\theoremstyle{plain}
\newtheorem{theorem}{Theorem}[section]
\newtheorem{proposition}[theorem]{Proposition}
\newtheorem{lemma}[theorem]{Lemma}
\newtheorem{corollary}[theorem]{Corollary}
\theoremstyle{definition}
\newtheorem{definition}[theorem]{Definition}
\newtheorem{remark}[theorem]{Remark}
\newtheorem{example}[theorem]{Example}
\newtheorem{question}[subsection]{Question}
\newcommand\E{\mathbb{E}}
\newcommand\Z{\mathbb{Z}}
\newcommand\R{\mathbb{R}}
\newcommand\T{\mathbb{T}}
\newcommand\C{\mathbb{C}}
\newcommand\N{\mathbb{N}}
\newcommand\F{\mathbb{F}}
\newcommand\X{\mathcal{X}}
\newcommand\Y{\mathcal{Y}}
\newcommand\M{\mathcal{M}}
\newcommand\ZZ{\mathrm{Z}}
\newcommand\YY{\mathrm{Y}}
\newcommand\XX{\mathrm{X}}
\newcommand\WW{\mathrm{W}}
\newcommand\blue{\color{blue}}
\newcommand\red{\color{red}}
\definecolor{revcrimson}{RGB}{220, 20, 60}
\definecolor{revcerulean}{RGB}{0, 130, 220}
\newcommand\Hom{\operatorname{Hom}}
\newcommand\Aut{\operatorname{Aut}}
\newcommand\id{\operatorname{id}}
\newcommand\eps{\varepsilon}
\newcommand\ML{\mathrm{ML}}
\newcommand\SML{\mathrm{SML}}
\newcommand\Spec{\mathrm{Spec}}
\newcommand\Poly{\mathrm{Poly}}
\newcommand\genprod{{\ast}}
\DeclareMathOperator*{\biggenprod}{{\mathlarger{\mathlarger{\mathlarger{\ast}}}}}
\newcommand{\mG}{{\mathcal{G}}}
\newcommand{\HK}{\mathrm{HK}}
\begin{document}

\baselineskip=17pt

\title[Totally disconnected Host--Kra and inverse Gowers]{The structure of totally disconnected Host--Kra factors, and the inverse theorem for the $U^k$ Gowers uniformity norms on finite abelian groups of bounded torsion}

\author[A. Jamneshan]{Asgar Jamneshan}
\address{Institute of Mathematics \\ University of Bonn \\ 53113 Bonn, Germany}
\email{ajamnesh@math.uni-bonn.de}

\author[O. Shalom]{Or Shalom}
\address{Department of Mathematics\\ Bar Ilan University \\ 
Ramat Gan \\
5290002, Israel}
\email{Or.Shalom@math.biu.ac.il}

\author[T. Tao]{Terence Tao}
\address{Department of Mathematics\\ University of California \\ 
Los Angeles \\
CA 90095-1555, USA}
\email{tao@math.ucla.edu}

\date{\today}

\begin{abstract} 
Let $\Gamma$ be a countable abelian group, let $k\geq 1$, and let $\XX=(X,\X,\mu,T)$ be an ergodic $\Gamma$-system of order $k$ in the sense of Host--Kra. The $\Gamma$-system $\XX$ is said to be totally disconnected if all its structure groups are totally disconnected. We show that any totally disconnected $\Gamma$-system of order $k$ is a generalized factor of a $\Z^\omega$-system with the structure of a Weyl system. 
As a consequence of this structure theorem, we show that totally disconnected $\Gamma$-systems of order $k$ are represented by translations on double cosets of nilpotent Polish groups. By a correspondence principle of two of us, we can use this representation to establish a (weak) inverse theorem for the $U^k$ Gowers uniformity norms on finite abelian groups of bounded torsion.   
\end{abstract}

\subjclass[2020]{Primary 37A15, 11B30; Secondary 28D15, 37A35.}

\keywords{}

\maketitle

\section{Introduction}

\subsection{Types of measure-preserving systems}

In this paper we investigate the relationships between different types of measure-preserving systems $\XX$, focusing in particular on ``systems of order $k$'' for some integer $k \geq 1$, where the acting group $\Gamma$ is either bounded torsion or torsion-free.  Our results are particularly strong when the system $\XX$ is ``totally disconnected'' and enjoys some additional ``divisibility'' conditions.

\begin{figure}
\centering
 \[\begin{tikzcd}
	{\substack{\text{totally disc.} \\ \text{order } k}} && {\substack{\text{totally disc.}\\ \Gamma \text{ torsion-free} \\ \text{order }k}} &&{\substack{\text{totally disc.} \\ k-\text{divisible} \\ \Gamma \text{ torsion-free} \\ \text{order }k}} && {\substack{\text{totally disc.}\\\infty-\text{divisible} \\\Gamma \text{ torsion-free}\\\mathrm{Weyl}_k}} \\
 \\ 	&& {\substack{\text{double coset}\\\text{system}\\ \text{degree } k}} && {\substack{\text{translational}\\\text{system} \\ \text{degree } k}} \\
	\\
	{\substack{U_1,\dots,U_k\ m-\text{torsion} \\ \Gamma\ m-\text{torsion} \\ \text{order }k}} && {\text{order }k} && {\mathrm{Weyl}_k} \\
	\\
	{\substack{U_1,\dots,U_k\ p^r-\text{torsion} \\ \Gamma\ p^r-\text{torsion} \\ \text{order }k}} && {\mathrm{Abramov}_{C(k,p)} } && {\mathrm{Abramov}_k}
	\arrow["{\substack{{\blue \Gamma = {\mathbb F}_p^\omega,\ k \leq p-1} \\ {\blue \text{OR } k=1}}}"', swap, from=5-3, to=5-5]
	\arrow["{\substack{\blue \Gamma = {\mathbb F}_p^\omega}}", from=5-3, to=7-3]
	\arrow["{\substack{{\blue \Gamma = {\mathbb F}_p^\omega} \\ {\blue k \leq p+1}}}"', shift right=1, from=5-3, to=7-5]
	\arrow[shift right=2, from=5-5, to=7-5]
	\arrow[from=7-5, to=7-3]
	\arrow["{\substack{{\red \text{extend } X}\\ \text{Thm. } \ref{divis}}}", shift left=1, Rightarrow, from=1-3, to=1-5]
	\arrow["{\substack{{\red \text{extend }\Gamma \text{ to } \Z^\omega}}}", shift left=1, from=1-1, to=1-3]
	\arrow[shift right=2, from=7-5, to=5-3]
	\arrow[from=1-7, to=5-5]
	\arrow["{\substack{\text{Thm. \ref{infty-weyl}} }}",shift left=1, Rightarrow, from=1-5, to=1-7]
	\arrow["{\substack{{\blue k-\text{divisible}}\\{\blue Z^{k-1}(X)\ \mathrm{Weyl}_{k-1}} \\ \text{Thm. \ref{abramov-weyl}} }}"', shift right=2, Rightarrow, from=7-5, to=5-5]
	\arrow["{\substack{{\blue \Gamma\ m-\text{ torsion }}\\ \text{Thm. \ref{bounded-tor}} }}", swap, Rightarrow, from=5-3, to=5-1]
	\arrow[from=5-1, to=1-1]
	\arrow["{\substack{\text{Thm. \ref{structure-thms}(ii)} }}", Rightarrow, from=1-3, to=3-3]
	\arrow["{\substack{\text{Thm. \ref{structure-thms}(i)} }}", Rightarrow, from=5-5, to=3-5]
	\arrow[shift left=1,from=3-5, to=3-3] 
    \arrow["{\substack{{\red \text{extend } X}}}", shift left=1, from=3-3, to=3-5]
    \arrow[shift left=1,from=1-7, to=1-5] 
    \arrow[shift left=1,from=1-5, to=1-3] 
    \arrow[shift left=1,from=1-3, to=1-1] 
    \arrow[from=3-3, to=5-3]
	\arrow["{\substack{{\red \text{split } \Gamma,\XX \text{ into } p-\text{Sylow factors}}\\ \text{Thm. \ref{pSylow}} }}", Rightarrow, shift left=1,from=5-1, to=7-1]
    \arrow[shift left=2,from=7-1, to=5-1]
\end{tikzcd}\]
    \caption{A schematic depiction of the key implications between various properties of ergodic $\Gamma$-systems $\XX$.  Here $k,m \geq 1$ are integers and $p$ is prime.  Thin arrows are implications which are either easy or established in previous literature; thick arrows are non-trivial and are established here. Some implications require additional hypotheses on $\Gamma, \XX, k$ as indicated (in {\blue blue}) next to the arrows; others (indicated in {\red red}) modify the system by replacing either $\Gamma$ or $\XX$ with an extension, or by decomposing $\Gamma$ and $\XX$ into their $p$-Sylow components.}
  \label{fig:main}
\end{figure} 

These implications are somewhat difficult to summarize succinctly: see Figure \ref{fig:main}.  In order to state these results more precisely, we need to review a certain amount of notation.  We shall give informal descriptions of the key concepts here, and defer precise definitions to Appendix \ref{host-kra-theory}.

Throughout this paper, $\Gamma = (\Gamma,+)$ will denote a countable discrete abelian group.  We distinguish some special types of such groups:
\begin{itemize}
\item \emph{torsion-free groups} $\Gamma$, in which $n \gamma \neq 0$ whenever $\gamma \in \Gamma$ and $n \in \Z$ are non-zero; and
\item \emph{$m$-torsion groups} $\Gamma$ for some $m \geq 1$, in which $m \gamma = 0$ for all $\gamma \in \Gamma$.
\end{itemize}

A key example of an $m$-torsion group is the infinite-dimensional vector space $\F_p^\omega = \bigoplus_{n=1}^\infty \F_p$ over a finite field of prime order $p$, in which case we can take $m=p$.  A key example of a torsion-free group is the free abelian group $\Z^\omega = \bigoplus_{n=1}^\infty \Z$ on countably many generators.  We observe that every countable discrete abelian group $\Gamma$ is isomorphic to a quotient of $\Z^\omega$, since $\Gamma$ is countably generated and thus the image of $\Z^\omega$ under a suitable homomorphism. In other words, $\Z^\omega$ serves as a ``universal'' group for the class of countable abelian groups $\Gamma$. 

Given $\Gamma$, we can introduce the notion of a \emph{$\Gamma$-system} $\XX = (X, \X, \mu, T)$ which roughly speaking is a probability space $(X,\X,\mu)$ equipped with a measure-preserving action\footnote{Strictly speaking, for minor technical reasons one should work with near-actions rather than actions: see Definition \ref{system-def}.  However, in this ``countable'' setting the distinction between actions and near-actions will not be of critical importance.} $T$ of $\Gamma$.  To avoid having to deal with the subtleties of ``uncountable'' ergodic theory \cite{jamneshan2019fz,jt19, jt20, jt-foundational}, we will insist that our probability spaces are standard Lebesgue, in order to make all probability algebras separable (and all structure groups metrizable, and all groups of transformations Polish).  There is an ordering $\leq$ on $\Gamma$-systems (up to a certain type of isomorphism), with the relation $\YY \leq \XX$ if $\YY$ is a \emph{factor} of $\XX$, or equivalently $\XX$ is an \emph{extension} of $\YY$; we give the formal definitions in Definition \ref{system-def}, but roughly speaking this means that there is a factor map $\pi \colon \XX \to \YY$ that pushes forward the measure on $\XX$ to the measure on $\YY$ and which is equivariant with respect to the $\Gamma$-actions.  We will also work with \emph{generalized} extensions (resp. factors) where we extend (resp. quotient) the group $\Gamma$ as well as the system $\XX$.

We isolate several notable classes of $\Gamma$-systems, namely the \emph{translational systems} $G/\Lambda$ (with $\Lambda$ a closed cocompact subgroup of $G$ and $\Gamma$ acting via a homomorphism to $G$), which contain in particular the \emph{rotational systems} $Z$  (where $G$ is abelian) and the \emph{nilsystems} (where $G$ is a nilpotent Lie group and $\Lambda$ is a sublattice) as special cases, and the more general \emph{double coset systems} $K \backslash G / \Lambda$ (where $K$ is a closed subgroup of $G$ normalized by the action of $\Gamma$).  We refer to Definition \ref{translational-def} for the formal definitions.

Every $\Gamma$-system $\XX$ gives rise to a canonical sequence of \emph{Host--Kra factors}\footnote{The factor $\ZZ^k(\XX)$ is sometimes denoted $\ZZ^{<k+1}(\XX)$ in the literature.}
$$ \ZZ^0(\XX) \leq \ZZ^1(\XX) \leq \ZZ^2(\XX) \leq \dots \leq \XX;$$
we review the formal definitions in Appendix \ref{host-kra-theory}. The lowest factor $\ZZ^0(\XX)$ is the invariant factor of $\XX$, which is trivial precisely when the $\Gamma$-system $\XX$ is ergodic.  In fact we will restrict attention almost exclusively in this paper to ergodic systems; in principle one can use the ergodic decomposition to then extend our results to non-ergodic settings, but we will not attempt to do so here.  The factor $\ZZ^1(\XX)$ is known as the \emph{Kronecker factor}, and is generated by the eigenfunctions of $\XX$; the factor $\ZZ^2(\XX)$ is also known as the \emph{Conze--Lesigne factor}. Roughly speaking, the $k^\mathrm{th}$ Host--Kra factor $\ZZ^k(\XX)$ controls the distribution of $k+1$-dimensional cubes in $\XX$, and plays an important role in understanding other patterns in $\XX$, such as $k$-term arithmetic progressions.

An ergodic $\Gamma$-system $\XX$ is said to be \emph{of order $k$} if $\ZZ^k(\XX)$ is equal to $\XX$. One can think of $\ZZ^k(\XX)$ as the maximal order $k$ factor of $\XX$, and any $\Gamma$-system of order $k$ is automatically of order $k'$ for any $k' \geq k$; see Proposition \ref{prop-functoriality}.  Ergodic $\Gamma$-systems of order $k$ have significant algebraic structure; for instance, ergodic $\Gamma$-systems of order $1$ (also known as \emph{Kronecker systems}) are (up to isomorphism) precisely the ergodic rotational $\Gamma$-systems on a compact abelian metrizable group $U$ (see e.g., \cite[Theorem 1.3]{jst}).  More generally, the Host--Kra factors are connected to each other by the relation
$$ \ZZ^k(\XX) = \ZZ^{k-1}(\XX) \rtimes_{\rho_{k-1}} U_k$$
(up to isomorphism) for any $k \geq 1$, where $U_k$ is a compact metrizable abelian group, $\rho_{k-1} \colon \Gamma \to \M(\ZZ^{k-1}(\XX),U_k)$ is a $U_k$-valued cocycle on $\ZZ^{k-1}(\XX)$ of type $k$, and $\rtimes$ denotes the (dynamical) skew-product construction; see Definitions \ref{coh:def}, \ref{type:def} and Proposition \ref{abelext} for the precise statements.  Thus we can express an ergodic $\Gamma$-system $\XX$ of order $k$ (up to isomorphism) as an iterated tower of skew-products
\begin{align}
    \label{structuregroups}
\XX &= U_1 \rtimes_{\rho_1}U_2 \ldots \rtimes_{\rho_{k-1}} U_{k}
\end{align}
for some compact abelian \emph{structure groups} $U_1,\dots,U_k$ and $U_i$-valued cocycles $\rho_i$ of type $i+1$, where the skew-product operation $\rtimes$ is performed from left to right, and $U_1$ has the structure of a rotational $\Gamma$-system.

By adapting the arguments in \cite[Chapter 12]{hk-book} (see also \cite[\S 2]{gmv}), one can show that ergodic translational $\Gamma$-systems $\mG/\Lambda$ of nilpotency class $k$ are of order $k$; taking quotients using Proposition \ref{prop-functoriality}(i), we conclude in particular that ergodic double coset systems $K \backslash \mG/\Lambda$ of nilpotency class $k$ are also of order $k$. The major theme in Host--Kra structure theory is the difficult converse direction, establishing that an ergodic $\Gamma$-system of order $k$ is isomorphic to (an inverse limit of) translational $\Gamma$-systems or a double coset $\Gamma$-system of nilpotency class $k$. This ergodic-theoretic inverse question is currently open for arbitrary $\Gamma$ and $k\geq 3$; see the end of this subsection for a discussion of known results, and see the next subsection for the contributions of this paper.   

We say that an ergodic $\Gamma$-system $\XX$ of order $k$ is \emph{totally disconnected} if each of the structure groups $U_1,\dots,U_k$ are totally disconnected.  Our results will be strongest in the totally disconnected case, which as we shall see in Theorem \ref{bounded-tor} below arises when $\Gamma$ is bounded torsion (i.e., $m$-torsion for some $m$).  In these cases, the structure of order $k$ systems $\XX$ is related to the polynomials on $\XX$, which we now pause to define.

\begin{definition}[Polynomials]
Let $k\geq 0$, let $\XX=(X,\X,\mu,T)$ be a $\Gamma$-system, and let $U=(U,+)$ be a compact metrizable abelian group. 
\begin{itemize}
\item We use $\M(\XX,U)$ to denote the abelian group of measurable functions from $X$ to $U$, up to almost everywhere equivalence, under the pointwise addition.  When $U=\T$ is the unit circle $\T \coloneqq \R/\Z$, we can give this space the $L^2$ metric $d_{L^2}(f,g) \coloneqq \| e(f)-e(g)\|_{L^2(\XX)} = (\int_X |e(f)-e(g)|^2\ d\mu)^{1/2}$, where $e(\theta) \coloneqq e^{2\pi i \theta}$; more generally, by the Pontryagin duality theorem, the following topological groups are isomorphic
$$\M(\XX,U) \equiv \M(\XX,\Hom(\hat U,\T)) \equiv \Hom(\hat U, \M(\XX,\T))\footnote{The isomorphism $\M(\XX,U)\cong \Hom(\widehat{U},\M(\XX,\T))$ maps $f\in \M(\XX,U)$ to the homomorphism $\chi\mapsto \chi\circ f$.}$$ 
(where $\Hom$ is in the category of abelian groups, and the countable discrete abelian group $\hat U$ is the Pontryagin dual of the compact abelian metrizable group $U$). This gives $\M(\XX,U)$ the structure of an abelian Polish group. Indeed, we can assign a complete metric on $\M(X,U)$ by setting $d(f,g) \coloneqq \sum_{n=1}^\infty \frac{1}{2^n} d_{L^2}(\chi_n\circ f,\chi_n\circ g)$, where $\{\chi_n:n\in \mathbb{N}\}$ is an enumeration of $\widehat{U}$.
\item For any $\gamma \in \Gamma$ and $f \in \M(\XX,U)$, we define the derivative $\partial_\gamma f \in \M(\XX,U)$ by the formula
$$ \partial_\gamma f \coloneqq f \circ T^\gamma - f.$$
This is well defined in $\M(\XX,U)$ since $T^\gamma$ preserves almost everywhere equivalence.  Observe that these operators $\partial_\gamma$ commute with each other, and we have the cocycle identity
\begin{equation}\label{cocycle-ident}
\partial_{\gamma_1+\gamma_2} = \partial_{\gamma_1} + \partial_{\gamma_2} \circ T^{\gamma_1} = \partial_{\gamma_1} + \partial_{\gamma_2} + \partial_{\gamma_1} \partial_{\gamma_2}
\end{equation}
for any $\gamma_1,\gamma_2 \in \Gamma$.
\item An element $p \in \M(\XX,U)$ is said to be a \emph{polynomial of degree (at most) $k$} if for every $\gamma_1,\ldots,\gamma_{k+1}\in \Gamma$ we have that
$$\partial_{\gamma_1}\ldots \partial_{\gamma_{k+1}}p = 0.$$
We denote by $\Poly_{\leq k}(X,U)$ the set of polynomials of degree at most $k$; this is a closed subgroup of $\M(\XX,U)$.  By convention, we set $\Poly_{\leq k}(X,U)=\{0\}$ for all integers $k<0$, thus $0$ is a polynomial of degree $k$ for any $k \in \Z$.  A function $\rho \colon \Gamma \to \M(\XX,U)$ is said to be a  \emph{polynomial of degree (at most) $k$} if it lies in $\Poly_{\leq k}(X,U)^\Gamma$, that is to say that $\rho_\gamma\in \Poly_{\leq k}(X,U)$ for all $\gamma \in \Gamma$.
\item When $U$ is the torus $\T = \R/\Z$, we abbreviate $\Poly_{\leq k}(\XX,U)$ as $\Poly_{\leq k}(X)$.  We also write $\Poly_{<k}$ for $\Poly_{\leq k-1}$.
\end{itemize}
\end{definition}

Note that we have the inclusions
$$ U \leq \Poly_{\leq 0}(\XX,U) \leq \Poly_{\leq 1}(\XX,U) \leq \Poly_{\leq 2}(\XX,U) \dots \leq \M(\XX,U)$$
of abelian Polish groups, with equality $U = \Poly_{\leq 0}(\XX,U)$ in the first inclusion when $\XX$ is ergodic.  We also have the equivalence
\begin{equation}\label{fourier-equiv}
\Poly_{\leq k}(\XX,U) \equiv \Poly_{\leq k}(\XX,\Hom(\hat U,\T)) \equiv \Hom(\hat U, \Poly_{\leq k}(\XX)).
\end{equation}
The following definitions will play a key role in our paper:

\begin{definition}[Abramov, Weyl, and divisible systems]  Let $\Gamma$ be a discrete countable abelian group, let $\XX$ be an ergodic $\Gamma$-system, and let $k \geq 1$.
\begin{itemize}
    \item $\XX$ is an \emph{Abramov system of order $k$} if the $\sigma$-algebra of $\XX$ is generated (up to null sets) by $\Poly_{\leq k}(\XX)$, or equivalently if $\{ e(P): P \in \Poly_{\leq k}(\XX)\}$ span a dense subspace of $L^2(\XX)$.
    \item $\XX$ is a \emph{Weyl system of order $k$} if it is of order $k$, and for each $1 \leq i < k$, the cocycle $\rho_i$ appearing in the description \eqref{structuregroups} is a polynomial of degree $i$, i.e., $\rho_i \in \Poly_{\leq i}(\ZZ^i(\XX), U_{i+1})^\Gamma$.
    \item $\XX$ is \emph{$k$-divisible}\footnote{This concept was denoted \emph{$k+1$-divisibility} in \cite{shalom2}.} if the abelian groups $\Poly_{\leq d}(\XX)$ are divisible for every $0 \leq d \leq k$ (i.e., if $P \in \Poly_{\leq d}(\XX)$ and $n \geq 1$ then there exists $Q \in \Poly_{\leq d}(\XX)$ such that $nQ=P$).  $\XX$ is \emph{$\infty$-divisible} if it is $k$-divisible for every $k$, i.e., $\Poly_{\leq d}(\XX)$ is divisible for every $d \geq 0$.
\end{itemize}
\end{definition}

In \cite[Theorem 3.8]{btz} it is shown that every Weyl system of order $k$ is Abramov of order $k$, and in \cite[Lemma A.35]{btz} it is shown\footnote{The paper \cite{btz} focuses on the case $\Gamma = \F_p^\omega$, but the arguments work in general.} that every Abramov system of order $k$ is of order $k$.  Conversely, from the description \eqref{structuregroups} it is clear that any system of order $1$ is Weyl of order $1$.  In general, it is known that systems of order $k$ are not necessarily Abramov or Weyl of order $k$; see Section \ref{counterexamples-sec} below.  Also, many systems of order $k$ are not divisible of any order (e.g., any ergodic $\mathbb{F}_p^\omega$-system) or totally disconnected(e.g., an irrational rotation), though as we shall see later one can often recover divisibility properties by passing to an extension.  

The case $\Gamma = \F_p^\omega$ has received particular interest (being closely related \cite{taozieglerhigh} to the inverse theorem for the Gowers norms in finite field vector spaces $\F_p^n$), and the following statements are known.

\begin{theorem}[$\F_p^\omega$-systems]\label{fp-omega}  Let $\XX$ be an ergodic $\F_p^\omega$-system of order $k$ for some prime $p$ and some $k \geq 1$.
\begin{itemize}
\item[(i)]  The structure groups $U_1,\dots,U_k$ are $p^r$-torsion for some $r \geq 1$.  In particular\footnote{Note that if $U$ is $p^r$-torsion, then so is the Pontraygin dual of $U$, or any closed subgroup of $U$.  In particular, no non-trivial closed subgroup of $U$ can be connected (since it has a non-trivial torsion character), and thus $U$ is totally disconnected.}, $\XX$ is totally disconnected.
\item[(ii)]  $\XX$ is Abramov of order $C$ for some $C = C(p,k)$ depending only on $p,k$.  If $p \geq k-1$, one can take $C=k$.
\item[(iii)]  If $p \geq k+1$, then $\XX$ is Weyl of order $k$.
\end{itemize}
\end{theorem}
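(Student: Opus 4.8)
All three parts are due to Bergelson--Tao--Ziegler \cite{btz}, whose arguments---though written for $\Gamma=\F_p^\omega$ only---are exactly what is needed, and I describe how the proof is organized and where the real work lies. \emph{Part (i).} I would induct on the order $k$, showing at each stage that every structure group $U_i$ is $p^{r_i}$-torsion for some $r_i$ bounded in terms of $k$. For $k=1$ the factor $\ZZ^1(\XX)$ is, up to isomorphism, an ergodic rotational system on a compact abelian metrizable group $U_1$ on which $\F_p^\omega$ acts through a homomorphism $\phi\colon\F_p^\omega\to U_1$ with dense image; since $u\mapsto pu$ is continuous and vanishes on the dense subgroup $\phi(\F_p^\omega)$ it vanishes on all of $U_1$, so $U_1$ is $p$-torsion. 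For the inductive step write $\XX=\ZZ^{k-1}(\XX)\rtimes_{\rho}U_k$ with $\rho:=\rho_{k-1}$ a $U_k$-valued cocycle of type $k$ over the order-$(k-1)$ base $\ZZ^{k-1}(\XX)$, whose structure groups are $p$-power torsion by the inductive hypothesis. The key algebraic input is that $p\gamma=0$ in $\F_p^\omega$, which by the cocycle equation for $\rho$ (cf.\ \eqref{cocycle-ident}) forces $\sum_{j=0}^{p-1}\rho_\gamma\circ T^{j\gamma}=\rho_{p\gamma}=0$ for every $\gamma$. Fed into the Host--Kra cohomology of cocycles of a fixed type over a system of bounded order, this forces $\rho$ to be cohomologous to a cocycle valued in a $p^r$-torsion closed subgroup of $U_k$ for some $r$ bounded in terms of $k$; minimality of the structure group then forces $U_k$ itself to be $p^r$-torsion. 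Finally a $p^r$-torsion compact abelian group is totally disconnected, since its Pontryagin dual is then a torsion discrete group (this is the footnote to the theorem statement).

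\emph{Part (ii).} This is the substantive inverse-type statement; I would again induct on $k$, the crux being to \emph{straighten} the top structure cocycle. Granting part (i) (so $U_k$ is $p^r$-torsion) and the inductively obtained Abramov structure of $\ZZ^{k-1}(\XX)$, one shows that $\rho_{k-1}$ is cohomologous to a \emph{polynomial} cocycle $\rho'$ of some degree $D=D(p,k)$, by descending through the structure tower of the base and solving, at each level, a Conze--Lesigne-type functional equation---here the bounded torsion of the structure groups is what makes the relevant averaging and division operations available. Up to isomorphism we may then take $\XX=\ZZ^{k-1}(\XX)\rtimes_{\rho'}U_k$, so the skew-product coordinate function $s_k\colon\XX\to U_k$ satisfies $\partial_\gamma s_k=\rho'_\gamma\in\Poly_{\leq D}(\XX,U_k)$, whence $s_k\in\Poly_{\leq D+1}(\XX,U_k)$; together with the pulled-back polynomials from $\ZZ^{k-1}(\XX)$ and the Fourier equivalence \eqref{fourier-equiv}, the functions $e(P)$ with $P\in\Poly_{\leq\max(C(p,k-1),D+1)}(\XX)$ then span a dense subspace of $L^2(\XX)$, so $\XX$ is Abramov of order $C(p,k):=\max(C(p,k-1),D(p,k)+1)$.

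\emph{Part (iii).} Under the hypothesis relating $p$ and $k$ in (iii) the straightening of part (ii) can be carried out sharply: inducting on $i$ from $1$ to $k-1$ and assuming $\ZZ^i(\XX)$ has already been presented as a Weyl system of order $i$, one solves the Conze--Lesigne-type equation for $\rho_i$ with a polynomial solution of the expected degree $i$ and no degree loss---the point being that the binomial and factorial coefficients inverted along the way are prime to $p$ in this range, and no Frobenius-type obstruction intervenes. This presents each $\rho_i$ as cohomologous to a polynomial of degree $i$, so $\XX$ is Weyl of order $k$; since Weyl of order $k$ implies Abramov of order $k$ (\cite[Theorem 3.8]{btz}), this also yields the refinement $C=k$ of (ii) in this regime.

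The step I expect to be the real obstacle is the cocycle straightening of part (ii) in low characteristic: there one genuinely cannot keep the degrees tight, the Conze--Lesigne-type equations being obstructed by Frobenius phenomena, and even controlling the resulting degree inflation well enough to extract a finite $C(p,k)$ is the technical heart of \cite{btz}.
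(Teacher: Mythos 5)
Your reading of this theorem matches the paper's: it is proved there entirely by citation --- part (i) to \cite[Theorem 4.8]{btz} (with the $k=1$ case by Pontryagin duality, as you say), the first half of (ii) to \cite[Theorem 1.20]{btz}, and (iii) to \cite[Corollary 8.7]{btz} --- and your sketch of the BTZ mechanism (induction on the order, straightening the top structure cocycle through Conze--Lesigne-type equations, with degree loss in low characteristic) is a fair description of where the work lies. One clarification on hypotheses: the printed inequalities ``$p\leq k+1$'', ``$p\leq k-1$'' are reversed typos for $k\leq p+1$, $k\leq p-1$ (compare Figure \ref{fig:main} and the questions in Section \ref{counterexamples-sec}); you implicitly read them the correct (high-characteristic) way, since your part (iii) inverts factorials coprime to $p$.

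The genuine gap is in your deduction of the refinement $C=k$ in (ii). You obtain it from the Weyl statement (iii), but that route only reaches the range $k\leq p-1$, whereas the theorem asserts $C=k$ in the strictly larger range $k\leq p+1$, and the boundary cases $k=p,\,p+1$ cannot be reached by cocycle straightening at all: Proposition \ref{second-counter} exhibits an ergodic $\F_2^\omega$-system of order $3=p+1$ that is Abramov but not Weyl of order $3$, so in that regime $\rho_{k-1}$ is genuinely not cohomologous to a polynomial cocycle of degree $k-1$. The paper covers $k\leq p+1$ by citing the nilspace-theoretic result \cite[Theorem 1.12]{CGSS}, which is a different method and not in \cite{btz}; your proposal, which attributes everything to \cite{btz}, does not prove this part of the statement. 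A smaller remark on (i): your inductive step is left vague (``fed into the Host--Kra cohomology \dots forces $\rho$ cohomologous to a cocycle valued in a $p^r$-torsion subgroup''); the cleanest way to make it precise --- and the one the paper itself uses for the stronger Theorem \ref{bounded-tor} --- is Lemma \ref{times p type}: $p\rho_{k-1}$ is of type $k-1$, hence $\ZZ^{k-1}(\XX)\rtimes_{p\rho_{k-1}} pU_k$ is a factor of order $k-1$, and maximality of $\ZZ^{k-1}(\XX)$ forces $pU_k=0$, giving $r=1$ outright rather than an unspecified $r$.
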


These results are largely summarized in the bottom half of Figure \ref{fig:main}.

\begin{proof}  For $(i)$, the case $k=1$ follows from  Pontryagin duality (see e.g., the discussion in \cite[\S 1.5]{jst}), while the general case is established in \cite[Theorem 4.8]{btz}.  As we shall see in Theorem \ref{bounded-tor} below, we can in fact take $r=1$; this latter claim was essentially already established (in the framework of nilspaces) in \cite[Proposition 3.5]{CGSS}.  

The first part of $(ii)$ was established in \cite[Theorem 1.20]{btz}.  The second part was established for $p \geq k+1$ in \cite[Corollary 8.7]{btz}, and the $p \geq k-1$ case was established by nilspace methods in \cite[Theorem 1.12]{CGSS}.  As we shall recall in Section \ref{counterexamples-sec} below, the condition $p \geq k-1$ cannot be completely omitted.

Finally, $(iii)$ was established in \cite[Corollary 8.7]{btz}.
\end{proof}
Comparing parts (ii) and (iii) of Theorem \ref{fp-omega}, it is natural to ask whether every system of order $k$ is necessarily Weyl of order $k$ provided $p \geq k-1$. In Proposition \ref{second-counter}, we provide a counterexample showing that this is not the case in general.

We also briefly list some other structural results for $\Gamma$-systems of finite order:

\begin{itemize}
    \item In the landmark paper \cite{host2005nonconventional}, it was shown that ergodic $\Z$-systems of order $k$ were inverse limits of nilsystems of order $k$.  This result has since been extended to ergodic $\Gamma$-systems when $\Gamma$ is finitely generated \cite{griesmer}, \cite{candela-szegedy-cubic}.
    \item If $\Gamma$ is an arbitrary countable abelian group, it was shown in \cite[Theorem 1.21]{shalom2} that every ergodic $\Gamma$-system $\XX$ of order $2$ was a double coset system, and in \cite[Theorem 1.18]{shalom2} it was shown that they admit (generalized) extensions which are translational.  In \cite[Theorem 1.7]{jst} it was shown that ergodic $\Gamma$-systems of order $2$ were also inverse limits of translational systems; see \cite{shalom1, shalom2} for some previous results in this direction. 
    \item Additional structure theorems for $\mathbb{F}_p^\omega$-systems were established in \cite{CGSS} using the theory of nilspaces.
\end{itemize}

\subsection{New positive results}

We now state some additional implications between various types of systems of order $k$.  We begin with a generalization and strengthening of Theorem \ref{fp-omega}(i).

\begin{theorem}[Bounded torsion systems]\label{bounded-tor}  Let $\Gamma$ be $m$-torsion for some $m \geq 1$, and let $\XX$ be an ergodic $\Gamma$-system of order $k$ for some $k \geq 1$.  Then the structure groups $U_1,\dots,U_k$ are $m$-torsion. In particular, $\XX$ is totally disconnected.
\end{theorem}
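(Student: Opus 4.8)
The plan is to induct on the order $k$, using the iterated skew‑product presentation \eqref{structuregroups}, $\XX = U_1 \rtimes_{\rho_1} U_2 \rtimes \cdots \rtimes_{\rho_{k-1}} U_k$, together with the fact that the penultimate factor $\YY := \ZZ^{k-1}(\XX) = U_1 \rtimes_{\rho_1} \cdots \rtimes_{\rho_{k-2}} U_{k-1}$ is itself an ergodic $\Gamma$-system of order $k-1$ with structure groups $U_1,\dots,U_{k-1}$ (see Proposition \ref{prop-functoriality}). The base case $k=1$ is the Kronecker case: here $\XX = U_1$ is an ergodic rotational $\Gamma$-system, so $U_1$ is a compact metrizable abelian group and $\Gamma$ acts through a homomorphism $\phi \colon \Gamma \to U_1$ with dense image. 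Since $\Gamma$ is $m$-torsion, $\phi(\Gamma)$ lies in the closed subgroup $\{u \in U_1 : mu = 0\}$; by density this subgroup is all of $U_1$, so $U_1$ is $m$-torsion. (Equivalently, $\widehat{U_1}$ is the group of eigenvalues of $\XX$, each of which is a character of $\Gamma$ and hence $m$-torsion. A compact $m$-torsion abelian group is profinite, hence totally disconnected, which gives the ``in particular'', cf.\ the footnote to Theorem \ref{fp-omega}.)

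For the inductive step, fix $k \geq 2$ and assume the result for order $k-1$; then $\YY = \ZZ^{k-1}(\XX)$ has $m$-torsion structure groups $U_1,\dots,U_{k-1}$ and is in particular totally disconnected. It remains to show the top structure group $U_k$ is $m$-torsion, equivalently that the discrete abelian group $\widehat{U_k}$ is $m$-torsion. Fix $\chi \in \widehat{U_k}$ and set $\sigma := \chi \circ \rho_{k-1} \colon \Gamma \to \M(\YY, \T)$; this is a $\T$-valued cocycle of type $k$ on $\YY$. By the canonical (``reduced'') nature of the Host--Kra--Ziegler structure group in Proposition \ref{abelext} — namely, a character of $\ZZ^k(\XX)$ over $\YY$ arising from $\rho_{k-1}$ is trivial precisely when the associated cocycle is an $\M(\YY,\T)$-coboundary — it suffices to prove the following cohomological vanishing: \emph{$m\sigma$ is an $\M(\YY,\T)$-coboundary}. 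Granting this, $m\chi = 0$ in $\widehat{U_k}$, and since $\chi$ was arbitrary we conclude $U_k$ is $m$-torsion.

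The proof of this vanishing is where both hypotheses enter. First, $m$-torsion of $\Gamma$ and the cocycle identity \eqref{cocycle-ident} give, for every $\gamma \in \Gamma$, the normalization $\sum_{j=0}^{m-1} \sigma_\gamma \circ T^{j\gamma} = \sigma_{m\gamma} = 0$; restricted to the cyclic subgroup $\langle\gamma\rangle$ (which has order dividing $m$) this already forces $m\sigma_\gamma$ to be a coboundary for the $\langle\gamma\rangle$-action, i.e.\ of the form $\partial_\gamma F_\gamma$, but a priori with $F_\gamma$ depending on $\gamma$. The real task is to glue these into a single $F \in \M(\YY,\T)$ with $m\sigma_\gamma = \partial_\gamma F$ for all $\gamma$, and this is where total disconnectedness of $\YY$ is used: since the structure groups of $\YY$ are $m$-torsion, hence profinite, $\YY$ is an inverse limit $\varprojlim_n \YY_n$ of factors $\YY_n$ that are finite towers of finite abelian $m$-torsion extensions, and on each $\YY_n$ the $\Gamma$-action factors through a finite abelian quotient of exponent dividing $m$. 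Descending through the finitely many levels of $\YY_n$ — at each level combining the normalization above with an averaging/transfer argument over the finite acting group, and the type-$k$ property of $\sigma$ — one shows that the image of $\sigma$ in $\M(\YY_n,\T)$ is cohomologous to a cocycle valued in $\tfrac1m\Z/\Z$, so that $m\sigma$ is a coboundary on $\YY_n$; passing to the limit yields the claim on $\YY$.

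I expect the main obstacle to be exactly this last step: keeping the torsion exponent equal to $m$ rather than letting it inflate to $m^{O(k)}$ as one works up the skew-product tower — a naive induction on the order, or a naive iteration of the transfer argument, loses a factor of $m$ at each stage, whereas the theorem demands the sharp exponent $m$. Exploiting the profinite structure supplied by total disconnectedness to hold the exponent at $m$, and correctly interfacing the $m$-torsion normalization of $\sigma$, the tower structure of $\YY$, and the finite-group cohomology, is the technical heart of the argument. A secondary point requiring care is the precise identification in Proposition \ref{abelext} guaranteeing that ``$m\sigma$ a coboundary'' genuinely translates into ``$m\chi = 0$''.
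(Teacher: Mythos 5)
Your base case and your reduction of the inductive step to a cohomological statement about $\sigma = \chi \circ \rho_{k-1}$ are fine (ergodicity of $\ZZ^{k-1}(\XX) \rtimes_{\rho_{k-1}} U_k$ does imply that $\xi \circ \rho_{k-1}$ is a coboundary only for trivial $\xi$, so "$m\sigma$ a coboundary for all $\chi$" would indeed give $mU_k = 0$). The gap is in the step you yourself flag as the technical heart: your gluing argument rests on a structural claim about $\YY = \ZZ^{k-1}(\XX)$ that is false once $k-1 \geq 2$. If the $\Gamma$-action on an ergodic system $\YY_n$ factors through a finite abelian quotient, then $\YY_n$ is (up to null sets) a single finite orbit, i.e.\ a rotation on a finite abelian group, hence of order $1$; and an inverse limit of order-$1$ systems is of order $1$ (Proposition \ref{prop-functoriality}(iii)). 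So a totally disconnected system of order $\geq 2$ — e.g.\ the order-$2$ system $\ZZ \rtimes_\sigma \Z/2\Z$ in the example of Section \ref{sec-mtorsion}, or the systems in Appendix \ref{appendix-d} — is never an inverse limit of "finite towers of finite abelian extensions with the action factoring through a finite quotient". (Relatedly, a cocycle on $\YY$ has no canonical "image in $\M(\YY_n,\T)$" for a proper factor $\YY_n$, and a limit of coboundaries need not be a coboundary, so even the passage to the limit is not available as stated.) The per-$\gamma$ transfer observation, while correct, does not help with the gluing, and nowhere does your sketch actually use the type-$k$ hypothesis, which is where the content must come from.

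For comparison, the paper's argument deliberately proves something much weaker than "$m\sigma$ is a coboundary": it shows (Lemma \ref{times p type}) only that $m\rho_{k-1}$ has type $k-1$. The mechanism is to pass to the cube system $\XX^{[k-1]}$, where type $k$ means $\Delta^{[k-1]}\rho_{k-1}$ has type $1$; on each ergodic component a type-$1$ $\T$-valued cocycle is cohomologous to a homomorphism $c \colon \Gamma \to \T$ (Proposition \ref{type}(vi)), and multiplying by $m$ kills $c$ because $\Gamma$ is $m$-torsion, leaving a coboundary (then Proposition \ref{type}(vii) and (ii) upgrade this to all of $\XX^{[k-1]}$ and all characters). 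Once $m\rho_{k-1}$ has type $k-1$, the factor $\ZZ^{k-1}(\XX) \rtimes_{m\rho_{k-1}} mU_k$ of $\XX$ is of order $k-1$ (Proposition \ref{type}(iii)), and maximality of $\ZZ^{k-1}(\XX)$ among order-$(k-1)$ factors forces $mU_k = 0$. Note this route needs no decomposition of the base into finite systems and in fact never uses total disconnectedness of $\ZZ^{k-1}(\XX)$; if you want to salvage your plan, you should replace the inverse-limit gluing by an argument of this kind on the cube spaces, or else downgrade your target from "coboundary" to "type $k-1$" and invoke maximality of the Host--Kra--Ziegler factor as the paper does.
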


This result is a straightforward consequence of existing theory and is established in Section \ref{sec-mtorsion}.  At the opposite extreme, it was shown in \cite[Theorem 9.5]{host2005nonconventional} that if $\Gamma$ is torsion-free and finitely generated\footnote{The arguments in \cite{host2005nonconventional} were stated for $\Z$-systems, but the arguments extend without difficulty to other torsion-free finitely generated abelian groups \cite[Theorem 4.9.5]{griesmer}.} and $\XX$ is an ergodic $\Gamma$-system of order $k$, then all the structure groups $U_2,\dots,U_k$ (except possibly for the first group $U_1$) are connected. 

In Theorem \ref{pSylow} we establish a convenient companion result to Theorem \ref{bounded-tor}, related to the Schur--Zassenhaus theorem, showing that an ergodic $\Gamma$-system $\XX$ of order $k$ of an $m$-torsion abelian group $\Gamma$ factors into a direct product $\XX = \biggenprod_{p|m} \XX_p$ of ``$p$-Sylow factors'', $\XX_p$, which are ergodic $\Gamma_p$-systems of order $k$ with $\Gamma_p$ the $p$-Sylow group of $\Gamma$ (the elements of $\Gamma$ whose order is a power of $p$).  In effect, this ``Sylow decomposition'' reduces the study of the action of $m$-torsion groups to the action of $p^r$-torsion groups for various prime powers $p^r$.

Theorem \ref{bounded-tor} motivates the study of totally disconnected systems of order $k$.  Such systems turn out to be particularly tractable after lifting $\Gamma$ to a torsion-free group.  This is because of the following two results:

\begin{theorem}[Divisible extension]\label{divis}  Let $\Gamma$ be torsion-free, and let $\XX$ be an ergodic $\Gamma$-system of order $k$ for some $k \geq 1$.  Then there exists an extension $\YY$ of $\XX$ which is still ergodic of order $k$, and is also $k$-divisible.  Furthermore, if $\XX$ is totally disconnected, then $\YY$ can also be chosen to be totally disconnected.
\end{theorem}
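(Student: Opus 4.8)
The plan is to induct on $k$, building $\YY$ as a tower of abelian extensions mirroring the Host--Kra tower \eqref{structuregroups} of $\XX$ while forcing divisibility level by level. The base case $k=1$ is essentially algebraic and explains why torsion-freeness is needed: an ergodic rotational $\Gamma$-system corresponds (up to isomorphism, by Pontryagin duality) to a countable subgroup $\Sigma$ of the group $\Hom(\Gamma,\T)$ of eigenvalues, namely $\Sigma=\hat U_1$ embedded as $\{\gamma\mapsto\chi(\alpha_\gamma):\chi\in\hat U_1\}$; using ergodicity and the continuity of the translation action one checks $\Poly_{\le 1}$ of the system is isomorphic to $\hat U_1\oplus\T$, and extensions within order $1$ correspond to enlargements of $\Sigma$ inside $\Hom(\Gamma,\T)$. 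Hence for $k=1$ it suffices to enlarge $\Sigma$ to a countable \emph{divisible} subgroup of $\Hom(\Gamma,\T)$; this is possible exactly because, for torsion-free $\Gamma$, the group $\Hom(\Gamma,\T)$ is itself divisible (equivalently $\mathrm{Ext}^1_\Z(\Gamma,\Z/n)=0$, which follows from the divisibility of $\mathrm{Ext}^1_\Z(\Gamma,\Z)$ for torsion-free $\Gamma$), so it contains a divisible hull of $\Sigma$; countability is preserved, and the divisible hull of a torsion group is torsion, so total disconnectedness is preserved as well.

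For the inductive step, write $\XX=\ZZ^{k-1}(\XX)\rtimes_{\rho}U_k$ with $\rho$ a $U_k$-valued cocycle of type $k$ on $\ZZ^{k-1}(\XX)$. Apply the inductive hypothesis to $\ZZ^{k-1}(\XX)$ to get an extension $\YY'$ which is ergodic of order $k-1$ and $(k-1)$-divisible (and totally disconnected if $\XX$ is), and pull $\rho$ back along $\YY'\to\ZZ^{k-1}(\XX)$ to a cocycle $\rho'$ on $\YY'$. One invokes (or proves) the standard fact that pulling back a type-$k$ cocycle along a factor map between ergodic order-$(k-1)$ systems keeps the skew-product ergodic and keeps its $(k-1)$-st Host--Kra factor equal to the base; thus $\XX_1:=\YY'\rtimes_{\rho'}U_k$ is an ergodic order-$k$ extension of $\XX$ with $\ZZ^{k-1}(\XX_1)=\YY'$. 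Since a polynomial of degree $\le d$ is measurable with respect to $\ZZ^d$, this already gives $\Poly_{\le d}(\XX_1)=\Poly_{\le d}(\YY')$ divisible for all $d\le k-1$; as an extension of a divisible group by a divisible group is divisible, it now remains only to extend $\XX_1$ further so that $\Poly_{\le k}(\XX_1)/\Poly_{<k}(\XX_1)$ becomes divisible.

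To achieve divisibility at the top level I would adjoin $n$-th roots one polynomial at a time and pass to a limit. Given $P\in\Poly_{\le k}(\XX_1)$ and $n\ge 1$, one seeks an extension carrying a degree-$\le k$ function $Q$ with $nQ-P\in\Poly_{<k}$. Since $\partial_\gamma P\in\Poly_{\le k-1}$ is already divisible, write $\partial_\gamma P=n s_\gamma$ with $s_\gamma\in\Poly_{\le k-1}$; the family $(s_\gamma)$ is a cocycle up to a $\tfrac1n\Z/\Z$-valued error, and the $U_k$-fibre behaviour of $P$ produces a character $\chi\in\hat U_k$; together these data define an obstruction class in a cohomology group built from $\YY'$ and $\Gamma$. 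One then enlarges $U_k$ by a further compact abelian group (totally disconnected if applicable) and lifts/modifies $\rho'$ so as to kill this class — possible because the relevant $\Hom$- and $\mathrm{Ext}$-groups over the torsion-free group $\Gamma$ are divisible enough and because one is free to choose the enlarging fibre. Enumerating all pairs $(P,n)$ in a countable list and taking the inverse limit of the resulting tower of order-$k$ systems produces $\YY$.

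The main obstacle is twofold. First, one must show that each root-adjoining step can be realized as an \emph{ergodic} extension that remains \emph{of order $k$}; equivalently, that dividing a type-$k$ cocycle by $n$ can be carried out within the class of type-$k$ cocycles after suitably enlarging the fibre group. This is where the cohomological obstruction analysis lives, and where torsion-freeness of $\Gamma$ is used a second time (to guarantee the obstruction groups are divisible and can be annihilated). Second, one must check that $\Poly_{\le k}$ of the inverse-limit system is exhausted by the $\Poly_{\le k}$'s of the finite stages, not merely their closure — otherwise the stagewise divisibility need not survive the limit. This requires a rigidity statement: a polynomial of bounded degree on the limit already lives at a finite stage, which I would prove using that each stage is an extension of the previous one with totally disconnected (indeed, in this iteration, profinite) fibres, so that bounded-degree polynomials cannot "spread across infinitely many stages." These two points, together with the bookkeeping needed to see that the limit is $k$-divisible at every level $d\le k$ simultaneously, are the technical heart of the argument.
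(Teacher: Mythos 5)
The heart of the theorem is exactly the step your third paragraph leaves unconstructed: given $P\in\Poly_{\leq k}$ of the current stage and $n\geq 1$, actually producing an ergodic order-$k$ extension carrying $Q$ with $nQ-P\in\Poly_{<k}$. You replace this with an unspecified ``obstruction class in a cohomology group'' that is to be ``killed'' by enlarging $U_k$, justified only by the phrase that the relevant $\Hom$/$\mathrm{Ext}$ groups over the torsion-free $\Gamma$ are ``divisible enough''; but that is precisely what has to be proven, and it is where the paper's real work lies. The paper first divides the top symmetric multilinear form $\nabla^k P$ inside $\SML_k(\Gamma,\T)$ (divisible because $\Gamma$ is torsion-free, Proposition \ref{symdivdiscrete}), then gives an explicit auxiliary ergodic $\Gamma$-system (Proposition \ref{bkg}), built on $\SML_0(\Gamma,\T)\times\cdots\times\SML_{k-1}(\Gamma,\T)\times\{b\}$, carrying a degree-$k$ polynomial with prescribed top derivative $k!b$; taking the product with the given system, passing to an ergodic component, and inducting on the degree of the discrepancy $P-mQ$ yields the root. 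Note that this mechanism unavoidably enlarges the \emph{lower} Host--Kra factors as well (the auxiliary system has its own lower structure groups, and the lower-degree discrepancy must itself be divided), so your plan of performing all top-level divisions while keeping the base $\YY'=\ZZ^{k-1}$ fixed --- which is what lets you conclude that $\Poly_{\leq d}$ for $d\leq k-1$ stays untouched --- is not something you have shown is feasible; if the base grows, your single induction on $k$ with a one-shot top stage does not obviously close, and one needs the paper's countable bookkeeping over all degrees and moduli, an inverse limit, and a final passage to $\ZZ^k$ of the limit.

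Two further steps you flag but do not supply are genuine ingredients. First, that $\Poly_{\leq i}$ of the inverse limit is the \emph{union} of the finite-stage polynomial groups is Proposition \ref{ppfacts}(vi), proved via the Gowers--Host--Kra seminorm rigidity theorem of Eisner--Tao together with the $L^2$-separation of polynomials; your proposed justification via ``profinite fibres'' does not even apply in the general (non totally disconnected) case, since root-adjoining extensions need not have profinite fibres (the paper's auxiliary systems certainly do not). Second, preservation of total disconnectedness: you propose to choose the enlarging fibres totally disconnected, but give no reason this is possible, and the paper instead recovers the totally disconnected case a posteriori, by passing to the maximal totally disconnected factor of the divisible extension and proving that the quotient of polynomial groups is torsion-free (Corollary \ref{cor-max-disconnected} and Proposition \ref{maxtdfactor}), a substantial argument in its own right. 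By contrast, your base case $k=1$ and the assertions in your second paragraph (the pulled-back type-$k$ cocycle gives an ergodic order-$k$ extension whose $(k-1)$-st Host--Kra factor is the new base) are correct and can be proved by a Mackey-type argument using Proposition \ref{type}(iii) and the maximality of $\ZZ^{k-1}(\XX)$, though you assert them rather than prove them.
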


\begin{theorem}[From $k$-divisibility to $\infty$-divisibility and Weyl]\label{infty-weyl}  Let $\Gamma$ be torsion-free, let $k \geq 1$, and let $\XX$ be an ergodic totally disconnected $k$-divisible $\Gamma$-system of order $k$.  Then $\XX$ is in fact $\infty$-divisible and Weyl of order $k$.
\end{theorem}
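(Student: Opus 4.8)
\emph{Plan.} I would argue by induction on $k$, the case $k=0$ (where $\XX$ is trivial) being vacuous. For the inductive step put $\YY\coloneqq\ZZ^{k-1}(\XX)$, so that by \eqref{structuregroups} we have $\XX=\YY\rtimes_{\rho_{k-1}}U_k$ with $U_k$ totally disconnected and $\rho_{k-1}$ a cocycle of type $k$. First I would check that $\YY$ satisfies the hypotheses of the theorem with $k$ replaced by $k-1$: it is ergodic and totally disconnected (its structure groups are $U_1,\dots,U_{k-1}$), and, since a polynomial of degree $\le d$ is measurable with respect to $\ZZ^{d}(\XX)$, for $0\le d<k$ we have $\Poly_{\le d}(\YY)=\Poly_{\le d}(\XX)$, which is divisible by hypothesis; thus $\YY$ is $(k-1)$-divisible. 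By the inductive hypothesis, $\YY$ is $\infty$-divisible and Weyl of order $k-1$, hence Abramov of order $k-1$ by \cite[Theorem 3.8]{btz}.

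The heart of the matter is to show that $\XX$ is Abramov of order $k$; granting this, $\XX$ is Weyl of order $k$ by Theorem \ref{abramov-weyl}, whose remaining hypotheses ($k$-divisibility of $\XX$, and $\ZZ^{k-1}(\XX)=\YY$ being Weyl of order $k-1$) are now available. Since the $\sigma$-algebra of $\YY$ is generated by $\Poly_{\le k-1}(\YY)\subseteq\Poly_{\le k}(\XX)$, it suffices to show that each vertical character $(y,u)\mapsto\langle u,\chi\rangle$, $\chi\in\hat U_k$, is measurable with respect to the $\sigma$-algebra generated by $\Poly_{\le k}(\XX)$. Using the Fourier identification \eqref{fourier-equiv} together with the fact that $\hat U_k$ is a torsion group (as $U_k$ is compact and totally disconnected), I would reduce to characters $\chi$ of prime-power order $p^{r}$, for which $\chi\circ\rho_{k-1}$ is a cocycle on $\YY$ of type $\le k$ with values in $\tfrac{1}{p^{r}}\Z/\Z$. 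The key step is then a cocycle-straightening statement: over a base $\YY$ that is Abramov of order $k-1$ and $(k-1)$-divisible, such a cocycle is cohomologous to a polynomial of degree $\le k-1$, i.e.\ $\chi\circ\rho_{k-1,\gamma}=Q^{\chi}_{\gamma}+\partial_{\gamma}F^{\chi}$ for all $\gamma\in\Gamma$, some $Q^{\chi}\in\Poly_{\le k-1}(\YY)^{\Gamma}$ and some $F^{\chi}\in\M(\YY,\T)$. Granting this, the function $(y,u)\mapsto\langle u,\chi\rangle-F^{\chi}(y)$ has $\gamma$-derivative equal to $Q^{\chi}_{\gamma}$, a polynomial of degree $\le k-1$ on $\YY$; hence all its $(k{+}1)$-fold derivatives vanish and it lies in $\Poly_{\le k}(\XX)$. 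As $F^{\chi}$ is $\YY$-measurable and $\YY$ is Abramov of order $k-1$, this exhibits $\langle u,\chi\rangle$ inside the $\sigma$-algebra generated by $\Poly_{\le k}(\XX)$, so $\XX$ is Abramov of order $k$.

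It remains to upgrade $k$-divisibility to $\infty$-divisibility. Now that $\XX$ is Weyl of order $k$, for each $1\le i\le k$ we have $\ZZ^{i}(\XX)=\ZZ^{i-1}(\XX)\rtimes_{\rho_{i-1}}U_i$ with $\rho_{i-1}\in\Poly_{\le i-1}(\ZZ^{i-1}(\XX),U_i)^{\Gamma}$. A short computation with the vertical $\hat U_i$-action (using that, on the order-$i$ system $\ZZ^{i}(\XX)$, the $\gamma$-derivative of a degree-$\le i$ polynomial is $\ZZ^{i-1}(\XX)$-measurable, forcing the polynomial to have a single vertical frequency) gives $\Poly_{\le i}(\ZZ^{i}(\XX),\T)\cong\Poly_{\le i}(\ZZ^{i-1}(\XX),\T)\oplus\hat U_i$; since $\Poly_{\le i}(\ZZ^{i}(\XX))=\Poly_{\le i}(\XX)$ is divisible for $i\le k$, each $\hat U_i$ is divisible, i.e.\ each structure group $U_i$ is torsion-free. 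I would then show, by induction on $d\ge 0$, that $\Poly_{\le d}(\XX,\T)$ is divisible: starting from $\Poly_{\le 0}(\XX,\T)=\T$, the successive quotients $\Poly_{\le d}(\XX,\T)/\Poly_{<d}(\XX,\T)$ of a Weyl system are built, via iterated differentiation, from continuous symmetric multilinear data on $U_1,\dots,U_k$, which is divisible because these groups are torsion-free; hence divisibility propagates up the filtration. This yields $\infty$-divisibility and completes the induction.

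\emph{Main obstacle.} The hard part is the cocycle-straightening invoked in the second paragraph: showing that over a sufficiently divisible Abramov base the only obstruction to a finite-type cocycle being cohomologous to a polynomial is a divisibility obstruction, which the hypotheses remove. This is the totally disconnected analogue of the arguments of \cite[\S 8]{btz} (where, for $\Gamma=\F_p^{\omega}$, the corresponding conclusion holds when $p$ is large relative to $k$, the role of ``$p$ large'' being played here by the divisibility of the base); both total disconnectedness --- used to reduce to prime-power-torsion coefficients, and hence to essentially finitary cohomology computations --- and $k$-divisibility --- used to solve those cohomology equations --- are essential, and in their absence there are genuine non-Weyl order-$2$ systems: connected ones such as the Heisenberg nilsystem (where total disconnectedness fails), and $\F_2^{\omega}$-systems of order $2$ (where $2$-divisibility fails).
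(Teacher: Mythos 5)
Your overall skeleton is the same as the paper's: induct on $k$, note that $\ZZ^{k-1}(\XX)$ inherits the hypotheses and is therefore (by induction) $\infty$-divisible and Weyl of order $k-1$, straighten the top cocycle $\rho_{k-1}$ into a polynomial of degree $k-1$, and then upgrade $k$-divisibility to $\infty$-divisibility (your detour through ``Abramov of order $k$'' plus Theorem \ref{abramov-weyl} is a legitimate repackaging of the paper's Corollary \ref{cor-weyl}, and your last paragraph is a compressed sketch of Proposition \ref{k-infty}, which in full requires Lemma \ref{U-divis}, the divisibility of $\SML_\ell(U,\Poly_{\leq m-k\ell}(\ZZ^{k-1}(\XX)))$ from Proposition \ref{symdiv}, the integration Lemma \ref{integ}, and a downward induction on the vertical degree). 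The genuine gap is the step you yourself flag as the ``main obstacle'': the assertion that, over the totally disconnected, divisible, Weyl base, every type-$k$ cocycle $\chi\circ\rho_{k-1}$ is cohomologous to a polynomial of degree $\leq k-1$. This is precisely Proposition \ref{trivial-quasi}, which is the bulk of Section \ref{divis-weyl-sec} and is where essentially all of the new content of Theorem \ref{infty-weyl} lives; appealing to it as ``the totally disconnected analogue of \cite[\S 8]{btz}'' does not work, because the BTZ argument relies on \cite[Lemma D.2]{btz} (open subgroups of elementary abelian $p$-groups split off), and exactly this fails for general totally disconnected groups --- as the paper stresses, this is the main new difficulty.

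Concretely, what your sketch omits is the splitting of short exact sequences of the form $0 \to \Poly_{\leq k-j}(\XX) \to H \to U \to 0$, where $U$ is a structure group of the base (not the coefficient group), encoding the higher-order Conze--Lesigne equation \eqref{hcl}. Your remark that prime-power-torsion coefficients reduce matters to ``essentially finitary cohomology computations'' is misleading: the coefficient group here is $\Poly_{\leq k-j}(\XX)$ (a countable union of circles), and the group one must split over is, after using divisibility of $\hat U$, a product of $p$-adic groups $\Z_p$ --- neither finite nor torsion. The paper's resolution has three ingredients you would need to reproduce: (a) a Baire-category/Pettis-lemma argument producing a continuous splitting over some open subgroup $V'$ of $U$; (b) the structure theorem for compact totally disconnected groups with divisible dual ($U \cong U_1 \times \Z_{p_1} \times \dots \times \Z_{p_m}$ with $U_1 \leq V'$), which converts the partial splitting into a splitting over a direct factor; and (c) a monothetic-generator argument, using $k$-divisibility of $\Poly_{\leq k}(\XX)$ to correct the lift of $1 \in \Z_p$ so that the splitting over $p^n\Z_p$ extends to all of $\Z_p$. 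In addition, the straightening statement must be proved for quasi-cocycles on systems of every order $j$ (not just $j=k-1$), with a separate high-order case $j>k$ handled via the ``good tuple'' descent of Proposition \ref{descent} and \cite[Proposition 8.11]{btz}, because the induction on $j$ differentiates the cocycle and drops its type, pushing one outside the regime your sketch considers. Without these arguments your proposal reduces the theorem to an unproved statement that is at least as hard as the theorem itself.
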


Theorem \ref{divis} strengthens a previous result \cite[Theorems 3.16, 3.17]{shalom2} of the second author (which only established $1$-divisibility rather than $k$-divisibility, and did not attempt to preserve the property of being totally disconnected), and is proven by a similar method: we do so in Section \ref{divis-ext-sec}. We note that the hypothesis that $\Gamma$ is torsion-free is necessary. For example, no non-trivial $\mathbb{F}_p^\omega$-system is $1$-divisible, because it is impossible to divide a polynomial of degree $1$ by $p$ without increasing its degree.

Theorem \ref{infty-weyl} is one of the main new results of this paper, and is proven in Section \ref{divis-weyl-sec} by adapting several arguments from \cite{btz}.  However, extra effort is needed due to the fact that the structure groups are now merely totally disconnected instead of being elementary $p$-groups. The main new difficulty in this step arises from the fact that open subgroups of an arbitrary totally disconnected group do not necessarily split; more precisely, \cite[Lemma D.2]{btz} fails in this setting.

Theorems \ref{bounded-tor}, \ref{divis}, \ref{infty-weyl} are depicted in the upper half of Figure \ref{fig:main}.  By combining these theorems together, we will be able to easily obtain the following corollary:

\begin{corollary}[Weyl extensions]\label{Weyl extensions} Let $k \geq 1$.
\begin{itemize}
    \item[(i)]  If $\Gamma$ is torsion-free, then every totally disconnected ergodic $\Gamma$-system of order $k$ has an extension which is a totally disconnected $\infty$-divisible ergodic Weyl $\Gamma$-system of order $k$.
    \item[(ii)]  If $\Gamma$ is $m$-torsion for some $m \geq 1$, then every ergodic $\Gamma$-system of order $k$ has a generalized extension (as defined in Section \ref{system-sec}) to a totally disconnected $\infty$-divisible ergodic Weyl $\Z^\omega$-system of order $k$.
\end{itemize}
\end{corollary}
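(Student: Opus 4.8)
The plan is simply to concatenate Theorems \ref{bounded-tor}, \ref{divis}, and \ref{infty-weyl}: the genuinely new content has already been packaged into those statements, so what remains is bookkeeping.

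For part (i), I would start with a totally disconnected ergodic $\Gamma$-system $\XX$ of order $k$, with $\Gamma$ torsion-free. First apply Theorem \ref{divis}, invoking its final clause, to pass to an extension $\YY$ of $\XX$ that is still ergodic of order $k$ and is simultaneously $k$-divisible and totally disconnected. Then feed $\YY$ into Theorem \ref{infty-weyl} --- whose hypotheses ($\Gamma$ torsion-free, $k \geq 1$, and $\YY$ ergodic, totally disconnected, $k$-divisible, of order $k$) are now all met --- to conclude that $\YY$ is in fact $\infty$-divisible and Weyl of order $k$. Since $\YY$ extends $\XX$, this $\YY$ is the required extension.

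For part (ii), let $\Gamma$ be $m$-torsion and $\XX$ an ergodic $\Gamma$-system of order $k$. First, Theorem \ref{bounded-tor} tells us the structure groups $U_1,\dots,U_k$ are $m$-torsion, so $\XX$ is already totally disconnected. Next I would lift the acting group to $\Z^\omega$: since $\Gamma$ is countable there is a surjective homomorphism $\phi\colon\Z^\omega\to\Gamma$, and I would let $\XX'$ denote $\XX$ equipped with the $\Z^\omega$-action $T'^{\gamma}\coloneqq T^{\phi(\gamma)}$, which is a generalized extension of $\XX$ in the sense of Section \ref{system-sec}. Because $\phi$ is onto, the invariant factor, the Host--Kra cube measures $\mu^{[n]}$, the factors $\ZZ^n$, and hence the structure groups of $\XX'$ agree with those of $\XX$; in particular $\XX'$ is an ergodic, totally disconnected $\Z^\omega$-system of order $k$. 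Finally I would apply part (i) with $\Z^\omega$ (which is torsion-free) in place of $\Gamma$, obtaining an extension $\YY$ of $\XX'$ that is a totally disconnected $\infty$-divisible ergodic Weyl $\Z^\omega$-system of order $k$; since $\XX'$ is itself a generalized extension of $\XX$, so is $\YY$.

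The only step that is not completely formal is the assertion that the Host--Kra--Ziegler tower --- the factors $\ZZ^n$ and the structure groups $U_n$ --- is unchanged when the action is pulled back along a surjective homomorphism $\phi\colon\Z^\omega\to\Gamma$. I expect this to be a short verification from the definitions in Appendix \ref{host-kra-theory}: the cube measures are assembled from ergodic averages over F{\o}lner sequences, the image of a F{\o}lner sequence for $\Z^\omega$ under $\phi$ is a F{\o}lner sequence (with multiplicities) for $\Gamma$, so the averages --- and therefore the Gowers--Host--Kra seminorms, the factors $\ZZ^n$, and the relative structure of $\ZZ^n$ over $\ZZ^{n-1}$ that determines $U_n$ --- all coincide. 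Everything else is a direct concatenation of the three cited theorems, which is why the corollary can be obtained easily.
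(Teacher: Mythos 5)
Your proposal is correct and follows essentially the same route as the paper: part (i) is Theorem \ref{divis} (with its totally-disconnected clause) followed by Theorem \ref{infty-weyl}, and part (ii) is Theorem \ref{bounded-tor} plus lifting the action along a surjection $\Z^\omega \to \Gamma$ and invoking part (i), with the paper likewise treating the invariance of the Host--Kra--Ziegler factors under this lift as a routine check from the definitions.
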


We will establish this corollary in Section \ref{weyl-sec}.  Using this corollary, we will be able to obtain some further structure theorems:

\begin{theorem}[Structure theorems]\label{structure-thms}  Let $k \geq 1$.
\begin{itemize}
    \item[(i)]  Every Weyl system of order $k$ is a translational system $G/\Lambda$ with $G$ a filtered\footnote{See Definition \ref{def-filtration} for the definition of a filtered group.} nilpotent Polish group of degree $k$. 
    \item[(ii)]  If $\Gamma$ is torsion-free, then every totally disconnected ergodic $\Gamma$-system of order $k$ is isomorphic to a double coset system $K \backslash G/\Lambda$ with $G$ a filtered nilpotent Polish group of degree $k$.
\end{itemize}
\end{theorem}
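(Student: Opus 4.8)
The plan is to prove (i) by induction on $k$, realizing the group $G$ as an iterated tower of central extensions mirroring the skew-product presentation \eqref{structuregroups}, and then to deduce (ii) from (i) together with Corollary~\ref{Weyl extensions}(i).

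\emph{Part (i).} For $k=1$ a Weyl system of order $1$ is, by definition, an ergodic rotational $\Gamma$-system on a compact metrizable abelian group $U_1$ with $\Gamma$ acting through a homomorphism $\Gamma\to U_1$; this is already translational, with $G=U_1$ filtered by $G_1=U_1\supseteq G_2=\{0\}$ and $\Lambda=\{0\}$. For the inductive step, write $\XX=\ZZ^{k-1}(\XX)\rtimes_{\rho_{k-1}}U_k$ with $\rho_{k-1}\in\Poly_{\leq k-1}(\ZZ^{k-1}(\XX),U_k)^\Gamma$. The factor $\ZZ^{k-1}(\XX)$ is again a Weyl system of order $k-1$, so by induction $\ZZ^{k-1}(\XX)\cong G'/\Lambda'$ with $G'$ a filtered nilpotent Polish group of degree $k-1$ (Definition~\ref{def-filtration}) and $\Gamma$ acting by left translation via $\phi'\colon\Gamma\to G'$. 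I would then build $G$ as a central extension $1\to U_k\to G\to G'\to1$ carrying the filtration obtained by setting $G_i$ to be the preimage of $G'_i$ for $1\leq i\leq k-1$, $G_k=U_k$, and $G_{k+1}=\{0\}$; concretely, $G$ should act on $X=X_{\ZZ^{k-1}(\XX)}\times U_k$ by transformations $(y,t)\mapsto(g'\cdot y,\,t+c_{g'}(y))$, where $g'\in G'$ and $c_{g'}\in\Poly_{\leq k-1}(\ZZ^{k-1}(\XX),U_k)$ is a Borel system of cochains chosen so that $c_{\phi'(\gamma)}=\rho_{k-1,\gamma}$, so that $T^\gamma$ is left translation by the resulting lift $\phi(\gamma)\in G$ of $\phi'(\gamma)$. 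The Weyl hypothesis — that $\rho_{k-1}$ is polynomial of degree exactly $k-1$ — is what allows one to choose the cochains so that the filtration identity $[G_i,G_j]\subseteq G_{i+j}$ holds (in particular $G$ is nilpotent of degree $k$), the cochains attached to deeper filtration steps of $G'$ being correspondingly lower-degree polynomials. It then remains to check the routine points: $G$ is a Polish group (an extension of a Polish group by a compact metrizable group), $G$ acts transitively on $X$, and the stabilizer $\Lambda$ of a base point is a closed cocompact subgroup, so that $\XX\cong G/\Lambda$ as $\Gamma$-systems.

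The main obstacle in (i) is the passage from the cocycle $\rho_{k-1}$, defined only on $\Gamma$ (which need not be large inside $G'$), to the full Borel system of cochains $(c_{g'})_{g'\in G'}$ together with the associated group-theoretic $2$-cocycle $G'\times G'\to U_k$, carried out so as to respect the degree bookkeeping that produces the filtration. This is precisely the analogue in our setting of the central construction of \cite[Chapter 12]{hk-book} (see also \cite{gmv}); what makes it tractable is that on a translational system a cocycle of type $k$ that is polynomial of degree $k-1$ can be identified with a polynomial map between the ambient groups, and this identification both supplies the cochains and verifies their compatibility. Some extra care relative to \cite{btz} is needed because the structure groups $U_i$ are merely compact metrizable rather than elementary $p$-groups.

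\emph{Part (ii).} Suppose $\Gamma$ is torsion-free and $\XX$ is a totally disconnected ergodic $\Gamma$-system of order $k$. By Corollary~\ref{Weyl extensions}(i), $\XX$ has an extension $\widetilde\XX$ which is a totally disconnected $\infty$-divisible ergodic Weyl $\Gamma$-system of order $k$, and by part (i) we may write $\widetilde\XX\cong G/\Lambda$ with $G$ a filtered nilpotent Polish group of degree $k$ and $\Gamma$ acting by left translation via $\phi\colon\Gamma\to G$. It remains to show that the intermediate $\Gamma$-factor $\pi\colon G/\Lambda\to\XX$ is of double-coset type. For this I would take $K$ to be the stabilizer in $G$, under the left-translation action on $L^2(G/\Lambda)$, of the subspace $L^2(\XX)\subseteq L^2(G/\Lambda)$ (equivalently, of the $\Gamma$-invariant sub-$\sigma$-algebra that $\XX$ determines inside $\widetilde\XX$); this $K$ is automatically a closed subgroup of $G$, and it is normalized by $\phi(\Gamma)$ because $L^2(\XX)$ is $\Gamma$-invariant. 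One then argues that $K$ is compact and that $L^2(G/\Lambda)^K=L^2(\XX)$, whence $\XX\cong K\backslash G/\Lambda$. The compactness of $K$ and this last equality are the substantive points, and here I expect the second main obstacle: one must rule out the double-coset hull of $\XX$ inside $\widetilde\XX$ being strictly larger than $\XX$. This should be handled by a Mackey-type descent argument exploiting the ergodicity of $\XX$, the homogeneity of $\widetilde\XX$, and the fact that $\XX$ is itself of order $k$ — in the spirit of the order-$2$ analysis of \cite[Theorem 1.21]{shalom2} — possibly after first replacing $(G,\Lambda)$ by a more economical presentation of $\widetilde\XX$.
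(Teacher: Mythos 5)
Both halves of your proposal run into genuine gaps, and in each case the paper's actual argument is organized around avoiding exactly the step you leave open. In part (i), your route builds $G$ as a central extension $1\to U_k\to G\to G'\to 1$, which forces you to extend the cocycle $\rho_{k-1}$ — defined only on $\phi'(\Gamma)$, which need not be dense or large in $G'$ — to a Borel system of cochains $(c_{g'})_{g'\in G'}$ compatible with a group-theoretic $2$-cocycle and with the degree bookkeeping needed for $[G_i,G_j]\subseteq G_{i+j}$. You flag this as the main obstacle but only assert that a polynomial type-$k$ cocycle ``can be identified with a polynomial map between the ambient groups''; no mechanism is given for producing the cochains or verifying the filtration identities, and this is precisely the hard point. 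The paper never solves this extension problem: it takes the much larger group $\mG\coloneqq \mG'\ltimes \Poly_{\leq k-1}(\ZZ^{k-1}(\XX),U_k)$ acting by $(s,q)(y,u)=(sy,u+q(y))$. The Weyl hypothesis says exactly that $\rho_\gamma$ is itself an element of the polynomial group, so $\gamma\mapsto(\phi'(\gamma),\rho_\gamma)$ is already the required homomorphism into $\mG$; the filtration is $\mG_i=\{(s,q):s\in\mG'_i,\ q\in\Poly_{\leq k-i}\}$ (using Lemma \ref{degrees}), transitivity comes from the constants $U_k\leq\Poly_{\leq k-1}$, and Effros' theorem gives the homeomorphism $\mG/\Lambda\cong\XX$. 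Your smaller-group construction might conceivably be completed, but as written the key step is missing.

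In part (ii) the gap is sharper: your plan hinges on showing that the stabilizer $K$ of $L^2(\XX)$ inside $G$ is \emph{compact}, and this is false in general. The group the paper constructs contains a copy of $\Poly_{\leq k-1}(\ZZ^{k-1}(\widetilde\XX),\ker\varphi)$ and, as the paper notes explicitly, is typically neither compact nor locally compact; this is why the proof has to manufacture a compactly supported ``poor man's Haar measure'' $\mu$ on $K$ (with $K$-invariant pushforwards $\mu_x$ on the compact orbits $Kx$) in order to prove that $K\backslash\mG/\Lambda$ is Polish and to identify $L^\infty(\XX)$ with the $K$-invariants via the averaging operator $\E_K$ and the Birkhoff--Alaoglu theorem. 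Moreover, the substantive point you defer to a ``Mackey-type descent'' — that the $K$-invariant functions are exactly $L^2(\XX)$, i.e.\ that $K$ is large enough — is where the $\infty$-divisibility of the Weyl extension actually enters: the paper proves by induction that $K$ surjects onto the corresponding group $K'$ for $\ZZ^{k-1}$, by showing $\partial_s F$ is a polynomial of degree $\leq k-1$ (a downward induction along the filtration) and then lifting through $\varphi\colon U\to W$ via Lemma \ref{polyrangelift}, whose proof requires $k$-divisibility. Your sketch never uses divisibility at all, which is a sign that the mechanism making the invariant-algebra identification work is absent; together with the false compactness claim, the proposed argument for (ii) does not go through as stated.
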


These results (which are also depicted in Figure \ref{fig:main}) can be compared with the celebrated result of Host and Kra \cite{host2005nonconventional} that ergodic $\Z$-systems of order $k$ are inverse limits of degree $k$ nilsystems.  We establish Theorem \ref{structure-thms} in Section \ref{sec-doublecoset}.  Part $(i)$ of the theorem is a straightforward algebraic construction; it is part $(ii)$ that requires Corollary \ref{Weyl extensions}.  Verifying that the construction actually makes $K \backslash G / \Lambda$ a Polish space and that the $\Gamma$-dynamics on this space is compatible with the quotient compact nilspace structure inherited from $G/\Lambda$ is surprisingly delicate, but can be accomplished with some effort.  Combining Theorem \ref{structure-thms}(i) with Theorem \ref{fp-omega}(iii), we conclude, in particular, that ergodic $\F_p^\omega$-systems are translational of degree $k$ (see Definition \ref{translational-def}) whenever $p\geq k+1$; this was previously observed in \cite[Theorem 2.3]{shalom1}.  

We can also obtain a further relation between Abramov and Weyl systems:

\begin{theorem}[From Abramov to Weyl assuming divisibility]\label{abramov-weyl}  Let $k \geq 1$, let $\Gamma$ be a countable abelian group, and let $\XX$ be an ergodic $k$-divisible Abramov $\Gamma$-system of order $k$.  If $\ZZ^{k-1}(\XX)$ is Weyl of order $k-1$, then $\XX$ is Weyl of order $k$.
\end{theorem}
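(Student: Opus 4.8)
The plan is to leverage the definition of a Weyl system, which requires the cocycles $\rho_i$ in the tower \eqref{structuregroups} to be polynomials of degree $i$. By hypothesis $\ZZ^{k-1}(\XX)$ is already Weyl of order $k-1$, so $\rho_1,\dots,\rho_{k-2}$ are polynomials of the correct degree; the only thing to verify is that the top cocycle $\rho_{k-1} \colon \Gamma \to \M(\ZZ^{k-1}(\XX), U_k)$ can be chosen (after a coboundary correction, which does not change the isomorphism class of the skew-product $\ZZ^{k-1}(\XX) \rtimes_{\rho_{k-1}} U_k = \XX$) to be a polynomial of degree $k-1$. Since $\rho_{k-1}$ is a cocycle of type $k$, we know $\partial_\gamma \rho_{k-1}$ is a $\Poly_{\leq k-1}(\ZZ^{k-1}(\XX),U_k)$-coboundary for each $\gamma$; the goal is to upgrade this to the statement that $\rho_{k-1}$ itself is cohomologous to a genuine degree-$(k-1)$ polynomial cocycle.

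First I would reduce to the case where $U_k$ is a torus $\T$ (or a circle) using the Fourier equivalence \eqref{fourier-equiv}, so that the Conze--Lesigne-type equations can be analyzed one character at a time. Next, the Abramov hypothesis on $\XX$ is used to ensure that $\XX$ — and hence all of its structure — is built out of polynomials: concretely, $L^2(\XX)$ is spanned by $e(P)$ with $P \in \Poly_{\leq k}(\XX)$, so any measurable function arising in the cohomological analysis (in particular the ``transfer functions'' witnessing that $\partial_\gamma \rho_{k-1}$ is a coboundary up to a lower-degree polynomial) can be taken to lie in a space of polynomials of controlled degree. The key step is then an inductive/cocycle-straightening argument: given that $\partial_\gamma \rho_{k-1} = \partial_\gamma' (\text{something}) + (\text{poly of degree} \leq k-1)$, one solves for a function $F \in \M(\ZZ^{k-1}(\XX),\T)$ with $\rho_{k-1} - \partial_\bullet F \in \Poly_{\leq k-1}$; here the $k$-divisibility of $\XX$ is what lets one actually produce $F$ (dividing polynomials by integers while keeping the degree bounded, exactly as in the arguments adapted from \cite{btz} and mirroring how Theorem \ref{infty-weyl} uses divisibility). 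The fact that $\ZZ^{k-1}(\XX)$ is Weyl is what makes the lower-order data polynomial rather than merely measurable, which is needed to run the solvability step without loss.

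The main obstacle I expect is the solvability of the relevant Conze--Lesigne / descent equation: one must show that a cocycle $\rho_{k-1}$ whose derivatives are ``polynomial coboundaries of degree $\leq k-1$'' is itself a polynomial of degree $\leq k-1$ modulo coboundaries, and this is precisely the kind of statement whose proof in \cite{btz} relies on the elementary $p$-group structure (via the splitting of open subgroups, i.e., \cite[Lemma D.2]{btz}) — as flagged in the discussion preceding Theorem \ref{infty-weyl}. The divisibility hypothesis is the replacement for that splitting: it guarantees that the cohomological obstruction, which a priori lives in a quotient by a non-divisible subgroup, actually vanishes because one can extract roots inside $\Poly_{\leq k-1}(\XX)$. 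So the heart of the argument is an abstract-nonsense computation in the Polish abelian groups $\Poly_{\leq d}(\XX)$ together with one application of divisibility to kill a torsion obstruction; once that is in place, reassembling $\XX = \ZZ^{k-1}(\XX) \rtimes_{\rho_{k-1}} U_k$ with the corrected $\rho_{k-1}$ exhibits $\XX$ as Weyl of order $k$.
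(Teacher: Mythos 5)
Your target is the right one (straighten the top cocycle $\rho_{k-1}$ to a polynomial of degree $k-1$ modulo a coboundary, then quote Weyl-ness of the base), and the per-character reduction via \eqref{fourier-equiv} is also in the spirit of the paper. But the heart of your argument is missing. You never say how the Abramov hypothesis actually produces the polynomial structure of $\rho_{k-1}$: you only assert that ``any measurable function arising in the cohomological analysis can be taken to lie in a space of polynomials of controlled degree'' and then invoke an unspecified ``inductive/cocycle-straightening'' step. That straightening step --- deducing that a cocycle whose derivatives are polynomial coboundaries of degree $\leq k-1$ is itself cohomologous to a polynomial of degree $\leq k-1$ --- is precisely the hard Conze--Lesigne-type descent of Proposition \ref{trivial-quasi}, whose proof in this paper requires total disconnectedness, $\infty$-divisibility and torsion-free $\Gamma$, none of which are hypotheses of Theorem \ref{abramov-weyl}; $k$-divisibility by itself does not supply it, so as written your central step has no proof. (Also, your opening claim that type $k$ gives ``$\partial_\gamma \rho_{k-1}$ is a $\Poly_{\leq k-1}$-coboundary for each $\gamma$'' is not what type $k$ means: type $k$ concerns $\Delta^{[k]}\rho$ being a coboundary on the cube system $\XX^{[k]}$.)

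The actual argument is much more direct and needs no descent equation. Writing $\XX = \ZZ^{k-1}(\XX)\rtimes_\rho U$, fix $\xi\in\hat U$. The Abramov hypothesis gives some $P\in\Poly_{\leq k}(\XX)$ correlating with the vertical character $(y,u)\mapsto e(\xi\cdot u)$. By Proposition \ref{ppfacts}(iii) and ergodicity, $\partial_u P$ is a constant $\xi_P(u)$ for each $u\in U$, so $e(P)$ is an eigenfunction of the vertical $U$-action with eigenvalue $e(\xi_P)$; unitarity of that action forces $\xi_P=\xi$, whence $P(y,u)=F_\xi(y)+\xi\cdot u$ for some $F_\xi\in\M(\ZZ^{k-1}(\XX),\T)$. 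Differentiating in $\gamma$ shows that $dF_\xi+\xi\cdot\rho$ is a polynomial of degree $\leq k-1$, i.e.\ each $\xi\cdot\rho$ is a $\T$-valued quasi-coboundary of degree $k-1$ --- with no divisibility used at this stage. Divisibility enters only afterwards, through the higher-order Moore--Schmidt theorem (Proposition \ref{typecircle:prop}), to upgrade the per-character statement to a $U$-valued quasi-coboundary; then Remark \ref{cohiso} and the Weyl hypothesis on $\ZZ^{k-1}(\XX)$ finish the proof. So the two places where you locate the work (Abramov making ``everything polynomial'', and divisibility solving a Conze--Lesigne equation) are not where either hypothesis is actually used, and the proposal as it stands has a genuine gap at its key step.
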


This result (also depicted in Figure \ref{fig:main}) rests on a higher-order (or, polynomial) Moore--Schmidt type theorem (classifying cocycles that are cohomologous to polynomials) previously obtained by the second author in \cite{shalom2} which requires that the underlying system satisfies a higher-order divisibility\footnote{The r\^ole of divisibility in the context of the classical Moore--Schmidt theorem was also observed by the first and the third author in \cite{jt19}. In this case, the required (linear) amount of divisibility is automatically given for arbitrary systems.}, and is proved in Section \ref{sec-abweyl}.

\subsection{Application: an inverse theorem for the Gowers norm in bounded torsion groups}

Recall the definition of the Gowers uniformity norms: 
\begin{definition}
Let $G=(G,+)$ be a finite abelian group, let $k\geq 1$, and let $f \colon G\rightarrow \mathbb{C}$ be a function. The \emph{$k$-th Gowers norm}\footnote{$U^k(G)$ is a seminorm when $k=1$ and a norm for all $k>2$, see e.g., \cite[\S 11.1]{tao-vu}.} of $f$ is defined by the formula
$$\|f\|_{U^k(G)} \coloneqq \left(\mathbb{E}_{x,h_1,\ldots,h_k\in G} \Delta_{h_1}\ldots\Delta_{h_k} f(x)\right)^{1/2^k}$$ where $\Delta_h f(x) \coloneqq f(x+h)\cdot\overline{f(x)}$, and $\E_{x\in A} f(x)\coloneqq \frac{1}{|A|}\sum_{x\in A}f(x)$ is the average. 
\end{definition}

An \emph{inverse theorem} characterizes when a (bounded) function $f$ has large Gowers norm.  In the case when $G$ is a vector space $\F_p^n$ over a finite field $\F_p$, we have a (qualitatively) satisfactory inverse theorem:
 
\begin{theorem}[Inverse theorem for $U^{k+1}(\F_p^n)$]\label{TZinverse}
Let $p$ be a prime number, let $k\geq 1$, and let $\delta>0$. Then there exists $\eps = \eps(\delta,k,p)>0$ such that for every finite-dimensional vector space $G=\F_p^n$ and any $1$-bounded\footnote{A function $f \colon G \to \C$ is \emph{$1$-bounded} if $|f(x)| \leq 1$ for all $x \in G$.} function $f \colon G \to \C$ with $\|f\|_{U^{k+1}(G)}>\delta$, there exists a polynomial $P\in\Poly_{\leq k}(G)$ (viewing $G$ as a translational $G$-system in the obvious fashion) such that $$\left|\E_{x\in G} f(x) e(-P(x))\right|>\varepsilon,$$
where $e(\theta) \coloneqq e^{2\pi i \theta}$.
\end{theorem}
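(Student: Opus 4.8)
The plan is to derive this from the Host--Kra--Ziegler structure theory for $\F_p^\omega$-systems by a correspondence principle. First I would argue by contradiction: if the conclusion fails there are $\delta>0$ and $1$-bounded $f_n\colon\F_p^{d_n}\to\C$ with $\|f_n\|_{U^{k+1}(\F_p^{d_n})}>\delta$ but $\sup_{P\in\Poly_{\leq k}(\F_p^{d_n})}\bigl|\E_x f_n(x)e(-P(x))\bigr|\to 0$. Forming the Loeb probability space on a nonprincipal ultraproduct $\prod_{n\to\mathcal U}\F_p^{d_n}$ with the shift action of $\F_p^\omega$ (the correspondence principle underlying \cite{taozieglerhigh}; one could also phrase this in the uncountable ergodic-theory language of \cite{jt-foundational}) produces an ergodic $\F_p^\omega$-system $\XX$ and a $1$-bounded $F\in L^\infty(\XX)$ with $\|F\|_{U^{k+1}(\XX)}\geq\delta$, where $\|\cdot\|_{U^{k+1}(\XX)}$ is the Host--Kra--Gowers seminorm (the ultralimit of the finitary norms), and such that $\int_X F\,\overline{e(P)}\,d\mu=0$ for \emph{every} $P\in\Poly_{\leq k}(\XX,\T)$ --- using the standard feature of this correspondence that degree-$\leq k$ polynomials on the ultraproduct are limits of degree-$\leq k$ polynomials on the $\F_p^{d_n}$. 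So it suffices to prove the ergodic counterpart: any $1$-bounded $F$ on an ergodic $\F_p^\omega$-system with $\|F\|_{U^{k+1}(\XX)}>0$ correlates with $e(P)$ for some $P\in\Poly_{\leq k}(\XX,\T)$.

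Next I would reduce to systems of order $k$: since the $U^{k+1}$ seminorm of $F$ sees only $\E(F\mid\ZZ^k(\XX))$, and polynomials of degree $\leq k$ on $\ZZ^k(\XX)$ lift to such on $\XX$, we may replace $\XX$ by $\ZZ^k(\XX)$ and $F$ by $\E(F\mid\ZZ^k(\XX))$; on a system of order $k$ the $U^{k+1}$ seminorm is a genuine norm, so this $F$ is nonzero. Now I would invoke the structure theory of $\F_p^\omega$-systems of order $k$: by Theorem \ref{bounded-tor} (with $m=p$) all structure groups $U_1,\dots,U_k$ are $p$-torsion, so $\XX$ is totally disconnected, and by Theorem \ref{fp-omega}(ii) $\XX$ is Abramov of order $C$ for some $C=C(p,k)$, with $C=k$ whenever $p\leq k+1$. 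When $\XX$ is Abramov of order $k$, the phases $\{e(P):P\in\Poly_{\leq k}(\XX,\T)\}$ are dense in $L^2(\XX)$, so the nonzero $F$ has nonzero inner product with one of them, and we are done; this already settles all primes $p\leq k+1$.

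The remaining work --- and the main obstacle --- is to force the degree down to $k$ when $p\geq k+2$, where Theorem \ref{fp-omega}(ii) only gives Abramov of order $C(p,k)>k$ and, as recalled in Section \ref{counterexamples-sec}, an ergodic $\F_p^\omega$-system of order $k$ genuinely need not be Abramov of order $k$; thus no purely abstract argument suffices and one must use extra structure. The route I would pursue is the degree-reduction of \cite{btz}: analyse the type-$(i+1)$ cocycle attached to $U_{i+1}$ in \eqref{structuregroups}, using the characteristic-$p$ polynomial identities that are precisely responsible for $C(p,k)$ exceeding $k$, to show --- after passing, if necessary, to the totally disconnected Weyl extension of Corollary \ref{Weyl extensions}(i), which is harmless because the systems produced by the correspondence descend from translational systems on the $\F_p^{d_n}$ --- that the correlation obtained above can always be realised against a polynomial of degree $\leq k$. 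Alternatively, one can run the standard induction on $k$ (cf. \cite{taozieglerhigh}): for $\gtrsim\delta^{2^{k+1}}$ of the $h$ the derivative $(F\circ T^h)\overline F$ has $U^k(\XX)$-norm $\gtrsim\delta$, hence by the $U^k$ inverse theorem correlates with some $e(Q_h)$ with $\deg Q_h\leq k-1$, and a cocycle/symmetry argument patches the $Q_h$ into a single $P$ of degree $\leq k$ correlating with $F$. The bookkeeping in this patching, together with the characteristic-$p$ subtleties, is where essentially all the difficulty lies; since the correspondence is a soft compactness argument, the resulting $\eps=\eps(\delta,k,p)$ is ineffective, which is all the statement asks for.
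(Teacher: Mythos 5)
Two structural points before the mathematics. First, the paper does not prove Theorem \ref{TZinverse} at all: it is quoted, with the high characteristic case $p>k$ attributed to \cite{taozieglerhigh} (a correspondence principle plus the ergodic structure theorem of \cite{btz} --- essentially the route you sketch) and the low characteristic case $p\leq k$ to \cite{taoziegler}, which proceeds by a completely different finitary argument about rank and equidistribution of polynomials. Second, you have the two regimes inverted. The inline inequalities in Theorem \ref{fp-omega}(ii)--(iii) are typos (compare Figure \ref{fig:main}, the Questions in Section \ref{counterexamples-sec}, and the sentence in the introduction identifying ``high characteristic'' with $p>k$): the Abramov/Weyl structure theorems hold when $k$ is small relative to $p$ (roughly $k\leq p+1$), so your correspondence argument settles $p>k$, not $p\leq k+1$, and the genuinely hard case is small $p$ (e.g.\ $p=2$, $k=5$), not $p\geq k+2$.

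In that hard low-characteristic regime your proposal has a real gap, not just missing bookkeeping. Your route (a) --- show that on an ergodic $\F_p^\omega$-system of order $k$ every nonzero function correlates with some $e(P)$, $P\in\Poly_{\leq k}$ --- would imply (by taking a function orthogonal to the closed span of such phases) that every such system is Abramov of order $k$, which is exactly what \cite{jstcounterexample} disproves for $p=2$, $k=5$; Proposition \ref{second-counter} shows the Weyl upgrade fails even earlier. Passing to the Weyl extension of Corollary \ref{Weyl extensions} does not rescue this: polynomials on the extension live over $\Z^\omega$ and only descend to the original finite groups after the torsion and degree bookkeeping of Section \ref{sec-gowers} (Proposition \ref{more-inductive}(iii), Lemma \ref{nilspacedegree}), which is precisely why the paper's Theorem \ref{gowers-main} only produces a correlating polynomial of degree $C(k,m)>k$; whether one can recover degree exactly $k$ along these lines is stated as an open question. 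Your route (b) is the standard induction-on-$k$ skeleton, but the ``patching/symmetry'' step is the entire content of the inverse theorem, and in low characteristic it is obstructed by the inability to divide by $k!$ --- which is exactly why \cite{taoziegler} abandoned the ergodic route there. (A smaller but genuine issue: in the correspondence step, orthogonality of the limit function to all measurable $P\in\Poly_{\leq k}(\XX)$ requires knowing that such $P$ are approximable by, or correlate with, internal degree-$\leq k$ polynomials on the $\F_p^{d_n}$; this is nontrivial and is handled in \cite{taozieglerhigh} only with the help of the high-characteristic structure theory.) So the proposal reproduces the known proof for $p>k$ but does not contain a proof, or a viable strategy, for $p\leq k$.
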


The high characteristic case of this theorem, that is when $p>k$, was established by the third author and Ziegler in \cite{taozieglerhigh} using a correspondence principle and a structure theorem from ergodic theory for $\F_p^\omega$-actions which they established jointly with Bergelson in \cite{btz}. The proof of the low characteristic case $p\leq k$ was established in \cite{tz-lowchar} by studying various notions of rank and equidistribution of polynomials. See also  \cite{szegedy-inv2}, \cite{milicevic}, \cite{milicevic-2}, \cite{milicevic-u56}, \cite{BSST}, \cite{CGSS} for other proofs of and related results to Theorem  \ref{TZinverse}, including several proofs that give quantitative control on the quantity $\eps$. 

When $\F_p^n$ is replaced by a more general finite abelian group, the above statement fails. For example, when $G = \Z/N\Z$ is a cyclic group (thinking of $N$ as being arbitrarily large), it was shown that a larger class of bracket polynomials or nilsequences form a higher-order Fourier basis; see the inverse theorem obtained by Green, Ziegler, and the third author in \cite{gtz-4} and \cite{gtz} (and \cite{green-tao-inverseu3} for the $k=2$ case).  Alternate proofs, generalizations, strengthenings, and other variants of this result have subsequently been established by several authors \cite{manners-periodic}, \cite{szegedy-inv2}, \cite{szegedy-inv}, \cite{manners-inv}, \cite{candela-szegedy-inverse}, \cite{jt21-1}.

However, by combining Corollary \ref{Weyl extensions} and Theorem \ref{structure-thms} with the correspondence principle from \cite{jt21-1} (see also \cite{towsner}), as well as some results on nilspace fibrations  (see Proposition \ref{fibration}),  we will be able to extend Theorem \ref{TZinverse} to the more general class of $m$-torsion groups, at the cost of worsening the degree:

\begin{theorem}\label{gowers-main}
Let $k,m\geq 1$ and let $\delta>0$. Then there exists $\varepsilon = \varepsilon(\delta,k,m)$ and a constant $C=C(k,m)$ such that for every finite abelian $m$-torsion group $G$ and any $1$-bounded function $f \colon G\rightarrow\mathbb{C}$ with $\|f\|_{U^{k+1}(G)}>\delta$, there exists a polynomial $P\in\Poly_{\leq C}(G)$ such that $$\left|\mathbb{E}_{x\in G} f(x) e(-P(x))\right|>\varepsilon.$$ 
\end{theorem}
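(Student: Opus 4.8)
The plan is to run a Furstenberg-type correspondence principle to transfer the problem into ergodic theory, apply the structure theorems of this paper, and then transfer back. Suppose for contradiction that Theorem~\ref{gowers-main} fails: then there is a fixed $\delta>0$, a sequence of finite $m$-torsion abelian groups $G_n$, and $1$-bounded functions $f_n\colon G_n\to\C$ with $\|f_n\|_{U^{k+1}(G_n)}>\delta$, yet $|\E_{x\in G_n}f_n(x)e(-P(x))|\le 1/n$ for every $P\in\Poly_{\le C}(G_n)$, where $C=C(k,m)$ is a constant to be chosen at the end. The first step is to invoke the correspondence principle of \cite{jt21-1} (with $\Gamma$ a countable $m$-torsion group, e.g.\ a suitable direct sum of the cyclic groups appearing as summands of the $G_n$, or simply $\Gamma = (\Z/m\Z)^\omega$ after embedding): this produces an ergodic $\Gamma$-system $\XX = (X,\X,\mu,T)$ and a bounded measurable function $F\in L^\infty(\XX)$ whose Gowers--Host--Kra seminorm $\|F\|_{U^{k+1}(\XX)}$ is bounded below in terms of $\delta$, and such that correlation of $F$ against any structured object on $\XX$ reflects, in the limit, correlation of $f_n$ against the corresponding object on $G_n$. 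By the standard reduction, $\|F\|_{U^{k+1}(\XX)}>0$ implies that the projection of $F$ to the order-$k$ factor $\ZZ^k(\XX)$ is nonzero; replacing $\XX$ by $\ZZ^k(\XX)$ we may assume $\XX$ is an ergodic $\Gamma$-system of order $k$.

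The second step is structural. By Theorem~\ref{bounded-tor}, since $\Gamma$ is $m$-torsion, $\XX$ is totally disconnected (all structure groups $U_1,\dots,U_k$ are $m$-torsion). By Corollary~\ref{Weyl extensions}(ii), $\XX$ admits a \emph{generalized} extension to a totally disconnected $\infty$-divisible ergodic Weyl $\Z^\omega$-system $\tilde\XX$ of order $k$; and by Theorem~\ref{structure-thms}(i) applied to the Weyl system $\tilde\XX$, we have $\tilde\XX \cong G/\Lambda$ for a filtered nilpotent Polish group $G$ of degree $k$. Pulling $F$ back along the (generalized) factor map, we obtain a bounded function on the translational system $G/\Lambda$ with nonzero $U^{k+1}$ seminorm, hence correlating with some nonzero "nilcharacter"-type object; concretely, one wants to show that a bounded function on a degree-$k$ filtered nilsystem with positive $U^{k+1}$ seminorm correlates with $e(\phi)$ for some polynomial phase function $\phi$ of degree at most $C(k,m)$ on $G/\Lambda$. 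This is where the remark about nilspace fibrations (Proposition~\ref{fibration}) enters: one decomposes $G/\Lambda$ as an inverse limit / fibration of finite-complexity pieces, and on each piece extracts a genuine polynomial of controlled degree. The Weyl property is what keeps the degree bounded: the defining cocycles $\rho_i$ are themselves polynomials of degree $i$, so iterated skew-products of them give polynomials of degree $O_k(1)$, and total disconnectedness over an $m$-torsion base lets one control the torsion of the relevant character groups purely in terms of $m$.

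The third step is to run the correspondence backwards. The polynomial phase on $G/\Lambda$, after pulling back to $\XX$ and then transferring through the correspondence principle, yields for infinitely many $n$ a polynomial $P_n\in\Poly_{\le C}(G_n)$ (of the same bounded degree $C$, since the correspondence principle of \cite{jt21-1} is degree-preserving on polynomial phases) with $|\E_{x\in G_n}f_n(x)e(-P_n(x))|$ bounded below by some $\varepsilon(\delta,k,m)>0$, contradicting our assumption once $n>1/\varepsilon$. Choosing $C=C(k,m)$ to be whatever bounded degree came out of the structural analysis completes the proof.

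I expect the main obstacle to be the second step, specifically the passage from "bounded function with positive $U^{k+1}$ seminorm on a degree-$k$ filtered nilsystem $G/\Lambda$ with $G$ merely a totally disconnected Polish (not Lie) group" to an honest \emph{finite-degree polynomial phase on the finite group} $G_n$. Two subtleties must be handled: (a) $G/\Lambda$ is an infinite-complexity object (an inverse limit), so one needs a compactness/approximation argument — reducing to a finite-complexity quotient on which the correlating phase is a genuine polynomial of controlled degree — and this is exactly the role of the nilspace fibration results cited as Proposition~\ref{fibration}; and (b) one must ensure the degree does not blow up when transferring the polynomial from the ergodic model back to $G_n$: the correspondence principle gives polynomials of the same degree, but one must check that "polynomial on the ergodic Weyl system" really does translate to "polynomial of that degree on $G_n$", which uses that Weyl systems are Abramov (of the same order, by \cite[Theorem 3.8]{btz}), so that the relevant functions are genuinely spanned by polynomial phases of bounded degree. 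Getting the bookkeeping of degrees tight — so that the final $C$ depends only on $k$ and $m$ and not on $G_n$ — is the crux.
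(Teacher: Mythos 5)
Your high-level strategy (correspondence principle, then structure theory, then transfer back) matches the paper's, but the two steps you yourself flag as ``the crux'' are exactly where the proposal has a genuine gap, and the fixes you sketch do not work. You apply the structure theory to the \emph{Weyl extension} $\tilde\XX\cong G/\Lambda$ and propose to pull $F$ back there and correlate it with a polynomial phase (via Abramov-ness of Weyl systems). But the extension exists only on the ergodic side: the finite groups $G_n$ (or their ultraproduct) are tied by the correspondence principle to $\XX$, not to $\tilde\XX$, and a polynomial phase on $\tilde\XX$ has no reason to descend to $\XX$ --- its conditional expectation onto $\XX$ is merely some bounded function --- let alone to transfer to $G_n$. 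What is actually needed, and what the paper proves, is structure on $\XX$ itself: after the Sylow decomposition (Theorem \ref{pSylow}), each factor $\XX_p$ is realized as a \emph{double coset} system $K_p\backslash\mG_p/\Lambda_p$ (Proposition \ref{inductive}(ii), i.e.\ the content of Theorem \ref{structure-thms}(ii) obtained through the generalized Weyl $\Z^\omega$-extension), equipped with a compatible compact nilspace structure (Proposition \ref{more-inductive}(ii)). That is the object to which the finite groups can actually be mapped; your proposal never uses the double coset theorem, which is the whole point of it.

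Second, the transfer back is not ``degree-preserving'': the correspondence principle of \cite{jt21-1} contains no statement that polynomial phases transfer with the same degree, and indeed the final degree $C(k,m)$ is much larger than $k$. The actual mechanism is: approximate $\XX_p$ by compact finite-rank nilspace factors $\XX_{p,\beta}$; show the factor map from the ultraproduct group to $\XX$ is almost a nilspace morphism and invoke the Candela--Szegedy rigidity theorem (\cite[Theorem 4.2]{candela-szegedy-inverse}, via \cite[Lemma 7.3]{jt21-1}) to extract genuine nilspace morphisms $g^{(n)}\colon G^{(n)}\to\XX_\beta$; split these along Sylow components using Lemma \ref{SchurZassenhaus} (your proposal does not address composite $m$ at all); lift $G^{(n)}_p$ to $\Z^{N}$ and lift the morphism through the fibration $\psi_p\colon\mathcal{W}_p\to\XX_{p,\beta}$ from a bounded-torsion filtered abelian group (Proposition \ref{more-inductive}(iii), Corollary \ref{fibration}, Lemma \ref{fibration-lift}); and finally apply the residue-map estimate of Lemma \ref{nilspacedegree} to convert a character of $\mathcal{W}_p$ composed with the lifted morphism into an honest polynomial on the finite group of degree bounded only in terms of $k$ and $m$. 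None of this is supplied by Abramov-ness or by Proposition \ref{fibration} alone, which are the roles you assign to them; so the proposal correctly locates the difficulty but does not contain an argument that closes it.
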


This inverse theorem is already new for the groups $G=(\Z/4\Z)^n, n\geq 1$.  We give the details of the proof of this theorem in Section \ref{sec-gowers}.  The quantity $C(k,m)$ can be explicitly computed, but we do not attempt to optimize this quantity here. 

Our proof provides a stronger structural result for the polynomial $P$ from Theorem \ref{gowers-main}.
\begin{definition}
Let $G$ be a finite abelian group and write $G=\prod_{p \text{ is a prime}} G_p$ where $G_p$ is the $p$-sylow subgroup of $G$ and $l\in \mathbb{N}$ be an integer. Letting $H=(H,+)$ be another finite abelian group, we say that an extension $q:H\rightarrow G$ is an $l$-extension if the $p$-sylow subgroup of $\ker q$ is a $p^l$-torsion group.
\end{definition}
\begin{example}
   For every $n\geq 1$, the quotient map $q:\mathbb{Z}/2^{n+1}\mathbb{Z}\rightarrow \mathbb{Z}/2\mathbb{Z}$, $q(x)=x\mod 2$ is an $n$-extension.
\end{example}

We show that the function $f$ from theorem \ref{gowers-main} in fact correlates with a polynomial phase of degree $k$ defined on a bounded extension of the finite group $G$.
\begin{theorem}\label{gowersextension}
    Let $k,m\geq 1$ and let $\delta>0$. Then there exists $\epsilon=\epsilon(\delta,k,m)$ and a constant $C_1=C_1(k,m)$ such that for every finite abelian group $G$ and any $1$-bounded function $f:G\rightarrow \mathbb{C}$ with $\|f\|_{U^{k+1}(G)}>\delta$, there exist a $C_1$-extension $q:H\rightarrow G$ and a polynomial $P\in \Poly_{\leq k}(H)$ such that
    $$\left|\mathbb{E}_{x\in H} f(q(x))e(-P(x))\right|>\epsilon.$$
    \noindent Furthermore, there is a constant $C_2=C_2(k,m)$ and a cross-section $|\cdot|:G\rightarrow H$ of $q$ so that
    $$\left|\mathbb{E}_{x\in G} f(x) e(-P(|x|))\right|>\epsilon$$ and $P(|\cdot|)$ is a polynomial of degree $\leq C_2$.
    
\end{theorem}
\subsection{Counterexamples and open questions}\label{counterexamples-sec}

We now turn to some negative results that show that at least some of the hypotheses in the above results are necessary.

The classical example of the Heisenberg $\Z$-system is well known to be an ergodic $\Z$-system of order $2$ that is not Abramov of any order; see \cite{fw} and \cite[Remark 1.17]{btz}.  In fact, a similar example can be constructed for $\Z$ replaced by an unbounded torsion abelian group such as $\Gamma = \bigoplus_{p\in\mathbb{P}}\mathbb{F}_p$; see \cite[Theorem 1.21]{shalom3}.  Among other things, this shows that the bounded torsion hypothesis in Theorem \ref{bounded-tor} is necessary.  In a similar vein, from Pontragyin duality it is known that for abelian groups of unbounded torsion such as $\Gamma = \bigoplus_{p\in\mathbb{P}}\mathbb{F}_p$, there are ergodic $\Gamma$-systems of order $1$ which are not totally disconnected; see the discussion in \cite[\S 1.5]{jst}, as well as \cite[Example 1.28]{shalom3} for a concrete example.  In particular, being Weyl does not imply being totally disconnected even when the acting group $\Gamma$ is torsion (but not bounded torsion).  Finally, a well known example of Rudolph \cite{rudolph} (discussed further in \cite[Remark 5.6]{jst}) that produces an ergodic $\Z$-system of order $2$ that is not a translational system of degree $2$.

It was conjectured in \cite{btz} that the condition $k \leq p+1$ in Theorem \ref{fp-omega}(ii) could be removed, that is to say that every ergodic $\F_p^\omega$-system of order $k$ was Abramov of order $k$.  Recently, we were able to disprove this in \cite{jstcounterexample}, in which we constructed an ergodic $\F_2^\omega$-system of order $5$ which was not Abramov of order $5$.  We also have the following variant of this example:

\begin{proposition}\label{second-counter}  There exists an ergodic $\mathbb{F}_2^\omega$-system of order $3$ which is Abramov of order $3$, but not Weyl of order $3$. In particular, totally disconnected and Abramov systems are not necessarily Weyl. 
\end{proposition}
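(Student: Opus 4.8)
Since $p=2$ satisfies $p\leq k+1$ with $k=3$, Theorem \ref{fp-omega}(ii) already guarantees that \emph{every} ergodic $\F_2^\omega$-system of order $3$ is Abramov of order $3$, and Theorem \ref{bounded-tor} (or Theorem \ref{fp-omega}(i)) guarantees that it is totally disconnected. Hence the entire content of the proposition is to exhibit a single ergodic $\F_2^\omega$-system of order $3$ that is \emph{not} Weyl of order $3$; the last sentence of the proposition then follows for free.

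The plan is to realize such a system as a skew-product $\XX=\ZZ\rtimes_\rho\F_2$, where $\ZZ$ is an explicit ergodic Weyl $\F_2^\omega$-system of order $2$ (so that $\ZZ^2(\XX)=\ZZ$ and the lower connecting cocycles of $\XX$ are already polynomials of the expected degree), and $\rho\colon\F_2^\omega\to\M(\ZZ,\F_2)$ is an $\F_2$-valued cocycle of type $3$ on $\ZZ$ whose cohomology class is not represented by a polynomial of degree $2$. One would choose $\rho$ by adapting the building blocks of the order-$5$ construction in \cite{jstcounterexample}: concretely, $\rho_\gamma$ is assembled from the degree-$1$ and degree-$2$ polynomials on $\ZZ$ so that the cocycle identity \eqref{cocycle-ident} holds and $\rho$ is of type $3$, yet for a suitable $\gamma$ the function $\rho_\gamma$ is a polynomial of degree exactly $3$ on $\ZZ$ that cannot be lowered to degree $2$ by subtracting a derivative $\partial_\gamma F$.

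I would organize the argument as follows. First, fix $\ZZ$ explicitly and record the polynomials on it that are needed. Second, write down the candidate $\rho$ and check by direct computation that it is a cocycle of type $3$ which is not a coboundary, so that $\XX=\ZZ\rtimes_\rho\F_2$ is an ergodic $\F_2^\omega$-system of order exactly $3$; these verifications are routine. Third --- the heart of the matter --- show that $\XX$ is not Weyl of order $3$. Since $\ZZ$ is Weyl of order $2$ and $\F_2$ has no nontrivial automorphisms, $\XX$ is Weyl of order $3$ precisely when $\rho$ is cohomologous to a cocycle valued in $\Poly_{\leq 2}(\ZZ,\F_2)$, that is, when there is a single $F\in\M(\ZZ,\F_2)$ with $\rho_\gamma-\partial_\gamma F\in\Poly_{\leq 2}(\ZZ,\F_2)$ for every $\gamma\in\F_2^\omega$. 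This is a polynomial Conze--Lesigne-type equation, and its (in)solvability can be analysed with the polynomial Moore--Schmidt theory of \cite{shalom2}, or, equivalently, by passing to the associated degree-$3$ $\F_2$-nilspace and invoking the obstruction theory of \cite{CGSS}; the chosen $\rho$ is designed so that it has no solution.

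The main obstacle is this last step. The naive obstruction one might hope to use, namely the ``top symbol'' $\partial_{\gamma_1}\partial_{\gamma_2}\partial_{\gamma_3}\rho_{\gamma_4}\in\M(\ZZ,\F_2)$, cannot detect the failure of Weyl-ness: the cocycle identity \eqref{cocycle-ident} forces this symbol to be symmetric under interchanging $\gamma_3,\gamma_4$, while commutation of derivatives makes it symmetric in $\gamma_1,\gamma_2,\gamma_3$, so it is totally symmetric --- exactly as it would be if $\rho$ were cohomologous to a polynomial of degree $2$. What one must instead verify is the finer cohomological statement that this symbol is \emph{not} of the form $\partial_{\gamma_1}\partial_{\gamma_2}\partial_{\gamma_3}\partial_{\gamma_4}F$ for a single measurable function $F$ on $\ZZ$. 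Establishing this non-vanishing for a concrete $\rho$ over a concrete order-$2$ base is the technical crux, and it is precisely here that $p=2$ lying \emph{outside} the range $k\leq p-1$ of Theorem \ref{fp-omega}(iii) is used. Once this step is in place, the remaining verifications are routine.
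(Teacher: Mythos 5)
Your reduction via Theorem \ref{fp-omega}(ii) (every ergodic $\F_2^\omega$-system of order $3$ is Abramov of order $3$, since $p=2\leq k+1$) is legitimate and even streamlines matters relative to the paper, which verifies the Abramov property of its explicit example by hand. But beyond that the proposal is a plan rather than a proof: the two things that constitute the actual mathematical content --- writing down a specific type-$3$ cocycle $\rho$ on an explicit order-$2$ Weyl base, and proving that $\rho$ is \emph{not} cohomologous to a degree-$2$ polynomial --- are both deferred ("adapting the building blocks of \cite{jstcounterexample}", "the chosen $\rho$ is designed so that it has no solution"). You correctly diagnose that the naive obstruction, the totally symmetric symbol $\partial_{\gamma_1}\partial_{\gamma_2}\partial_{\gamma_3}\rho_{\gamma_4}$, cannot detect non-Weylness, but you do not supply any replacement obstruction, and the tools you point to do not obviously apply: the higher-order Moore--Schmidt theorem (Proposition \ref{typecircle:prop}) requires $\Gamma$ torsion-free and a divisibility hypothesis, both of which fail for $\F_2^\omega$-systems, and no concrete nilspace obstruction computation is indicated. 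So there is a genuine gap exactly at what you call the technical crux.

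For comparison, the paper's proof (Appendix \ref{appendix-d}) runs on a different mechanism that sidesteps the need for a symbol-type obstruction. It takes $\YY=\ZZ\rtimes_\rho\Z/8\Z$ with $\ZZ=\prod_{n}\Z/2\Z$ and $\rho_\gamma(z)=\sum_i(|z_i+\gamma_i|-|z_i|)\bmod 8$, so that $\YY$ is a $\Z/2\Z$-extension, by a cocycle $\sigma$ as in \eqref{sig}, of the order-$2$ Weyl system $\XX=\ZZ\rtimes_{\rho\bmod 4}\Z/4\Z$ of \cite[Appendix E]{taoziegler}. After checking ergodicity, Abramov of order $3$, and $\ZZ^2(\YY)=\XX$ (via a $U^3$ computation plus Mackey theory --- a step you declare routine but which does need an argument, since a type-$3$ extension of an order-$2$ system could a priori collapse), the non-Weyl claim is obtained by pure $2$-torsion arithmetic: if $\sigma$ were cohomologous to a degree-$2$ polynomial, then $Q=\tfrac{F}{8}+\tfrac{G}{2}$ would be a degree-$3$ polynomial with $2Q=\iota$, where $\iota(z,t)=\tfrac{t}{4}$, contradicting the already-established failure of the exact root property for $\XX$ in \cite[Appendix E]{taoziegler}. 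In other words, the paper reduces non-Weylness to a known ``no square root in degree $3$'' statement rather than constructing a new cohomological obstruction; your proposal would be complete only if you either reproduced such a reduction or actually carried out the obstruction computation you allude to.
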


We establish Proposition \ref{second-counter} in Appendix \ref{appendix-d}; the argument is based off of a construction in \cite[Appendix E]{tz-lowchar} of an ergodic $\mathbb{F}_2^\omega$-system that does not have the ``exact root property''.

\begin{figure}
\centering
    \[\begin{tikzcd}
	{\substack{\Gamma=\Z \\ \text{order } 2}} && {\substack{\text{translational}\\ \text{system} \\ \text{degree } 2}}\\
	{\substack{\Gamma=\bigoplus_{p \in {\mathbb P}} {\mathbb F}_p \\ \text{order } 2}} && {\mathrm{Abramov}_C} \\
    {\substack{\Gamma = {\mathbb F}_2^\omega\\\text{order }5}} && {\mathrm{Abramov}_5}\\
	{\substack{\Gamma={\mathbb F}_2^\omega \\ \mathrm{Abramov}_3}} && {\mathrm{Weyl}_3} \\
	{\substack{\Gamma=\bigoplus_{p \in {\mathbb P}} {\mathbb F}_p \\ \mathrm{Weyl}_1}} && {\text{totally disc.}} 
    \arrow["\shortmid"{marking}, from=1-1, to=1-3]	
    \arrow["\shortmid"{marking}, from=1-1, to=2-3]
	\arrow["\shortmid"{marking}, swap, from=2-1, to=2-3]
	\arrow["\shortmid"{marking}, from=3-1, to=3-3]
	\arrow["\shortmid"{marking}, "\substack{\text{Prop. } \ref{second-counter}}", Rightarrow, from=4-1, to=4-3]
	\arrow["\shortmid"{marking}, from=5-1, to=5-3]
\end{tikzcd}\]
    \caption{A schematic depiction of the counterexamples.}
  \label{fig:counter}
\end{figure} 

The above counterexamples are summarized in Figure \ref{fig:counter}.
We close this section with some questions left open by the above positive and negative results.

\begin{question} Is there an ergodic $\Z^\omega$-system of order $k$ that is not isomorphic to an inverse limit of translational systems of degree at most $k$?
\end{question}

\begin{question}  Can every ergodic $\F_p^\omega$-system of order $k$ be extended (within the category of $\F_p^\omega$-systems) to an Abramov system of order $k$?
\end{question}
\begin{remark}
    The question above was recently answered in the affirmative by Candela, Gonz\'alez-S\'anchez, and Szegedy in \cite{CGSSextensions}.
\end{remark}
\begin{question}  Is every ergodic $\F_p^\omega$-system of order $k$ a translational system of degree $C(k,p)$ for some $C(k,p)$?  This is currently known for $k \leq p-1$. 
\end{question}
\begin{question}
Can the value $C(k,p)$ from the previous question be taken to be $k$ when $k=p$? What is the situation for $k=p+1$?
\end{question}

\begin{question}  Is every translational ergodic $\F_p^\omega$-system of degree $k$ Abramov of order $k$?  Perhaps the counterexample in \cite{jstcounterexample} already gives a negative answer to this question.
\end{question}

\begin{question} Theorem \ref{TZinverse} asserts that we can take $C(k,m)=k$ in Theorem \ref{gowers-main} when $m$ is prime.  Can one do so when $m$ is not prime?
\end{question}
\begin{remark}
    The question above was recently answered in the affirmative by the authors in \cite{jst-polynomialtowers}.
\end{remark}

\subsection*{Acknowledgements}
AJ was funded by the German Research Foundation under Germany's Excellence Strategy -- EXC-2047 -- 390685813 and its Heisenberg Programme --  547294463. 
OS was supported by NSF grant DMS-1926686 and Alon Fellowship. TT was supported by a Simons Investigator grant, the James and Carol Collins Chair, the Mathematical Analysis \& Application Research Fund Endowment, and by NSF grant DMS-1764034. Part of this work was done while the second author was a post-doc at the Hebrew University under the supervision of Mike Hochman and was supported by the ISF research grant 3056/21.  We thank Pablo Candela, Diego Gonz\'alez-S\'anchez, and Bal\'azs Szegedy for sharing their recent preprint \cite{cgss-2023}, a result from which (Lemma \ref{double-nilspace}) we use in our current paper. We thank the anonymous referee for their thorough review, positive comments, and constructive remarks on this manuscript.

\section{Groups of bounded torsion, totally disconnected systems, and a Sylow decomposition}\label{sec-mtorsion}

In this section we prove Theorem \ref{bounded-tor}, as well as a related Sylow decomposition (Theorem \ref{pSylow}) that will be useful in the proof of Theorem \ref{gowers-main}.

The key lemma used to prove Theorem \ref{bounded-tor} is

\begin{lemma}[Multiplication by $m$ lowers type]\label{times p type}
Let $\Gamma$ be a countable abelian $m$-torsion group for some $m \geq 1$, let $\XX$ be an ergodic $\Gamma$-system, let $U$ be a compact abelian metrizable group, and let $\rho \colon \Gamma \to \M(\XX,U)$ be a cocycle of type $k$. Then $m \cdot \rho$ is of type $k-1$.
\end{lemma}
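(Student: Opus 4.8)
The plan is to unwind the definition of "type $k$" for a cocycle and exploit the $m$-torsion hypothesis to kill one level of the cube structure. Recall (Definition \ref{type:def}) that $\rho \colon \Gamma \to \M(\XX,U)$ is of type $k$ if, after passing to the $k$-th cubespace of $\XX$ (equivalently, in the language of Host--Kra cube constructions), a certain derived cocycle built from $\rho$ by taking $k+1$ "discrete derivatives" along the cube directions is a coboundary; more concretely, the statement that $\rho$ has type $k$ amounts to the existence of a measurable solution $F$ to a functional equation of the shape $\Delta^{[k+1]} \rho = \partial F$ on the cube system, where $\Delta^{[k+1]}$ records the alternating combination of the $2^{k+1}$ translates of $\rho$ over a $(k+1)$-dimensional Host--Kra cube. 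The goal is to show the analogous equation with $k$ in place of $k+1$ holds for $m\cdot\rho$.

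First I would set up the cube-system formalism precisely, recalling that for an ergodic $\Gamma$-system $\XX$ the Host--Kra cube system $\XX^{[n]}$ carries a diagonal $\Gamma$-action together with the $2^n$ coordinate projections, and that a $U$-valued cocycle $\rho$ on $\XX$ pulls back along each coordinate projection. The key structural input is the cocycle identity \eqref{cocycle-ident} together with the observation that the "type" of a cocycle is detected by whether a suitable iterated difference is a coboundary on the appropriate cube system. Second, I would use the $m$-torsion of $\Gamma$ in the following way: since $m\gamma = 0$ for every $\gamma$, the map $T^{m\gamma} = T^{0} = \mathrm{id}$, and more usefully, the derivative identity gives a telescoping/collapsing when one multiplies the cocycle by $m$. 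Concretely, the plan is to show that $m\rho$, restricted along any single cube direction, is itself already a coboundary in that direction (because $m$ times a cocycle over a $\Z/m$-worth of translates telescopes), which is precisely the statement that passing from $\rho$ to $m\rho$ trades one genuine cube dimension for a coboundary dimension, hence lowers the type by one.

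The cleanest route is probably to argue at the level of the defining functional equation: if $F$ witnesses that $\rho$ is of type $k$ via $\Delta^{[k+1]}\rho = \partial_\ast F$ on $\XX^{[k+1]}$, I would construct from $F$ an explicit witness $G$ (a finite sum of translates of $F$, with the sum running over the $m$ "layers" in the collapsed direction, i.e. $G = \sum_{j=0}^{m-1}(\text{suitable translate of }F)$) showing that $\Delta^{[k]}(m\rho) = \partial_\ast G$ on $\XX^{[k]}$. Verifying this identity is a bookkeeping computation using \eqref{cocycle-ident} and the fact that multiplying by $m$ turns the "$\rho$ over one direction" into a sum over a full period, which telescopes against the derivative. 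I would also need to check measurability and that $G$ genuinely lives on $\XX^{[k]}$ (not just on a larger cube system) — this is where the ergodicity and the structure of the Host--Kra cube projections get used.

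The main obstacle I anticipate is the last point: correctly identifying which cube system the witness $G$ lives on, and checking that the collapsed direction really does drop the cube dimension rather than merely producing a coboundary on the same-dimensional cube. In other words, the algebra of "$m\cdot(\text{cocycle of type }k)$ has type $k-1$" is easy to believe and the functional-equation manipulation is routine once set up, but matching it against the precise cube-theoretic Definition \ref{type:def} and Proposition \ref{abelext} — so that "type $k-1$" is literally the conclusion and not just a heuristically-lower-complexity statement — will require care with the Host--Kra machinery reviewed in Appendix \ref{host-kra-theory}. An alternative, possibly slicker, approach that sidesteps some of this: induct on $k$, using the skew-product description \eqref{structuregroups} and the known behavior of cocycles of type $k$ under the passage $\ZZ^{k-1}(\XX)\to\ZZ^{k-2}(\XX)$, reducing to the base case $k=1$ where "type $1$" means $\rho$ is (cohomologous to) a homomorphism-like object and multiplication by $m$ makes it a coboundary outright by $m$-torsion.
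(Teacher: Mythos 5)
There is a genuine gap, and it sits exactly at the point you flagged as "bookkeeping." Your central mechanism is that the $m$-torsion telescoping identity should let you manufacture, from a witness $F$ of the coboundary equation on the higher cube system, an explicit witness $G$ (a finite sum of translates of $F$) for $\Delta^{[k-1]}(m\rho)$ on $\XX^{[k-1]}$ (note also that in the paper's convention, Definition \ref{type:def}, type $k$ means $\Delta^{[k]}\rho$ is a coboundary on $\XX^{[k]}$, not $\Delta^{[k+1]}\rho$). This does not work. What the cocycle identity and $m$-torsion actually give is $\rho_{m\gamma}=0$, i.e. $\sum_{j=0}^{m-1}\rho_\gamma\circ T^{j\gamma}=0$, hence $m\rho_\gamma=-\sum_{j=1}^{m-1}\partial_{j\gamma}\rho_\gamma$; but a sum of derivatives of $\rho_\gamma$ along group elements is not a coboundary, and proving that each $\partial_{\gamma'}\rho$ has lower type is essentially equivalent to the structure theorem for type-$1$ cocycles, not a formal consequence of it. Concretely, the hypothesis gives an identity $\Delta^{[k-1]}\rho(x)-\Delta^{[k-1]}\rho(x')=dF(x,x')$ holding $\mu^{[k]}$-almost everywhere on the doubled space $\XX^{[k]}$; there is no way to descend $F$ to $\XX^{[k-1]}$ by finite translation sums, you cannot restrict the a.e.\ identity to the measure-zero set $x'=T^{\gamma'}x$, and you cannot average out the $x'$ variable because the functions are $U$-valued (this is precisely why the Conze--Lesigne-type arguments pass to $e(F)$ and run a spectral/eigenfunction argument). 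So the "routine functional-equation manipulation" you propose has no valid implementation, and the $m$-torsion hypothesis alone cannot substitute for the missing analytic step.

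The paper's proof supplies exactly that missing step. One observes that type $k$ of $\rho$ on $\XX$ is the same as type $1$ of $\Delta^{[k-1]}\rho$ on $\XX^{[k-1]}$; then, after pairing with a character $\xi\in\hat U$ (Moore--Schmidt, Proposition \ref{type}(i)--(ii), to reduce to $\T$-valued cocycles), one applies the structure theorem for type-$1$ cocycles (Proposition \ref{type}(vi): cohomologous to a homomorphism $c\colon\Gamma\to\T$) on each ergodic component of $\XX^{[k-1]}$ --- the cube system is typically non-ergodic, so one also needs Proposition \ref{type}(vii) to recombine components. Only then does $m$-torsion enter, in the trivial but decisive form $mc=0$, giving $\Delta^{[k-1]}(\xi\cdot m\rho)=d(mF)$ and hence type $k-1$. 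Your closing alternative ("reduce to the base case where type $1$ means cohomologous to a homomorphism and $m$-torsion kills it") is the right idea, but it should be run on the cube system $\XX^{[k-1]}$ rather than by induction through the tower $\ZZ^{k-1}(\XX)$, and it must address the non-ergodicity of $\XX^{[k-1]}$ and the reduction from general $U$ to $\T$; without Proposition \ref{type}(vi) (or an equivalent spectral argument) as an explicit input, the proof is incomplete.
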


\begin{proof}
Since $\rho$ is of type $k$ on $\XX$, $\Delta^{[k-1]}\rho$ is of type $1$ on $\XX^{[k-1]}$ thanks to Definition \ref{type:def}. Let $\xi\in \hat U$ be a character (i.e., a continuous homomorphism from $U$ to $\T$), which we write as $\xi \colon u \mapsto \xi \cdot u$. By Proposition \ref{type}(vi), for every ergodic component of $\XX^{[k-1]}$, we can find a homomorphism $c \colon \Gamma \rightarrow \T$ and a map $F \in \M( \XX^{[k-1]}, \T)$ such that $\Delta^{[k-1]}(\xi\cdot \rho) = c + dF$. Since $\Gamma$ is $m$-torsion, $mc=0$, and thus $\Delta^{[k-1]}(m \xi\cdot \rho) = d(m F)$. Therefore $\Delta^{[k-1]}(m\xi\cdot \rho)$ is a coboundary on each ergodic component of $\XX^{[k-1]}$. By Proposition \ref{type}(vii),  $\Delta^{[k-1]} m\xi\cdot \rho = \Delta^{[k-1]} (\xi \cdot m \rho)$ is a coboundary on $\XX^{[k-1]}$ for every $\xi \in\hat U$, and so $m \rho$ is of type $k-1$ thanks  to Proposition \ref{type}(ii) and Definition \ref{type:def}. 
\end{proof}

Now we prove Theorem \ref{bounded-tor} by induction on $k$.  If $k=1$, by \eqref{structuregroups} $\XX$ is a rotational system on a compact abelian group $U_1$. It is classical (see e.g., \cite[\S 1.5]{jst}) that $U_1$ can be identified with a closed subgroup of the Pontryagin dual $\hat \Gamma$ of $\Gamma$.  Since $\Gamma$ is $m$-torsion, so is $\hat \Gamma$ (see e.g., \cite[Theorem 18]{morris}), and hence $U_1$ is also $m$-torsion as required.

Now suppose that $k\geq 2$ and assume inductively that the claim holds for smaller values of $k$. We can write $\XX = \ZZ^{k-1}(\XX)\rtimes_{\rho_{k-1}} U_k$ where $\rho_{k-1}$ is of type $k$. Let $\xi\in\hat U_k$ be a character. By Lemma \ref{times p type}, the factor $\XX'=\ZZ^{k-1}(\XX)\rtimes_{m \rho_{k-1}} mU_k$ (with factor map $(x,u) \mapsto (x,mu)$ for $x \in \ZZ^{k-1}(\XX)$ and $u \in U_k$) is a system of order $k-1$ by Proposition \ref{type}(iii). Since $\ZZ^{k-1}(\XX)$ is the maximal factor of order $k-1$, we deduce that $mU_k=0$ and so $U_k$ is $m$-torsion.  

If $U$ is $m$-torsion, then so is its Pontryagin dual $\hat U$ (again, see \cite[Theorem 18]{morris}).  Since groups with torsion Pontryagin duals are totally disconnected (using characters of finite order to separate points), we conclude that all the structure groups $U_1,\dots,U_k$ are totally disconnected. This completes the proof of Theorem \ref{bounded-tor}.
\begin{example}
We construct an example of an ergodic $(\mathbb{Z}/4\mathbb{Z})^\omega$-system whose structure groups are merely $2$-torsion, showing that in general $m$ may not be the minimal torsion of the structure groups of an ergodic $\Gamma_m$-system. Let $Z = (\mathbb{Z}/2\mathbb{Z})^\mathbb{N}$ and $X = Z\times \mathbb{Z}/2\mathbb{Z}$ equipped with Borel $\sigma$-algebra, Haar measure, and the action induced by the commuting transformations
$$T_i (z,u) = (z+e_i, u+z_i)$$
where $e_i$ denotes the $i$-th generator of $(\mathbb{Z}/4\mathbb{Z})^\omega$. 
Observe that the orbit of $(z,u)$ under $T_i$ looks like $$(z,u)\to(z+e_i,u+z_i)\to (z,u+e_i)\to (z+e_i,u+e_i+z_i)\to (z,u)$$ and so $T_i^4=\id$ for all $i$. We therefore obtain a $(\mathbb{Z}/4\mathbb{Z})^\omega$-action on $X$. It is in fact not hard to see that $\XX=\ZZ\rtimes_\sigma \mathbb{Z}/2\mathbb{Z}$ with $\ZZ$ the Kronecker factor and $\sigma(\gamma,z) = \sum_{i=1}^\infty \gamma_i z_i + \binom{\gamma_i}{2} \mod 2$. Note that $\sigma$ is a polynomial of degree $1$ and so $\mathrm{X}$ is a Conze--Lesigne $(\mathbb{Z}/4\mathbb{Z})^\omega$-system. Note furthermore that $\mathrm{X}$ is not a Kronecker system because $\sigma$ is not cohomologous to a constant. 
\end{example}
\subsection{A totally disconnected system splits to a product of its $p$-Sylow subgroups}

It is a classical result that any finite abelian group is a direct product of its $p$-Sylow subgroups. This result extends to profinite (i.e., compact totally disconnected) abelian groups; see e.g., \cite[Proposition 2.3.8]{ribes-z} or \cite[Corollary 8.8]{hofmann}.  Dually, if $\Gamma$ is a countable abelian $m$-torsion group for some $m \geq 1$, we may make the identification
\begin{equation}\label{p-split}
\Gamma = \bigoplus_{p|m} \Gamma_p
\end{equation}
where $\Gamma_p$ is the $p$-Sylow subgroup of $\Gamma$ (those elements of $\Gamma$ whose order is a power of $p$), where the product runs over all the prime factors $p$ of $m$.  This decomposition can also be readily obtained from the Chinese remainder theorem.

We now obtain an analogous decomposition for ergodic $\Gamma$-systems of finite order. Define the \emph{generalized product}\footnote{This should not be confused with the product $\XX_1 \times \XX_2$ of two $\Gamma$-systems $\XX_1$, $\XX_2$, which is again a $\Gamma$-system rather than a $\Gamma \oplus \Gamma$ system.  The relation between the two is that $\XX_1 \times \XX_2$ is essentially the $\Gamma \oplus  \Gamma$-system $\XX_1 \genprod \XX_2$ but with the action restricted to the diagonal group $\{ (\gamma,\gamma): \gamma \in \Gamma\}$.} $\XX_1 \genprod \XX_2$ of a $\Gamma_1$-system $\XX_1 = (X_1, \X_1, \mu_1, T_1)$ and a $\Gamma_2$-system $\XX_2 = (X_2, \X_2, \mu_2, T_2)$ to be the $\Gamma_1 \oplus \Gamma_2$-system $\XX_1 \genprod \XX_2 = (X_1 \times X_2, \X_1 \times \X_2, \mu_1 \times \mu_2, T_1 \genprod T_2)$, with the shift $T_1 \genprod T_2$ defined by the formula
$$ (T_1 \genprod T_2)^{(\gamma_1,\gamma_2)}(x_1,x_2) \coloneqq (T_1^{\gamma_1} x_1, T_2^{\gamma_2} x_2).$$
Note that the $\Gamma_1 \oplus \Gamma_2$-system $\XX_1 \genprod \XX_2$ is a generalized extension of the $\Gamma_1$-system $\XX_1$ and the $\Gamma_2$-system $\XX_2$ (i.e., $\XX_1, \XX_2$ are generalized factors of  $\XX_1 \genprod \XX_2$). We may similarly define direct products $\biggenprod_{\alpha \in A} \XX_\alpha$ of any finite number of $\Gamma_\alpha$-systems $\XX_\alpha$ (indeed one could even define countable products in this fashion, but we will not need to do so here).

\begin{theorem}[Sylow decomposition for systems of $m$-torsion groups]\label{pSylow}
Let $m$ be a fixed natural number and let $\Gamma$ be an $m$-torsion group, which we decompose into $p$-Sylow factors $\Gamma_p$ as in \eqref{p-split}. Let $k\geq 1$ and let $\XX$ be an ergodic $\Gamma$-system of order $k$. Then $\XX$ is isomorphic (as a $\Gamma$-system) to a generalized product $\biggenprod_{p|m} \XX_p$ of ergodic $\Gamma_p$-systems $\XX_p$ of order $k$, where $p$ ranges over the primes dividing $m$.
\end{theorem}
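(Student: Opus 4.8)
The plan is to decompose $\XX$ into its $p$-Sylow factors by identifying, for each prime $p \mid m$, a natural candidate factor of $\XX$ on which only the subgroup $\Gamma_p$ acts nontrivially, and then to show that the (generalized) product of these factors recovers $\XX$. Write $m = \prod_{p \mid m} p^{r_p}$ and for each prime $p \mid m$ set $m_p \coloneqq m / p^{r_p}$, so that $m_p$ is coprime to $p$ and $\Gamma_p = m_p \Gamma = \{ m_p \gamma : \gamma \in \Gamma\}$ is precisely the $p$-Sylow subgroup of $\Gamma$ (the elements killed by a power of $p$), while $\Gamma = \bigoplus_{p \mid m} \Gamma_p$ as in \eqref{p-split}. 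The candidate for $\XX_p$ is the factor $\XX^{\Gamma_q \text{ for } q \neq p}$ of $\XX$ consisting of those functions invariant under $T^\gamma$ for every $\gamma$ in the complementary Sylow group $\Gamma_p^c \coloneqq \bigoplus_{q \mid m,\ q \neq p} \Gamma_q$; equivalently, using $\Gamma_p^c = p^{r_p}\Gamma$, it is the sub-$\sigma$-algebra of $\X$ fixed by the action of $p^{r_p}\Gamma$.

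First I would check that each $\XX_p$, when equipped with the action of $\Gamma_p$ (the complementary group $\Gamma_p^c$ acts trivially by construction), is an ergodic $\Gamma_p$-system: ergodicity follows since the $\Gamma$-invariant functions are already trivial and the $\Gamma_p$-invariant functions on $\XX_p$ are a fortiori $\Gamma$-invariant. Next I would verify $\XX_p$ has order $k$ as a $\Gamma_p$-system. The clean way is to use the tower \eqref{structuregroups}: by Theorem \ref{bounded-tor} all structure groups $U_1, \dots, U_k$ are $m$-torsion, hence split as $U_i = \bigoplus_{p \mid m} (U_i)_p$ into their own $p$-primary parts (profinite/discrete abelian $m$-torsion groups split into Sylow components, \cite[Proposition 2.3.8]{ribes-z}). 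The key mechanism is Lemma \ref{times p type}: multiplying a type-$k$ cocycle by $m$ lowers its type, and more relevantly, multiplying by $m_p$ (which annihilates the $q$-primary parts for $q \neq p$ but is invertible on the $p$-primary part) projects the cocycle onto its $p$-component without changing its type when restricted appropriately. I would argue that the factor of $\XX$ cut out by passing to the $p$-primary part of every structure group is exactly $\XX_p$, and that this factor inherits the iterated skew-product description with structure groups $(U_1)_p, \dots, (U_k)_p$ and cocycles of the same types, so that $\XX_p$ is of order $k$.

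Then I would assemble the isomorphism $\XX \cong \biggenprod_{p \mid m} \XX_p$. There is a canonical $\Gamma$-equivariant factor map $\pi \colon \XX \to \biggenprod_{p \mid m} \XX_p$ given by the product of the projections $\pi_p \colon \XX \to \XX_p$, where on the target the group $\Gamma = \bigoplus_p \Gamma_p$ acts coordinatewise (which, restricted to the diagonal-type embedding of $\Gamma$, is the given $\Gamma$-action since $T^\gamma = \prod_p T^{m_p \gamma \cdot (\text{unit}_p)}$ after the Chinese-remainder identification). To see $\pi$ is an isomorphism it suffices to show the $\sigma$-algebras $\pi_p^{-1}(\X_p)$, i.e. the fixed algebras of $p^{r_p}\Gamma$, are jointly independent and together generate $\X$. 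Generation: the tower \eqref{structuregroups} together with the Sylow splitting of each $U_i$ exhibits $\X$ as generated by the coordinates, each of which lives in some $(U_i)_p$ and hence is $\pi_p$-measurable. Independence: this is where I expect the main work to lie — I would prove it by induction on $k$ using the skew-product structure, showing at each level that the $p$-primary part of the new coordinate $U_{i}$ is, conditionally on $\ZZ^{i-1}(\XX)$, independent of the complementary primary parts; the relative independence should follow from the fact that the corresponding cocycles are "supported" on disjoint primary components and from the conditional-independence properties of skew products. An alternative, perhaps cleaner, route to both generation and independence is Fourier-analytic: expand $L^2(\XX)$ using the characters of the structure groups (an orthonormal basis adapted to the tower), observe each basis element is a product of a $\pi_p$-measurable factor over the primes $p$, and deduce the tensor decomposition $L^2(\XX) \cong \bigotimes_p L^2(\XX_p)$ directly.

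The main obstacle is the joint independence / tensor-splitting step: one must ensure that the skew-product twists at each level of the tower do not create correlations between the different $p$-primary parts, which requires carefully tracking how the cocycles $\rho_i$ decompose under the Sylow splitting of $U_i$ and invoking the ergodic-theoretic independence lemmas for skew products (and, implicitly, that $\ZZ^{i}(\XX)$ itself already splits as $\biggenprod_p \ZZ^i(\XX_p)$, which is the inductive hypothesis). Everything else — ergodicity of the factors, the order-$k$ property via Lemma \ref{times p type}, and the Chinese-remainder bookkeeping identifying the diagonal $\Gamma$-action with the coordinatewise one — is routine given Theorem \ref{bounded-tor} and the structure theory recalled in the introduction.
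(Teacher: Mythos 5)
Your definition of $\XX_p$ as the sub-$\sigma$-algebra fixed by the complementary Sylow group $\Gamma_p^c$ is a reasonable starting point (and a posteriori it does coincide with the factors the paper produces), and the ergodicity observation is correct. However, the two steps you dismiss as routine or as "bookkeeping" --- generation of $\X$ by the algebras $\pi_p^{-1}(\X_p)$, and the assertion that the cocycles are "supported on disjoint primary components" --- are precisely the content of the theorem, and as stated they are unjustified. In the tower $\XX = \ZZ^{k-1}(\XX)\rtimes_{\rho_{k-1}}U_k$, the $U_{k,p}$-valued component $\rho_{k-1,p}$ of the cocycle is, a priori, a function on all of $\ZZ^{k-1}(\XX)$ and depends on the whole element $\gamma\in\Gamma$, not just on $\gamma_p$. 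Consequently the $U_{k,p}$-coordinate function $\mathrm{u}_p$ is not invariant under $T^{\gamma_q}$ for $q\neq p$ (its derivative is $\rho_{k-1,p,\gamma_q}$, which need not vanish), so it is not measurable with respect to your $\XX_p$; likewise "passing to the $p$-primary part of every structure group" does not define a factor of $\XX$ until one knows that $\rho_{i,p}$ is measurable with respect to the $p$-part of the lower tower. So both your generation step and your independence step presuppose what has to be proved.

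The missing mechanism is a cohomological straightening: one must show that $\rho_{k-1,p}$ is cohomologous to a cocycle that is measurable with respect to the $p$-Sylow part of the base, and then (by an ergodicity and coprimality argument on homomorphisms $\Gamma_q\to U_{k,p}$) that this representative depends only on $\gamma_p$. This is where the paper's proof does genuine work: it uses Lemma \ref{alg-lemma} together with Proposition \ref{type}(iv) to show that $\partial_u\rho_{k-1,p}$ is a coboundary for $u$ in the complementary structure groups, packages the solutions of the resulting Conze--Lesigne-type equation into a short exact sequence $0\to U_{k,p}\to H\to U_{i+1,(p)}\to 0$ of compact groups, shows $H$ is abelian because the commutator pairing between groups of coprime torsion is trivial, and then splits the sequence by the profinite Schur--Zassenhaus theorem to obtain the coboundary correction $dF$; only after subtracting $dF$ does the $p$-component become invariant under the complementary structure groups, which is what makes your generation and tensor-splitting claims true. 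Your proposed inductive or Fourier-analytic route to independence would need exactly this input, and your route to the order-$k$ property of $\XX_p$ ("cut to the $p$-primary structure groups") is circular for the same reason --- in the paper the order-$k$ property comes for free at the end, once $\XX$ is exhibited as $\biggenprod_{p|m}(\YY_p\rtimes_{\rho'_{k-1,p}}U_{k,p})$ and each factor is seen to be a generalized factor of the ergodic order-$k$ system $\XX$. As it stands, then, the proposal has a genuine gap at its central step.
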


Before we prove this theorem we first record an algebraic lemma that computes the order of vanishing of $T^{m^r}-1$ at $T-1$ (i.e., the largest power of $T-1$ that divides $T^{m^r}-1$) in an $m$-characteristic ring.

\begin{lemma}[Vanishing of $T^{m^r}-1$]\label{alg-lemma} Let $R$ be any ring with identity which is $m$-characteristic for some $m \geq 1$ in the sense that $m \cdot 1 = 0$ in $R$.  Then for any $T \in R$ and $r \geq 1$, we have
$$ T^{m^r} - 1 \in R \cdot (T-1)^{d_{m,r}}$$
and
$$ \sum_{j=0}^{m^r-1} T^j \in R \cdot (T-1)^{d_{m,r}-1}$$
where $d_{m,r}$ is defined by the formula
$$ d_{m,r} = \min_{1 \leq i \leq k} p_i^{a_i(r-1)+1}$$
and $m = \prod_{i=1}^k p_i^{a_i}$ is the prime factorization of $m$.  In particular, if $m > 1$ is a power of a prime $p$, then
$$ T^{p^r} - 1 \in R \cdot (T-1)^{p^r}$$
and
$$ 1 + T + \dots + T^{p^r-1} \in R \cdot (T-1)^{p^r-1}.$$
\end{lemma}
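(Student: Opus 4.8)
\emph{Sketch of the argument.} This is purely algebraic, and the plan is to reduce it to a polynomial identity over $\Z$ together with the $p$-adic valuation of binomial coefficients. The two displayed consequences are the cleanest special case: taking $m=p$ prime, $R$ is an $\F_p$-algebra, so the Frobenius relation $(X-1)^p=X^p-1$ in $\F_p[X]$ gives $X^{p^r}-1=(X-1)^{p^r}$ by induction on $r$, and dividing by the monic non-zerodivisor $X-1$ gives $1+X+\dots+X^{p^r-1}=(X-1)^{p^r-1}$; substituting the single element $X=T$ yields $T^{p^r}-1=(T-1)^{p^r}$ and $1+T+\dots+T^{p^r-1}=(T-1)^{p^r-1}$ exactly in $R$, which is even stronger than asserted.

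For the general \emph{prime-power} case $m=p^a$, which carries the real content, I would set $Y=X-1$ and expand, over $\Z$,
\[ X^{p^s}-1=(1+Y)^{p^s}-1=\sum_{j=1}^{p^s}\binom{p^s}{j}Y^j,\qquad 1+X+\dots+X^{p^s-1}=\sum_{j=0}^{p^s-1}\binom{p^s}{j+1}Y^j, \]
and then invoke the valuation identity $v_p\binom{p^s}{j}=s-v_p(j)$, valid for $1\le j\le p^s$. This follows from $\binom{p^s}{j}=\frac{p^s}{j}\binom{p^s-1}{j-1}$ together with the fact (Kummer's theorem, or a direct carry count) that $\binom{p^s-1}{j-1}$ is a $p$-adic unit, because every base-$p$ digit of $p^s-1$ equals $p-1$. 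Taking $s=ar$, so that $p^s=m^r$ and $p^{s-a+1}=p^{a(r-1)+1}=d_{m,r}$, one gets $v_p\binom{p^s}{j}\ge a$ whenever $v_p(j)\le s-a$, hence for every $1\le j<d_{m,r}$. Since $p^a\cdot 1_R=0$, all of these low-order binomial coefficients vanish in $R$, so $Y^{d_{m,r}}$ divides the first sum and $Y^{d_{m,r}-1}$ the second; substituting $Y=T-1$ gives both claimed inclusions in the prime-power case.

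For general $m=\prod_{i=1}^k p_i^{a_i}$, the hypothesis $m\cdot 1_R=0$ makes the natural map $\Z/m\Z\to R$ well defined with central image, so by the Chinese remainder theorem there are central orthogonal idempotents $e_1,\dots,e_k$ summing to $1_R$ and a ring decomposition $R=\bigoplus_{i=1}^k Re_i$ in which $Re_i$ is $p_i^{a_i}$-characteristic with identity $e_i$. Writing $T_i=Te_i$ (so $T_iT_j=0$ for $i\neq j$), one has $T^{m^r}-1_R=\sum_i(T_i^{m^r}-e_i)$ and $(T-1_R)^d=\sum_i(T_i-e_i)^d$, so that $R(T-1_R)^d=\bigoplus_i Re_i(T_i-e_i)^d$ and it suffices to work inside each $Re_i$. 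There I would split off the prime-to-$p_i$ part: writing $m^r=p_i^{a_i r}M_i$ with $\gcd(M_i,p_i)=1$ and $S=T_i^{p_i^{a_i r}}$, the identities
\[ S^{M_i}-e_i=(S-e_i)\sum_{\ell=0}^{M_i-1}S^\ell,\qquad \sum_{j=0}^{m^r-1}T_i^j=\Bigl(\sum_{j=0}^{p_i^{a_i r}-1}T_i^j\Bigr)\Bigl(\sum_{\ell=0}^{M_i-1}S^\ell\Bigr) \]
reduce everything to the prime-power case applied to the element $T_i$ of the $p_i^{a_i}$-characteristic ring $Re_i$, giving $T_i^{m^r}-e_i\in Re_i(T_i-e_i)^{p_i^{a_i(r-1)+1}}$ and $\sum_{j<m^r}T_i^j\in Re_i(T_i-e_i)^{p_i^{a_i(r-1)+1}-1}$. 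Reassembling over $i$, and using that $d_{m,r}$ is at most each $p_i^{a_i(r-1)+1}$, yields the two inclusions of the lemma. I do not expect a genuine obstacle: everything is elementary, and the only points requiring care are the valuation identity $v_p\binom{p^s}{j}=s-v_p(j)$ (standard) and the verification that the idempotent decomposition of $R$ is compatible with the ideal $R(T-1)^d$ and with the recombination of the exponents into $d_{m,r}$.
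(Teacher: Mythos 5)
Your prime-power argument is correct and is in substance the paper's own: both reduce the claim to the vanishing in $R$ of the low-order binomial coefficients, you via the exact identity $\nu_p\binom{p^s}{j}=s-\nu_p(j)$, the paper via Legendre's formula plus a pigeonhole argument. The problem is the final reassembly step for composite $m$: you invoke ``$d_{m,r}$ is at most each $p_i^{a_i(r-1)+1}$'', but with the stated definition $d_{m,r}=\max_{1\le i\le k}p_i^{a_i(r-1)+1}$ this inequality goes the wrong way. In fact the lemma as printed (with $\max$) is false: take $m=6$, $r=1$, $R=(\Z/6\Z)[x]$, $T=1+x$; then $d_{6,1}=3$, but $T^6-1=3x^2+2x^3+3x^4+x^6$ has a nonzero $x^2$ term and so does not lie in $R\cdot(T-1)^3$ (equivalently, $\binom{6}{2}=15$ is not divisible by $6$). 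The definition should be a minimum rather than a maximum; the paper's own proof commits the same reversal at its last line (``$p_i^{a_i(r-1)+1}\ge d_{m,r}$'' is only valid when $d_{m,r}$ is the minimum), and the $\min$ version is all that the paper's applications need (there either $m$ is a prime power, where $\max=\min$, or only the crude bound $d_{m,r}\ge 2^r\ge r+1$ is used, which $\min_i p_i^{a_i(r-1)+1}\ge 2^r$ still supplies). So your argument, as written, correctly proves the $\min$ version of the lemma; you should say so explicitly instead of asserting the false comparison against the $\max$.

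Concerning the route: apart from this point your general-$m$ reduction is sound but genuinely different from the paper's. You split $R$ by the central orthogonal idempotents coming from $\Z/m\Z\cong\prod_i\Z/p_i^{a_i}\Z$, observe that $R(T-1)^d=\bigoplus_i Re_i(T_i-e_i)^d$ (the cross terms vanish since the $e_i$ are central and orthogonal), peel off the prime-to-$p_i$ part of the exponent by the geometric-series factorizations of $S^{M_i}-e_i$ and of $\sum_{j<m^r}T_i^j$, and then quote the prime-power case in each factor $Re_i$; all of these identities check out. The paper never leaves $\Z$: it works in $R$ directly and shows that $m$ divides $\binom{m^r}{j}$ for every $1\le j<\min_i p_i^{a_i(r-1)+1}$, treating each prime separately. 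The two arguments prove the same (corrected) statement; the paper's is shorter because it needs no ring decomposition, while yours isolates the prime-power case cleanly and makes the role of the $p$-adic valuation completely explicit, at the cost of the CRT bookkeeping.
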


\begin{proof}  Write $D \coloneqq T-1$, then by the binomial formula
$$ T^{m^r}-1 = (1+D)^{m^r} - 1 = \sum_{j=1}^{m^r} \binom{m^r}{j} D^j$$
and similarly
$$ \sum_{j=0}^{m^r-1} T^j = \sum_{j=1}^{m^r} \binom{m^r}{j} D^{j-1}$$
(which follows from the previous identity by dividing by $D=T-1$ in the formal polynomial ring of $T$).  It thus suffices to show that $\binom{m^r}{j}$ is divisible by $m$ whenever $1 \leq j < d_{m,r}$.

Suppose for contradiction that there was $1 \leq j < d_{m,r}$ such that $\binom{m^r}{j}$ is not divisible by $m$, thus there exists a prime $p_i, 1 \leq i \leq k$ such that
$$ \nu_{p_i}\left( \binom{m^r}{j} \right) < \nu_{p_i}(m) = a_i$$
where $\nu_p(n)$ denotes the number of times $p$ divides $n$.  But from Kummer's identity
$$ \nu_p\left(\binom{a}{b}\right) = \nu_p(a!)-\nu_p(b!)-\nu_p((a-b)!) = \sum_{l=1}^\infty \left\lfloor \frac{a}{p^l} \right\rfloor - \left\lfloor \frac{b}{p^l} \right\rfloor  - \left\lfloor \frac{a-b}{p^l} \right\rfloor$$
and the pigeonhole principle (noting that the summands are non-negative integers), we conclude that the quantity
$$ \left\lfloor \frac{m^r}{p_i^l} \right\rfloor -\left \lfloor \frac{j}{p_i^l} \right\rfloor - \left\lfloor \frac{m^r - j}{p_i^l} \right\rfloor$$
must vanish for at least one $a_i(r-1)+1 \leq l \leq ra_i$.  But this implies that $j$ is divisible by $p_i^l \geq p_i^{a_i(r-1)+1} \geq d_{m,r}$, giving the required contradiction.
\end{proof}

Now we prove Theorem \ref{pSylow}.

\begin{proof}
We induct on $k$. The case $k=1$ follows from the Pontryagin duality between ergodic $\Gamma$-systems and countable subgroups of $\hat \Gamma$ (see e.g., \cite[\S 1.5]{jst}), so now suppose that $k>1$ and the claim has already been proven for $k-1$.  Thus we may write
$$\ZZ^{k-1}(\XX) = \biggenprod_{p|m} \YY_p$$
for some ergodic $\Gamma_p$-systems $\YY_p$ of order $k$.  By Proposition \ref{abelext} and Theorem \ref{bounded-tor}, we can therefore write
$$ \XX = \left(\biggenprod_{p|m} \YY_p\right) \rtimes_{\rho_{k-1}} U_k$$
for some $m$-torsion compact abelian metrizable group $U_k$ (so in particular $U_k$ is totally disconnected), and some $U_k$-valued cocycle $\rho_{k-1}$ of type $k$ on $\biggenprod_{p|m} \YY_p$.  Applying the Sylow decomposition for profinite abelian groups (\cite[Proposition 2.3.8]{ribes-z} or \cite[Corollary 8.8]{hofmann}) to $U_k$, one can split
$$ U_k = \prod_{p|m} U_{k,p}$$
where $U_{k,p}$ is the closed subgroup of $U_k$ consisting of elements whose order is a power of $p$.  We can thus write
$$ \rho_{k-1} = (\rho_{k-1,p})_{p|m}$$
where for each prime $p$ dividing $m$, $\rho_{k-1,p}$ is a $U_{k,p}$-valued cocycle of order $k$ on $\biggenprod_{q|m} \YY_{q}$ (with $q$ also understood to be a prime dividing $m$).  Suppose we could show that for each $p|m$, $\rho_{k-1,p}$ is cohomologous to another $U_{k,p}$-valued cocycle $\rho'_{k-1,p}$ that is measurable with respect to the $\YY_p$ factor.  Then, for any prime $q|m$ distinct from $p$, we would have
$$ \partial_{\gamma_q} (\rho'_{k-1,p})_\gamma = 0$$
for all $\gamma_q \in \Gamma_q$ and $\gamma \in \Gamma$.  By the cocycle property, the left-hand side is also equal to $\partial_\gamma (\rho'_{k-1,p})_{\gamma_q}$, thus by ergodicity the function $(\rho'_{k-1,p})_{\gamma_q}$ is equal almost everywhere to a constant $c_p(\gamma_q)$.  From the cocycle equation $c_p$ is a homomorphism from $\Gamma_q$ to $U_{k,p}$, hence is trivial since the torsions are coprime.  We conclude that $(\rho'_{k-1,p})_{\gamma_q} = 0$ almost surely, thus by the cocycle equation $(\rho'_{k-1,p})_{\gamma}$ only depends on the $\Gamma_p$-component $\gamma_p$ of $\gamma$, and so (by abuse of notation) $\rho'_{k-1,p}$ can now be thought of as a $U_{k,p}$-valued cocycle on the $\Gamma_p$-system $\YY_p$ rather than the $\Gamma$-system $\biggenprod_{q|m} \YY_{q}$.  By hypothesis, $\rho_{k-1}$ is cohomologous to the $U_k$-valued cocycle $\rho'_{k-1} \coloneqq (\rho'_{k-1,p})_{p|m}$, and $\XX$ would then be equivalent as a $\Gamma$-system to
$$ \left(\biggenprod_{p|m} \YY_p\right) \rtimes_{\rho'_{k-1}} U_k = \biggenprod_{p|m} (\YY_p \rtimes_{\rho'_{k-1,p}} U_{k,p})$$
In particular one can view the $\Gamma_p$-system $\YY_p \rtimes_{\rho'_{k-1,p}} U_{k,p}$ as a (generalized) factor of the $\Gamma$-system $\XX$; since $\XX$ was ergodic and of order $k$, each $\YY_p \rtimes_{\rho'_{k-1,p}} U_{k,p}$ is so as well, and this would prove the theorem. 

It remains to show, for each fixed prime $p$ dividing $m$, that $\rho_{k-1,p}$ is cohomologous to a $\YY_p$-measurable cocycle.  Fix such a prime $p$ and split
\begin{align*}
m &= m_p \times m_{(p)} \\
\Gamma &= \Gamma_p \oplus \Gamma_{(p)} \\
\ZZ^{k-1}(\XX) &= \YY_p \genprod \YY_{(p)}
\end{align*}
where $m_p$ is the largest power of $p$ dividing $m$, $m_{(p)}$ is the largest factor of $m$ coprime to $p$, $\Gamma_{(p)} \coloneqq \bigoplus_{q|m: q \neq p} \Gamma_q$ is the prime-to-$p$ component of $\Gamma$, and $\YY_{(p)} \coloneqq \biggenprod_{q|m: q \neq p} \YY_q$ is the prime-to-$p$ component of $\ZZ^{k-1}(\XX)$.  Then $\Gamma_{(p)}$ is an $m_{(p)}$-torsion group and $\YY_{(p)}$ is an ergodic $\Gamma_{(p)}$-system of order $k-1$.  Also, since $U_k$ is $m$-torsion, the $p$-part $U_{k,p}$ is $m_p$-torsion.  

By Proposition \ref{abelext} and Theorem \ref{bounded-tor}, we may write
$$ \YY_{(p)} = U_{1,(p)} \rtimes_{\rho_{1,(p)}} U_{2,(p)} \rtimes_{\rho_{2,(p)}} \dots \rtimes_{\rho_{k-2,(p)}} U_{k-1,(p)}$$
for some $m_{(p)}$-torsion compact abelian metrizable groups $U_{1,(p)},\dots,U_{k-1,(p)}$, and some type $i$ $U_{i,(p)}$-valued cocycles $\rho_{i-1,(p)}$ on
$$ \ZZ^{i-1}(\YY_{(p)}) = U_{1,(p)} \rtimes_{\rho_{1,(p)}} U_{2,(p)} \rtimes_{\rho_{2,(p)}} \dots \rtimes_{\rho_{i-2,p}} U_{i-1,(p)}$$
for $i=1,\dots,k-1$.

We will show by downward induction on $i$ that $\rho_{k-1,p}$ is cohomologous (as a $U_{k,p}$-valued cocycle on $\YY_p \genprod \YY_{(p)}$) to a cocycle that is $\YY_p \genprod \ZZ^i(\YY_{(p)})$-measurable for $i=0,\dots,k-1$; taking $i=0$ will give us the claim.  The claim for $i=k-1$ is trivial; now suppose that $0 \leq i < k-1$ and that the claim has already been proven for $i+1$, thus after modification by a coboundary one can view $\rho_{k-1,p}$ as being a $U_{k,p}$-valued cocycle on the $\Gamma$-system
$$\YY_{i+1;p} \coloneqq \YY_p \genprod (\ZZ^i(\YY_{(p)}) \rtimes_{\rho_{i,(p)}} U_{i+1,(p)}).$$
By Proposition \ref{type}(v), $\rho_{k-1,p}$ remains of type $k$ on this factor system.  

The group $U_{i+1,(p)}$ acts measure-preservingly on $\YY_{i+1;p}$ by vertical rotations.  By\footnote{By abuse of notation, we use $\partial_u$ for $u \in U$ to denote differentiation with respect to the action of $U$, and $\partial_\gamma$ for $\gamma \in \Gamma$ to denote differentiation with respect to the action of $\Gamma$, thus for instance $\partial_u$ and $\partial_\gamma$ commute.} Proposition \ref{type}(iv), $\partial_u \rho_{k-1,p}$ is a cocycle of type $k-i-1$ for every $u \in U_{i+1,(p)}$.  More generally, if we let $\langle \partial_u \rangle$ be the ring of operators (endomorphisms) on $\M( \YY_{p,i+1}, U_{k,p})$ generated by  $\partial_u$, then $\langle \partial_u \rangle$ is a (commutative) ring of characteristic $m_p$ since $U_{k,p}$ is $m_p$-torsion.  Hence by Lemma \ref{alg-lemma} we see that for any $r \geq 1$ that
$$ \partial_{m_p^r u} \in \langle \partial_u \rangle \partial_u^{p^r}$$
and hence by many applications of Proposition \ref{type}(iv), there exists $r>0$ such that $\partial_{m_p^r u} \rho_{k-1,p}$ is a cocycle of type $0$ - that is to say, a coboundary - for every $u \in U_{i+1,(p)}$. On the other hand, since $U_{i+1,(p)}$ is $m_{(p)}$-torsion and $m_p^r$ is coprime to $m_{(p)}$, we have $m_p^r U_{i+1,(p)} = U_{i+1,(p)}$.  Thus $\partial_u \rho_{k-1,p}$ is a coboundary for every $u \in U_{i+1,(p)}$, that is to say for every $u \in U_{i+1,(p)}$ there exists $F_u \in \M( \YY_{i+1;p}, U_{k,p})$ solving the equation
\begin{equation}\label{upf}
 \partial_u \rho_{k-1,p} = dF_u.
 \end{equation}
Since $\YY_{i+1;p}$ is an ergodic $\Gamma$-system, we see that $F_u$ is determined up to a constant shift by $U_{k,p}$.  Thus if we let $H$ denote the subgroup of the semi-direct product $U_{i+1,(p)} \ltimes \M( \YY_{i+1;p}, U_{k,p})$ (defined in Remark \ref{cocyc-skew}, and equipped with the product topology) consisting of all pairs $(u, F_u) \in U_{i+1,(p)} \ltimes \M( \YY_{i+1;p}, U_{k,p})$ solving the equation \eqref{upf}, then $H$ is a closed subgroup of $U_{i+1,(p)} \ltimes \M( \YY_{i+1;p}, U_{k,p})$ and we have a short exact sequence
\begin{equation}\label{uhp}
 0 \to U_{k,p} \to H \to U_{i+1,(p)} \to 0
 \end{equation}
 of topological groups. Since the two factors $U_{k,p}, U_{i+1,(p)}$ are compact (and the map from $U_{k,p}$ to $H$ is an embedding, and $U_{i+1,(p)}$ has the quotient topology from $H$), $H$ is also compact thanks to \cite[Theorem 5.25]{hewittross}.

Since $U_{i+1,(p)}$ is abelian, \eqref{uhp} implies that $[H,H]\leq U_{k,p}$. In particular, $[H,H]$ is central, so $H$ is at most $2$-step nilpotent. Therefore, the commutator map $[\cdot,\cdot]$ on $H$ induces a bilinear map from $U_{i+1,(p)} \times U_{i+1,(p)}$ to $U_{k,p}$.  Since $U_{k,p}$ is $m_p$-torsion and $U_{i+1,(p)}$ is $m_{(p)}$-torsion, this bilinear map must be trivial; that is to say, $H$ is abelian.    Since $U_{k,p}$ is $m_p$-torsion and $U_{i+1,(p)}$ is $m_{(p)}$-torsion, with $m_p$ and $m_{(p)}$ coprime, the profinite Schur--Zassenhaus theorem (see e.g., \cite[Proposition 2.33]{wilson}) implies that this sequence splits in the category of profinite abelian groups.  Thus we have a continuous homomorphism $u \mapsto (u,F_u)$ from $U_{i+1,(p)}$ to $H$.  The homomorphism property is equivalent to the cocycle property
$$ F_{u+u'} = F_u + F_{u'} \circ V^u$$
for $u,u' \in U_{i+1,(p)}$.  If we then define $F \in \M( \YY_{i+1;p}, U_{k,p} )$ by the formula
$$ F(y_p, y_{(p)}, u+u_0) \coloneqq F_u( y_p, y_{(p)}, u_0)$$
for all $y_p \in \YY_p$, $y_{(p)} \in \ZZ^i(\YY_{(p)})$, $u \in U_{i+1,(p)}$ and a generic $u_0 \in U_{i+1,(p)}$ (cf., \cite[Lemma B.6]{btz}), then one easily sees that $F_u = \partial_u F$ for all $u \in U_{i+1,(p)}$.  From \eqref{upf} we conclude that the cocycle $\rho_{k-1,p} - dF$ is $U_{i+1,(p)}$-invariant, and therefore $\YY_p \genprod \ZZ^i(\YY_{(p)})$-measurable.  Since this cocycle is cohomologous to $\rho_{k-1,p}$, we have closed the induction.
\end{proof}

\section{Abramov and divisible imply Weyl}\label{sec-abweyl}

In this section we prove Theorem \ref{abramov-weyl}.  The arguments used to prove this theorem are not needed elsewhere in the paper.

Let $k, \Gamma, \XX$ be as in the theorem, then by Proposition \ref{abelext} we can write $\XX=\ZZ^{k-1}(\XX)\rtimes_\rho U$ for some (type $k$) cocycle $\rho$ and a compact abelian metrizable group $U$.  Suppose that $P \in \Poly_{\leq k}(\XX)$ is a polynomial of degree $\leq k$. From Proposition \ref{ppfacts}(iii) we know that for every $u \in U$, $\partial_u P$ is a polynomial of degree at most zero, and thus constant almost everywhere by ergodicity. Thus there is a map $\xi_P \colon U \to \T$ such that $\partial_u P = \xi_P(u)$ for all $u \in U$.  From the cocycle equation we see that $\xi_P$ is a homomorphism and thus lies in $\hat U$.  Thus $e(P)$ is an eigenfunction of the $U$-action, with eigenvalue $e(\xi_P)$.

Conversely, if $\xi \in \hat U$, then by the Abramov hypothesis, the function $(y,u) \mapsto e(\xi \cdot u)$ on $\XX=Z^{k-1}(\XX)\rtimes_\rho U$ must have a non-zero inner product with $e(P)$ for some $P \in \Poly_{\leq k}(\XX)$.  Note that $(y,u) \mapsto e(\xi \cdot u)$ is an eigenfunction of the $U$-action with eigenvalue $e(\xi)$. It follows from the unitary nature of this action that distinct $U$-eigenfunctions are orthogonal. Therefore, $\xi_P = \xi$.  We thus have
$$ P(y,u) = F_\xi(y) + \xi \cdot u$$
for almost every $y \in \ZZ^{k-1}(\XX)$, $u \in U$, and some $F_\xi \in \M( \ZZ^{k-1}(\XX), \T)$.  Taking a derivative $\partial_\gamma$ for any $\gamma \in \Gamma$, we conclude that
$$ \partial_\gamma P(y,u) = \partial_\gamma F_\xi(y) + \xi \cdot \rho_\gamma(y).$$
Recall that $\partial_u P = \xi_P(u)$ for all $u\in U$, and so $\partial_\gamma P$ is $U$-invariant for all $\gamma\in \Gamma$. Thus, $dF_\xi + \xi \cdot \rho$ is a $\ZZ^{k-1}(\XX)$-measurable polynomial of degree $\leq k-1$.  We conclude that $\xi \cdot \rho$ is a $\T$-valued quasi-coboundary on $\ZZ^{k-1}(\XX)$ of degree $\leq k-1$.  Since this holds for every $\xi\in\hat U$, Proposition \ref{typecircle:prop} implies that $\rho$ is a $U$-valued quasi-coboundary on $\ZZ^{k-1}(\XX)$ of degree $\leq k-1$. By Remark \ref{cohiso}, cohomologous cocycles define isomorphic extensions, thus we may therefore assume without loss of generality that $\rho_\gamma$ is a polynomial of degree $\leq k-1$ for every $\gamma \in \Gamma$; since $\ZZ^{k-1}(\XX)$ was Weyl of order $k-1$, we conclude that $\XX$ is Weyl of order $k$ as required.

\section{Obtaining divisibility by extension}\label{divis-ext-sec}

The purpose of this section is to establish Theorem \ref{divis}.

Let $\Gamma$ be a countable abelian group.  Recall from Definition \ref{def-multilinear} that for any $k \geq 0$, $\SML_k(\Gamma,\T)$ denotes the space of symmetric multilinear forms $b \colon \Gamma^k \to \T$.  This can be viewed as a compact abelian subgroup of $\T^{\Gamma^k}$.  We have $\SML_0(\Gamma,\T) = \T$, while $\SML_1(\Gamma,\T)$ is essentially the Pontryagin dual $\hat \Gamma$ of $\Gamma$; thus one can view $\SML_k(\Gamma,\T)$ for $k > 1$ as ``higher order'' Pontryagin duals of $\Gamma$.

If $\XX$ is an ergodic $\Gamma$-system and $P \in \Poly_{\leq k}(\XX)$ is a polynomial of degree $\leq k$, then for any $\gamma_1,\dots,\gamma_k \in \Gamma$, the derivative $\partial_{\gamma_1} \dots \partial_{\gamma_k} P$ is a polynomial of degree $\leq 0$ and is thus constant by ergodicity.  Thus there is a map $\nabla^k P \colon \Gamma^k \to \T$ defined by
$$ \nabla^k P(\gamma_1,\dots,\gamma_k) \coloneqq \partial_{\gamma_1} \dots \partial_{\gamma_k} P.$$
From the cocycle equation we see that $\nabla^k P$ is multilinear, and by commutativity it lies in $\SML_k(\Gamma,\T)$, and so $\nabla^k$ is a homomorphism from $\Poly_{\leq k}(\XX)$ to $\SML_k(\Gamma,\T)$.  The image of this homomorphism was called the \emph{$k^{\mathrm{th}}$ discrete spectrum}\footnote{The case $k=1$ corresponds to the classical discrete spectrum of $\XX$, as defined for instance in \cite[Chapter "Discrete spectrum"]{halmos2013lectures}.} of $\XX$ in \cite{shalom2}; we denote it as $\Spec_k(\XX)$.  The kernel of this homomorphism is $\Poly_{<k}(\XX)$, thus we have a short exact sequence
\[\begin{tikzcd}
	0 & {\Poly_{<k}(\XX)} & {\Poly_{\leq k}(\XX)} & {\Spec_k(\XX)} & 0
	\arrow[from=1-4, to=1-5]
	\arrow["{\nabla^k}", from=1-3, to=1-4]
	\arrow[from=1-2, to=1-3]
	\arrow[from=1-1, to=1-2]
\end{tikzcd}\]
of abelian groups.  Note from Proposition \ref{ppfacts}(i) that $\Spec_k(\XX)$ is at most countable.

In general, a symmetric multilinear form $b \in \SML_k(\Gamma.\T)$ need not lie in the $k$-discrete spectrum of a given $\Gamma$-system $\XX$.  However, we have the following construction:

\begin{proposition}\label{bkg}  Let $b \in \SML_k(\Gamma,\T)$.  Then there exists an ergodic $\Gamma$-system $\XX$ such that $k! b \in \Spec_k(\XX)$.
\end{proposition}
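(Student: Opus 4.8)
The plan is to build $\XX$ explicitly as an iterated abelian extension (a Weyl-type tower) whose top-degree polynomial realizes $k!\,b$. The natural model is the ``universal'' nilsystem attached to $b$: take a compact abelian group on which $\Gamma$ acts, extend by $\T$ using a cocycle whose derivatives encode $b$, and iterate. Concretely, I would first treat the case $k=1$, where $b \in \SML_1(\Gamma,\T) \cong \hat\Gamma$ is just a character; then the rotational $\Gamma$-system $\XX = \T$ with $T^\gamma x = x + b(\gamma)$ is ergodic (on the subgroup generated by the image, or one passes to that Kronecker factor) and $\nabla^1(\mathrm{id}_\T) = b$, so even $b \in \Spec_1(\XX)$. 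For general $k$ I would induct: given $b \in \SML_k(\Gamma,\T)$, restrict/contract one variable to produce $b' \in \SML_{k-1}(\Gamma,\T)$ and, using the inductive hypothesis, obtain an ergodic $\Gamma$-system $\YY$ with a polynomial $Q \in \Poly_{\leq k-1}(\YY)$ whose $(k-1)$st derivative is (a scalar multiple of) $b'$; then I would adjoin a $\T$-extension $\XX = \YY \rtimes_\rho \T$ where $\rho_\gamma$ is chosen, in the spirit of the cocycle identity \eqref{cocycle-ident}, so that a polynomial $P$ on $\XX$ exists with $\partial_\gamma P$ equal to $Q$ twisted appropriately; the factorials accumulate because symmetrizing/antisymmetrizing a $k$-fold iterated derivative picks up a $k!$.

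The cleanest route, and the one I would actually write, avoids hand-building cocycles variable by variable: instead realize $b$ via a polynomial phase on a group rotation. Since $\Gamma$ is the image of $\Z^\omega$ under some surjection, and $\SML_k$ is functorial, it suffices to produce the system for $\Gamma = \Z^\omega$ (then push down along the quotient, noting the resulting system is still a $\Gamma$-system and $k!\,b$ is still in its $k$th discrete spectrum, since the discrete spectrum is pulled back along factor maps by Proposition \ref{ppfacts} and the quotient relations). For $\Z^\omega$ with generators $e_1, e_2, \dots$, write $b(e_{i_1},\dots,e_{i_k}) = \beta_{i_1,\dots,i_k} \in \T$ symmetric in the indices, and consider the phase polynomial on the infinite torus $\T^{\binom{\N}{\leq k}}$ (coordinates indexed by multisets of size $\leq k$) given in terms of the coordinate functions and the ``discrete monomials'' $\binom{x}{j}$; translation by the vector whose $e_i$-coordinate is, roughly, the appropriate shift. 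The key computation is that on the system $(\T, x \mapsto x + \alpha)$ one has $\partial_\gamma^{(k)}\binom{x}{k} = $ constant, and a $k$-fold iterated finite difference of the binomial polynomial of degree $k$ in $k$ \emph{distinct} directions equals $k!$ times the leading coefficient — this is exactly where the factor $k!\,b$ (rather than $b$) enters, because $\nabla^k$ is defined via distinct directions $\gamma_1,\dots,\gamma_k$ while the polynomial naturally carries $\binom{\cdot}{k}$ with integer (not rational) coefficients. So I would define $\XX$ as the sub-rotational-system of a suitable finite-dimensional torus generated by the orbit, let $P$ be the relevant $\T$-valued phase polynomial of degree $k$ on it, verify $P \in \Poly_{\leq k}(\XX)$ by checking $\partial_{\gamma_1}\cdots\partial_{\gamma_{k+1}} P = 0$ (degree count on binomials), and compute $\nabla^k P = k!\,b$.

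The main obstacle is \textbf{ensuring ergodicity} of the system so constructed while still retaining the full multilinear form $b$ in the spectrum: a naive group rotation on a torus need not be ergodic, and passing to the (ergodic) sub-system generated by a single orbit might kill some of the coordinates that were carrying the data of $b$. I expect to handle this the same way the $k=1$ case is handled classically (Kronecker systems $\leftrightarrow$ subgroups of $\hat\Gamma$, as cited from \cite[\S 1.5]{jst}): one does not insist on the full torus, but rather takes $\XX$ to be the orbit closure of a generic point, which is automatically an ergodic group rotation on a compact abelian group $Z \leq \T^{\binom{\N}{\leq k}}$; one then checks that the polynomial $P$, restricted to $Z$, still has $\nabla^k P = k!\,b$ — which holds because $\nabla^k P$ is computed from the $\Gamma$-action alone (the shifts $b(\gamma),\dots$), not from which point of the torus one sits at, so it is insensitive to the passage to the orbit closure. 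A secondary technical point is verifying that $P$ genuinely descends to a well-defined element of $\M(Z,\T)$ and that the degree bound survives restriction to the closed subgroup $Z$; both follow since restriction of polynomials to a sub-system is a polynomial of no greater degree (Proposition \ref{prop-functoriality} / \ref{ppfacts}), so no real work is needed there.
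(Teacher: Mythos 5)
Your second route --- the one you say you would actually write, realizing $k!\,b$ by a phase polynomial on a group rotation and then passing to an orbit closure --- has a genuine gap for $k\geq 2$, and not merely a technical one. An orbit closure of a translation action is a Kronecker (rotational) system, and such systems cannot carry the required polynomial when $b$ takes irrational values. Concretely, take $\Gamma=\Z$ and $b(m,n)=\alpha mn$ with $\alpha$ irrational, and suppose $P\in\Poly_{\leq 2}(Z)$ on an ergodic rotation $Z$ had $\nabla^2P=2b$. Since the orbit of the rotating element is dense and translation acts continuously in measure, $P$ is then a polynomial of degree $\leq 2$ for the action of all of $Z$, so $\partial_s\partial_t P$ is a constant for each $s,t\in Z$ and $(s,t)\mapsto \partial_s\partial_t P$ is a continuous symmetric bilinear form on $Z$; such a form vanishes on the connected component of the identity and takes torsion values on the profinite quotient, so $\nabla^2P(m,n)$ is a rational multiple of $mn$ and can never equal $2\alpha mn$. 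This is precisely why unipotent skew towers, not rotations, are needed to realize higher-order data. Two further steps of the same route also do not survive scrutiny: the ``discrete monomials'' $\binom{x}{j}$ are not well-defined functions on $\T$, so the proposed phase polynomial does not live on the torus you build; and the reduction to $\Gamma=\Z^\omega$ is not legitimate as stated, since a $\Z^\omega$-system descends to a $\Gamma$-system only when the kernel of $\Z^\omega\to\Gamma$ acts trivially, which your construction does not guarantee --- and the proposition asks for a $\Gamma$-system, not a generalized extension.

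Your first sketch (an iterated affine/unipotent tower) is the right idea, but it is exactly the part you left unexecuted. The paper carries it out in one stroke: take the compact abelian space $\SML_0(\Gamma,\T)\times\cdots\times\SML_{k-1}(\Gamma,\T)\times\{b\}$ of tuples $(b_i)_{i=0}^{k}$ with $b_k=b$, equipped with Haar probability measure, and let $\gamma$ act by the unipotent affine map $T^\gamma(b_i)_{i=0}^k=\bigl(\sum_{j=0}^{k-i}\binom{k-i}{j}b_{i+j}(\gamma^{\times j},\cdot)\bigr)_{i=0}^k$; this is built directly from $\Gamma$, so no passage through $\Z^\omega$ is needed. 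The coordinate $P\bigl((b_i)_{i=0}^k\bigr)=b_0$ is a degree-$k$ polynomial, and an alternating-difference computation --- your observation that $k$ distinct differences of a degree-$k$ expression pick up $k!$ times the top coefficient is exactly the mechanism --- gives $\partial_{\gamma_1}\cdots\partial_{\gamma_k}P=k!\,b$. Ergodicity is then obtained not by orbit closures but by ergodic decomposition: since $\nabla^kP=k!\,b$ is a constant, it persists on almost every ergodic component, which is the correct form of your remark that $\nabla^kP$ is ``computed from the action alone''; that remark, however, cannot rescue the rotation model, where no suitable $P$ exists in the first place.
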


\begin{proof}  To motivate the construction, define the functions $f_i \colon \Gamma \to \SML_i(\Gamma,\T)$ for $i=0,\dots,k$ by the formula
$$ f_i(x)(\gamma_1,\dots,\gamma_i) \coloneqq b( x^{\times k-i}, \gamma_1,\dots,\gamma_i)$$
for $x, \gamma_1,\dots,\gamma_i \in \Gamma$, where $x^{\times k-1}$ denotes $k-i$ repetitions of $x$. We abbreviate this formula as
$$ f_i(x) \coloneqq b( x^{\times k-i}, \cdot).$$
Clearly $f_k = b$, and we have the recursive identities
$$ f_i(x + \gamma) = \sum_{j=0}^{k-i} \binom{k-i}{j} f_j(x)(\gamma^{\times k-i-j}, \cdot)$$ 
while $f_0 \colon x \mapsto b(x,\dots,x)$ is a polynomial of degree $k$ with $\nabla^k f_0 = k! b$.

Inspired by this calculation, we let $\XX$ denote the product space
$$ X \coloneqq \SML_0(\Gamma,\T) \times \SML_1(\Gamma,\T) \times \dots \times \SML_{k-1}(\Gamma,\T) \times \{b\}$$
of tuples $(b_i)_{i=0}^k$ with $b_i \in \SML_i(\Gamma,\T)$ and $b_k = b$, which we equip with the Haar probability measure, and with the shift
$$ T^\gamma (b_i)_{i=0}^k \coloneqq \left( \sum_{j=0}^{k-i} \binom{k-i}{j} b_{i+j}(\gamma^{\times j},\cdot) \right)_{i=0}^k.$$
Thus for instance if $k=3$, then
$$ T^\gamma (b_0,b_1,b_2,b_3) = (b_{0,\gamma}, b_{1,\gamma}, b_{2,\gamma}, b_{3,\gamma})$$
where
\begin{align*}
b_{0,\gamma} &= b_0 + 3 b_1(\gamma) + 3 b_2(\gamma,\gamma) + b_3(\gamma,\gamma,\gamma) \\
b_{1,\gamma}(\gamma_1) &= b_1(\gamma_1) + 2 b_2(\gamma,\gamma_1) + b_3(\gamma,\gamma,\gamma_1) \\
b_{2,\gamma}(\gamma_1,\gamma_2) &= b_2(\gamma_1,\gamma_2) + b_3(\gamma,\gamma_1,\gamma_2) \\
b_{3,\gamma}(\gamma_1,\gamma_2,\gamma_3) &= b_3(\gamma_1,\gamma_2,\gamma_3).
\end{align*}
It is easy to see from Fubini's theorem that each $T^\gamma$ is a measure-preserving map on $X$.  It is also an action, since
\begin{align*}
T^\gamma T^{\gamma'} (b_i)_{i=0}^k &= \left( \sum_{j=0}^{k-i} \sum_{l=0}^{k-i-j} \binom{k-i}{j} \binom{k-i-j}{l} b_{i+j+l}(\gamma^{\times j},(\gamma')^{\times l}, \cdot) \right)_{i=0}^k \\
&= \left(\sum_{m=0}^{k-i}  \binom{k-i}{m} \sum_{j=0}^m \binom{m}{j} b_{i+m}(\gamma^{\times j},(\gamma')^{\times m-j}, \cdot) \right)_{i=0}^k \\
&= \left(\sum_{m=0}^{k-i} \binom{k-i}{m} b_{i+m}((\gamma+\gamma')^{\times m}, \cdot) \right)_{i=0}^k \\
&= T^{\gamma+\gamma'} (b_i)_{i=0}^k.
\end{align*}
Thus $\XX$ is a $\Gamma$-system (though not necessarily an ergodic one).  Let $P \in \M(\XX,\T)$ denote the function
$$ P( (b_i)_{i=0}^k ) \coloneqq b_0.$$
For any $\gamma_1,\dots,\gamma_k \in \Gamma$ and $\epsilon_1,\dots,\epsilon_k \in \{0,1\}$, we have
$$ P( T^{\epsilon_1 \gamma_1 + \dots + \epsilon_k \gamma_k} (b_i)_{i=0}^k )
= \sum_{j=0}^{k} \binom{k}{j} b_j((\epsilon_1 \gamma_1+\dots+\epsilon_k \gamma_k)^{\times j}).$$
Using the symmetric multilinear nature of the $b_j$, the right-hand side can be expanded as
$$ k! \epsilon_1 \dots \epsilon_k b_k(\gamma_1,\dots,\gamma_k)$$
plus finitely many additional terms, each of which are independent of at least one of the $\epsilon_i$.  Taking alternating differences and using $b_k = b$, we conclude that
$$ \partial_{\gamma_1} \dots \partial_{\gamma_k} P = k! b.$$
In particular $P \in \Poly_{\leq k}(\XX)$.  If we decompose $\XX$ into ergodic components, then for almost every ergodic component $\XX'$ we have $P \in \Poly_{\leq k}(\XX')$ and $\nabla^k P = k! b$ in $\XX'$, giving the claim.
\end{proof}

The above proposition is particularly useful when $\Gamma$ is torsion-free:

\begin{corollary}  Let $\Gamma$ be torsion-free, let $\XX$ be an ergodic $\Gamma$-system, let $P \in \Poly_{\leq k}(\XX)$ for some $k \geq 1$, and let $m \geq 1$ be an integer.  Then there exists an ergodic extension $\YY$ of $\XX$ and $Q \in \Poly_{\leq k}(\YY)$ such that $mQ = P$ (where we identify $\Poly_{\leq k}(\XX)$ with a subgroup of $\Poly_{\leq k}(\YY)$ in the obvious fashion).
\end{corollary}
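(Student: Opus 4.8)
The plan is to induct on $k$, where it is convenient to also allow the (harmless, stronger) case $k=0$: for $k=0$, ergodicity gives $\Poly_{\leq 0}(\XX)=\T$, which is divisible, so one may take $\YY=\XX$ and $Q=P/m$.

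For the inductive step with $k\geq 1$, the idea is to divide the ``top symbol'' $b\coloneqq\nabla^k P\in\SML_k(\Gamma,\T)$ by $m$, realize the quotient via a separate system furnished by Proposition~\ref{bkg}, subtract, and recurse on the lower-degree remainder. First I would record that $\SML_k(\Gamma,\T)$ is divisible when $\Gamma$ is torsion-free: it equals the Pontryagin dual $\Hom(S^k\Gamma,\T)$ of the $k$-th symmetric power $S^k\Gamma$ (the quotient of $\Gamma^{\otimes k}$ by the symmetry relations), and the dual of a discrete abelian group is divisible exactly when that group is torsion-free; here $S^k\Gamma$ is torsion-free because, writing $\Gamma$ as an increasing union of finitely generated (hence free) subgroups $\Gamma_n$, one has $S^k\Gamma=\varinjlim S^k\Gamma_n$ with injective transition maps (injectivity being visible after tensoring with $\Q$), exhibiting $S^k\Gamma$ as an increasing union of free groups. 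Granting this, choose $e\in\SML_k(\Gamma,\T)$ with $m\,k!\,e=b$ and apply Proposition~\ref{bkg} to $e$, obtaining an ergodic $\Gamma$-system $\ZZ$ and $R\in\Poly_{\leq k}(\ZZ)$ with $\nabla^k R=k!\,e$, so that $\nabla^k(mR)=b=\nabla^k P$.

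Next I would pass to a joining $\YY_1$ of $\XX$ and $\ZZ$ that is ergodic and projects onto both factors — for instance an ergodic component of the product system $\XX\times\ZZ$, which automatically projects onto each of the ergodic systems $\XX$ and $\ZZ$. Then $\XX$ and $\ZZ$ are factors of $\YY_1$, so $P$ and $R$ may be viewed in $\Poly_{\leq k}(\YY_1)$ (polynomials pull back along factor maps), and $\nabla^k(P-mR)=0$, i.e.\ $P-mR\in\ker\nabla^k=\Poly_{<k}(\YY_1)=\Poly_{\leq k-1}(\YY_1)$. Applying the inductive hypothesis to $\YY_1$, the polynomial $P-mR$, and the integer $m$ yields an ergodic extension $\YY$ of $\YY_1$ together with $S\in\Poly_{\leq k-1}(\YY)$ satisfying $mS=P-mR$. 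Then $Q\coloneqq R+S\in\Poly_{\leq k}(\YY)$ satisfies $mQ=P$, and $\YY$ is an extension of $\XX$ since $\XX$ is a factor of $\YY_1$, which is a factor of $\YY$; this closes the induction.

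The step I expect to require the most care is the divisibility of $\SML_k(\Gamma,\T)$ (equivalently, torsion-freeness of the symmetric power $S^k\Gamma$), the rest being routine manipulation of cocycles, factor maps, and ergodic joinings. What is really needed from it is just that $b$ is divisible by $m\,k!$ inside $\SML_k(\Gamma,\T)$, so that Proposition~\ref{bkg} can be invoked to produce a system whose $k$-discrete spectrum contains a preimage of $b$ under multiplication by $m$.
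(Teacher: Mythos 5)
Your proposal is correct and follows essentially the same route as the paper: divide the top symbol $\nabla^k P$ by $mk!$ in $\SML_k(\Gamma,\T)$ (the paper cites Proposition \ref{symdivdiscrete} for this divisibility, which your symmetric-power argument re-derives), realize the quotient via Proposition \ref{bkg}, pass to an ergodic component of the product system (which, as the paper notes in a footnote, extends $\XX$ for \emph{almost every} component — your ``automatically'' should be read in that sense), and induct on $k$ for the lower-degree remainder.
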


We remark that this result was first proven (by a different method) in \cite[Proposition 3.15]{shalom2}.

\begin{proof}  By Proposition \ref{symdivdiscrete} and the torsion-free nature of $\Gamma$, we can find $b \in \SML_k(\Gamma,\T)$ such that $m k! b = \nabla^k P$.  By Proposition \ref{bkg}, we can find an ergodic $\Gamma$-system $\ZZ$ and $Q \in \Poly_{\leq k}(\ZZ)$ such that $\nabla^k Q = k! b$.  If we view $P,Q$ as degree $k$ polynomials in the product system $\XX \times \ZZ$, we conclude that $\nabla^k (P-mQ) = 0$, thus $P-mQ \in \Poly_{<k}(\XX \times \ZZ)$.  The product system $\XX \times \ZZ$ need not be ergodic, but almost every ergodic component $\YY$ of $\XX \times \ZZ$ will be an extension\footnote{Indeed, the conditional expectation operator $\E(|\XX)$ maps invariant functions of $\XX \times \ZZ$ to invariant (and hence constant) functions in $\XX$, hence by duality mean zero functions in $\XX$ are orthogonal to the measure of almost every ergodic component $\YY$, so that such measures pushforward to the measure on $\XX$.} of $\XX$, and $P-mQ \in \Poly_{<k}(\YY)$.  The claim now follows by an induction on $k$.
\end{proof}

We can iterate this\footnote{A similar iteration appears in the construction of sated extensions in \cite{austin-norm, austin2015pleasant}.  We thank Tim Austin for this observation.} to conclude

\begin{corollary}  Let $\Gamma$ be torsion-free, let $\XX$ be an ergodic $\Gamma$-system, and let $k \geq 1$.  Then there is an ergodic extension $\YY$ of $\XX$ which is $k$-divisible.
\end{corollary}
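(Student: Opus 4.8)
The plan is to iterate the preceding corollary and then pass to an inverse limit. First I would reduce the goal to a countable list of requirements: I claim $\YY$ is $k$-divisible as soon as $\Spec_d(\YY)$ is divisible for every $1\le d\le k$. Indeed $\Poly_{\le 0}(\YY)=\T$ is divisible for ergodic $\YY$, and if $\Poly_{<d}(\YY)$ is divisible then, being divisible, it is injective as a $\Z$-module, so the short exact sequence $0\to\Poly_{<d}(\YY)\to\Poly_{\le d}(\YY)\xrightarrow{\nabla^d}\Spec_d(\YY)\to 0$ splits, and $\Poly_{\le d}(\YY)$ is divisible once $\Spec_d(\YY)$ is; an induction on $d$ then gives $k$-divisibility. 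The virtue of this reduction is that each $\Spec_d(\,\cdot\,)$ is at most \emph{countable} (Proposition~\ref{ppfacts}(i)), so we must only divide countably many explicit symmetric forms by countably many integers.

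Next I would build an increasing tower $\XX=\XX_0\le\XX_1\le\XX_2\le\cdots$ of ergodic $\Gamma$-systems by a standard bookkeeping argument, and set $\YY\coloneqq\varprojlim_n\XX_n$, which is again an ergodic $\Gamma$-system extending $\XX$. Call a triple $(d,b,m)$ with $1\le d\le k$, $m\ge 1$, and $b\in\Spec_d(\XX_n)$ a \emph{task} available at stage $n$; since only countably many tasks are ever available and there are countably many stages, we may arrange that every task appearing at any stage is eventually handled. To handle $(d,b,m)$ at stage $n$ one picks $P\in\Poly_{\le d}(\XX_n)$ with $\nabla^d P=b$ and applies the preceding corollary (with $d$ in place of $k$) to obtain an ergodic extension $\XX_{n+1}\ge\XX_n$ and $Q\in\Poly_{\le d}(\XX_{n+1})$ with $mQ=P$; then $\nabla^d Q\in\Spec_d(\XX_{n+1})$ satisfies $m\cdot\nabla^d Q=b$, so $b$ has become $m$-divisible in the discrete spectrum. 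Granting the claim of the next paragraph, any $b\in\Spec_d(\YY)$ already lies in some $\Spec_d(\XX_n)$, so for each $m$ the task $(d,b,m)$ was handled at a later stage and $b\in m\cdot\Spec_d(\XX_{n'})\subseteq m\cdot\Spec_d(\YY)$; thus $\Spec_d(\YY)$ is divisible for every $1\le d\le k$, and $\YY$ is $k$-divisible by the first paragraph.

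The step I expect to be the main obstacle is the claim that the discrete spectrum is continuous along the tower: $\Spec_d(\varprojlim_n\XX_n)=\bigcup_n\Spec_d(\XX_n)$ for each $d$, equivalently that the top-order datum $\nabla^d Q$ of a degree-$\le d$ polynomial $Q$ on the limit is already realized at a finite stage. For $d=1$ this is the classical fact that the point spectrum of an inverse limit is the union of the point spectra of the stages (if $f$ is an eigenfunction of the limit, then $\E(f\mid\XX_n)$ is an eigenfunction of $\XX_n$ with the same eigenvalue, nonzero for large $n$). For general $d$ I would argue as follows: on an ergodic system a polynomial phase of positive degree has zero mean, so $\langle e(P),e(P')\rangle=\int e(P-P')=0$ whenever $\nabla^d P\neq\nabla^d P'$; hence the $L^2$ space of the degree-$\le d$ Abramov factor decomposes orthogonally as $\bigoplus_{b\in\Spec_d}V_b$, where $V_b$ is the closed span of the phases $e(P)$ with $P\in\Poly_{\le d}$ and $\nabla^d P=b$. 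Since the Abramov factor is, like the factors $\ZZ^k$, continuous under inverse limits, the $L^2$ space of the Abramov factor of $\varprojlim_n\XX_n$ is the closure of the union of those of the $\XX_n$; so if $\nabla^d Q=b_0$ were missing from every $\Spec_d(\XX_n)$, the nonzero function $e(Q)\in V_{b_0}$ would be orthogonal to $V_b(\XX_n)$ for all $n$ and all $b\in\Spec_d(\XX_n)$, hence orthogonal to the whole ambient $L^2$ — a contradiction. Carrying this out rigorously for all $d\le k$ (rather than just $d=1$, which already suffices for the $1$-divisibility result of \cite[\S 3]{shalom2}) is where the Host--Kra structure theory of the Abramov factors, and some care with inverse limits, are required.
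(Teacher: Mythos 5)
Your overall architecture — iterate the preceding corollary along a countable bookkeeping of tasks, pass to an inverse limit $\YY=\varprojlim_n \XX_n$, and conclude via the fact that degree-$\leq d$ data of the limit is already realized at a finite stage — is the same as the paper's; the preliminary reduction to divisibility of $\Spec_d(\YY)$ is harmless but also unnecessary, since $\Poly_{\leq d}(\XX_n)$ is already countable modulo the divisible group $\T$ of constants (Proposition \ref{ppfacts}(i)), so one can bookkeep directly at the level of polynomials. The genuine gap is in your third paragraph, i.e.\ in your proposed proof of the continuity claim $\Spec_d(\varprojlim_n\XX_n)=\bigcup_n\Spec_d(\XX_n)$. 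The statement it rests on — that a nonconstant polynomial phase on an ergodic system has zero mean, so that $e(P)\perp e(P')$ whenever $\nabla^d P\neq \nabla^d P'$ — is false for $d\geq 2$. Concretely, let $Z=\Z/2\Z$ with the ergodic $\Z$-action of rotation by $1$ (note $\Z$ is torsion-free, so this is a legitimate system here), and let $P(0)=0$, $P(1)=1/4$ in $\T$. Then $\partial_1 P$ takes the values $\pm 1/4$, but $\partial_1\partial_1 P\equiv 1/2$ is constant and all third derivatives vanish, so $P\in\Poly_{\leq 2}(Z)$ is nonconstant with $\nabla^2 P\neq 0$; yet $\E e(P)=(1+e(1/4))/2=(1+i)/2\neq 0$. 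Hence $V_{\nabla^2 P}$ is not orthogonal to $V_0$, the decomposition $\bigoplus_b V_b$ of the Abramov factor is not orthogonal, and the final ``orthogonal to all of $L^2$'' contradiction does not go through. Your eigenfunction argument is correct only for $d=1$. (A second unproven input, that the Abramov factor commutes with inverse limits in the sense you need, is essentially of the same strength as the claim being proven, so it cannot be taken off the shelf either.)

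The fact you need is true, and is exactly Proposition \ref{ppfacts}(vi) of the paper: for an inverse limit of ergodic systems, $\Poly_{\leq d}(\XX)=\bigcup_\alpha \Poly_{\leq d}(\XX_\alpha)$, which immediately gives the corresponding statement for $\Spec_d$ by applying $\nabla^d$. Its proof is not by orthogonality but by a seminorm/stability argument: $\|e(P)\|_{U^{d+1}}=1$, one approximates $e(P)$ in $L^2$ (and hence in $U^{d+1}$) by a function measurable in some finite stage, invokes the $99\%$-inverse theorem of \cite{eisner-tao} to replace that approximant by $e(R)$ with $R\in\Poly_{\leq d}(\XX_\alpha)$, and then uses the $L^2$-separation of distinct polynomial phases from Proposition \ref{ppfacts}(i) — a quantitative lower bound on distances, precisely because orthogonality fails — to conclude that $P$ and $R$ differ by a constant. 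If you replace your third paragraph by this argument (or a citation of it), the rest of your proposal goes through and coincides with the paper's proof.
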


\begin{proof}  By iterating the above construction and taking inverse limits (noting from Proposition \ref{ppfacts}(i) that $\Poly_{\leq k}(\XX_n)$ is countable modulo constants for every $n$), we can construct a sequence of ergodic extensions
$$ \XX = \XX_0 \leq \XX_1 \leq \XX_2 \dots$$
such that for any $1 \leq i \leq k$, $n \geq 1$, $P \in \Poly_{\leq i}(\XX_n)$, and $m \geq 1$, there exists $Q \in \Poly_{\leq i}(\XX_{n+1})$ such that $mQ = P$.  Now let $\YY$ be the inverse limit of the $\XX_n$, then $\YY$ is also an ergodic extension of $\XX$, as an inverse limit of ergodic systems. From Proposition \ref{ppfacts}(vi), $\Poly_{\leq i}(\YY)$ is the union of the $\Poly_{\leq i}(\XX_n)$ and is thus divisible for each $1 \leq i \leq k$, giving the claim.
\end{proof}

Now we can easily establish the first part of Theorem \ref{divis}, because if $\XX$ is of order $k$, and $\YY$ is the ergodic extension constructed by the above corollary, then $\ZZ^k(\YY)$ is an extension of $\XX$, being the maximal factor of $\YY$ of order $k$.  On the other hand, since every element of $\Poly_{\leq k}(\YY)$ is measurable in $\ZZ^k(\YY)$ by Proposition \ref{ppfacts}(ii), we have $\Poly_{\leq i}(\YY) = \Poly_{\leq i}(\ZZ^k(\YY))$, and so the $k$-divisibility of $\YY$ implies the $k$-divisibility of $\ZZ^k(\YY)$, and so $\ZZ^k(\YY)$ is a $k$-divisible order $k$ ergodic extension of $\XX$, as required.

To prove the second part of Theorem \ref{divis} we need to know how the property of being totally disconnected interacts with polynomials and with basic operations on $\Gamma$-systems.  We begin with

\begin{proposition}\label{prop-div-invlim}
    Let $\Gamma$ be a countable abelian group and let $k\geq 1$.  
    Let $\XX,\XX_i, i\in I$ be ergodic totally disconnected $\Gamma$-systems of order $k$. 
    \begin{itemize}
        \item[(i)] Any factor $\YY$ of $\XX$ is totally disconnected. 
        \item[(ii)] If $\WW$ is the inverse limit of the $\XX_i$ in the category of $\Gamma$-systems, then $\WW$ is a totally disconnected $\Gamma$-system of order $k$. 
    \end{itemize}
\end{proposition}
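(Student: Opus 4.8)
\emph{Plan.} The strategy for both parts is to reduce to statements about the structure groups $U_1,\dots,U_k$, using two elementary facts: a compact abelian metrizable group $U$ is totally disconnected if and only if its Pontryagin dual $\hat U$ is a torsion group, and the class of torsion abelian groups is closed under passage to subgroups and to directed unions. Throughout I will use the standard Host--Kra--Ziegler machinery from Appendix \ref{host-kra-theory}: the factors $\ZZ^j$ are functorial (Proposition \ref{prop-functoriality}) and commute with inverse limits; and, via Proposition \ref{abelext}, for an ergodic system $\ZZ$ of order $k$ the dual $\widehat{U_j(\ZZ)}$ is canonically identified with the relative discrete spectrum of $\ZZ^j(\ZZ)$ over $\ZZ^{j-1}(\ZZ)$, equivalently with the image of the injective homomorphism $\chi\mapsto[\chi\cdot\rho_{j-1}]$ from $\widehat{U_j(\ZZ)}$ into the group of type-$j$ $\T$-valued cocycle classes on $\ZZ^{j-1}(\ZZ)$ (injectivity being the assertion that a nontrivial character of the vertical $U_j$-action is not a coboundary, which follows from ergodicity).

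\emph{Part (i).} I induct on $k$; the case $k=0$ is trivial, and $k=1$ is the classical fact that a factor of an ergodic group rotation is a rotation on a quotient group, a quotient of a profinite group being profinite. For $k\ge 1$, functoriality gives that $\ZZ^{k-1}(\YY)$ is a factor of $\ZZ^{k-1}(\XX)$, which is totally disconnected (its structure groups being $U_1(\XX),\dots,U_{k-1}(\XX)$); by the inductive hypothesis $U_1(\YY),\dots,U_{k-1}(\YY)$ are totally disconnected, so it remains to treat $U_k(\YY)$, which is trivial unless $\YY$ has order exactly $k$ (so assume this). I will build an injective homomorphism $\widehat{U_k(\YY)}\hookrightarrow\widehat{U_k(\XX)}$ (equivalently, exhibit $U_k(\YY)$ as a quotient of $U_k(\XX)$). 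Let $\pi\colon\XX\to\YY$ be the factor map; it carries the order $k-1$ factor $\ZZ^{k-1}(\XX)$ onto $\ZZ^{k-1}(\YY)$, and the pullback $\pi^{-1}(\ZZ^{k-1}(\YY))$, being an order $k-1$ factor of $\XX$, is contained in $\ZZ^{k-1}(\XX)$. Given $\eta\in\widehat{U_k(\YY)}$, realized by a relative eigenfunction $g$ of $\YY$ over $\ZZ^{k-1}(\YY)$, the function $g\circ\pi\in\M(\XX,\C)$ is nonzero and all of its $\Gamma$-derivatives are measurable over $\ZZ^{k-1}(\XX)$ (since $\partial_\gamma(g\circ\pi)=(\partial_\gamma g)\circ\pi$ and $\partial_\gamma g$ is $\ZZ^{k-1}(\YY)$-measurable). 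Decomposing $g\circ\pi$ into $U_k(\XX)$-isotypic components over $\ZZ^{k-1}(\XX)$ by Peter--Weyl yields a nonzero component transforming by some $\psi\in\widehat{U_k(\XX)}$, and comparing derivative data shows that $[\psi\cdot\rho_{k-1}]$ is exactly the inflation to $\ZZ^{k-1}(\XX)$ of $[\eta\cdot\rho_{k-1}^{\YY}]$. Since inflation of $\T$-cocycle classes along the ergodic extension $\ZZ^{k-1}(\XX)\to\ZZ^{k-1}(\YY)$ is injective (a standard consequence of the theory in Appendix \ref{host-kra-theory}; cf.\ Proposition \ref{type}), and $\chi\mapsto[\chi\cdot\rho_{k-1}]$ is injective on $\widehat{U_k(\XX)}$, the assignment $\eta\mapsto\psi$ is a well-defined injective homomorphism. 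As $U_k(\XX)$ is totally disconnected, $\widehat{U_k(\XX)}$ is torsion, hence so is $\widehat{U_k(\YY)}$, hence $U_k(\YY)$ is totally disconnected. This closes the induction.

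\emph{Part (ii).} Since $\ZZ^k$ commutes with inverse limits and each $\XX_i$ has order $k$, we get $\ZZ^k(\WW)=\varprojlim_i\ZZ^k(\XX_i)=\varprojlim_i\XX_i=\WW$, so $\WW$ is an ergodic $\Gamma$-system of order $k$. For each $1\le j\le k$ we have $\ZZ^j(\WW)=\varprojlim_i\ZZ^j(\XX_i)$, and for this inverse limit of abelian extensions one has $\widehat{U_j(\WW)}=\varinjlim_i\widehat{U_j(\XX_i)}$: a relative eigenfunction of $\WW$ over $\ZZ^{j-1}(\WW)$ lies in $L^2(\WW)=\overline{\bigcup_i L^2(\XX_i)}$ and, relative eigenfunctions being rigid, already occurs at a finite stage, while conversely each finite stage's relative eigenfunctions pull back to $\WW$. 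Each $\widehat{U_j(\XX_i)}$ is torsion because $\XX_i$ is totally disconnected, and a directed union of torsion abelian groups is torsion; hence $\widehat{U_j(\WW)}$ is torsion and $U_j(\WW)$ is totally disconnected. As this holds for every $j\le k$, $\WW$ is totally disconnected.

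\emph{Main obstacle.} The delicate point is the top-level step of part (i): a general factor $\YY$ of $\XX$ need not respect the vertical $U_k(\XX)$-fibration of $\XX$ over $\ZZ^{k-1}(\XX)$, so one cannot directly read off $U_k(\YY)$ as a quotient of $U_k(\XX)$; the detour through pulling back a relative eigenfunction and tracking its cocycle class under inflation is exactly what circumvents this. Care is also needed that the cohomological inputs — the identification of $\widehat{U_j}$ with realized cocycle classes, and injectivity of inflation — are invoked on the correct (possibly non-ergodic) systems and over the relevant ergodic components, in the manner of the appendix.
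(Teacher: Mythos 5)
The one step in your argument that would fail as justified is the injectivity claim in part (i). You deduce ``$\eta\mapsto\psi$ is injective'' from the assertion that inflation of $\T$-valued cocycle cohomology classes along the ergodic extension $\ZZ^{k-1}(\XX)\to\ZZ^{k-1}(\YY)$ is injective, calling this a standard consequence of Appendix \ref{host-kra-theory}. That assertion is false in general: any cocycle $\tau$ on an ergodic system $\ZZ$ becomes a coboundary on the skew-product $\ZZ\rtimes_\tau\T$ (the vertical coordinate is a transfer function), and such extensions are frequently ergodic while $\tau$ is not a coboundary downstairs; nothing in Proposition \ref{type} excludes this phenomenon for the particular extension $\ZZ^{k-1}(\XX)\to\ZZ^{k-1}(\YY)$. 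What rescues your construction is not inflation-injectivity but the ambient ergodic system $\XX$: write $\YY=\ZZ^{k-1}(\YY)\rtimes_\sigma W$ with vertical coordinate $w$, and suppose $\eta\cdot\sigma\circ\pi'=dF$ on $\ZZ^{k-1}(\XX)$. Then $(e(\eta\cdot w)\circ\pi)\cdot\overline{e(F)}$ (with $F$ lifted to $\XX$) is $\Gamma$-invariant on $\XX$, hence constant by ergodicity of $\XX$, so $e(\eta\cdot w)\circ\pi$ is $\ZZ^{k-1}(\XX)$-measurable; since it is also $\pi^{-1}(\YY)$-measurable and the seminorm characterization of the Host--Kra--Ziegler factors (together with the fact that the Host--Kra measures of $\YY$ are pushforwards of those of $\XX$) gives $\pi^{-1}(\YY)\cap\ZZ^{k-1}(\XX)=\pi^{-1}(\ZZ^{k-1}(\YY))$ up to null sets, the function $e(\eta\cdot w)$ is $\ZZ^{k-1}(\YY)$-measurable, which forces $\eta=0$ by vertical Fourier analysis. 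With that substitution your embedding $\widehat{U_k(\YY)}\hookrightarrow\widehat{U_k(\XX)}$ goes through, but as written the cited justification is a genuine gap.

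Two further remarks. In part (ii), the claim that every relative eigenfunction of $\WW$ over $\ZZ^{j-1}(\WW)$ ``already occurs at a finite stage'' is exactly the content of Proposition \ref{prop-invlim}(ii); a one-line appeal to rigidity is not a proof (compare the genuine work needed for the analogous statement for polynomials in Proposition \ref{ppfacts}(vi)), so you should either cite that proposition or supply the stability argument. In fact the paper's own proof of the whole proposition is precisely this shortcut: it quotes Proposition \ref{prop-invlim} --- the structure groups of a factor are quotients of the structure groups, and the structure groups of an inverse limit are the inverse limits of the structure groups --- and then invokes the purely topological facts that quotients and inverse limits of totally disconnected compact groups are totally disconnected. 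Your dual/torsion reformulation is correct, but most of what you re-derive by hand (including the delicate top-level step above) is already packaged in Proposition \ref{prop-invlim} and could simply be cited.
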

\begin{proof}
Compact Hausdorff totally disconnected groups are profinite. Thus, factors and inverse limits of totally disconnected groups are totally disconnected in the category of topological groups, see e.g., \cite[\S 1.1]{ribes-z}. 
The claims then follow from Proposition \ref{prop-invlim}. 
\end{proof}

We obtain that any $\Gamma$-system of bounded order has a maximal totally disconnected factor: 

\begin{corollary}[Maximally totally disconnected factor]\label{cor-max-disconnected}
    Let $\Gamma$ be a countable abelian group and let $k\geq 1$. 
    Let $\XX=(X,\mu,T)$ be an ergodic $\Gamma$-system of order $k$. 
    Then there is a maximal totally disconnected factor $\YY$ of $\XX$ in the sense that if $\WW$ is any totally disconnected factor of $\XX$ then $\WW$ is a factor of $\YY$. 
\end{corollary}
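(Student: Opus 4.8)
The plan is to show that the class $\mathcal D$ of totally disconnected factors of $\XX$ is closed under countable joins, and then to read off the maximal element from a separability argument. First I would reduce to \emph{finite} joins. Since $\XX$ is standard Lebesgue, its probability algebra is separable, so the $\Gamma$-invariant complete sub-$\sigma$-algebra $\mathcal C\coloneqq\bigvee_{\WW\in\mathcal D}\X_\WW$ is countably generated; approximating a countable generating family in $L^2(\XX)$ by functions each measurable with respect to $\X_{\WW_1}\vee\dots\vee\X_{\WW_m}$ for finitely many $\WW_i\in\mathcal D$, one obtains a countable subfamily $(\WW_n)_{n\in\N}$ of $\mathcal D$ with $\mathcal C=\bigvee_n\X_{\WW_n}$. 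Granting that $\mathcal D$ is closed under the join of two of its members, put $\YY_n\coloneqq\WW_1\vee\dots\vee\WW_n\in\mathcal D$; this is an increasing chain of totally disconnected order-$k$ factors of $\XX$ whose union generates $\mathcal C$, so by Proposition \ref{prop-div-invlim}(ii) the factor $\YY$ of $\XX$ associated to $\mathcal C$ is a totally disconnected $\Gamma$-system of order $k$, i.e.\ $\YY\in\mathcal D$. It is then the required maximum, since any $\WW\in\mathcal D$ satisfies $\X_\WW\subseteq\mathcal C=\X_\YY$ and hence is a factor of $\YY$.

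The crux is therefore the closure of $\mathcal D$ under the join of two members. Let $\YY_1,\YY_2\in\mathcal D$ and $\ZZ\coloneqq\YY_1\vee\YY_2$; being a factor of the ergodic order-$k$ system $\XX$, $\ZZ$ is ergodic of order $k$, and the point is that its structure groups are totally disconnected. I would induct on $k$. For $k=1$, $\YY_1,\YY_2$ are Kronecker systems whose eigenvalue groups $E_1,E_2\leq\hat\Gamma$ are torsion (recall that a compact abelian group is totally disconnected if and only if its Pontryagin dual is torsion); since $\X_\ZZ=\X_{\YY_1}\vee\X_{\YY_2}$, the space $L^2(\ZZ)$ is the closed span of products of eigenfunctions of $\YY_1$ and $\YY_2$, so $\ZZ$ is again a Kronecker system, and its eigenvalue group is $E_1+E_2$, which is torsion; hence $\ZZ\in\mathcal D$. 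For $k\geq2$, one would first argue, using the functoriality of the Host--Kra--Ziegler factors (Proposition \ref{prop-functoriality}) together with the fact that a join of two order-$(k-1)$ factors of an ergodic system is again of order $k-1$, that $\ZZ^{k-1}(\ZZ)=\ZZ^{k-1}(\YY_1)\vee\ZZ^{k-1}(\YY_2)$, so that the inductive hypothesis makes the structure groups $U_1,\dots,U_{k-1}$ of $\ZZ$ totally disconnected; it then remains only to control the top structure group $U_k$. For this one writes $\ZZ=\ZZ^{k-1}(\ZZ)\rtimes_{\rho}U_k$ and uses the two factor maps $\ZZ\to\YY_i$ to exhibit (the Pontryagin dual of) $U_k$ as assembled from the corresponding top groups of $\YY_1$ and $\YY_2$ by passing to closed subgroups, quotients, and the abelian extensions that arise when one forms a common extension of two abelian skew-products --- operations all of which preserve the property of having torsion dual, equivalently of being totally disconnected --- whence $U_k$ is totally disconnected and $\ZZ\in\mathcal D$.

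The main obstacle is precisely this last inductive step: a priori $\ZZ^{k-1}$ need not commute with joins, and the top structure group of a join of two order-$k$ systems is not obviously tractable, so one must argue --- in the spirit of Host--Kra structure theory --- that the extra randomness introduced by the join is genuinely of degree $\leq k$ and is organized by a totally disconnected compact abelian group. Once this is established, the remainder (the separability reduction, the appeal to Proposition \ref{prop-div-invlim}(ii), and the verification of maximality) is routine.
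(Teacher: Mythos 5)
Your skeleton --- order the totally disconnected factors, show the family is directed under pairwise joins, and pass to the limit using Proposition \ref{prop-div-invlim}(ii) --- is exactly the paper's, and the separability reduction to a countable increasing chain is a harmless elaboration. The genuine gap is the one you flag yourself: you never prove that the join of two totally disconnected factors is again totally disconnected, and that is the entire content of the corollary. Moreover, the route you sketch cannot be repaired as stated, because its first step, the identity $\ZZ^{k-1}(\YY_1\vee\YY_2)=\ZZ^{k-1}(\YY_1)\vee\ZZ^{k-1}(\YY_2)$, is false in general: the Host--Kra--Ziegler factors do not commute with joins. For instance, with $\Gamma=\Z$ and $k=2$, take $\XX=\T^3$ with $T(x,y_1,y_2)=(x+\alpha,\,y_1+x,\,y_2+x+\beta)$ where $1,\alpha,\beta$ are rationally independent, and let $\YY_1,\YY_2$ be the factors generated by $(x,y_1)$ and $(x,y_2)$. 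Each $\YY_i$ is an Anzai skew product whose Kronecker factor is the base circle, yet their join is all of $\XX$, and $e(y_2-y_1)$ is an eigenfunction (eigenvalue $e(\beta)$) not measurable with respect to the base; so $\ZZ^{1}$ of the join strictly contains the join of the $\ZZ^{1}$'s. An analogous example exists over $\F_p^\omega$ with finite structure groups (two skew products $Z\rtimes\Z/p\Z$ whose cocycles differ by a character of $\F_p^\omega$ that is not an eigenvalue of $Z$), so the identity does not become true in the totally disconnected category either. The new information created by a join can thus surface at a \emph{lower} level of the tower than where it originated, and an induction that first freezes $\ZZ^{k-1}$ of the join and only then inspects the top group cannot get started. (Your closure observations are not the problem: closed subgroups, quotients and extensions of compact abelian groups with torsion dual again have torsion dual.)

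The paper sidesteps this by never trying to match the two towers level by level: given factor maps $\pi_i\colon\XX\to\YY_i$ it forms the joining $\YY_1\times\YY_2$ on the product space with the pushforward measure $(\pi_1\times\pi_2)_*\mu$, which is ergodic of order $k$ by Proposition \ref{prop-functoriality}(i), and reads off total disconnectedness directly from the representation \eqref{structuregroups}: the structure groups of such a joining are assembled from the (totally disconnected) structure groups $U_i(\YY_1)\times U_i(\YY_2)$ of the two given towers via closed subgroups, quotients and extensions, even though the levels get reshuffled as in the example above. If you want to keep an inductive argument, it should run along the tower of the joining itself (equivalently, realize the joining inside the product of the two towers), not through the unproved --- and in fact false --- compatibility of $\ZZ^{k-1}$ with joins. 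Once that pairwise statement is in place, the rest of your write-up (directedness, the inverse limit via Proposition \ref{prop-div-invlim}(ii), and maximality) is indeed routine and agrees with the paper.
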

\begin{proof}
Let $\mathcal{Y}$ be the collection of totally disconnected factors of $\XX$. 
We can order $\mathcal{Y}$ by the factor relation. For two totally disconnected factors $\YY_1,\YY_2\in\mathcal{Y}$ with factor maps $\pi_1\colon \XX\to \YY_1, \pi_2\colon \XX\to \YY_2$, we can form their joining $\YY_1\times \YY_2$ equipped with the pushforward measure $(\pi_1\times\pi_2)_*\mu$ (cf.~\cite[\S 3.5]{hk-book}).  From the representation \eqref{structuregroups}, we obtain that  $\YY_1\times \YY_2$ is totally disconnected. Thus $\mathcal{Y}$ is directed, and we can form its inverse limit which is totally disconnected by Proposition \ref{prop-div-invlim}(ii). 
\end{proof}

\begin{remark}
The maximal totally disconnected factor cannot be obtained simply by quotienting out the connected components of the structure groups. Indeed, consider the ergodic $\Z^\omega$-system $\XX \coloneqq(\T\times \prod_{i=1}^\omega \Z/p\Z)\rtimes_\rho \Z/p\Z$ for a prime $p$, with the $\Z^\omega$-action defined by the countably many commuting transformations $T_i(t,u,w) = (t+\alpha,u+e_i,w+u+(\{t+\alpha\}-\{t\}-\alpha))$, for $1\leq i<\omega$, where $\alpha\in\T$ is irrational, $t\in\T$, $u,w\in \Z/p\Z$, $e_i\in\prod_{i=1}^\omega \Z/p\Z$ has $1$ in the $i$-th coordinate and $0$ otherwise,  and $\{\cdot\}$ denotes the fractional part. The maximal totally disconnected factor of $\XX$ is isomorphic to $\prod_{i=1}^\omega  \Z/p\Z$ equipped with the rotations $u\mapsto u+e_i$. The reason is that the cocycle $\rho$ is not cohomologous to a cocycle which is invariant to translations by $t\in \T$ as cocycles taking values in $\Z/p\Z$. 
\end{remark}

As is well known, the maximal totally disconnected factor $Y$ of a compact abelian group $X$ is formed by quotienting $X$ by the connected component of the identity. Namely, $\YY=\XX/\XX^0$.  Since the Pontryagin dual of the connected group $\XX^0$ is torsion-free, we conclude that $\hat X/\hat Y\cong \widehat{\XX^0}$ is torsion-free.  The following proposition can be viewed as a higher order version of this statement.

\begin{proposition}\label{maxtdfactor}
 Let $\Gamma$ be a countable abelian group, let $k\geq 1$, and let $\YY=(Y,\nu,S)$ be the maximal totally disconnected factor of an ergodic $\Gamma$-system $\XX=(X,\mu,T)$ of order $k$. Then $\Poly_{\leq k}(\XX) / \Poly_{\leq k}(\YY)$ is torsion-free.  
\end{proposition}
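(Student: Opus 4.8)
The plan is to argue by induction on $k$. The case $k\le 1$ is the classical statement quoted just before the proposition: writing $\XX$ as a rotation on a compact abelian metrizable group $X$, the factor $\YY$ is the rotation on $X/X^\circ$ with $X^\circ$ the identity component, and $\Poly_{\le 1}(\XX)/\Poly_{\le 1}(\YY)\cong \hat X/\hat Y\cong \widehat{X^\circ}$ is torsion-free as the dual of a connected group. For the inductive step, write $\XX=\ZZ^{k-1}(\XX)\rtimes_{\rho}U_k$ as in \eqref{structuregroups}, put $W:=\ZZ^{k-1}(\XX)$ (of order $k-1$), and let $V$ be the maximal totally disconnected factor of $W$. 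First, $V=\ZZ^{k-1}(\YY)$: indeed $\ZZ^{k-1}(\YY)$ is a totally disconnected factor of $W$ (Proposition \ref{prop-div-invlim}(i)), and any totally disconnected factor of $W$ is a totally disconnected factor of $\XX$, hence of $\YY$, hence of $\ZZ^{k-1}(\YY)$ by functoriality (Proposition \ref{prop-functoriality}). Second, by Proposition \ref{ppfacts}(iii) the derivative $\partial_u P$ of any $P\in\Poly_{\le k}(\XX)$ along $u\in U_k$ is a constant $\xi_P(u)$; the cocycle identity makes $\xi_P\in\widehat{U_k}$, and $P\mapsto\xi_P$ is a homomorphism whose kernel is the set of $W$-measurable elements of $\Poly_{\le k}(\XX)$, namely $\Poly_{\le k}(W)=\Poly_{\le k-1}(W)=\Poly_{<k}(\XX)$ (as $W$ has order $k-1$). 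This gives a short exact sequence $0\to\Poly_{<k}(\XX)\to\Poly_{\le k}(\XX)\to \mathrm{S}_k(\XX)\to 0$ with $\mathrm{S}_k(\XX):=\{\xi_P:P\in\Poly_{\le k}(\XX)\}\le\widehat{U_k}$, and the analogous sequence for $\YY$, compatible via the factor map $\XX\to\YY$, which induces a surjection $\psi\colon U_k\to U_k(\YY)$.

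Since $\Poly_{<k}(\XX)\cap\Poly_{\le k}(\YY)=\Poly_{<k}(\YY)$ (again because $\ZZ^{k-1}(\YY)$ has order $k-1$), a diagram chase yields a short exact sequence
\[ 0\to \Poly_{<k}(\XX)/\Poly_{<k}(\YY)\to \Poly_{\le k}(\XX)/\Poly_{\le k}(\YY)\to \mathrm{S}_k(\XX)/\mathrm{S}_k(\YY)\to 0. \]
An extension of a torsion-free group by a torsion-free group is torsion-free, so it suffices to treat the two outer terms. The left term equals $\Poly_{\le k-1}(W)/\Poly_{\le k-1}(V)$, which is torsion-free by the inductive hypothesis applied to $W$ and its maximal totally disconnected factor $V$. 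So everything is reduced to showing $\mathrm{S}_k(\XX)/\mathrm{S}_k(\YY)$ is torsion-free. Here I would first reduce to the case that $U_k$ is itself totally disconnected. Letting $U_k^\circ$ be the identity component and $\XX':=W\rtimes_{\bar\rho}(U_k/U_k^\circ)$ the corresponding factor of $\XX$, the connected compact group $U_k^\circ$ acts on $\XX$ by $\Gamma$-system automorphisms commuting with the $\Gamma$-action, hence preserves the unique maximal totally disconnected factor $\YY$, and a connected group acting on a totally disconnected system acts trivially (checked level by level along the tower of $\YY$, the relevant automorphism and translation groups being totally disconnected); thus $\YY$ is a factor of $\XX'$ and remains its maximal totally disconnected factor. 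Since $\Poly_{\le k}(\XX)/\Poly_{\le k}(\XX')\hookrightarrow\widehat{U_k^\circ}$ via $P\mapsto\xi_P|_{U_k^\circ}$, which is torsion-free, the same extension argument lets us replace $\XX$ by $\XX'$, i.e.\ assume $U_k$ totally disconnected.

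With $U_k$ totally disconnected, $\mathrm{S}_k(\YY)$ is identified with a subgroup of $\widehat{U_k(\YY)}=\{\chi\in\widehat{U_k}:\chi|_{\ker\psi}=0\}\le\widehat{U_k}$, and unwinding the above, $\xi\in\mathrm{S}_k(\XX)$ iff the $\T$-valued cocycle $\xi\circ\rho$ on $W$ is cohomologous to a cocycle with values in $\Poly_{\le k-1}(W)$, while $\xi\in\mathrm{S}_k(\YY)$ iff moreover this can be arranged with a transfer function and polynomial cocycle that are $V$-measurable. The remaining claim then follows from a descent statement to be proved by induction on $k$ (in tandem with the companion statement for transfer functions): a degree-$\le k-1$ cocycle on $W$ some multiple of which is a coboundary — which is the situation forced when $n\xi\in\mathrm{S}_k(\YY)$, since $\mathrm{S}_k(\YY)$ is torsion (see below) — is cohomologous to a cocycle pulled back from $V$; in other words, the ``extra'' polynomial cocycles and transfer functions that $W$ carries over $V$ form a torsion-free object, so no torsion-order realization of $\xi$ can genuinely require them. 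Reconciling a cohomological realization of $\xi$ over $W$ with one living over $V$ is, I expect, the main obstacle of the proof; its inputs are exactly the torsion-freeness of $\Poly_{\le k-1}(W)/\Poly_{\le k-1}(V)$ (the inductive hypothesis) and of $\widehat{U^\circ}$ for the structure groups $U$ of $W$.

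Finally, one needs the auxiliary fact used above: $\Spec_k(\YY)=\Poly_{\le k}(\YY)/\Poly_{<k}(\YY)$ — hence its quotient $\mathrm{S}_k(\YY)$ — is torsion whenever $\YY$ is totally disconnected. For $P\in\Poly_{\le k}(\YY)$, the map $u\mapsto\partial_u P$ is a continuous homomorphism from the profinite group $U_k(\YY)$ into $\Poly_{\le k-1}(\ZZ^{k-1}(\YY))$; its image is a profinite subgroup consisting of mean-zero functions, hence injects into $\Poly_{\le k-1}(\ZZ^{k-1}(\YY))/\T$, which is torsion by induction; a profinite torsion group is bounded, so some nonzero multiple of $P$ is $\ZZ^{k-1}(\YY)$-measurable, i.e.\ of degree $<k$.
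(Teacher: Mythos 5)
Your proposal has a genuine gap exactly where the proposition's real content lies. After reducing to the quotient term, you need that $\mathrm{S}_k(\XX)/\mathrm{S}_k(\YY)$ is torsion-free, i.e.\ that a character $\xi$ with $n\xi\in\mathrm{S}_k(\YY)$ already lies in $\mathrm{S}_k(\YY)$; you yourself describe this as ``a descent statement to be proved by induction'' and ``the main obstacle of the proof'', and you give no argument for it. This is not a routine step: reconciling a cohomological realization of $\xi\circ\rho$ over $W=\ZZ^{k-1}(\XX)$ with one living over its maximal totally disconnected factor $V$ is precisely the difficulty the paper's proof is built around. The paper handles it by a different mechanism: it interpolates between $\XX$ and $\YY$ by the factors $\WW_i$ (maximal factor of $\XX$ whose last $i$ structure groups are totally disconnected), uses Lemma \ref{lem-goingup} to lift a compact \emph{connected} abelian group $\tilde U_{k-i}$ of automorphisms acting freely on $\WW_{i-1}$, and then kills $\partial_u Q$ by the separation property of polynomials (Proposition \ref{ppfacts}(i)): since $nQ$ is invariant, $n\,\partial_u Q=0$, while $\partial_u Q$ is small in $L^\infty$ for $u$ near the identity, forcing $\partial_u Q=0$ there and hence everywhere by connectedness. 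Nothing in your sketch substitutes for this (or any other) mechanism, so the proof is incomplete at its core. Relatedly, your reduction to $U_k$ totally disconnected (``a connected group acting on a totally disconnected system acts trivially'') is only asserted ``level by level''; the remark after Corollary \ref{cor-max-disconnected} shows that interactions between connected structure groups and the maximal totally disconnected factor are exactly where naive expectations fail, so this too needs a real argument.

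There is also a concrete error in your setup. The kernel of $P\mapsto\xi_P$ on $\Poly_{\leq k}(\XX)$ is the group of $U_k$-invariant, i.e.\ $W$-measurable, polynomials of degree $\leq k$, which is $\Poly_{\leq k}(W)$; your identification $\Poly_{\leq k}(W)=\Poly_{\leq k-1}(W)$ ``as $W$ has order $k-1$'' is false in general: a system of order $k-1$ can carry polynomials of degree $k$ that are not of degree $k-1$ (for instance the rotational $\F_2^\omega$-system in Appendix \ref{appendix-d} carries degree-$2$ polynomials; more generally this is the content of Theorem \ref{fp-omega}(ii)). Consequently your short exact sequence with $\Poly_{<k}(\XX)$ on the left is not exact, and the corrected left term $\Poly_{\leq k}(W)/\Poly_{\leq k}(V)$ requires the proposition at the \emph{same} degree $k$ for the order-$(k-1)$ system $W$, so the induction on $k$ as you organized it does not close (one would have to reorganize, e.g.\ inducting on the order of the system with the degree fixed, as the paper's $\WW_i$-interpolation effectively does). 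In summary: the skeleton (exact sequence plus extension-of-torsion-free argument) is a reasonable start, but the decisive step is missing and one of the exactness claims is wrong.
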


\begin{proof}  We introduce intermediate factors
$$ \YY = \WW_k \leq \WW_{k-1} \leq \dots \leq \WW_0=\XX$$ 
where for each $0\leq i \leq k$, $\WW_i$ is the maximal factor of $\XX$ with the last $i$ structure groups being totally disconnected (these factors exist using the same argument as in the proof of Corollary \ref{cor-max-disconnected}). Since the extension of a torsion-free group by a torsion-free group is again torsion-free, it suffices to show that $\Poly_{\leq k}(\WW_{i-1}) / \Poly_{\leq k}(\WW_{i})$ is torsion-free for each $1 \leq i \leq k$.  That is to say, if $Q \in \Poly_{\leq k}(\WW_{i-1})$ and $n \geq 1$ are such that $nQ$ is measurable in $\WW_i$, then we need to show that $Q$ is measurable in $\WW_i$.

Write
\begin{align*}
\WW_{i-1} &= U_0\rtimes_{\rho_0} U_1\rtimes_{\rho_1} \ldots\rtimes_{\rho_{k-1}}U_k \\
\WW_i &=  V_0\rtimes_{\sigma_0}  V_1\rtimes_{\sigma_1} \ldots\rtimes_{\sigma_{k-1}} V_{k}.
\end{align*}
By Proposition \ref{prop-invlim}, for each $0\leq j\leq k$, we obtain surjective homomorphisms $\varphi_j: U_j \rightarrow V_j$, whose kernels we denote by $K_j$, such that $\varphi_{j+1}\circ \rho_j$ is cohomologous to $\sigma_{j}\circ \pi_{j}$, where $\pi_j:\ZZ^j(\WW_{i-1})\rightarrow \ZZ^j(\WW_i)$ is the factor map, for all $0\leq j\leq k-1$. Let $(U_{k-i})_0$ denote the connected component of the identity in the group $U_{k-i}$. Since $V_{k-i}$ is totally disconnected, $(U_{k-i})_0$ is a subgroup of $K_{k-i}$. We make a small reduction. Since $\sigma_j\circ\pi_j$ and $\varphi_{j+1}\circ \rho_j$ are cohomologous, we can write
$\varphi_{j+1} \circ \rho_j = \sigma_j \circ \pi_j + dF_j$. 
Let $s \colon W_j\rightarrow U_j$ be any measurable cross-section. We may replace $\rho_j$ with $\rho_j-F_j\circ s$, and  assume without loss of generality that $\varphi_{j+1}\circ\rho_j = \sigma_j\circ\pi_j$. In particular, we note that  $\varphi_{j+1}\circ\rho_j$ is invariant to translations by $(U_{k-i})_0$. We can now iteratively apply the first part of Lemma \ref{lem-goingup}, and find a compact connected abelian group $\tilde{U}_{k-i}$ that acts freely on $\WW_{i-1}$. Since the orbit of $(0,\ldots,0)$ under $\tilde{U}_{k-i}$ is included in $K_{k-i+1}, \ldots,K_k$, we deduce that $nQ$ is invariant with respect to translations by $\tilde{U}_{k-i}$.  The action of $\tilde {U}_{k-1}$ is strongly continuous in $L^2$, and thus continuous in $L^\infty$ when acting in $Q$, thanks to Proposition \ref{ppfacts}(i).  Thus for $u \in \tilde{U}_{k-1}$ close enough to the identity, $\partial_u Q$ is close to zero in $L^\infty$ and vanishes when multiplied by $n$, thus must vanish identically.  Since $\tilde{U}_{k-i}$ is connected, we conclude that $Q$ is invariant with respect to translations by this group.  Therefore, $Q$ is measurable with respect to $\WW_{i-1}/\tilde{U}_{k-i}$. From the second part of Lemma \ref{lem-goingup} and construction, this is a factor of $\WW_i$ and this completes the proof.
\end{proof}

Now we can prove the second part of Theorem \ref{divis}.  Let $\XX$ be a totally disconnected order $k$ ergodic $\Gamma$-system.  By the first part of this theorem, we can find an order $k$ ergodic extension $\YY$ of $\XX$ that is $k$-divisible.  This $\Gamma$-system $\YY$ need not be totally disconnected, however by Corollary \ref{cor-max-disconnected} it has a maximal totally disconnected factor $\ZZ$. Clearly $\ZZ$ is an extension of the totally disconnected factor $\XX$, and is also totally disconnected, order $k$, and ergodic by construction.  For any $1 \leq i \leq k$, $\Poly_{\leq i}(\YY)$ is divisible by construction, while $\Poly_{\leq i}(\YY)/\Poly_{\leq i}(\ZZ)$ is torsion-free thanks to Proposition \ref{maxtdfactor}.  This implies that $\Poly_{\leq i}(\ZZ)$ is divisible as a subgroup of a divisible group with torsion free quotient, and the claim follows.

\section{From divisibility to Weyl}\label{divis-weyl-sec}

The purpose of this section is to establish Theorem \ref{infty-weyl}.  The arguments used to prove this theorem are not needed elsewhere in the paper.

We will establish this result by combining two key propositions, which we now state.

\begin{proposition}[$k$-divisibility and Weyl imply $\infty$-divisibility]\label{k-infty}
Let $\Gamma$ be torsion-free, let $k \geq 1$, and let $\XX$ be an ergodic $k$-divisible $\Gamma$-system which is Weyl of order $k$.  Then $\XX$
is $\infty$-divisible.
\end{proposition}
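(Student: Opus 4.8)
I would prove the equivalent statement that $\Poly_{\le d}(\XX)$ is divisible for every $d\ge 0$, in two stages. First, recall the short exact sequence $0\to\Poly_{\le d-1}(\XX)\to\Poly_{\le d}(\XX)\xrightarrow{\ \nabla^{d}\ }\Spec_d(\XX)\to 0$. An extension of a divisible abelian group by a divisible abelian group is divisible, so an induction on $d$ (with base $\Poly_{\le 0}(\XX)=\T$) reduces the problem to showing that $\Spec_d(\XX)$ is divisible for every $d\ge 1$. For $d\le k$ this is immediate, since $\Spec_d(\XX)$ is a quotient of $\Poly_{\le d}(\XX)$, which is divisible by hypothesis. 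So the real content is divisibility of $\Spec_d(\XX)$ for $d>k$, and I would establish that by induction on the number $k$ of structure groups.

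\textbf{Base case and reduction to $\ZZ^{k-1}(\XX)$.}
For $k=1$ the system is a rotational system $U_1$ with $\Spec_1(U_1)\cong\hat U_1$ divisible, and a direct Fourier-analytic analysis of polynomials on the compact abelian group $U_1$ (of the same flavour as the order-$1$ arguments used in the proof of Theorem \ref{bounded-tor}) identifies each $\Poly_{\le d}(U_1)$ and shows $\Spec_d(U_1)$ is divisible once $\hat U_1$ is. For the inductive step write $\XX=\YY\rtimes_{\rho_{k-1}}U_k$ with $\YY=\ZZ^{k-1}(\XX)$. Since the Host--Kra--Ziegler factors are nested (Proposition \ref{prop-functoriality}), $\ZZ^{d}(\XX)=\ZZ^{d}(\YY)$ for $d\le k-1$, so by Proposition \ref{ppfacts}(ii) one gets $\Poly_{\le d}(\YY)=\Poly_{\le d}(\XX)$ for $d\le k-1$; hence $\YY$ is $(k-1)$-divisible, and it is Weyl of order $k-1$ (the cocycles $\rho_1,\dots,\rho_{k-2}$ are unchanged). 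By the induction hypothesis $\YY$ is $\infty$-divisible, so $\Poly_{\le d}(\YY)$ is divisible for all $d$.

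\textbf{Passing from $\YY$ to $\XX$.}
The remaining task is to control $\Poly_{\le d}(\XX)$ over $\Poly_{\le d}(\YY)$ for $d>k$, and here I would use the Weyl hypothesis twice. First, differentiating along the vertical $U_k$-action: for $P\in\Poly_{\le d}(\XX)$ and $u\in U_k$, a suitable form of Proposition \ref{ppfacts}(iii) (using that the extension cocycle $\rho_{k-1}$ is of type $k$) gives $\partial_uP\in\Poly_{\le d-k}(\XX)$, and iterating this produces a ``vertical Taylor expansion'' of $P$ along the fibre $U_k$ whose coefficients are continuous symmetric multilinear maps $U_k^{\times i}\to\Poly_{\le d-ik}(\XX)$, the top one being $\YY$-measurable. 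Second, since $\rho_{k-1}\in\Poly_{\le k-1}(\YY,U_k)^{\Gamma}$, for each $\xi\in\hat U_k$ the pull-back $\pi_{U_k}^{*}\xi$ of $\xi$ along $\XX\to U_k$ satisfies $\partial_\gamma(\pi_{U_k}^{*}\xi)=\xi\circ(\rho_{k-1})_\gamma\in\Poly_{\le k-1}(\YY)$, so $\pi_{U_k}^{*}\xi\in\Poly_{\le k}(\XX)$; the map $\xi\mapsto\pi_{U_k}^{*}\xi$ is a section of the vertical-character map $P\mapsto\xi_P$, so $\hat U_k$ is a direct summand of the divisible group $\Poly_{\le k}(\XX)$ and is therefore divisible. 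Combining these, I would exhibit, for each $d>k$, a finite filtration of $\Poly_{\le d}(\XX)$ whose successive quotients are $\Poly_{\le d}(\YY)$ (the $\YY$-measurable polynomials) together with the groups of admissible vertical Taylor coefficients, each built from $\hat U_k$ and the $\Poly_{\le d'}(\YY)$ with $d'<d$ — all divisible by the induction hypothesis and the direct-summand observation — where the polynomiality of $\rho_{k-1}$ is used to check that the compatibility constraints cut out divisible subgroups. Divisibility of $\Poly_{\le d}(\XX)$, hence of $\Spec_d(\XX)$, follows.

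\textbf{Main obstacle.}
The hard step is the last one: verifying that the cohomological data describing $\Poly_{\le d}(\XX)$ over $\Poly_{\le d}(\YY)$ — essentially cocycles of the vertical $U_k$-action valued in the groups $\Poly_{\le d'}(\YY)$, twisted by $\rho_{k-1}$ — form divisible groups. Divisibility of a ``$\Hom$-type'' or cocycle group with divisible target is not automatic (there are potential $\mathrm{Ext}$ obstructions), and it is precisely the Weyl hypothesis that $\rho_{k-1}$ be a genuine polynomial (not merely a cocycle of type $k$) that allows one both to descend vertical derivatives to $\YY$ and to solve the relevant primitive equations explicitly; the conclusion genuinely fails without it, as the non-Abramov Heisenberg systems show. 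The routine homological bookkeeping (splitting the filtration, checking continuity and multilinearity, reassembling a polynomial from its vertical Taylor coefficients) I would defer to the detailed write-up.
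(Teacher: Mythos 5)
Your skeleton does match the paper's: induct on $k$, write $\XX = \ZZ^{k-1}(\XX)\rtimes_\rho U_k$ with $\rho$ polynomial of degree $k-1$, note that $\ZZ^{k-1}(\XX)$ is $(k-1)$-divisible and Weyl of order $k-1$, hence $\infty$-divisible by induction, observe that $\hat U_k$ is divisible (your section argument is essentially the paper's Lemma \ref{U-divis}), and then analyse $P\in\Poly_{\leq d}(\XX)$ through its vertical derivatives, which give symmetric multilinear maps from $U_k$ into $\Poly_{\leq d-\ell k}(\ZZ^{k-1}(\XX))$. However, the two steps you defer are exactly the substance of the proof, so as written there is a genuine gap.

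First, divisibility of the coefficient groups $\SML_\ell(U_k,\Poly_{\leq d-\ell k}(\ZZ^{k-1}(\XX)))$ is not a formal consequence of ``divisible target plus divisible $\hat U_k$''; in the paper it is Proposition \ref{symdiv}(iii), whose proof uses both that $\hat U_k$ has divisible dual structure and that the target is a countable union of copies of $\T$ (Proposition \ref{ppfacts}(i)), via a reduction to finite quotients. You correctly flag that such divisibility is not automatic, but then leave it unproven; moreover your framing in terms of twisted cocycles and $\mathrm{Ext}$ obstructions overcomplicates matters: once $\partial_{u_1}\cdots\partial_{u_{\ell+1}}P=0$, the $\ell$-fold vertical derivatives are $U_k$-invariant and hence $\ZZ^{k-1}(\XX)$-measurable, so the relevant groups are plain (untwisted) symmetric multilinear maps --- no cocycle analysis is needed, but a real divisibility argument still is. Second, what you dismiss as ``routine homological bookkeeping'' --- reassembling a polynomial from its vertical Taylor coefficients --- is precisely where the Weyl hypothesis earns its keep: the paper's Lemma \ref{integ} takes $b\in\SML_\ell(U_k,\Poly_{\leq d-\ell k}(\ZZ^{k-1}(\XX)))$ and shows that the diagonal function $Q(y,u)\coloneqq b(u,\dots,u)(y)$ lies in $\Poly_{\leq d}(\XX)$ with $\nabla^\ell_{U_k}Q=\ell!\, b$; the degree bound is proved by tensoring $b$ with $\ell$ copies of the vertical coordinate function (a polynomial of degree $k$ exactly because $\rho$ has degree $k-1$) and applying Lemma \ref{Tensor}. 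With these two ingredients one runs a downward induction on the vertical degree, writing $\nabla^\ell_{U_k}P=n\ell!\,b$ and subtracting $nQ$; your filtration-plus-``extension of divisible by divisible'' packaging is an equivalent bookkeeping scheme, but without proving these two lemmas the divisibility of the successive quotients, and hence the conclusion, is not established.
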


\begin{proposition}[Totally disconnected $\infty$-divisible systems have trivial quasicohomology]\label{trivial-quasi}  Let $\Gamma$ be torsion-free, let $j \geq 0$ and $k \geq 0$, and let $\XX$ be an ergodic totally disconnected $\infty$-divisible $\Gamma$-system that is Weyl of order $j$.  Let $f \colon \Gamma \to \M(\XX,\T)$ be a $\T$-valued quasi-cocycle on $\XX$ of degree $k-2$ that is of type $k$.  Then $f$ is also a quasi-coboundary of degree $k-1$.
\end{proposition}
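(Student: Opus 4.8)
\textbf{Proof proposal for Proposition \ref{trivial-quasi}.}

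The plan is to induct on $k$, the degree of the type bound. For $k \leq 1$ the statement is essentially trivial: a quasi-cocycle of type $\leq 1$ is (up to a quasi-coboundary of degree $\leq 0$, i.e.\ a homomorphism into $\T$) already a constant cocycle by the classical Moore--Schmidt / ergodicity argument, and since $\Gamma$ is torsion-free and $\XX$ is $\infty$-divisible one can absorb such terms; the cases $k=0,1$ should be handled by hand. For the inductive step, assume the result for all smaller values of $k$ and let $f\colon \Gamma \to \M(\XX,\T)$ be a $\T$-valued quasi-cocycle of degree $k-2$ that is of type $k$. The key idea, following the strategy in \cite{btz}, is to differentiate. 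Since $\XX$ is Weyl of order $j$, it is in particular of order $j$, and we may write $\XX = \ZZ^{j-1}(\XX) \rtimes_{\rho_{j-1}} U_j$ with $\rho_{j-1}$ a polynomial of degree $j-1$ (using that $\XX$ is Weyl). The group $U_j$ acts on $\XX$ by vertical rotations; for $u \in U_j$, the derivative $\partial_u f$ is a quasi-cocycle whose type drops by one (using Proposition \ref{type}(iv) together with the fact that differentiating in the vertical direction lowers the degree of a quasi-cocycle by one, as in \cite[\S7]{btz}), so $\partial_u f$ has type $k-1$ and degree at most $k-3$ — actually we should be a bit careful and track that $\partial_u f$ is a quasi-cocycle of degree $k-2$ still, of type $k-1$, but relative to a lower-order Weyl system $\ZZ^{j-1}(\XX)$ — and we can apply the inductive hypothesis to conclude $\partial_u f = dG_u + P_u$ for some $G_u \in \M(\XX,\T)$ and some polynomial $P_u$ of degree $\leq k-2$, for every $u$.

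The heart of the argument is then to integrate this family of identities over $u \in U_j$ to recover $f$ itself up to a quasi-coboundary of degree $k-1$. This is where $\infty$-divisibility and total disconnectedness enter. One forms, as in \cite[\S7--8]{btz} and in the proof of Theorem \ref{pSylow} above, the appropriate closed subgroup $H$ of a semidirect product $U_j \ltimes \M(\XX,\T)$ consisting of solutions $(u, G_u)$ to $\partial_u f = dG_u + P_u$, fitting into a short exact sequence $0 \to \T \to H \to U_j \to 0$ (modulo the polynomial ambiguity, which must be quotiented out carefully using that $\Spec_{k-2}(\XX)$ is countable). Here is the main obstacle: unlike in \cite{btz}, where $U_j$ is an elementary $p$-group and open subgroups split by \cite[Lemma D.2]{btz}, here $U_j$ is merely totally disconnected, so this extension need not split. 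This is precisely the difficulty flagged in the introduction. To get around it, I expect one must exploit $\infty$-divisibility: the relevant extension is by a \emph{divisible} group-like object (after the analysis of $\Poly$ groups), or one must pass to a finite-index open subgroup of $U_j$ on which things do split, solve the equation there, and then average/descend using that $U_j$ is totally disconnected hence an inverse limit of finite quotients. Concretely: choose a decreasing sequence of open subgroups $U_j = V_0 \supseteq V_1 \supseteq \cdots$ with $\bigcap_n V_n = \{0\}$ and $V_n/V_{n+1}$ finite; solve the cocycle-type equation on each finite level using divisibility to divide out obstructions, and take a limit.

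Once one has a measurable section $u \mapsto (u, G_u)$ which is a genuine homomorphism (equivalently, $G_{u+u'} = G_u + G_{u'}\circ V^u$ modulo polynomials), one defines $G \in \M(\XX,\T)$ by the standard formula $G(x) = G_u(x_0)$ when $x = V^u x_0$ for a generic base point $x_0$ (cf.\ \cite[Lemma B.6]{btz} and the formula for $F$ in the proof of Theorem \ref{pSylow}), so that $\partial_u G = G_u$ and hence $\partial_u(f - dG)$ is, modulo polynomials of degree $\leq k-2$, invariant under the $U_j$-action. Then $f - dG$ descends (modulo a quasi-coboundary of degree $k-1$ coming from the polynomial corrections, which are themselves measurable on $\ZZ^{k}(\XX)$ and hence on a lower factor) to a quasi-cocycle on $\ZZ^{j-1}(\XX)$, which is again totally disconnected, $\infty$-divisible, and Weyl — of order $j-1$ this time — so an inner induction on $j$ finishes the argument, the base case $j=0$ being the trivial (rotational, indeed one-point) system where every quasi-cocycle of type $k$ is manifestly a quasi-coboundary of degree $k-1$ by the definition of type. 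I expect the bookkeeping of the two nested inductions (on $k$ and on $j$) and the careful handling of the non-split extension to be by far the most delicate part; the rest follows the template of \cite{btz} with the elementary-abelian hypotheses replaced by divisibility plus total disconnectedness.
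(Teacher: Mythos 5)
Your overall template (double induction on $k$ and $j$, vertical differentiation, a semidirect-product group $H$ of Conze--Lesigne-type solutions, splitting, integration via a generic-point formula, descent to $\ZZ^{j-1}(\XX)$) is the same as the paper's, but two essential pieces are missing or would fail as described. First, the descent step of your ``inner induction on $j$'' is not justified in the regime $j>k$. Differentiating along the top structure group $U_j$ (which fixes $\ZZ^{j-1}(\XX)$) lowers the type and the quasi-cocycle degree by $j$, not by one (Proposition \ref{type}(iv) and Proposition \ref{ppfacts}(iii)), so your bookkeeping of ``type $k-1$, degree $k-2$'' is off; more seriously, after subtracting $d\Phi$ and the polynomial corrections and writing $\tilde f = Q + \overline f\circ\pi$, one must show that $\overline f$ is still of type $k$ \emph{on the factor} $\ZZ^{j-1}(\XX)$. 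In the low-order case $j\le k$ this follows from \cite[Proposition 8.11]{btz}, but when $j>k$ it does not, and the paper has to introduce the ``good tuple'' formalism and prove a separate descent-of-type statement (Proposition \ref{descent}), following \cite[Section 8.5]{btz}. Your sketch never addresses this dichotomy, and without it the induction on $j$ does not close.

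Second, the splitting of the short exact sequence $0 \to \Poly_{\leq k-j}(\XX) \to H \to U \to 0$ is the crux, and neither of your proposed mechanisms works as stated. Abstract injectivity of the divisible kernel (Lemma \ref{divis-inject}) only yields an algebraic splitting, with no continuity or measurability, and $H$ is not even obviously abelian; while ``solve on each finite quotient $V_n/V_{n+1}$ and take a limit'' runs exactly into the compatibility obstruction that makes the extension nontrivial in the first place (this is the failure of \cite[Lemma D.2]{btz} flagged in the introduction). The paper's actual route is different: a Baire category/Pettis-lemma argument produces a \emph{continuous} splitting over some open subgroup $V'$ of $U$; then the divisibility of the system is converted, via Lemma \ref{U-divis}, into divisibility of $\hat U$, which together with total disconnectedness forces $U$ to be a (possibly infinite) product of $p$-adic groups $\Z_p$; since $V'$ is open it contains a direct complement, reducing to finitely many $\Z_p$ factors; and each $\Z_p$ factor is handled using that $\Z_p$ is monothetic, correcting the lift of a topological generator by repeated use of the divisibility of $\Poly_{\leq k}(\XX)$ so that its $p^n$-th power agrees with the splitting already obtained on $p^n\Z_p$. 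This structural use of divisibility (through $\hat U$ and through $\Poly_{\leq k}(\XX)$, not through injectivity of the kernel) is the genuinely new ingredient, and your proposal does not supply a substitute for it.
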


The notions of quasi-cocycle and quasi-coboundary are defined in Definition \ref{coh:def}(iv).  Proposition \ref{trivial-quasi} can be compared with \cite[Theorem 8.6]{btz}, which established a similar result when $\Gamma = \F_p^\omega$ in the high characteristic case $p \geq k$, though due to the torsion nature of $\F_p^\omega$ an additional ``line cocycle'' condition needed to be imposed on $f$ in that paper.

\begin{remark}
We note that Proposition \ref{trivial-quasi} fails if the divisibility hypothesis is dropped. For example take $k=2$, and let $Z\coloneqq \Z/4\Z\times \Z/4\Z$ be the rotational $\Z^2$-system with translation action $T^{(n,m)}(x_1,x_2) \coloneqq (x_1+n \mod 4 , x_2+m \mod 4)$.
Let $\rho \colon \Gamma \to \M(Z,\T)$ be the map 
$$ \rho_{\gamma_1,\gamma_2}(x_1,x_2) = - \frac{\gamma_1 x_2}{4}.$$
This is a quasi-cocycle of order $k-2=0$ since
$$ \rho_{\gamma+\gamma'}(x) - \rho_\gamma(x) - \rho_{\gamma'}(x+\gamma) = \frac{\gamma'_1 \gamma_2}{4}$$
is a polynomial of degree $0$ for every $\gamma,\gamma' \in \Z^2$.  If we then let $\rho' \colon \Gamma \to \M(Z,\T)$ be the map $\rho' = \rho + dF$ where $F(x_1,x_2) \coloneqq \frac{x_1^2 x_2}{4}$, then $\rho'$ is cohomologous to $\rho$ and is thus also a quasi-cocycle of order $k-2$.  It is also a quasi-coboundary of order $k-1=1$, so this is not yet a counterexample.  However, we observe that $\rho'_\gamma$ is $(2\Z)^2$-periodic.  Indeed,
\begin{align*}
 \partial_{(2,0)} \rho'_\gamma(x) &= \partial_{(2,0)} \rho_\gamma(x) + \partial_\gamma \partial_{(2,0)} F(x) \\
 &= 0 + \partial_\gamma 0 \\
 &= 0
 \end{align*}
 and
\begin{align*}
 \partial_{(0,2)} \rho'_\gamma(x) &= \partial_{(0,2)} \rho_\gamma(x) + \partial_\gamma \partial_{(0,2)} F(x) \\
 &= - \frac{\gamma_1}{2} + \partial_\gamma \frac{x_1^2}{2} \text{ mod } 1  \\
 &= - \frac{\gamma_1}{2} + \partial_\gamma \frac{x_1}{2} \text{ mod } 1 \\
 &= 0.
 \end{align*}
 Thus $\rho'$ descends to a quasi-cocycle $\rho''$ of order $k-2$ on the factor $W \coloneqq \Z/2\Z \times \Z/2\Z$.  We claim that $\rho''$ is not a quasi-coboundary of order $k-1$, for if we did have
 $$ \rho'' = dF' + q$$
 for some $F' \in \M(W,\T)$ and $q \colon \Z^2 \to \Poly_{\leq 1}(W)$, then on taking two derivatives we would have
 $$ \partial_s \partial_t \rho''_\gamma = \partial_\gamma \partial_s \partial_t F'$$
 for all $\gamma,s,t \in \Z^2$.  But
 \begin{align*}
 \partial_s \partial_t \rho'_\gamma(x)  &= \partial_\gamma \partial_s \partial_t F(x) \\
 &= \frac{s_1 t_1 \gamma_2 + s_1 t_2 \gamma_1 + s_2 t_1 \gamma_1}{2} \text{ mod } 1
 \end{align*}
 so on descending from $Z$ back to $W$ we conclude that
 $$  \partial_\gamma \partial_s \partial_t F'(x) = \frac{s_1 t_1 \gamma_2 + s_1 t_2 \gamma_1 + s_2 t_1 \gamma_1}{2} \text{ mod } 1.$$
 On the other hand, since $\partial_{2s} = \partial_s \partial_s + 2\partial_s$ and $\partial_{2t} = \partial_t \partial_t + 2 \partial_t$ annihilate $F'$, we have the identity
 $$ \partial_s \partial_s \partial_t F' = -2 \partial_s \partial_t F' = \partial_s \partial_t \partial_t F'$$
and hence
$$
 \frac{s_1 s_1 t_2 + s_1 s_2 t_1 + s_2 s_1 t_1}{2} =  \frac{s_1 t_1 t_2 + s_1 t_2 t_1 + s_2 t_1 t_1}{2} \text{ mod } 1$$
 for all $s,t \in \Z^2$, which is false.
\end{remark}

Assume Propositions \ref{k-infty}, \ref{trivial-quasi} for now.  Proposition \ref{trivial-quasi} has the following consequence:

\begin{corollary}[Totally disconnected $\infty$-divisible base implies Weyl]\label{cor-weyl}  Let $\Gamma$ be torsion-free, let $k \geq 1$, and let $\XX$ be an ergodic $\Gamma$-system of order $k$.  If 
$\ZZ^{k-1}(\XX)$ is totally disconnected, $\infty$-divisible, and Weyl of order $k-1$, then $\XX$ is Weyl of order $k$.
\end{corollary}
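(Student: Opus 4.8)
The plan is to reduce the statement to Proposition \ref{trivial-quasi} applied with $\ZZ^{k-1}(\XX)$ as the base system, finishing with the same bookkeeping used at the end of the proof of Theorem \ref{abramov-weyl}. First I would invoke Proposition \ref{abelext} to write $\XX = \ZZ^{k-1}(\XX) \rtimes_\rho U$, where $U = U_k$ is a compact abelian metrizable group and $\rho = \rho_{k-1}$ is a $U$-valued cocycle of type $k$ on $\ZZ^{k-1}(\XX)$. The case $k=1$ is immediate, since an ergodic $\Gamma$-system of order $1$ is a rotational system and is Weyl of order $1$ by \eqref{structuregroups} (the Weyl condition being vacuous in that range), so I would assume $k \geq 2$. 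The goal is then to show that $\rho$ is cohomologous to a cocycle taking values in $\Poly_{\leq k-1}(\ZZ^{k-1}(\XX))$; once that is done, Remark \ref{cohiso} lets me replace $\rho$ by this polynomial cocycle without changing the isomorphism class of $\XX$, and since $\ZZ^{k-1}(\XX)$ is Weyl of order $k-1$ (so $\rho_1,\dots,\rho_{k-2}$ are already polynomials of the correct degrees), this exhibits $\XX$ as Weyl of order $k$.

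To produce the polynomial representative I would pass to $\T$-valued cocycles by Pontryagin duality. For each character $\xi \in \hat U$, the composite $\xi \cdot \rho \colon \Gamma \to \M(\ZZ^{k-1}(\XX),\T)$ is a genuine cocycle, hence in particular a $\T$-valued quasi-cocycle of degree $k-2$ (its coboundary defect vanishes, and $0 \in \Poly_{\leq k-2}$), and it is of type $k$ because $\rho$ is of type $k$ as a $U$-valued cocycle (Proposition \ref{type}(ii)). The base system $\ZZ^{k-1}(\XX)$ is ergodic, totally disconnected, $\infty$-divisible, and Weyl of order $k-1$ by hypothesis. Thus Proposition \ref{trivial-quasi}, applied with $j = k-1$ and the same $k$, tells me that each $\xi \cdot \rho$ is a $\T$-valued quasi-coboundary of degree $k-1$ on $\ZZ^{k-1}(\XX)$, i.e.\ cohomologous to a polynomial of degree $\leq k-1$.

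Finally I would assemble these $\T$-valued conclusions into a single $U$-valued one via Proposition \ref{typecircle:prop}, exactly as in the last step of the proof of Theorem \ref{abramov-weyl}: since $\xi \cdot \rho$ is a quasi-coboundary of degree $\leq k-1$ for every $\xi \in \hat U$, the cocycle $\rho$ is a $U$-valued quasi-coboundary of degree $\leq k-1$ on $\ZZ^{k-1}(\XX)$, which is precisely what was needed to conclude. I do not expect a genuine obstacle inside this corollary: all of the analytic difficulty has been isolated in Proposition \ref{trivial-quasi} (and, for producing the $\infty$-divisible hypothesis in applications, in Proposition \ref{k-infty}). The only points requiring a little care are the degenerate case of small $k$ and checking that the degree parameters feeding into Proposition \ref{trivial-quasi} line up, together with verifying that each hypothesis of that proposition — ergodicity, total disconnectedness, $\infty$-divisibility, and the Weyl property of order $k-1$ — is indeed inherited by the base factor $\ZZ^{k-1}(\XX)$.
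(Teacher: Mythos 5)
Your proposal is correct and follows essentially the same route as the paper: write $\XX = \ZZ^{k-1}(\XX) \rtimes_\rho U$ via Proposition \ref{abelext}, apply Proposition \ref{trivial-quasi} to each $\xi \cdot \rho$ on the base $\ZZ^{k-1}(\XX)$ (with $j = k-1$), upgrade to a $U$-valued quasi-coboundary via Proposition \ref{typecircle:prop} and $\infty$-divisibility, and conclude with Remark \ref{cohiso} and the Weyl hypothesis on the base. Your degree bookkeeping (quasi-cocycle of degree $k-2$, type $k$, quasi-coboundary of degree $k-1$) in fact matches the statement of Proposition \ref{trivial-quasi} precisely, so no further remarks are needed.
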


\begin{proof} By Proposition \ref{abelext} we may write $\XX = \ZZ^{k-1}(\XX) \rtimes_\rho U$ for some compact abelian metrizable $U$ and some cocycle $\rho$ of type $k$.  For any character $\xi \in \hat U$, Proposition \ref{type}(ii) implies that $\xi \circ \rho$ is a $\T$-valued cocycle on $\XX$ of type $k$. Since a cocycle is trivially also a quasi-cocycle we may apply Proposition \ref{trivial-quasi}. From that Proposition, applied with $k+1$ in place of $k$ and $j=k-1$, we see that $\xi \circ \rho$ is a $\T$-valued quasi-coboundary on $\XX$ of degree $k-2$.
By Proposition \ref{typecircle:prop} and the $\infty$-divisibility hypothesis, $\rho$ is a $U$-valued quasi-coboundary on $\XX$ of degree $k-1$.  By Remark \ref{cohiso}, we may thus assume without loss of generality that $\rho_\gamma$ is a polynomial of degree $\leq k-1$ for all $\gamma \in \Gamma$.  Since $\ZZ^{k-1}(\XX)$ was Weyl of order $k-1$, this implies that $\XX$ is Weyl of order $k$ as required.
\end{proof}

We can now use Proposition \ref{k-infty} and Corollary \ref{cor-weyl} to prove Theorem \ref{infty-weyl} as follows.  We induct on $k$.  The $k=1$ 
case of Theorem \ref{infty-weyl} follows from Proposition \ref{k-infty} and the fact (immediate from \eqref{structuregroups}) that all order $1$ systems are Weyl.  Now suppose that $k > 1$ and that the theorem has already been proven for $k-1$.  Since $\XX$ is ergodic, totally disconnected and $k$-divisible, $\ZZ^{k-1}(\XX)$ is ergodic, totally disconnected and $k-1$-divisible, hence by induction hypothesis $\ZZ^{k-1}(\XX)$ is $\infty$-divisible and Weyl of order $k-1$.  By Corollary \ref{cor-weyl}, $\XX$ is Weyl of order $k$, and then by Proposition \ref{k-infty}, $\XX$ is $\infty$-divisible, thus closing the induction.

It remains to establish Proposition \ref{k-infty} and Proposition \ref{trivial-quasi}.  This will be done in the next two subsections.

\subsection{Obtaining $\infty$-divisibility}

We now prove Proposition \ref{k-infty}.  Let $\Gamma$, $k$, $\XX$ be as in that proposition.  We assume inductively that the claim holds for $k-1$ (this is trivial for $k=1$).  As $\XX$ is Weyl, we can write
$$ \XX = \ZZ^{k-1}(\XX) \rtimes_\rho U$$
for some compact abelian metrizable group $U$ and some $U$-valued cocycle $\rho$ on $\ZZ^{k-1}(\XX)$ which is polynomial of degree at most $k-1$.  By Proposition \ref{ppfacts}(ii), $\ZZ^{k-1}(\XX)$ is $k-1$-divisible and hence (by induction hypothesis) $\infty$-divisible.  The $k$-divisibility of $\XX$ also gives an important additional property on $U$:

\begin{lemma}\label{U-divis} The Pontryagin dual $\hat U$ of $U$ is divisible.
\end{lemma}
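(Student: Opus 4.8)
\textbf{Proof proposal for Lemma \ref{U-divis}.}

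The plan is to recover divisibility of $\hat U$ from the $k$-divisibility of $\XX$ by exhibiting, for a given character $\xi \in \hat U$ and a given $n \geq 1$, an explicit degree $k$ polynomial on $\XX$ built out of $\xi \cdot u$, dividing it by $n$ inside $\Poly_{\leq k}(\XX)$, and reading off a character that solves $n\eta = \xi$. First I would use the fact that $\XX = \ZZ^{k-1}(\XX) \rtimes_\rho U$ is Weyl of order $k$ to observe that the function $P_\xi(y,u) \coloneqq \xi \cdot u$ on $\XX$ is a polynomial of degree $\leq k$: its derivative $\partial_\gamma P_\xi$ equals $\xi \cdot \rho_\gamma$, which is a polynomial of degree $\leq k-1$ on $\ZZ^{k-1}(\XX)$ since $\rho$ is polynomial of degree $\leq k-1$ by hypothesis; hence $P_\xi \in \Poly_{\leq k}(\XX)$. (This is essentially the chain of reasoning already used in Section \ref{sec-abweyl}.)

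Next, by the $k$-divisibility of $\XX$, there is $Q \in \Poly_{\leq k}(\XX)$ with $nQ = P_\xi$. The point is that $Q$ must then itself be, up to lower-order and base-system contributions, a character of the $U$-action. Concretely I would argue: for every $u \in U$, $\partial_u P_\xi = \xi \cdot u$ is a constant (independent of the base point), so $\partial_u Q$ satisfies $n \partial_u Q = \xi \cdot u$. Since $\XX$ is ergodic and $\partial_u Q$ is a polynomial of degree $\leq k$, one shows $\partial_u Q$ is in fact invariant under the $U$-action composed appropriately, or more directly: differentiating again in $u' \in U$ gives $n \partial_{u'} \partial_u Q = 0$, so $\partial_{u'}\partial_u Q$ takes values in the ($n$-torsion, hence by ergodicity and connectedness considerations trivial after passing to the connected component) — here I must be slightly careful because $U$ need not be connected. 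The clean route is: the map $u \mapsto \partial_u Q$ is a homomorphism from $U$ into $\M(\XX,\T)$ (cocycle identity for $\partial$), its composition with multiplication by $n$ lands in $\hat U$ (the constants $\xi \cdot u$), and the continuity of the $U$-action in $L^\infty$ on polynomials (Proposition \ref{ppfacts}(i)) forces $\partial_u Q$ to be a continuous function of $u$ into the Polish group $\M(\XX,\T)$; combined with $n \partial_u Q = \xi\cdot u$ one deduces $\partial_u Q$ is itself constant in the base variable, i.e. $\partial_u Q = \eta \cdot u$ for some $\eta \in \hat U$ with $n \eta = \xi$. This gives divisibility of $\hat U$.

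The main obstacle I expect is exactly the step of promoting "$n \partial_u Q$ is a character" to "$\partial_u Q$ is a character": a priori $\partial_u Q$ is only a degree $\leq k$ polynomial, and dividing a character by $n$ in $\M(\XX,\T)$ could in principle produce something genuinely nonconstant whose $n$-th multiple happens to be constant (think of $\frac{1}{n}$ times a trivial character plus a genuine eigenfunction). The resolution is that any such discrepancy is an $n$-torsion element of $\Poly_{\leq k}(\XX)$ whose derivative in $u$ lies in the discrete spectrum; one pins it down using that $\XX$ is ergodic so the invariant functions are constant, and that (by the Weyl/polynomial structure and Proposition \ref{ppfacts}) the group $\Poly_{\leq k}(\XX)$ modulo constants is torsion-free in the relevant fibre direction — alternatively one simply absorbs the ambiguity into the choice of $\eta$ since we only need the existence of \emph{some} $\eta$ with $n\eta = \xi$, and the constant ambiguity in $Q$ is harmless because we take a derivative in $u$. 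I would write the argument so as to minimize reliance on connectedness of $U$, using the $L^\infty$-continuity of the $U$-action on polynomials as the key regularity input.
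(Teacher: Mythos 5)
Your overall strategy is the paper's: view the vertical coordinate (equivalently $\xi \cdot \mathrm{u}$) as an element of $\Poly_{\leq k}(\XX)$ using the Weyl hypothesis, divide by $n$ using $k$-divisibility, differentiate along the vertical $U$-action, and read off a character $\eta$ with $n\eta = \xi$. But there is a genuine gap at exactly the step you flag yourself: you never actually prove that $\partial_u Q$ is constant in the base variable. Knowing that $n\,\partial_u Q = \xi \cdot u$ is constant, that $u \mapsto \partial_u Q$ is a continuous homomorphism into $\M(\XX,\T)$, and that $\XX$ is ergodic does not force $\partial_u Q$ to be constant: the discrepancy between $\partial_u Q$ and a constant is a degree $\leq k$ polynomial taking values in $\tfrac1n\Z/\Z$, and such nonconstant polynomials abound precisely in the setting where this lemma is applied (totally disconnected structure groups have torsion duals, so $\Poly_{\leq k}(\XX)$ modulo constants is very far from torsion-free; your parenthetical appeal to torsion-freeness ``in the relevant fibre direction'' is unsupported). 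The alternative escape of ``absorbing the ambiguity into $\eta$'' does not help either, since the problem is not which constant $\partial_u Q$ equals but whether it is constant at all; if it is not, there is no character to extract. Continuity via Proposition \ref{ppfacts}(i) (or the compact-group statement \ref{ppfacts}(iv)) only yields constancy of $\partial_u Q$ for $u$ in some \emph{open subgroup} of $U$, which is strictly weaker than what is needed, as $U$ need not be generated by a neighborhood of the identity.

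The missing ingredient, and the way the paper closes this step, is Proposition \ref{ppfacts}(iii): the vertical rotation $V^u$ is an automorphism of $\XX$ fixing the $\sigma$-algebra of $\ZZ^{k-1}(\XX)$, so differentiating a polynomial of degree $\leq k$ along it drops the degree all the way to $k-(k-1)-1 = 0$. Hence $\partial_u Q$ is a degree-zero polynomial, i.e.\ a constant $\chi(u)$ by ergodicity, for \emph{every} $u \in U$; the cocycle identity \eqref{cocycle-ident} then makes $\chi$ a (measurable, hence continuous) character, and $n\chi(u) = \partial_u(nQ) = \xi \cdot u$ gives $\xi = n\chi$. If you replace your ``clean route'' with this one-line application of \ref{ppfacts}(iii), your argument becomes exactly the paper's proof; without it, the central step does not go through.
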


\begin{proof}
    Let $\xi \in \hat U$ and let $n\geq 1$.  Observe that the vertical coordinate function $\mathrm{u} \in \M(\XX,U)$ defined by $\mathrm{u} \colon (y,u) \to u$ is a polynomial of degree $\leq k$, since $\partial_\gamma \mathrm{u}(y,u) = \rho_\gamma(y)$ for all $y \in \ZZ^{k-1}(\XX)$ and $u \in U$.  In particular, $\xi \cdot \mathrm{u} \in \Poly_{\leq k}(\XX)$.  By $k$-divisibility, we can write $\xi \cdot \mathrm{u} = nP$ for some $P \in \Poly_k(\XX)$.  By Proposition \ref{ppfacts}(iii), $\partial_u P$ is degree zero and thus equal to a constant $\chi(u)$ for each $u \in U$.  From the cocycle equation \eqref{cocycle-ident}, $\chi$ is a character, thus $\chi \in \hat U$.  Since $\partial_u (\xi \cdot \mathrm{u}) = \xi  \cdot u$, we conclude that $\xi = n \chi$, and so $\hat U$ is divisible as required.
\end{proof}

Let $P \in \Poly_{\leq m}(\XX)$ for some $m \geq 1$, and let $n \geq 1$.  We wish to show that $P-nQ = 0$ for some $Q \in \Poly_{\leq m}(\XX)$.  On the other hand, for each $u \in U$, the ``vertical derivative'' operator $\partial_u$ lowers degree by $k$, thanks to Proposition \ref{ppfacts}(iii).  In particular, we have $\partial_{u_1} \dots \partial_{u_\ell} P = 0$ whenever $k \ell > m$ and $u_1,\dots,u_\ell \in U$.  The claim now follows from iterating the following claim downwards in $\ell$ (starting with the largest $\ell$ for which $k \ell \leq m$).

\begin{proposition}[Lowering the vertical degree]\label{vert}  Let $\Gamma, k, \XX, P, m, n$ be as above.  Suppose that there is $\ell \geq 0$ with $k \ell \leq m$ such that
$$ \partial_{u_1} \dots \partial_{u_{\ell+1}} P = 0$$
for all $u_1,\dots,u_{\ell+1} \in U$.  Then there exists $Q \in \Poly_{\leq m}(\XX)$ such that
$$ \partial_{u_1} \dots \partial_{u_{\ell}} (P-nQ) = 0$$
for all $u_1,\dots,u_{\ell} \in U$.
\end{proposition}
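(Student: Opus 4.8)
The plan is to build $Q$ as a ``polynomial in the vertical coordinate''. Since $\XX$ is Weyl of order $k$ we may write $\XX = \ZZ^{k-1}(\XX) \rtimes_\rho U$ with $\rho$ polynomial of degree $\leq k-1$; write $\mathrm{u}\colon(y,u)\mapsto u$ for the vertical coordinate and put $d\coloneqq m-k\ell\geq 0$. I would first record the two halves of a vertical differentiation/integration calculus. On one hand,
$$\Phi(u_1,\dots,u_\ell)\coloneqq \partial_{u_1}\cdots\partial_{u_\ell}P$$
is a \emph{continuous symmetric multilinear} form $\Phi\colon U^\ell\to \Poly_{\leq d}(\ZZ^{k-1}(\XX))$: it takes values in $\Poly_{\leq d}(\XX)$ because each $\partial_{u_i}$ lowers polynomial degree by $k$ (Proposition \ref{ppfacts}(iii)), it is $U$-invariant (hence $\ZZ^{k-1}(\XX)$-measurable) by the hypothesis $\partial_{u_1}\cdots\partial_{u_{\ell+1}}P=0$, it is symmetric because the $\partial_{u_i}$ commute, it is multilinear by the cocycle identity $\partial_{u+u'}=\partial_u+\partial_{u'}+\partial_u\partial_{u'}$ (the cross term carries an extra vertical derivative and vanishes), and it is continuous because the vertical $U$-action is strongly continuous in $L^2$, which upgrades to $L^\infty$-continuity on bounded-degree polynomials by Proposition \ref{ppfacts}(i). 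On the other hand, for any continuous symmetric multilinear $\Psi\colon U^\ell\to\Poly_{\leq d}(\ZZ^{k-1}(\XX))$, the function $\Psi(\mathrm{u}^{\times\ell})\in\M(\XX,\T)$ given by $(y,u)\mapsto\Psi(u,\dots,u)(y)$ lies in $\Poly_{\leq d+k\ell}(\XX)=\Poly_{\leq m}(\XX)$ and satisfies $\partial_{u_1}\cdots\partial_{u_\ell}[\Psi(\mathrm{u}^{\times\ell})]=\ell!\,\Psi(u_1,\dots,u_\ell)$; the derivative identity is a direct multilinear expansion in which only the fully mixed term survives (with multiplicity $\ell!$), and the degree bound is a routine induction on $\ell$ and on $d$ using that $\mathrm{u}$ has degree $\leq k$ and $\rho$ has degree $\leq k-1$ (this is the calculus of polynomial functions of cocycle-valued coordinates, cf.\ \cite{btz}).

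Granting these, I would finish as follows: find a continuous symmetric multilinear $\Psi\colon U^\ell\to\Poly_{\leq d}(\ZZ^{k-1}(\XX))$ with $n\ell!\,\Psi=\Phi$, and set $Q\coloneqq\Psi(\mathrm{u}^{\times\ell})\in\Poly_{\leq m}(\XX)$. Then $\partial_{u_1}\cdots\partial_{u_\ell}(nQ)=n\ell!\,\Psi(u_1,\dots,u_\ell)=\Phi(u_1,\dots,u_\ell)=\partial_{u_1}\cdots\partial_{u_\ell}P$, i.e.\ $\partial_{u_1}\cdots\partial_{u_\ell}(P-nQ)=0$, which is the desired conclusion.

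So everything reduces to the claim that the abelian group of continuous symmetric multilinear forms $U^\ell\to\Poly_{\leq d}(\ZZ^{k-1}(\XX))$ is divisible; this is where I expect the real work to lie, and where $k$-divisibility is genuinely used. Two inputs are available: $\ZZ^{k-1}(\XX)$ is $\infty$-divisible (the inductive hypothesis inside the proof of Proposition \ref{k-infty}), so the target $\Poly_{\leq d}(\ZZ^{k-1}(\XX))$ is a divisible Polish abelian group whose identity component is $\T$ — its quotient by $\T$ being countable (Proposition \ref{ppfacts}(i)), hence a discrete Polish group; and $\hat U$ is divisible by Lemma \ref{U-divis}, equivalently $U$ is torsion-free. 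To divide $\Phi$ by $N\coloneqq n\ell!$ I would first reduce modulo $\T$: the form $\Phi\bmod\T$ takes values in a countable discrete group, so it is locally constant on the compact group $U^\ell$ and factors through a symmetric multilinear form on a finite quotient $(U/K)^\ell$; divisibility of $\hat U$ lets me pass to a finer finite quotient $(U/K')^\ell$ on which this form admits an $N$-th root (a finite abelian group computation, and precisely the point where torsion-freeness of $U$ is indispensable — a $\Q/\Z$-valued bilinear form on a group with no divisible cover, e.g.\ $(\Z/p\Z)^2$, need not be $p$-divisible). I would then lift that root from $\Poly_{\leq d}(\ZZ^{k-1}(\XX))/\T$ back to $\Poly_{\leq d}(\ZZ^{k-1}(\XX))$ — possible since $\T$ is injective, so $\mathrm{Ext}^1$ against a finite group vanishes — and subtract, reducing to a $\T$-valued continuous symmetric multilinear form, which one divides by $N$ using divisibility of $\hat U$ together with a final induction on $\ell$. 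Keeping all the intermediate forms continuous and symmetric is automatic from the finite-quotient reduction.
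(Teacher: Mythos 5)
Your proof is correct and is essentially the paper's argument: you encode the vertical derivatives as an element $\nabla_U^\ell P$ of $\SML_\ell(U,\Poly_{\leq m-k\ell}(\ZZ^{k-1}(\XX)))$, divide it by $n\ell!$ using divisibility of that group of symmetric multilinear forms, and integrate back via the diagonal formula $Q(y,u)=\Psi(u,\dots,u)(y)$, which is exactly the paper's Lemma \ref{integ}. The two ingredients you sketch by hand are already packaged in the paper: the divisibility step is Proposition \ref{symdiv}(iii) (applied with $\hat U$ divisible by Lemma \ref{U-divis} and the target group divisible and a countable union of copies of $\T$ by the inductive hypothesis and Proposition \ref{ppfacts}(i)), and the degree bound $Q\in\Poly_{\leq m}(\XX)$ is obtained there cleanly via the tensor-product Lemma \ref{Tensor} applied to $b$ and the vertical coordinate $\mathrm{u}\in\Poly_{\leq k}(\XX,U)$ rather than your ``routine induction''.
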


It remains to establish Proposition \ref{vert}.   Let the hypotheses be as in that proposition, then for any $u_1,\dots,u_\ell \in U$, the function $\partial_{u_1} \dots \partial_{u_{\ell}} P$ is $U$-invariant, and is also a polynomial of degree $\leq m-k\ell$ by Proposition \ref{ppfacts}(iii).  Thus it can be identified with an element of $\Poly_{\leq m-k\ell}( \ZZ^{k-1}(\XX))$.  By the cocycle identity \eqref{cocycle-ident}, we see that the expression $\partial_{u_1} \dots \partial_{u_{\ell}} P$ is a homomorphism in each of the $u_1,\dots,u_\ell$, and is also symmetric.  Thus we have constructed an element $\nabla_U^\ell P$ of $\SML_\ell(U, \Poly_{\leq m-k\ell}( \ZZ^{k-1}(\XX)))$, defined by the formula
$$ \nabla_U^\ell P \colon (u_1,\dots,u_\ell) \mapsto \partial_{u_1} \dots \partial_{u_{\ell}} P.$$
By induction hypothesis, the group $\Poly_{\leq m-k\ell}( \ZZ^{k-1}(\XX))$ is divisible; by Proposition \ref{ppfacts}(i), this group is also
a countable union of copies of $\T$.  From this, Lemma \ref{U-divis}, and Proposition \ref{symdiv}(iii), we conclude that $\SML_\ell(U, \Poly_{\leq m-k\ell}( \ZZ^{k-1}(\XX)))$ is divisible.  In particular, we can write
$$ \nabla_U^\ell P = n \ell! b$$ 
for some $b \in \SML_\ell(U, \Poly_{\leq m-k\ell}( \ZZ^{k-1}(\XX)))$.

We will shortly prove the following claim.

\begin{lemma}[Polynomial integration lemma]\label{integ}  Let the notation and hypotheses be as above.  Then for any $b \in \SML_\ell(U, \Poly_{\leq m-k\ell}( \ZZ^{k-1}(\XX)))$, there
exists $Q \in \Poly_{\leq m}(\XX)$ such that $\nabla_U^\ell Q = \ell! b$.
\end{lemma}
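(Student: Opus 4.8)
The plan is to write $Q$ down explicitly as the diagonal restriction of $b$ with the vertical coordinate substituted. Write $\XX=\ZZ^{k-1}(\XX)\rtimes_\rho U$, so that points of $\XX$ are pairs $(y,u)$ with $y\in\ZZ^{k-1}(\XX)$, $u\in U$, and let $\mathrm{u}\in\M(\XX,U)$ be the vertical coordinate $(y,u)\mapsto u$; as in the proof of Lemma~\ref{U-divis} we have $\partial_\gamma\mathrm{u}=\rho_\gamma\in\Poly_{\leq k-1}(\XX,U)$, hence $\mathrm{u}\in\Poly_{\leq k}(\XX,U)$. Define
$$ Q(y,u) \coloneqq b\bigl(\mathrm{u}(y,u)^{\times\ell}\bigr)(y) = b(u,\dots,u)(y),$$
where $b(u^{\times\ell})\in\Poly_{\leq m-k\ell}(\ZZ^{k-1}(\XX))$ is evaluated at $y$. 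This is a genuine element of $\M(\XX,\T)$ because $b$, being separately measurable and multilinear out of the compact groups $U$ into the Polish group $\M(\ZZ^{k-1}(\XX),\T)$, is jointly measurable.

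I would first check that $\nabla^\ell_U Q=\ell!\,b$. Vertical rotations fix the base coordinate, so $\partial_{u'}Q(y,u)=b((u+u')^{\times\ell})(y)-b(u^{\times\ell})(y)$; applying $\ell$ vertical derivatives $\partial_{u_1},\dots,\partial_{u_\ell}$ and expanding each $b((u+u_i)^{\times\ell})$ multilinearly, the polarization identity for the degree-$\ell$ map $u\mapsto b(u^{\times\ell})$ gives $\partial_{u_1}\cdots\partial_{u_\ell}Q=\ell!\,b(u_1,\dots,u_\ell)$, a function independent of $u$, so that $\partial_{u_1}\cdots\partial_{u_{\ell+1}}Q=0$. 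Thus, once we know $Q\in\Poly_{\leq m}(\XX)$, $\nabla^\ell_U Q$ is defined and equals $\ell!\,b$ in $\SML_\ell(U,\Poly_{\leq m-k\ell}(\ZZ^{k-1}(\XX)))$.

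The crux is the degree bound $Q\in\Poly_{\leq m}(\XX)$, equivalently $\partial_\gamma Q\in\Poly_{\leq m-1}(\XX)$ for every $\gamma\in\Gamma$, which I would prove by induction on $d\coloneqq m-k\ell\geq 0$, uniformly over all admissible pairs $(m,\ell)$. Using $T^\gamma(y,u)=(S^\gamma y,u+\rho_\gamma(y))$ and expanding multilinearly,
$$ \partial_\gamma Q(y,u)=\sum_{j=1}^{\ell}\binom{\ell}{j}\,b\bigl(u^{\times(\ell-j)},\rho_\gamma(y)^{\times j}\bigr)(S^\gamma y)\;+\;(\partial_\gamma b)(u^{\times\ell})(y),$$
where $\partial_\gamma b\in\SML_\ell(U,\Poly_{\leq d-1}(\ZZ^{k-1}(\XX)))$; the last term is precisely the function ``$Q$'' attached to the data $(m-1,\ell,\partial_\gamma b)$, hence lies in $\Poly_{\leq m-1}(\XX)$ by the inductive hypothesis (and vanishes in the base case $d=0$). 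For $1\leq j\leq\ell$ the $j$-th summand feeds $\ell-j$ copies of $\mathrm{u}\in\Poly_{\leq k}(\XX,U)$ and $j$ copies of $\rho_\gamma\in\Poly_{\leq k-1}(\ZZ^{k-1}(\XX),U)$ into the $\ell$-linear form $b$ with base values of degree $\leq d$; invoking a composition lemma for polynomials — that a symmetric $\ell$-linear form valued in $\Poly_{\leq d}$ of a factor, evaluated (along the fibre) at polynomial $U$-valued cocycles of degrees $e_1,\dots,e_\ell$, yields a polynomial of degree $\leq d+e_1+\cdots+e_\ell$, the twisting $S^\gamma$ in the evaluation point being harmless since precomposing with $T^{-\gamma}$ removes it while keeping the input degrees — bounds the $j$-th summand by degree $d+(\ell-j)k+j(k-1)=m-j\leq m-1$. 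Summing gives $\partial_\gamma Q\in\Poly_{\leq m-1}(\XX)$, closing the induction.

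The step I expect to be the real obstacle is this composition lemma, since it is what turns "substituting polynomial cocycles into a multilinear form" into additivity of degrees. I would prove it by induction on the total degree $d+e_1+\cdots+e_\ell$: applying $\partial_\gamma$ and using $v_i\circ T^\gamma=v_i+\partial_\gamma v_i$ together with $\pi\circ T^\gamma=S^\gamma\circ\pi$ for the factor map $\pi$, one expands into finitely many terms, in each of which either some input $v_i$ is replaced by $\partial_\gamma v_i$ (lowering $e_i$ by $1$) or $\partial_\gamma$ falls on the $\Poly_{\leq d}$-value (lowering $d$ by $1$), so each term has strictly smaller total degree and the base case (constant inputs, constant values) is trivial. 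The only delicate points are the joint measurability needed to view the composed function as an element of $\M(\XX,\T)$, and the finite bookkeeping of the Leibniz-type expansion; if a suitable composition statement is already available among the basic facts about polynomials (cf.\ Proposition~\ref{ppfacts} and Appendix~\ref{host-kra-theory}), one can cite it directly and skip this induction.
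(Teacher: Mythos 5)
Your proposal is correct and follows essentially the same route as the paper: the same explicit choice $Q(y,u)=b(u,\dots,u)(y)$, the same polarization computation giving $\nabla_U^\ell Q=\ell!\,b$, and the same underlying mechanism (degrees add when polynomial data is fed multilinearly into a polynomial-valued form) for the bound $Q\in\Poly_{\leq m}(\XX)$. The only difference is packaging: the ``composition lemma'' you flag as the real obstacle is exactly what the paper obtains from Lemma \ref{Tensor} together with the universal property of the tensor product --- one views $b$ as an element of $\Poly_{\leq m-k\ell}(\XX,\SML_\ell(U,\T))$ via the canonical identification with $\SML_\ell(U,\Poly_{\leq m-k\ell}(\ZZ^{k-1}(\XX)))$, applies Lemma \ref{Tensor} repeatedly to get $b\otimes\mathrm{u}\otimes\dots\otimes\mathrm{u}\in\Poly_{\leq m}(\XX,\SML_\ell(U,\T)\otimes U^{\otimes\ell})$, and pushes forward under the evaluation homomorphism $c\otimes u_1\otimes\dots\otimes u_\ell\mapsto c(u_1,\dots,u_\ell)$, which puts $Q$ directly in $\Poly_{\leq m}(\XX)$. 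With that in hand, your outer induction on $d=m-k\ell$ and the expansion of $\partial_\gamma Q$ are unnecessary (and your sketched Leibniz-type induction for the composition lemma is precisely the proof of Lemma \ref{Tensor}), so your argument is a correct but more laborious rendering of the same idea.
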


Assuming this lemma, we conclude that
$$ \nabla_U^\ell (P-nQ) = 0$$
and Proposition \ref{vert} follows.

It remains to prove Lemma \ref{integ}.  We make the explicit choice
$$ Q(y,u) \coloneqq b( u,\dots, u)(y)$$
for $u \in U$ and $y \in \ZZ^{k-1}(\XX)$, where $u$ is repeated $\ell$ times.  By the symmetric multilinear nature of $b$, it is clear that $\nabla_U^\ell Q = \ell! b$.  It remains to show that $Q$ is a polynomial of degree at most $m$.  We can achieve this from the following simple algebraic lemma, which asserts that the (tensor) product of a polynomial of degree $m$ and a polynomial of degree $n$ is a polynomial of degree $m+n$:

We can view $b$ as an element of $\Poly_{\leq m-k\ell}( \ZZ^{k-1}(\XX), \SML_\ell(U,\T) )$ since this is canonically identified with $\SML_\ell(U, 
\Poly_{\leq m-k\ell}( \ZZ^{k-1}(\XX) ))$.  In particular, $b$ lifts to an element of $\Poly_{\leq m-k\ell}( \XX, \SML_\ell(U,\T) )$.
Meanwhile, recall from the proof of Lemma \ref{U-divis} that the vertical coordinate function $\mathrm{u} \colon (y,u) \mapsto u$ is an element of $\Poly_{\leq k}(\XX, U)$.   Applying Lemma \ref{Tensor} repeatedly, we conclude that
$$ b \otimes \mathrm{u} \otimes \dots \otimes \mathrm{u} \in \Poly_{\leq m}(\XX, \SML_\ell(U,\T) \otimes U^{\otimes \ell} )$$
where $\mathrm{u}$ appears $\ell$ times on the right-hand side.  On the other hand, by the universal nature of the tensor product, we see that $Q$ is the image of $b \otimes \mathrm{u} \otimes \dots \otimes \mathrm{u}$ under the canonical homomorphism from $\SML_\ell(U,\T) \otimes U^{\otimes \ell}$ to $\T$, that maps $c \otimes u_1 \otimes \dots \otimes u_\ell$ to $c(u_1,\dots,u_\ell)$ for any $c \in \SML_\ell(U,\T)$ and $u_1,\dots,u_\ell \in U$.  Hence $Q \in \Poly_{\leq m}(\XX,\T)$ as desired.

\subsection{Ensuring trivial quasicohomology}

We now prove Proposition \ref{trivial-quasi}.  We induct on $k$.  The case $k=0$ is trivial (type $0$ functions are coboundaries by definition), and when $k=1$, the claim follows from Proposition \ref{type}(vi), so assume that $k \geq 2$ and that the claim has already been proven for $k-1$.

Now we induct on $j$.  When $j=0$, $\XX$ is a point, and the claim is trivial, so suppose that $j \geq 1$ and the claim has already been proven for $j-1$ (with $k$ held fixed).  As in \cite{btz}, we divide into the low-order case $j \leq k$ and the high-order case $j>k$.

We begin with the low-order case $j \leq k$.  By Proposition \ref{abelext} we may write
$$ \XX = \ZZ^{j-1}(\XX) \rtimes_\rho U$$
for some compact abelian metrizable $U$ and some $U$-valued cocycle $\rho$ on $\ZZ^{j-1}(\XX)$ of type $j$. By Proposition \ref{ppfacts}(iii) and Proposition \ref{type}(iv), we have that for every $u\in U$, $\partial_u f$ is a $\T$-valued quasicocycle on $\XX$ of degree $k-2-j$ that is of type $k-j$.  By induction hypothesis, we conclude that $\partial_u f$ is a quasi-coboundary of degree $k-1-j$, thus there exists $q_u \colon \Gamma \to \Poly_{\leq k-1-j}(\XX)$ and $F_u \in \M(\XX,\T)$ such that the ``higher order Conze--Lesigne equation''
\begin{equation}\label{hcl}
\partial_u f = dF_u + q_u
\end{equation}
is satisfied.

Motivated by this, we recall from Remark \ref{cocyc-skew} that we have the semidirect product $U \ltimes \M(\XX,\T)$, which when equipped with the product topology becomes a Polish group.  Inside this group we introduce the subgroup
\begin{equation}\label{H-def}
H \coloneqq \{ (u,F) \in U \ltimes \M(\XX,\T): \partial_u f - dF \in \Poly_{\leq k-1-j}(\XX)^\Gamma \}
\end{equation}
of pairs $(u,v)$ such that $\partial_u f_\gamma - \partial_\gamma F$ is a polynomial of degree at most $k-1-j$ for all $\gamma \in \Gamma$.  It is easy to check from the cocycle equation \eqref{cocycle-ident} that this is a closed subgroup of $U \ltimes \M(\XX,\T)$, and the higher order Conze--Lesigne equation \eqref{hcl} can be restated as the assertion that the homomorphism $(u,F) \mapsto u$ is a surjection from $H$ to $U$.  The kernel of this homomorphism is the set of pairs $(0,F)$ such that $dF \in  \Poly_{\leq k-1-j}(\XX)^\Gamma$, or equivalently that $F \in \Poly_{\leq k-j}(\XX)$.  Thus we have a short exact sequence
\begin{equation}\label{short-exact}
0 \to \Poly_{\leq k-j}(\XX) \to H \to U \to 0.
\end{equation}
Suppose for the moment that this sequence splits in the category of topological groups, thus there exists a continuous homomorphism $u \mapsto (u,F_u)$ from $U$ to $H$.  Thus $F_u \in \M(\XX,\T)$ varies continuously with $U$ and obeys the cocycle equation
$$ F_{u_1+u_2} = F_{u_1} + F_{u_2} \circ V^{u_1}$$
for all $u_1,u_2 \in U$.  By Proposition \ref{C.8} we conclude that there exists $\Phi \in \M(\XX,\T)$ such that $F_u = \partial_u \Phi$ for all $u \in U$.  From \eqref{H-def} we then have
$$ \partial_u (f - d\Phi) \in \Poly_{\leq k-1-j}(\XX)^\Gamma$$
for all $u \in U$.  Thus, if we let $\tilde f \coloneqq f - d\Phi$, then $\tilde f$ is cohomologous to $f$ (and thus still a quasi-cocycle of degree $k-2$ of type $k$), and we have
\begin{equation}\label{puf}
\partial_u \tilde f_\gamma = q_{u,\gamma}
\end{equation}
for all $u \in U$, $\gamma \in \Gamma$, and some $q_{u,\gamma} \in \Poly_{\leq k-1-j}(\XX)$ that depends continuously on $u$.  From the cocycle equation we see that
$$ q_{u_1+u_2,\gamma} = q_{u_1,\gamma} + q_{u_2,\gamma} \circ V^{u_1}$$
for all $u_1,u_2 \in U$ and $\gamma \in \Gamma$. By Proposition \ref{PI1}, we can thus find $Q_\gamma \in \Poly_{\leq k-1}(\XX)$ for each $\gamma \in\Gamma$ such that $q_{u,\gamma} = \partial_u Q_\gamma$; indeed we can take
\begin{equation}\label{quu}
Q_\gamma(y,u u_0) = q_{u,\gamma}(y,u_0)
\end{equation}
for some generic $u_0 \in U$.  We conclude that $\partial_u (\tilde f_\gamma - Q_\gamma)=0$, thus
\begin{equation}\label{tf}
\tilde f = Q + \overline{f} \circ \pi
\end{equation}
for some $\overline{f} \colon \Gamma \to \M(\ZZ^{j-1}(\XX),\T)$, where $\pi \colon \XX \to \ZZ^{j-1}(\XX)$ is the factor map.  For future reference we observe that this argument is still valid if $\ZZ^{j-1}(\XX)$ were replaced by any intermediate factor between $\ZZ^{j-1}(\XX)$ and $\XX$.

As observed in \cite[Section 8.3]{btz}, the choice \eqref{quu} makes $Q$ into a quasi-cocycle of degree $k-2$.  We repeat the calculation here, taking the opportunity to correct some misprints in that paper.  It suffices to show that for any $\gamma_1,\gamma_2 \in \Gamma$, the expression
$$ Q_{\gamma_1+\gamma_2}(y, uu_0) - Q_{\gamma_1}(y,uu_0) - (Q_{\gamma_2} \circ T^{\gamma_1})(y,uu_0)$$
is a polynomial function of $(y,u)$ of degree at most $k-2$.  Since $\partial_{\gamma_1} Q_{\gamma_2}$ is already of degree at most $k-2$, it suffices to establish this claim for
$$ Q_{\gamma_1+\gamma_2}(y, uu_0) - Q_{\gamma_1}(y,uu_0) - Q_{\gamma_2}(y,uu_0)$$
which by \eqref{quu} is equal to
$$ q_{u,\gamma_1+\gamma_2}(y,u_0) - q_{u,\gamma_1}(y,u_0) - q_{u,\gamma_2}(y,u_0).$$
On the other hand, since $\tilde f$ is a quasi-cocycle of degree $k-2$, we see from \eqref{puf} and Proposition \ref{ppfacts}(iii) that $q_u$ is a quasi-cocycle of degree at most $k-2-j$, thus
$$ q_{u,\gamma_1+\gamma_2} - q_{u,\gamma_1} - q_{u,\gamma_2} \circ T^{\gamma_1}$$
is a polynomial of degree at most $k-2-j$; it is also a cocycle in $u$.  Applying \cite[Lemma 8.14(ii)]{btz}, we conclude (for generic $u_0$) that
$$\left(q_{u,\gamma_1+\gamma_2} - q_{u,\gamma_1} - q_{u,\gamma_2} \circ T^{\gamma_1}\right)(y,u_0)$$
is a polynomial function of $(y,u)$ of degree at most $k-2$.  Also, applying Proposition \ref{PI1} again to $\partial_{\gamma_1} q_{u,\gamma_2}$, we see (again for generic $u_0$) that
$$ (\partial_{\gamma_1} q_{u,\gamma_2})(y,u_0)$$
is also a polynomial function of $(y,u)$ of degree at most $k-2$.  Summing, we obtain the claim.  Again, this argument would remain valid if $\ZZ^{j-1}(\XX)$ were replaced by any intermediate factor between $\ZZ^{j-1}(\XX)$ and $\XX$.

Clearly $Q$ is also a quasi-coboundary of degree $k-1$, and thus of type $k$.  We conclude that $\pi^* \overline{f}$ is a quasi-cocycle on $\XX$ of degree $k-2$ and type $k$.  By \cite[Proposition 8.11]{btz}, this implies that $\overline{f}$ is also a quasi-cocycle on $\ZZ^{j-1}(\XX)$ of degree $k-2$ and type $k$.  By induction hypothesis, $\overline{f}$ is a quasi-coboundary on $\ZZ^{j-1}(\XX)$ of degree $k-1$; pulling back by $\pi$ and using \eqref{tf} we conclude that $\tilde f$ and hence $f$ are also quasi-coboundaries on $\XX$ of degree $k-1$, as claimed.

A modification of the above argument  (already implicit in \cite{btz}) allows us to utilize ``partial'' splittings of the sequence as follows.  If the group $U$ has a product structure $U_1 \times U_2$ for compact abelian metrizable subgroups $U_1,U_2$, and we let $H_1$ be the closed subgroup of $H$ consisting of those pairs $(u_1,F_1) \in H$ with $u_1 \in U_1$, then of course we also have a short exact sequence
\begin{equation}\label{h1}
0 \to \Poly_{\leq k-j}(\XX) \to H_1 \to U_1 \to 0.
\end{equation}
Also, letting $\pi_1 \colon U \to U_1$, $\pi_2 \colon U \to U_2$ be the projection maps, we may factorize
$$ \XX = (\ZZ^{j-1}(\XX) \rtimes_{\pi_2 \circ \rho} U_2) \rtimes_{\pi_1 \circ \rho \circ \pi} U_1$$
where $\pi \colon \ZZ^{j-1}(\XX) \rtimes_{\pi_2 \circ \rho} U_2 \to \ZZ^{j-1}(\XX)$ is the factor map. The remark is then that if the sequence \eqref{h1} splits, and the Proposition \ref{trivial-quasi} is already known to hold for $\ZZ^{j-1}(\XX) \rtimes_{\pi_2 \circ \rho} U_2$, then Proposition \ref{trivial-quasi} holds for $\XX$.  This follows by repeating the previous arguments but with $\ZZ^{j-1}(\XX)$ replaced by $\ZZ^{j-1}(\XX) \rtimes_{\pi_2 \circ \rho} U_2$, which is intermediate between $\ZZ^{j-1}(\XX)$ and $\XX$.  The practical upshot of this is that once we achieve a partial splitting on a direct factor $U_1$ of $U$, we can then reduce $U$ without loss of generality to the complementary factor $U_2$.

As a first use of this remark, we use the argument from\footnote{We will actually use a variant of that argument, relying on the Pettis lemma (a variant of the Steinhaus lemma) in place of Lusin's theorem, in order to avoid invoking measurable selection theorems.} \cite[Proposition 6.1]{btz} to obtain a splitting on an open subgroup $V'$ of $U$.  Let $\eps>0$ be a small number to be chosen later.  As $\M(\XX,\T)$ is separable, it can be covered by countably many balls $B( f_n, \eps)$, $n \in \N$ in the $L^2$ metric for some $f_n \in \M(\XX,\T)$.  For each $n$, the orbit of $U$ on $f_n$ is compact and the action of $U$ is strongly continuous, so we can cover $U$ by finitely many open sets $U_{n,j}$ such that $f_n \circ V_u$ only varies by at most $\eps$ in the $L^2$ metric for $u \in U_{n,j}$.  For each $n,j$, let $E_{n,j}$ be the set of those $u \in U_{n,j}$ which lift to at least one element $(u,F_u)$ in $H$ with $F_u \in B(f_n,\eps)$.  Then the $E_{n,j}$ form a countable cover of $U$ by analytic sets.  By the Baire category theorem, at least one of the $E_{n,j}$ is non-meager.

Since $E_{n,j}$ is analytic, it has the property of Baire by the Lusin--Sierpinski theorem (see e.g., \cite[Theorem 21.6]{kechris1995classical}), thus it agrees outside of a meager set with an open set $U'$, which must be non-empty since $E_{n,j}$ is non-meager.  By the Baire category theorem, $E_{n,j}-E_{n,j}$ must contain $U'-U'$, which is an open neighborhood of the identity (this is the Pettis lemma, see \cite{pettis} or \cite[Theorem 9.9]{kechris1995classical}).  Since $U$ is totally disconnected, we conclude (see e.g., \cite[Proposition 1.1.3]{neukirch}) that $E_{n,j}-E_{n,j}$ contains an open subgroup $V$ of $U$.

If $v \in V$, then we arbitrarily write $v = u-u'$ for some $u,u' \in E_{n,j}$.  The element $(v,\tilde F_v) \coloneqq (u,F_u) (u',F_{u'})^{-1}$ then lies in $H$, with $\tilde F_v$ within $5\eps$ of $0$ by the triangle inequality.  If we then let $K$ denote all the pairs $(v,F)$ in $H$ with $v \in V$ and $F$ within $20\eps$ of $\tilde F_v + c$ for some constant $c \in \T$, we see from Proposition \ref{ppfacts}(i) that (if $\eps$ is chosen to be sufficiently small) $K$ is a closed subgroup of $H$ and that one has the short exact sequence
$$ 0 \to \T \to K \to V \to 0.$$
In particular we have a continuous bijective homomorphism from the compact group $K/\T$ to the compact abelian group $V$, which must then be an isomorphism of topological groups.  In particular $K/\T$ is abelian, and the map $v \mapsto (v, \tilde F_v) \text{ mod } \T$ is continuous.  Taking commutators (which annihilate $\T$), the map $(v_1,v_2) \mapsto [(v_1,\tilde F_{v_1}), (v_2,\tilde F_{v_2})]$ will map to the interval $(-0.1,0.1) \text{ mod } 1$ in $\T$ if $v_1,v_2$ lie in some sufficiently small open subgroup $V'$ of $V$.  But this map is a homomorphism in $v_1,v_2$, and this interval contains no non-trivial subgroups of $\T$, thus this map in fact vanishes.  Thus if we let $K'$ be the preimage of $V'$ in $K$, then $K'$ is abelian and we have the short exact sequence
$$ 0 \to \T \to K' \to V' \to 0$$
of compact abelian groups.  By Pontryagin duality we see that $\T$ is an injective object in the category of compact abelian groups, so this sequence splits, thus we may find a continuous homomorphism $v \mapsto (v, F'_v)$ from $V'$ to $K'$.  This of course also induces a splitting of the short exact sequence
\begin{equation}\label{vp}
0 \to \Poly_{\leq k}(\XX) \to \{ (u,F) \in H: u \in V' \} \to V' \to 0.
\end{equation}
We would now like to use the previous remark, but we run into a new\footnote{This was not an issue in \cite{btz}, since the structure group $U$ was $p$-torsion and so \cite[Lemma D.2]{btz} was available.} technical difficulty that $V'$ is not necessarily a direct factor of $U$.    However, we can still obtain strong structural control on $U$ as follows.  By the Sylow theorem for profinite abelian groups (see e.g., \cite[Proposition 2.3.8]{ribes-z} or \cite[Corollary 8.8]{hofmann}), $U$ can be expressed as the direct product of its maximal $p$-profinite subgroups $U_p$.  By Lemma \ref{U-divis}, $U$ has divisible dual, and thus so do the subgroups $U_p$. A divisible $p$-profinite group is isomorphic to a (possibly infinite) direct product of $p$-Pr\"ufer groups $\Z[\frac{1}{p}]/\Z = \hat \Z_p$ (see e.g., \cite[Theorem 23.1]{Fuchs}), so we conclude that $U$ is the product of finite or infinitely many $p$-adic groups $\Z_p$.  As $V'$ is open in the product topology, it must contain the product of all but finitely many of these factors.  Thus there exists a direct product representation $U = U_1 \times U_2$ where $U_2$ is the product $U_2 = \Z_{p_1} \times \dots \times \Z_{p_m}$ of finitely many $p$-adic groups (where the primes $p_1,\dots,p_m$ are not necessarily distinct), and $U_1$ is a compact subgroup of $V'$.  In particular we have a splitting of the short exact sequence
$$ 0 \to \Poly_{\leq k}(\XX) \to \{ (u,F) \in H: u \in U_1 \} \to U_1 \to 0.$$
In view of the previous remark, we may now assume without loss of generality that $U=U_2$, that is to say $U$ is a product
$$ U = \Z_{p_1} \times \dots \times \Z_{p_m}$$
of $p$-adic groups.

We now proceed by induction on $m$.  If $m=0$ then there is nothing to prove.  For $m \geq 1$, it again suffices by the preceding remark and the inductive hypothesis to obtain a splitting
$$ 0 \to \Poly_{\leq k}(\XX) \to \{ (u,F) \in H: u \in \Z_{p_m} \} \to \Z_{p_m} \to 0$$
of the final factor $\Z_{p_m}$.  To simplify the notation we will now abbreviate $p_m$ as $p$.

By the previous arguments, we have already obtained a splitting \eqref{vp} for some open subgroup $V'$ of $U$.  This open subgroup $V$ need not contain all of $\Z_{p}$, but it must contain an open subgroup $p^n \Z_{p}$ of $\Z_{p}$ for some $n \geq 0$, thus there is a continuous homomorphism $u \mapsto (u,F_u)$ from $p^n \Z_{p}$ to some compact abelian subgroup $K$ of $H$.  We would like to extend this homomorphism to $\Z_{p}$.  If $H$ were abelian we could appeal to Lemma \ref{divis-inject} and the divisibility hypothesis to conclude (after quotienting out by $p^n \Z_p$).  Unfortunately, $H$ is not abelian in general, but we can instead take advantage of the fact that $\Z_{p}$ is \emph{monothetic} in the sense that it contains a dense cyclic group, namely the integers $\Z$ which are generated by the unit $1$.  From \eqref{short-exact} we can lift the unit $1$ to an element $(1, F'_1)$ of $H$, where $F'_1$ is unique modulo $\Poly_{\leq k}(\XX)$.  Raising to the power $p^n$ and using \eqref{short-exact} again, we have
$$ (1,F'_1)^{p^n} = (p^n, F_{p^n} + Q)$$
for some polynomial $Q \in \Poly_{\leq k}(\XX)$.  We can begin to eliminate $Q$ by using the divisibility hypothesis to write $Q = p^n P$, and then one can calculate that
$$ (1,(F'_1-P))^{p^n} = (p^n, F_{p^n} + Q')$$
for some polynomial $Q' \in \Poly_{\leq k-1}(\XX)$.  Iterating this procedure, we can eventually find a lift $(1,F''_1)$ of $1$ in $H$ such that
$$ (1,F''_1)^{p^n} = (p^n, F_{p^n})$$
which by the homomorphism property of $u \mapsto (u,F_u)$ implies that
\begin{equation}\label{1f1}
 (1,F''_1)^{mp^n} = (mp^n, F_{mp^n}).
 \end{equation}
In particular, $(1,F''_1)$ commutes with $(u,F_u)$ for $u \in p^n\Z$, and hence for $u \in p^n \Z_p$ by continuity.  We can therefore extend the homomorphism $u \mapsto (u,F_u)$ from $p^n \Z_p$ to $\Z_p = p^n\Z_p + \Z$ by the formula
$$ u+m \mapsto (1,F''_1)^m (u,F_u)$$
for any $u \in p^n \Z_p$ and $m \in \Z$, since \eqref{1f1} ensures that this definition is well-defined.  This homomorphism is already continuous on the open finite index subgroup $p^n \Z_p$ of $\Z_p$, hence is continuous on $\Z_p$ as well.  This gives the desired splitting.

Finally, we treat the high-order case $j > k$.  This largely follows the arguments in \cite[Section 8.5]{btz} and we only sketch the details here.  Define a \emph{good tuple} on $\XX$ to be a tuple $(f_\omega)_{\omega \in \{0,1\}^k}$ of $\T$-valued quasi-cocycles $f_\omega  \colon \Gamma \to \M(\XX,\T)$ of order $k-2$, such that $\sum_{\omega \in \alpha} f_\omega \circ \pi_\omega$ is a cocycle on $\XX^{[k]}$ for every face $\alpha$ of $\{0,1\}^k$, and $\sum_{\omega \in \{0,1\}^k} f_\omega \circ \pi_\omega$ is a coboundary on $\XX^{[k]}$, where $\pi_\omega \colon X^{[k]} \to X$ are the coordinate projections.  Since $f$ is of type $k$, we see that $((-1)^{|\omega|} f)_{\omega \in \{0,1\}^k}$ is a good tuple.  Following the arguments in \cite[Section 8.5]{btz}, we can deduce the $j>k$ case from the $j=k$ case once we establish the following analogue of \cite[Proposition 8.12]{btz}:

\begin{proposition}[Descent of type]\label{descent} Let $j > k \geq 1$, let $\Gamma$ be torsion-free, and let $\XX$ be an ergodic totally disconnected $\Gamma$-system that is Weyl of order $j$.  Let $(f_\omega)_{\omega \in \{0,1\}^k}$ be a good tuple on $\XX$.  Then there exists a good tuple $(\tilde f_\omega)_{\omega \in \{0,1\}^k}$ on $\ZZ^{j-1}(\XX)$ such that for each $\omega \in \{0,1\}^k$, $f_\omega$ is cohomologous to $\tilde f_\omega \circ \pi$, where $\pi \colon \XX \to \ZZ^{j-1}(\XX)$ is the factor map.
\end{proposition}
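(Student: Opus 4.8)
The plan is to adapt the descent argument of \cite[Section 8.5]{btz}, replacing its use of $p$-torsion structure groups by the totally disconnected splitting technology developed above for Proposition \ref{trivial-quasi}. Since $\XX$ is Weyl of order $j$, Proposition \ref{abelext} lets us write $\XX = \ZZ^{j-1}(\XX) \rtimes_\rho U$, where $U = U_j$ is the top structure group — totally disconnected by hypothesis — and $\rho \colon \Gamma \to \M(\ZZ^{j-1}(\XX), U)$ is a polynomial of degree $\leq j-1$. The goal is to correct each $f_\omega$ by a coboundary so that the whole tuple becomes invariant under the vertical $U$-action, hence pulled back from $\ZZ^{j-1}(\XX)$, and then to check that the good-tuple conditions are inherited by the descended tuple.

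First I would differentiate vertically. Fix $u \in U$. Differentiation by the level-$j$ structure group lowers both type and polynomial degree by $j$ (Proposition \ref{type}(iv) and Proposition \ref{ppfacts}(iii)); since $j > k$, the good-tuple relations for the tuple $(\partial_u f_\omega)_\omega$ all collapse to trivial coboundary relations on the relevant face systems, and the polynomial remainder that appears in the low-order analysis of Proposition \ref{trivial-quasi} now lies in $\Poly_{\leq k-1-j}(\XX) = \{0\}$. Consequently each $\partial_u f_\omega$ is an honest coboundary $dF_{u,\omega}$ on $\XX$, with $F_{u,\omega} \in \M(\XX,\T)$ unique up to a constant by ergodicity and depending continuously on $u$.

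Next comes the splitting step, which must treat all components simultaneously so as not to break the relations binding the $f_\omega$ together. Let $H$ be the closed subgroup of $U \ltimes \M(\XX,\T)^{\{0,1\}^k}$ consisting of tuples $(u,(F_\omega)_\omega)$ with $\partial_u f_\omega = dF_\omega$ for every $\omega$; it sits in a short exact sequence $0 \to \T^{\{0,1\}^k} \to H \to U \to 0$. As in the low-order case of Proposition \ref{trivial-quasi}, this need not split directly, since \cite[Lemma D.2]{btz} is unavailable for a general totally disconnected $U$; instead one first produces a partial splitting over an open subgroup by the Pettis lemma and a Baire category argument, then uses the profinite Sylow decomposition of $U$ together with the divisibility of $\hat U$ to reduce to the case where $U$ is a finite product of $p$-adic groups $\Z_p$, and finally exploits the fact that each $\Z_p$ is monothetic to extend the partial homomorphism across, splitting the sequence. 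From a splitting $u \mapsto (u,(F_{u,\omega})_\omega)$ and Proposition \ref{C.8} we obtain $\Phi_\omega \in \M(\XX,\T)$ with $\partial_u \Phi_\omega = F_{u,\omega}$ for all $u$; then each $f_\omega - d\Phi_\omega$ is $U$-invariant, so $f_\omega - d\Phi_\omega = \tilde f_\omega \circ \pi$ for some $\tilde f_\omega \colon \Gamma \to \M(\ZZ^{j-1}(\XX),\T)$.

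Finally I would verify that $(\tilde f_\omega)_\omega$ is a good tuple on $\ZZ^{j-1}(\XX)$: modifying the $f_\omega$ by coboundaries preserves the face-cocycle and full-coboundary relations on $\XX^{[k]}$, and each $f_\omega - d\Phi_\omega$ remains a quasi-cocycle of degree $k-2$; applying \cite[Proposition 8.11]{btz} (descent of the quasi-cocycle, cocycle, and coboundary properties along the factor map $\pi^{[k]} \colon \XX^{[k]} \to \ZZ^{j-1}(\XX)^{[k]}$), these relations pass down to $(\tilde f_\omega)_\omega$ on $\ZZ^{j-1}(\XX)^{[k]}$. The step I expect to be the main obstacle is exactly the splitting of $0 \to \T^{\{0,1\}^k} \to H \to U \to 0$ for merely totally disconnected $U$: this is the situation where \cite[Lemma D.2]{btz} fails, and one must push through the Pettis/Sylow/$\Z_p$-monothetic chain from the low-order case while ensuring the single splitting obtained is compatible with the full system of cocycle identities defining a good tuple. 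If the degree bookkeeping in the second paragraph is not as sharp as stated and a nonzero polynomial remainder $q_{u,\omega}$ survives, the argument also accommodates it: one reinstates polynomials $Q_\omega \colon \Gamma \to \Poly_{\leq k-1}(\XX)$ via Proposition \ref{PI1} with $\partial_u Q_\omega = q_{u,\omega}$, chosen as in \cite[Section 8.3]{btz} so that each $Q_\omega$ is itself a quasi-cocycle of degree $\leq k-2$, and controls their cube-combinations using \cite[Lemma 8.14]{btz}.
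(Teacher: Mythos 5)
Your overall skeleton (vertical differentiation using $j>k$, Conze--Lesigne groups inside $U \ltimes \M(\XX,\T)$, and the Pettis/Sylow/monothetic splitting technology from the low-order case) matches the paper's proof, but there is a genuine gap at the final descent step. You assert that, once $f_\omega - d\Phi_\omega$ is invariant under the vertical action and hence of the form $\tilde f_\omega \circ \pi$, the good-tuple axioms "pass down" to $(\tilde f_\omega)_\omega$ via \cite[Proposition 8.11]{btz}. The quasi-cocycle and face-cocycle conditions do descend, but the last axiom does not: knowing that $\sum_{\omega} \tilde f_\omega \circ \pi \circ \pi_\omega$ is a coboundary on $\XX^{[k]}$ does \emph{not} imply that $\sum_{\omega} \tilde f_\omega \circ \pi_\omega$ is a coboundary on the cube system of the factor -- a pulled-back cocycle can become a coboundary upstairs without being one downstairs (this is exactly the mechanism by which skew-product extensions are built), and \cite[Proposition 8.11]{btz} addresses type of quasi-cocycles, not descent of the coboundary property. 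The paper explicitly flags this ("we cannot quite conclude...") and repairs it by using the high-order hypothesis $j>k$ to write $X^{[k]}$ as an abelian extension of the cube system of the intermediate factor by $U_1^{[k]}$ (\cite[Lemma A.36]{btz}), and then invoking \cite[Lemma B.11]{btz} to produce characters $\chi_\omega \in \hat U$ so that the \emph{corrected} tuple $(\tilde f_\omega - \chi_\omega \cdot \rho_1)_\omega$ is a good tuple on the factor. Without this character-correction step your conclusion that the descended tuple is good is unjustified, and in general false.

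A secondary issue: you claim the chain "partial splitting over an open subgroup, Sylow decomposition, monothetic $\Z_p$ argument" yields a splitting of the sequence over all of $U$, after which the $f_\omega - d\Phi_\omega$ are invariant under the full vertical group. The paper never obtains (or needs) such a global splitting: it only splits over a direct factor $U_1$ of $U = U_1 \times U_2$ (with $U_2$ a finite product of $p$-adic groups), makes the functions $U_1$-invariant, performs the character correction above to get a good tuple on $\ZZ^{j-1}(\XX) \rtimes_{\rho_2} U_2$, and then recurses, peeling off one $\Z_{p_i}$ at a time. Gluing your partial splittings over $U_1$ and over each $\Z_{p_i}$ into one homomorphism on $U$ would require the commutators of the chosen lifts (valued in the central kernel $\T^{\{0,1\}^k}$) to vanish, which you do not verify; the paper's iterate-and-descend structure is designed precisely to avoid having to do so. Your combined group $H \leq U \ltimes \M(\XX,\T)^{\{0,1\}^k}$ with kernel $\T^{\{0,1\}^k}$, as opposed to the paper's separate groups $H_\omega$ split over a common open subgroup, is a harmless cosmetic variation; the missing ingredients are the $X^{[k]}$ decomposition and the \cite[Lemma B.11]{btz} correction, together with the induction that replaces the base by the intermediate factor.
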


To prove this proposition we mimic the arguments used to prove \cite[Proposition 8.12]{btz}.  As $\XX$ is Weyl and totally disconnected, we can write $\XX = \ZZ^{j-1}(\XX) \rtimes_\rho U$ for some compact abelian totally disconnected metrizable $U$.

Arguing as in the proof of \cite[Lemma 8.8]{btz} and using the high order hypothesis $j>k$, we see that for every $\omega \in \{0,1\}^k$ and $u \in U$, that $\partial_u f_\omega$ is a coboundary, thus there exists $F_{u,\omega} \in \M(\XX,\T)$ such that
$$ \partial_u f_\omega = d F_{u,\omega}.$$
Thus if we let $H_\omega \leq U \ltimes \M(\XX,\T)$ denote the closed subgroup of pairs $(u,F) \in U \ltimes \M(\XX,\T)$ obeying the Conze--Lesigne type equation $\partial_u f_\omega = dF$, then by arguing as in in the low-order case we have a short exact sequence
$$ 0 \to \T \to H_\omega \to U \to 0.$$
Repeating previous arguments, we can find an open subgroup $V'$ of $U$ for which the short exact sequence
$$ 0 \to \T \to \{ (u,F) \in H_\omega: u \in V' \} \to V' \to 0$$
splits for every $\omega \in \{0,1\}^k$.  Since $U$ is totally disconnected and has divisible dual as before, we can find a direct product $U = U_1 \times U_2$ with $U_2$ the product of finitely many $p$-adic groups such that $V'$ contains $U_1$.  Then the short exact sequence
$$ 0 \to \T \to \{ (u,F) \in H_\omega: u \in U_1 \} \to U_1 \to 0$$
splits, so by Lemma \ref{C.8} as before there exists $\Phi_\omega \in \M(\XX,\T)$ such that $(u,\partial_u \Phi_\omega) \in H_\omega$ for all $u \in U_1$ and $\omega \in \{0,1\}^k$, thus by definition of $H$ we see that $f_\omega - d\Phi_\omega$ is $U_1$-invariant.
Writing 
$$ \XX = (\ZZ^{j-1}(\XX) \rtimes_{\rho_2} U_2) \rtimes_{\rho_1} U_1$$
for suitable cocycles $\rho_1, \rho_2$, we thus have
$$ f_\omega - d\Phi_\omega = \tilde f_\omega \circ \pi_2$$
for some tuple $(\tilde f_\omega)_{\omega \in \{0,1\}^k}$ of functions $\tilde f_\omega \colon\Gamma \to \M(\ZZ^{j-1}(\XX) \rtimes_{\rho_2} U_2,\T)$, where $\pi_2 \colon \XX \to \ZZ^{j-1}(\XX) \rtimes_{\rho_2} U_2$ is the factor map.  Since $\tilde f_\omega \circ \pi_2$ is a quasi-cocycle of order $k-2$ on $\XX$, $\tilde f_\omega$ is a quasi-cocycle of order $k-2$ on $\ZZ^{j-1}(\XX) \rtimes_{\rho_2} U_2$; similarly, for any face $\alpha \in \{0,1\}^k$, $\sum_{\omega \in \alpha} \tilde f_\omega \circ \pi_\omega$ is a cocycle on $(\ZZ^{j-1}(\XX) \rtimes_{\rho_2} U_2)^{[k]}$.  We cannot quite conclude that $\sum_{\omega \in \{0,1\}^k} \tilde f_\omega \circ \pi_\omega$ is a coboundary on $(\ZZ^{j-1}(\XX) \rtimes_{\rho_2} U_2)^{[k]}$, but we know that $\sum_{\omega \}in \{0,1\}^k} \tilde f_\omega \circ \pi_2 \circ \pi_\omega$ is a coboundary on $X^{[k]}$.  Because of the high order hypothesis, we have the decomposition
$$ X^{[k]} = (\ZZ^{j-1}(\XX) \rtimes_{\rho_2} U_2)^{[k]} \rtimes_{\rho_1^{[k]}} U_1^{[k]}$$
thanks to \cite[Lemma A.36]{btz}, and hence by \cite[Lemma B.11]{btz} (arguing as in the proof of \cite[Proposition 8.12]{btz}) we can find characters $\chi_\omega \in \hat U$ for $\omega \in \{0,1\}^k$ such that
$$ \sum_{\omega \in \{0,1\}^k} (\tilde f_\omega - \chi_\omega \cdot \rho_1) \circ \pi_\omega$$
is a coboundary on $(\ZZ^{j-1}(\XX) \rtimes_{\rho_2} U_2)^{[k]}$.  We conclude that the tuple $(\tilde f_\omega - \chi_\omega \cdot \rho_1)_{\omega \in \{0,1\}^k}$ is a good tuple on $\ZZ^{j-1}(\XX) \rtimes_{\rho_2} U_2$.  If we can establish Proposition \ref{descent} for this tuple, then we obtain Proposition \ref{descent} for the original tuple $(f_\omega)_{\omega \in \{0,1\}^k}$.  The practical upshot of this is that we can replace $U$ with $U_2$, so as before we can assume without loss of generality that
$$ U = \Z_{p_1} \times \dots \times \Z_{p_m}$$
is a product of of $p$-adic groups.

As before, it will now suffice by induction on $m$ to show that the short exact sequence
$$ 0 \to \T \to \{ (u,F) \in H_\omega: u \in \Z_{p_m} \} \to \Z_{p_m} \to 0$$
splits.  But this follows exactly as in the low order case.

\section{Obtaining Weyl extensions}\label{weyl-sec}

In this section we establish Corollary \ref{Weyl extensions}, basically by ``chasing'' the arrows in the upper half of Figure \ref{fig:main}.

We begin with $(i)$.  If $\Gamma$ is torsion-free and $\XX$ is a totally disconnected ergodic $\Gamma$-system of order $k$, then by Theorem \ref{divis} $\XX$ has  a totally disconnected ergodic extension $\YY$ of order $k$ which is also $k$-divisible.  By Theorem \ref{infty-weyl}, $\YY$ is also Weyl of order $k$, giving the claim.

Now we prove $(ii)$.  If $\Gamma$ is $m$-torsion and $\XX$ is an ergodic $\Gamma$-system of order $k$, then by Theorem \ref{bounded-tor} all the structure groups $U_1,\dots,U_k$ are totally disconnected.  Next, since $\Gamma$ is countably generated, we may express $\Gamma$ as the quotient of $\Z^\omega$.  The $\Gamma$-system $\XX$ can then also be viewed as a $\Z^\omega$-system in the obvious fashion; one can check from the definitions that extending the acting group from $\Gamma$ to $\Z^\omega$ does not affect the ergodicity, the Host--Kra factors or the notion of a polynomial.  In particular, the structure groups remain unchanged, so that $\XX$ is still a totally disconnected $\Z^\omega$-system of order $k$.  By part $(i)$, this system has an extension $\YY$ which is an ergodic totally disconnected $\infty$-divisible Weyl $\Z^\omega$-system of order $k$.  This is a generalized extension of $\XX$, giving the claim.

\section{Constructing translational and double coset systems}\label{sec-doublecoset}

In this section we prove Theorem \ref{structure-thms}.  The following simple example may be worth keeping in mind as motivation: if $\alpha \in \R$ is irrational, then the skew shift $\Z$-system defined as $\XX = \T^2$ with action
$$ T^n (x,y) = \left(x+n\alpha, y + nx + \frac{n(n-1)}{2} \alpha\right)$$
is clearly Weyl of order $2$, and can be identified with a translational system $\mG/\Lambda$, where $\mG$ is the group of continuous measure-preserving transformations $S \colon \T^2 \to \T^2$
of the form
$$ S \colon (x,y) \mapsto (x + \theta, y + m x + \beta )$$
for $m \in\Z$ and $\theta,\beta \in \T$, and $\Lambda$ is the stabilizer of the origin $(0,0)$ in $\mG$, that is to say those transformations with $\theta=\beta=0$.  Indeed, $\mG$ acts transitively on $\T^2$, and the shift $T$ provides a homomorphism from $\Z$
to $\mG$.

To treat the general case, we first need two lemmas.  The first describes how polynomials on translational systems interact with translations. We say that a group $\mG$ acting on a $\Gamma$-system $\XX$ \emph{fixes a factor} $\pi:\XX\rightarrow \YY$ if $\pi\circ g = \pi $ for all $g\in \mG$.

 \begin{lemma}[Polynomials on translational systems]\label{degrees}  
 Let $\Gamma$ be a countable abelian group, let $k\geq 1$, and let $\mG/\Lambda$ be an ergodic translational $\Gamma$-system of degree $k$.  Suppose furthermore that $\mG$ is equipped with a degree $k$ filtration $(\mG_j)_{j\geq 0}$, such that $\mG_j$ fixes the factor $\ZZ^{j-1}(\mG/\Lambda)$ for all $j\geq 1$.
 
 For any compact abelian metrizable $W$, every $d,m\geq 0$, every polynomial $P\in \Poly_{\leq d}(\mG/\Lambda,W)$, and every $g\in \mG_m$ we have then the following properties. 
 \begin{itemize}
     \item[(i)] $P\circ g$ is a polynomial of degree $d$ (i.e., $\mG$ acts on $\Poly_{\leq d}(\mG/\Lambda,W)$).
     \item[(ii)] $\partial_g P = P \circ g - P$ is\footnote{This generalizes Lemma \ref{ppfacts}(iii), which corresponds to the case where $\Gamma$ is centralized by $\mG$ .} a polynomial of degree $d-m$.   (In particular, $\partial_g P$ vanishes if $d<m$.)
\end{itemize}
 \end{lemma}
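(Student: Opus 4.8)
The plan is to prove (i) and (ii) together by an induction that exploits the filtration $(\mG_j)_{j\ge 0}$ and the hypothesis that $\mG_j$ fixes $\ZZ^{j-1}(\mG/\Lambda)$. First I would record the basic observation that for $g \in \mG$ and $\gamma \in \Gamma$, the translation-by-$g$ map and the shift $T^\gamma$ on $\mG/\Lambda$ commute up to an element of $\mG$: since $\Gamma$ acts via a homomorphism into $\mG$ and $\mG$ is nilpotent of class $k$, we have $g^{-1}\gamma g = \gamma \cdot [\gamma^{-1},g^{-1}]^{-1}\cdots$, i.e. $g\cdot T^\gamma = T^\gamma\cdot g'$ where $g' = \gamma^{-1}g\gamma \in \mG$, and moreover if $g \in \mG_m$ then $[\gamma,g]\in\mG_{m+1}$ by the filtration property. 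This is the engine that lets derivatives interact with the $\mG$-action. For (i), the statement that $P\circ g \in \Poly_{\le d}(\mG/\Lambda,W)$ for $g\in\mG_m$ (in particular any $g\in\mG=\mG_0$): by the Fourier equivalence \eqref{fourier-equiv} it suffices to treat $W=\T$. Then $\partial_{\gamma_1}\cdots\partial_{\gamma_{d+1}}(P\circ g)$ can be rewritten, using the commutation relation repeatedly, as a $\Z$-linear combination of terms of the form $(\partial_{\gamma_{i_1}}\cdots\partial_{\gamma_{i_r}}P)\circ(\text{some element of }\mG)$ with $r \ge d+1$ minus correction terms involving commutators $[\gamma_i,g]\in\mG_{m+1}$; a cleaner route is to prove (i) and (ii) simultaneously by downward/upward induction, as sketched next.

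The cleanest organization is a double induction: an outer induction on $k$ (the degree of the system / nilpotency class) and, for fixed $k$, a downward induction on $m$ from $m = k+1$ (where $\mG_{k+1}=\{1\}$ and everything is trivial) down to $m=0$. Fix $m$ and assume (i), (ii) known for all larger filtration degrees, and also assume the lemma for systems of lower degree (so that we may apply it on $\ZZ^{k-1}(\mG/\Lambda)$, which is itself a translational $\Gamma$-system of degree $k-1$ with the induced filtration $(\mG_j/(\mG_j\cap\mG_k))_{j\ge 0}$, using that $\mG_k$ acts trivially on it). For (ii): we must show $\partial_g P \in \Poly_{\le d-m}$ for $g\in\mG_m$. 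Pick any $\gamma_1,\dots,\gamma_{d-m+1}\in\Gamma$; using the commutation relation $g T^\gamma = T^\gamma g [\gamma,g]^{\pm1}\cdots$ with the commutators landing in $\mG_{m+1}$, one computes
$$\partial_{\gamma_1}\cdots\partial_{\gamma_{d-m+1}}\partial_g P = \partial_g\big(\partial_{\gamma_1}\cdots\partial_{\gamma_{d-m+1}}P\big) + (\text{terms } \partial_{h}(\cdots) \text{ with } h\in\mG_{m+1}),$$
where each correction term has, by the outer/downward inductive hypothesis for filtration degree $m+1$, degree dropping by at least $m+1$, hence is a polynomial of degree $\le d-(d-m+1)-(m+1)<0$, i.e. zero; and the main term is $\partial_g$ of a polynomial of degree $\le d-(d-m+1)=m-1<m$, which by the inductive hypothesis (case of degree $<m$, already established: $\partial_g$ of anything of degree $<m$ vanishes) is $0$. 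Hence $\partial_g P$ has degree $\le d-m$. The very base of this sub-induction — that $\partial_g P = 0$ when $\deg P < m$ and $g\in\mG_m$ — is where the hypothesis "$\mG_m$ fixes $\ZZ^{m-1}(\mG/\Lambda)$" is used directly: a polynomial of degree $< m$ is $\ZZ^{m-1}(\mG/\Lambda)$-measurable (by the analogue of Proposition \ref{ppfacts}(ii) for these systems, i.e. $\Poly_{<m}\subseteq \M(\ZZ^{m-1}(\mG/\Lambda),\T)$), and $g\in\mG_m$ acts trivially on that factor, so $P\circ g = P$.

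With (ii) in hand, (i) is immediate: $P\circ g = P + \partial_g P$, and $\partial_g P$ has degree $\le d-m\le d$ (or is $0$ if $d<m$), so $P\circ g\in\Poly_{\le d}(\mG/\Lambda,W)$; moreover $g\mapsto (P\mapsto P\circ g)$ is visibly an action, giving the parenthetical remark. I expect the main obstacle to be bookkeeping in the commutation identity: expanding $\partial_{\gamma_1}\cdots\partial_{\gamma_n}(P\circ g)$ one must track a combinatorial sum of composed shifts and iterated commutators and verify that every term not equal to $(\partial_{\gamma_1}\cdots\partial_{\gamma_n}P)\circ g$ carries at least one derivative in a direction lying in $\mG_{m+1}$ (so the inductive hypothesis applies) — this is essentially a discrete Leibniz/Baker–Campbell–Hausdorff expansion in the nilpotent group $\mG$, and making the induction bottom out correctly (degree $<m$ forces vanishing via the filtration-fixes-factor hypothesis, and $\mG_{k+1}=1$) is the delicate part. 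Everything else reduces to the case $W=\T$ via \eqref{fourier-equiv} and to standard facts about $\Poly_{\le d}$ being a closed subgroup.
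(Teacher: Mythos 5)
Your plan is, in essence, the paper's own proof: the same key mechanism (group commutators of $g\in\mG_m$ with $\phi(\gamma)\in\mG_1$ land in $\mG_{m+1}$ by the filtration property), the same use of the hypothesis that $\mG_m$ fixes $\ZZ^{m-1}(\mG/\Lambda)$ combined with Proposition \ref{ppfacts}(ii) to kill $\partial_g P$ when $\deg P<m$, a downward induction on $m$ with the trivial case $m>k$, and the derivation of (i) from (ii) via $P\circ g=P+\partial_g P$. The one soft spot is precisely the ``bookkeeping'' you flag, and it is slightly more than bookkeeping: in your all-at-once expansion the correction terms are not bare expressions $\partial_h(\cdots)$ with $h\in\mG_{m+1}$ applied to iterated $\Gamma$-derivatives of $P$; they come composed with shifts by elements of $\mG$ that involve $g$ itself, and to assign such a term a degree you need part (i) for $g\in\mG_m$ acting on lower-degree polynomials --- which your induction scheme (downward on $m$ only, no induction on $d$) has not yet provided, so as written there is a circularity. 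The paper resolves this by inducting on $d$ as well and commuting a single derivative at a time, via the identity $\partial_\gamma \partial_g P = \partial_g \partial_\gamma P + \partial_{[g^{-1},\phi(\gamma)^{-1}]}(P\circ \phi(\gamma) \circ g)$: the first term has degree $\leq (d-1)-m$ by the induction on $d$ (since $\partial_\gamma P$ has degree $\leq d-1$), and the second has degree $\leq d-(m+1)$ by the downward induction on $m$, so every object appearing has either strictly smaller degree or strictly larger filtration index and no circularity arises (normality of $\mG_{m+1}$, i.e.\ $shs^{-1}\in\mG_{m+1}$, lets one absorb the stray shifts). If you add the induction on $d$ to your scheme, or equivalently adopt this one-step identity, your degree counts for the main and correction terms go through as you computed; your outer induction on $k$ is then not needed.
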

 
 \begin{proof}
 We prove the claim by induction on $d$ and then by downward induction on $m$. If $d=0$, then by ergodicity $P$ is a constant and the claims follow trivially. Let $d\geq 1$ and assume inductively that the claim holds for all smaller values of $d$. If $m>k$, then $g$ acts trivially and the claims are immediate; thus suppose inductively that $m \leq k$ and that the claims hold for all larger values of $m$. 
 
 Since $(i)$ follows from $(ii)$, it suffices to establish $(ii)$.  If $m \geq d+1$ then by Proposition \ref{ppfacts}(ii) $P$ is measurable with respect to $\ZZ^d(\mG/\Lambda)$ and hence fixed by $g$, so we may assume $m < d+1$.  For $\gamma \in \Gamma$, observe that
 $$\partial_\gamma \partial_g P = \partial_g \partial_\gamma P + \partial_{[g^{-1},\phi(\gamma)^{-1}]}(P\circ \phi(\gamma) \circ g).$$ 
As $g \in \mG_m$ and $\phi(\gamma) \in \mG = \mG_1$, $[g^{-1},\phi(\gamma)^{-1}]$ lies in $\mG_{m+1}$.  Applying the induction hypothesis, we see that both terms on the right-hand side lie in $\Poly_{\leq d-m-1}(\mG/\Lambda,W)$, hence $\partial_g P$ lies in $\Poly_{\leq d-m}(\mG/\Lambda)$ as required.
 \end{proof}

In order to handle part $(ii)$ of the theorem, we will also need a lifting lemma for polynomials in divisible systems.

\begin{lemma}[Polynomials lift in divisible systems]\label{polyrangelift}
Let $\Gamma$ be a countable abelian group, let $k\geq 0$, let $\XX$ be a $k$-divisible ergodic $\Gamma$-system, and let $\varphi \colon U\rightarrow W$ be a surjective homomorphism between two compact abelian metrizable groups $U, W$. 
\begin{itemize}
\item[(i)] The induced map $\varphi_* \colon \Poly_{\leq k}(\XX,U)\rightarrow \Poly_{\leq k}(\XX,W)$ defined by $\varphi_*(Q) \coloneqq \varphi\circ Q$ is surjective.
\item[(ii)]  Furthermore, for any compact subset $K_W$ of $\Poly_{\leq k}(\XX,W)$, there exists a compact subset $K_U$ of $\Poly_{\leq k}(\XX,U)$ and a measurable map $\lambda \colon K_W \to K_U$ such that $\varphi_*( \lambda( Q ) ) = Q$ for all $Q \in K_W$.
\end{itemize}
\end{lemma}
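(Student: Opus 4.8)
The plan is to reduce both parts to a short exact sequence argument at the level of polynomial groups, exploiting $k$-divisibility exactly where the obstruction to lifting lives. First I would observe that $\varphi\colon U\to W$ fits into a short exact sequence $0\to N\to U\xrightarrow{\varphi} W\to 0$ of compact abelian metrizable groups, where $N\coloneqq\ker\varphi$. Applying the (covariant, exact on the left) functor $\Poly_{\leq k}(\XX,-)$—which is exact on the left since $\Poly_{\leq k}(\XX,-)$ is a closed subgroup of $\M(\XX,-)$ and the latter is exact by the Fourier/Pontryagin description—we obtain an exact sequence
\[
0\to \Poly_{\leq k}(\XX,N)\to \Poly_{\leq k}(\XX,U)\xrightarrow{\varphi_*}\Poly_{\leq k}(\XX,W),
\]
and the content of (i) is precisely that $\varphi_*$ is onto, i.e.\ that the connecting map into $H^1$ vanishes. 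Concretely: given $Q\in\Poly_{\leq k}(\XX,W)$, lift it pointwise (using a Borel section $s\colon W\to U$ of $\varphi$, which exists since $W$ is standard Borel) to some $\tilde Q\coloneqq s\circ Q\in\M(\XX,U)$ with $\varphi\circ\tilde Q=Q$. Then $\varphi\circ(\partial_{\gamma_1}\cdots\partial_{\gamma_{k+1}}\tilde Q)=\partial_{\gamma_1}\cdots\partial_{\gamma_{k+1}}Q=0$, so $c(\gamma_1,\dots,\gamma_{k+1})\coloneqq\partial_{\gamma_1}\cdots\partial_{\gamma_{k+1}}\tilde Q$ takes values in $N$; moreover by ergodicity each such $\partial_{\gamma_1}\cdots\partial_{\gamma_{k+1}}\tilde Q$, being the $(k+1)$-st derivative of a function, is a cocycle-type expression, and in fact iterating \eqref{cocycle-ident} shows it descends to an $N$-valued symmetric multilinear-type obstruction on $\Gamma^{k+1}$. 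The key step is then to correct $\tilde Q$ by an element of $\M(\XX,N)$ to kill this obstruction: here is where I would use $k$-divisibility. Passing to a character $\xi\in\hat N$, the scalar obstruction $\xi\circ c$ is a $\T$-valued expression which, by the theory of $\Poly_{\leq k}$ and the divisibility of $\Poly_{\leq j}(\XX)$ for $j\le k$ (exactly as in the divisibility arguments of Section \ref{divis-ext-sec}, e.g.\ via Proposition \ref{symdivdiscrete}/\ref{bkg} type reasoning), can be realized as $\nabla^{k+1}$ of a degree-$(k+1)$... — more carefully: what we actually need is that $\tilde Q$ differs from a genuine degree-$k$ lift by something in $\M(\XX,N)$, and the obstruction class lies in a group that $k$-divisibility forces to vanish. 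I would phrase this cleanly by reducing via \eqref{fourier-equiv} to $W=\T^{a}$ or to the circle-valued case and invoking the already-established lifting statements for $\Poly_{\leq k}(\XX)$.

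For part (ii), once surjectivity is known, I would upgrade it to a measurable section over a compact set by a standard measurable-selection argument. The map $\varphi_*\colon\Poly_{\leq k}(\XX,U)\to\Poly_{\leq k}(\XX,W)$ is a continuous surjective homomorphism of abelian Polish groups (continuity and the Polish structure come from the metrics described in the Polynomials definition). Given the compact set $K_W\subseteq\Poly_{\leq k}(\XX,W)$, the preimage $\varphi_*^{-1}(K_W)$ is a closed subset of the Polish group $\Poly_{\leq k}(\XX,U)$; by the Jankov--von Neumann uniformization theorem (or the Kuratowski--Ryll-Nardzewski selection theorem for the closed-valued multifunction $Q\mapsto\varphi_*^{-1}(Q)\cap\varphi_*^{-1}(K_W)$) there is a universally (hence $\mu$-)measurable map $\lambda_0\colon K_W\to\Poly_{\leq k}(\XX,U)$ with $\varphi_*\circ\lambda_0=\mathrm{id}$. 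To make the range compact, note $\lambda_0(K_W)$ is a measurable subset of the $\sigma$-compact group $\Poly_{\leq k}(\XX,U)$ (it is $\sigma$-compact since it is a closed subgroup of $\M(\XX,U)$, itself a countable union of compact sets via \eqref{fourier-equiv} and Proposition \ref{ppfacts}(i)); by inner regularity of a suitable finite measure on it (e.g.\ push forward a probability measure on $K_W$ through $\lambda_0$), one finds a compact $K_U\supseteq\lambda(K_W)$ after restricting $\lambda_0$ to a co-null compact subset of $K_W$ and adjusting on the remaining null set—or, more simply, one observes $\lambda_0$ restricted to a suitable compact subset of $K_W$ of full measure has relatively compact, hence (after closure) compact image, and since we only need the identity $\varphi_*(\lambda(Q))=Q$ to hold for all $Q\in K_W$ in the a.e.\ sense we allow modifying $\lambda_0$ on the exceptional set arbitrarily within $\varphi_*^{-1}(Q)$.

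I expect the main obstacle to be part (i): correctly identifying the obstruction group to lifting a degree-$k$ polynomial through $\varphi$ and checking that $k$-divisibility of $\Poly_{\leq j}(\XX)$, $0\le j\le k$, really does annihilate it. The subtlety is that a pointwise lift $s\circ Q$ is generally not polynomial, and its $(k+1)$-st derivatives form an $N$-valued object which one must recognize as ``coboundary-like modulo the lower polynomial groups'' — concretely, one wants to show it is $d$(something) plus a genuine symmetric multilinear form with values in $N$, and then that this symmetric multilinear $N$-valued form lies in the image of $\nabla^k$ from $\Poly_{\leq k}(\XX,N)$, which is exactly a divisibility/Pontryagin statement (cf.\ the proof of the Corollary after Proposition \ref{bkg}). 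Once this is set up, everything reduces cleanly via \eqref{fourier-equiv} to the circle-valued case already handled in the divisibility section, and part (ii) is routine descriptive set theory.
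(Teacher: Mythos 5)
For part (i), your main line of argument (Borel section $s\colon W\to U$, pointwise lift $\tilde Q=s\circ Q$, then ``kill the obstruction'') has a genuine gap at its central step. Since $\tilde Q$ is merely measurable and not polynomial, the derivatives $\partial_{\gamma_1}\cdots\partial_{\gamma_{k+1}}\tilde Q$ are $\ker\varphi$-valued measurable \emph{functions} on $X$; ergodicity does not make them constant, so they do not descend to a symmetric multilinear obstruction on $\Gamma^{k+1}$, and the claim that $k$-divisibility ``forces the obstruction group to vanish'' is never substantiated --- correcting $\tilde Q$ by an element of $\M(\XX,\ker\varphi)$ so as to make it polynomial is exactly the assertion to be proved. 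The clean route, which you only gesture at in your final sentence, is the one the paper takes: by \eqref{fourier-equiv} one has $\Poly_{\leq k}(\XX,U)\equiv\Hom(\hat U,\Poly_{\leq k}(\XX))$ and $\Poly_{\leq k}(\XX,W)\equiv\Hom(\hat W,\Poly_{\leq k}(\XX))$, surjectivity of $\varphi$ identifies $\hat W$ with a subgroup of $\hat U$, and $k$-divisibility makes $\Poly_{\leq k}(\XX)$ a divisible, hence injective, abelian group (Lemma \ref{divis-inject}), so every homomorphism $\hat W\to\Poly_{\leq k}(\XX)$ extends to $\hat U$. No pointwise lift or obstruction analysis is needed.

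For part (ii) the gap is more serious. The lemma demands $\varphi_*(\lambda(Q))=Q$ for \emph{every} $Q\in K_W$ (there is no measure on $K_W$ in the statement, and the application in Proposition \ref{inductive}(ii) uses exact lifts), so verifying it only ``in the a.e.\ sense'' changes the statement; and your compactness argument does not work even a.e.: a merely measurable section restricted to a compact set need not have relatively compact image (one needs Lusin's theorem to get continuity on a compact subset of large measure, which then covers only part of $K_W$), and modifying $\lambda_0$ on the exceptional set ``arbitrarily within $\varphi_*^{-1}(Q)$'' destroys precisely the compact-range property you are trying to arrange. A selection theorem does give a measurable section into the Polish group $\Poly_{\leq k}(\XX,U)$, but to land in one fixed compact set over all of $K_W$ one must additionally show that the continuous surjection $\varphi_*$ is compact-covering (e.g.\ via the open mapping theorem for Polish groups together with the fact that open continuous surjections with completely metrizable domain are compact-covering); you never address this point, and it is the heart of the matter. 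The paper instead builds the compact range directly into a character-by-character Zorn construction: it tracks a compact constraint set $F_\xi\subset\Poly_{\leq k}(\XX)$ for each $\xi\in\hat U$, uses Proposition \ref{ppfacts}(i) (a compact set of polynomials meets only finitely many cosets of $\T$) together with divisibility to produce measurable $n$-th-root maps with compact range at each extension step, and obtains compactness of $K_U$ from Tychonoff's theorem.
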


One can think of the map $\lambda$ in Lemma \ref{polyrangelift}(ii) as a ``local measurable section'' for $\varphi$; it is not continuous\footnote{Indeed, the simple example when $\XX$ is a point, $K_W=U=W=\T$, $k=0$ (so in particular $\Poly_{\leq k}(\XX,U) = \Poly_{\leq k}(\XX,W) = \T$), and $\varphi(u) = 2u$ for all $u \in U$ already shows that continuous sections do not always exist.} in general, but at least has the weaker property of mapping the compact domain $K_W$ into a compact range $K_U$, which turns out to be adequate for our applications.

\begin{proof}
By \eqref{fourier-equiv} we have $\Poly_{\leq k}(\XX,U) \equiv \Hom(\hat U, \Poly_{\leq k}(\XX))$ and $\Poly_{\leq k}(\XX,W) \equiv \Hom(\hat W, \Poly_{\leq k}(\XX))$.  As $U$ surjects onto $W$, one can identify $\hat W$ with a subgroup of $\hat U$.  By hypothesis, $\Poly_{\leq k}(\XX)$ is divisible, hence by Lemma \ref{divis-inject} every homomorphism from $\hat W$ to $\Poly_{\leq k}(\XX)$ extends to a homomorphism from $\hat U$ to $\Poly_{\leq k}(\XX)$.  The claim $(i)$ follows.

We now prove $(ii)$ by a refinement of the above argument (and in particular by a closer inspection of the Zorn's lemma argument used to prove Lemma \ref{divis-inject}).  We again use the identifications $\Poly_{\leq k}(\XX,U) \equiv \Hom(\hat U, \Poly_{\leq k}(\XX))$, $\Poly_{\leq k}(\XX,W) \equiv \Hom(\hat W, \Poly_{\leq k}(\XX))$. By enlarging $K_W$ if necessary, we may assume $K_W$ takes the form
$$ K_{W, \vec F_W} \coloneqq \{ Q \in \Hom(\hat W, \Poly_{\leq k}(\XX)): Q_\xi \in F_\xi \forall \xi \in \hat W \}$$
where $Q \colon \xi \mapsto Q_\xi$ denotes a homomorphism from $\hat W$ to $\Poly_{\leq k}(\XX)$, and $F_W = (F_\xi)_{\xi \in \hat W}$ is some tuple of compact subsets $F_\xi$ of $\Poly_{\leq k}(\XX)$.   Define a \emph{partial solution} to be a triple $(V, \vec F_V, \lambda_V)$ where $V$ is a compact metrizable group with $\hat W \leq \hat V \leq \hat U$, $\vec F_V = (F_\xi)_{\xi \in \hat V}$ is a tuple of compact subsets of $\Poly_{\leq k}(\XX)$ extending $F_W$, and $\lambda_V \colon K_{W, \vec F_W} \to K_{V, \vec F_V}$ is a measurable map such that $\lambda_V(Q)$ extends $Q$ for all $Q \in K_{W,\vec F_W}$.  Clearly we have a partial solution with $V=W$, and if we have a partial solution with $V=U$ then we are done (note that $K_{U,\vec F_U}$ is compact by Tychonoff's theorem).  We can partially order these partial solutions by declaring $(V, \vec F_V, \lambda_V) \leq (V', \vec F_{V'}, \lambda_{V'})$ if $\hat V \leq \hat V'$, $\vec F_{V'}$ extends $\vec F_V$, and $\lambda_{V'}(Q)$ extends $\lambda_V(Q)$ for all $Q \in K_{W, \vec F_W}$.  One can check that every chain of partial solutions is of at most countable length (because of the countable nature of $\hat U$) and has an obvious least upper bound.  By Zorn's lemma, it thus suffices to show that any partial solution $(V, \vec F_V, \lambda_V)$ with $\hat V < \hat U$ is not maximal.  Let $\eta$ be an element of $\hat U$ that is not in $\hat V$; and let $V'$ be such that $\hat V'$ is generated by $\hat V$ and $\eta$.  It will suffice to construct a partial solution $(V', \vec F_{V'}, \lambda_{V'})$ that is larger than $(V, \vec F_V, \lambda_V)$.

Suppose first that $n \eta \not \in \hat V$ for any positive integer $n$.  Then one can define the extension $\lambda_{V'}(Q) \colon \hat V' \to \Poly_{\leq k}(\XX)$ of $\lambda_V(Q) \colon \hat V \to \Poly_{\leq k}(\XX)$ for any $Q \in K_{W,\vec F_W}$ to be the unique extension which is a homomorphism with $\lambda_{V'}(Q)_\eta = 0$.  For $\xi \in \hat V' \backslash \hat V$, the $\lambda_{V'}(Q)_\xi$, $Q \in K_{W,\vec F_W}$ are easily seen to range inside some compact subset $F_\xi$ of $\Poly_{\leq k}(\XX)$; setting $\vec F_{V'} \coloneqq (F_\xi)_{\xi \in \hat V'}$ we obtain the desired larger partial solution $(V', \vec F_{V'}, \lambda_{V'})$.

Now suppose that $n \eta \in \hat V$ for some positive integer $n$, which we may take to be minimal.  By Lemma \ref{ppfacts}(i), the compact set $K_{n\eta}$ of $\Poly_{\leq k}(\XX)$ is contained in a finite number of cosets of $\T$.  Applying divisibility, we may then find a compact subset $K_\eta$ of $\Poly_{\leq k}(\XX)$ and a measurable map $\psi \colon K_{n\eta} \to K_\eta$ such that $n \psi(Q) = Q$ for all $Q \in K_{n\eta}$.  We now define extension $\lambda_{V'}(Q) \colon \hat V' \to \Poly_{\leq k}(\XX)$ of $\lambda_V(Q) \colon \hat V \to \Poly_{\leq k}(\XX)$ for any $Q \in K_{W,\vec F_W}$  to be the unique extension which is a homomorphism with $\lambda_{V'}(Q)_\eta = \psi( \lambda_{V'}(Q)_{n\eta} )$.  One can check that such an extension exists.   For  $\xi \in \hat V' \backslash \hat V$, the $\lambda_{V'}(Q)_\xi$, $Q \in K_{W,\vec F_W}$ are easily seen to range inside some compact subset $F_\xi$ of $\Poly_{\leq k}(\XX)$, and we again obtain the desired larger partial solution $(V', \vec F_{V'}, \lambda_{V'})$ as desired.
\end{proof}

Now we can prove Theorem \ref{structure-thms}.
First we remark from \cite[Lemma 2.1]{btz2} that any Weyl system has a canonical structure of a compact space, in such a way that every polynomial has a unique continuous representative.  By abuse of notation we shall identify each polynomial on such systems with that representative, so that all polynomials are now continuous.  In view of Corollary \ref{Weyl extensions}, it will suffice to establish the following claim.

\begin{proposition}[Inductive claim]\label{inductive}  Let $k \geq 1$, let $\Gamma$ be a countable abelian group, and let $\XX$ be an ergodic Weyl $\Gamma$-system of order $k$.  
\begin{itemize}
\item[(i)] $\XX$ is isomorphic (both as a $\Gamma$-system and as a compact space) to a translational $\Gamma$-system $\mG/\Lambda$, where $\mG$ is a Polish group equipped with a degree $k$ filtration $(\mG_j)_{j\geq 0}$ of Polish subgroups, such that $\mG_j$ fixes the factor $\ZZ^{j-1}(\mG/\Lambda)$ for all $j\geq 1$.
\item[(ii)]  Suppose further that $\XX$ is $k$-divisible. Then any factor of $\XX$ is isomorphic (as a $\Gamma$-system) to a double coset system $K \backslash \mG /\Lambda$, with the factor map given by $g \Lambda \mapsto K g \Lambda$ for all $g \in \mG$.  We also make the additional technical claim (convenient for inductive purposes) that there exists a compactly supported Radon probability measure $\mu$ on $K$ such that for every $x \in \mG/\Lambda$, $Kx$ is compact and the pushforward $\mu_x$ of $\mu$ from $K$ to $Kx \subset K \backslash \mG / \Lambda$ is $K$-invariant, or equivalently (by Riesz representation) that
$$ \int_K f(kx)\ d\mu(x) = \int_K f(k_0 kx)\ d\mu(x)$$
for all $x \in \mG/\Lambda$, $k_0 \in K$, and $f \in C(\mG/\Lambda)$.  Furthermore we have $\mu_x = \mu_{k_0 x}$ for all $k_0 \in K$.
\end{itemize}
\end{proposition}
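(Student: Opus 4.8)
The plan is to prove Proposition~\ref{inductive} by induction on $k$, establishing $(i)$ and $(ii)$ together (though $(i)$ only needs the Weyl hypothesis, while $(ii)$ additionally uses $k$-divisibility). The base case $k=1$ is the classical fact that an ergodic order $1$ system is a rotational system $U/\{0\}$ on a compact abelian group $U$, with $\mG=U$, $\mG_1=U$, $\mG_2=\{0\}$, and the factor statement amounting to the observation that a factor of a compact abelian group rotation is again such a rotation, so $K$ is a closed subgroup and $\mu$ is its Haar measure.

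For the inductive step, suppose $\XX$ is ergodic Weyl of order $k$. By Proposition~\ref{abelext} and the Weyl hypothesis we can write $\XX = \ZZ^{k-1}(\XX)\rtimes_\rho U_k$ where $\rho$ is a polynomial of degree $\leq k-1$ and $U_k$ is compact abelian metrizable. By the inductive hypothesis $(i)$, $\ZZ^{k-1}(\XX)$ is isomorphic to a translational system $\mG'/\Lambda'$ with a degree $k-1$ filtration $(\mG'_j)$ such that $\mG'_j$ fixes $\ZZ^{j-1}(\mG'/\Lambda')$. To build $\mG$ for $\XX$, I would take $\mG$ to be the group of continuous transformations of $\mG'/\Lambda' \times U_k$ of the form $(x,u)\mapsto (g'x, u + P(x) + c)$ where $g'\in\mG'$, $c\in U_k$, and $P$ ranges over a suitable space of polynomials on $\mG'/\Lambda'$ with values in $U_k$ — concretely the $P$ of the form $\partial_{g'}(\text{something})$ controlled by Lemma~\ref{degrees}. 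The filtration is then defined by $\mG_j = \{(g',P,c): g'\in\mG'_j, P\in\Poly_{\leq j-1}(\mG'/\Lambda',U_k), \text{ and } c=0 \text{ if } j\geq 2\}$ (with $\mG_1$ allowing arbitrary $c\in U_k$, and $\mG_{k+1}=\{0\}$). One checks the filtration axioms $[\mG_i,\mG_j]\subseteq \mG_{i+j}$ using the identity $\partial_\gamma\partial_g P$ from the proof of Lemma~\ref{degrees}(ii) together with the bilinearity/degree-dropping of commutators, and checks $\mG_j$ fixes $\ZZ^{j-1}$. Transitivity of $\mG$ on $\mG'/\Lambda'\times U_k$ follows from transitivity of $\mG'$ on $\mG'/\Lambda'$ plus the vertical $U_k$-action; the homomorphism $\phi\colon\Gamma\to\mG$ comes from writing $T^\gamma = (\phi'(\gamma), \rho_\gamma, c_\gamma)$, using that $\rho_\gamma$ is a polynomial of degree $\leq k-1$ so lands in the allowed space. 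The Polish structure on $\mG$ and the identification of $\XX$ with $\mG/\Lambda$ as compact spaces comes from the canonical compact structure on Weyl systems (\cite[Lemma 2.1]{btz2}) and the continuity of polynomials on them.

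For part $(ii)$, given a factor $\YY$ of the $k$-divisible Weyl system $\XX = \mG/\Lambda$, I would realize $\YY$ as $\ZZ^{k-1}$-over-$\YY$ plus a "vertical" piece. By the inductive hypothesis $(ii)$ applied to $\ZZ^{k-1}(\YY)$ as a factor of $\ZZ^{k-1}(\XX)$, write $\ZZ^{k-1}(\YY) = K'\backslash \mG'/\Lambda'$ with the attendant measure $\mu'$. The factor $\YY$ itself is an extension of $\ZZ^{k-1}(\YY)$ by some quotient group of $U_k$; using Lemma~\ref{polyrangelift} (the polynomial lifting lemma, which crucially needs $k$-divisibility) one lifts the relevant cocycle data from the quotient back up to $U_k$, producing a compact subgroup $K$ of $\mG$ — built from $K'$, the kernel of $U_k\to$ (vertical group of $\YY$), and a compact family of polynomials witnessing the lift — so that $\YY = K\backslash\mG/\Lambda$ with factor map $g\Lambda\mapsto Kg\Lambda$. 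The measure $\mu$ on $K$ is assembled from $\mu'$ on $K'$ and Haar measure on the compact "vertical" and "polynomial" parts of $K$; the $K$-invariance of the pushforwards $\mu_x$ and the identity $\mu_x=\mu_{k_0 x}$ follow from the corresponding properties of $\mu'$ together with the translation-invariance of Haar measure, and the compactness of each orbit $Kx$ follows because $K$ is compact and acts continuously.

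The main obstacle I anticipate is verifying that the constructed $\mG$ (and $K$) are genuinely \emph{Polish} groups acting \emph{continuously} and transitively on the compact space, and that the filtration subgroups $\mG_j$ are closed and satisfy the commutator relations — this requires care because the "polynomial part" of $\mG$ is an infinite-dimensional group of functions, and one must check that composing two such transformations stays within the prescribed form (this is exactly where Lemma~\ref{degrees} on how polynomials transform under $\mG_m$ is essential, since $\partial_g P$ must land in $\Poly_{\leq d-m}$). A secondary delicate point, flagged in the excerpt, is that in part $(ii)$ the local measurable section from Lemma~\ref{polyrangelift}(ii) is genuinely non-continuous, so one must be content with the weaker "compact domain maps into compact range" property and check this suffices to keep $K$ compact and the measure $\mu$ well-defined and compactly supported. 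I would handle these by carefully tracking the filtration degrees at each composition and invoking the compact-structure results for Weyl systems to get continuity for free on the relevant pieces.
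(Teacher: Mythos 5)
Your strategy for part (i) is essentially the paper's (take $\mG$ to be a semidirect product of $\mG'$ with a group of vertical polynomial maps, act on $\ZZ^{k-1}(\XX) \times U_k$, let $\Lambda$ be the stabilizer of a point, and send $\gamma \mapsto (\phi'(\gamma),\rho_\gamma)$), but your filtration is written backwards and is not a filtration. You place $\Poly_{\leq j-1}$ at level $j$ (and delete the constants for $j \geq 2$); then the subgroups are not nested ($\mG_2 \not\subseteq \mG_1$, since a nonconstant vertical polynomial of degree $1$ lies in your $\mG_2$ but not your $\mG_1$), and the commutator axiom fails: conjugating $(\mathrm{id},q)$ with $q$ of degree $k-1$ by a translation $(g',0)$, $g' \in \mG'_1$, produces $(\mathrm{id},\partial_{(g')^{-1}}q)$, which has degree $k-2$ and is nonzero in general, whereas $[\mG_1,\mG_k]$ must land in the trivial group $\mG_{k+1}$. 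The correct choice is $\mG_j \coloneqq \{(s,q): s \in \mG'_j,\ q \in \Poly_{\leq k-j}(\ZZ^{k-1}(\XX),U_k)\}$: the degree \emph{decreases} in $j$, $\mG_k$ is exactly the central copy of $U_k$ (the constants), and the filtration axiom is precisely what Lemma \ref{degrees}(ii) delivers. Relatedly, the vertical part of $\mG$ should be all of $\Poly_{\leq k-1}(\ZZ^{k-1}(\XX),U_k)$, not only polynomials of the coboundary form $\partial_{g'}(\cdot)$; with the full polynomial group one gets transitivity (constants give the fiber action) and $\rho_\gamma$ lies in it by the Weyl hypothesis.

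The serious gap is in part (ii): you assert that $K$ is a \emph{compact} subgroup of $\mG$ and propose to build $\mu$ from Haar measure on its ``polynomial part,'' with compactness of the orbits $Kx$ following from compactness of $K$. This is false in general, and it is exactly the central difficulty. The group has to be $K = \{(s,q) \in \mG: s \in K',\ \varphi(q) + \partial_s F = 0\}$, where $F$ is the compatibility function from Proposition \ref{prop-invlim}; its kernel over $K'$ is the full group $\Poly_{\leq k-1}(\XX',\ker\varphi)$, which is a non-compact (not even locally compact) Polish group, so there is no Haar measure available and no automatic compactness of $Kx$. One must instead: (a) prove, by a downward induction on the filtration degree of $s \in K' \cap \mG'_d$, that $\partial_s F \in \Poly_{\leq k-1}(\XX',W)$, so that Lemma \ref{polyrangelift}(i) (this is where $k$-divisibility enters) yields that $K$ surjects onto $K'$ — a key step missing from your outline; (b) use Lemma \ref{polyrangelift}(ii) to get a measurable section over a compact $K'_0$ carrying $\mu'$, landing in a compact subset $K_0 \subset K$, set $K_1 = (\ker\varphi)K_0$ (compact, but \emph{not} a subgroup), define $\mu$ by integrating the section against $\mu'$ and Haar measure on $\ker\varphi$, and verify $Kx = K_1 x$ together with the invariance identities; (c) still identify $\YY$ with $K \backslash \mG/\Lambda$: show the $K$-invariant functions are exactly $L^\infty(\YY)$, show $K\backslash\mG/\Lambda$ is Polish (via $KA = K_1 A$), and upgrade measurable $K$-invariant functions to limits of continuous ones using the averaging operator $\E_K$, Lusin's theorem, and the mean ergodic theorem. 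Your fallback of ``checking that the compact-range property suffices to keep $K$ compact'' cannot succeed, because $K$ genuinely fails to be compact; the argument has to be rebuilt around the compact set $K_1$ and the measure $\mu$ rather than around $K$ itself.
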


We remark that in general, $K$, while a Polish group, will not be compact or even locally compact, and so does not necessarily come with a theory of Haar measure.  Nevertheless, $K$ will have compact orbits $Kx$, and the pushforward measures $\mu_x$ claimed in $(ii)$ will still exist (and be unique), and will serve as a ``Haar bundle'' for these orbits, cf. Remark \ref{rem-bundle}.  
  The measure $\mu$ is a sort of ``poor man's Haar measure'' for $K$ that will serve to conveniently organize this Haar bundle in a manner that interacts well with the topological structures on $\mG/\Lambda$ and $K \backslash \mG/\Lambda$.

We prove both parts of this proposition by induction on $k$.  We begin with $(i)$. The case $k=1$ is immediate since Weyl systems of order $1$ are rotational systems $Z$, and we can equip $Z$ with the abelian filtration in which $Z_0=Z_1=Z$ and $Z_i=\{0\}$ for $i>1$.  Now suppose that $k \geq 2$ and that Proposition \ref{inductive}(i) has already been proven for $k-1$.  Let $\XX$ be an ergodic Weyl $\Gamma$-system of order $k$.  By definition, we have
$$ \XX = \ZZ^{k-1}(\XX) \rtimes_\rho U$$
for some compact metrizable $U$ and some $U$-cocycle $\rho$ on $\ZZ^{k-1}(\XX)$ which is polynomial of degree $k-1$, with the canonical topology on $\XX$ being the product of the canonical topology of $\ZZ^{k-1}(\XX)$ and the topology of $U$.  By inductive hypothesis, we may write $\ZZ^{k-1}(\XX)$ as a translational system $\mG'/\Lambda'$, where $\mG'$ comes with a degree $k-1$ filtration $(\mG'_j)_{j\geq 0}$ of Polish subgroups, such that $\mG'_j$ fixes the factor $\ZZ^{j-1}(\mG'/\Lambda')$ for all $j\geq 1$.  

By Lemma \ref{degrees}, $\mG'$ acts continuously by homomorphisms on the abelian group $\Poly_{\leq k-1}(\ZZ^{k-1}(\XX),U)$.  We then define $\mG$
$$ \mG \coloneqq \mG' \ltimes \Poly_{\leq k-1}(\ZZ^{k-1}(\XX),U)$$
to be the semi-direct product of these two groups, that is to say the group of pairs $(s,q)$ with $s \in \mG'$ and $q \in \Poly_{\leq k-1}(\ZZ^{k-1}(\XX),U)$ with group law
$$ (s_1,q_1) (s_2,q_2) \coloneqq (s_1 s_2, q_1 \circ s_2 + q_2).$$
Since $\mG'$ and $\Poly_{\leq k-1}(\ZZ^{k-1}(\XX),U)$ are\footnote{Indeed, $\Poly_{\leq k-1}(\ZZ^{k-1}(\XX),U)$ is a closed subgroup of the Polish group $\M( \ZZ^{k-1}(\XX), U) = \Hom(\hat U, \M(\ZZ^{k-1}(\XX),\T)) \leq \M(\ZZ^{k-1}(\XX),\T)^{\hat U}$ and is thus also Polish.} Polish groups (with the former acting continuously on the latter), $\mG$ is also.  This group $\mG$ acts continuously on $\XX$ by the formula
\begin{equation}\label{sqy}
(s,q) (y,u) \coloneqq (sy, u + q(y))
\end{equation}
for $(s,q) \in \mG$, $y \in \ZZ^{k-1}(\XX) = \mG'/\Lambda'$, and $u \in U$.  By Fubini's theorem, the action is measure-preserving; by restricting attention to those polynomials $q$ that are constant (i.e., elements of $U$) we see that the action is transitive.  Using $0$ to denote the origin $\Lambda'/\Lambda'$ in $\ZZ^{k-1}(\XX) = \mG'/\Lambda'$, so that $\Lambda'$ is the $\mG'$-stabilizer of $0$, we see that the $\mG$-stabilizer $\Lambda$ of $0$ is given by
$$ \Lambda \coloneqq \{ (\gamma,q) \in \mG: \gamma \in \Lambda', q(0)=0 \}.$$
This is clearly a closed subgroup of $\mG$.  By Effros' theorem\footnote{Effros' theorem \cite{Effros} states that if $\mG$ is a Polish group acting transitively on a compact metric space $X$, then for any $x\in X$ the stabilizer $\Lambda = \{g\in \mG : gx=x\}$ is a closed subgroup of $\mG$ and $X$ is homeomorphic to $\mG/\Lambda$.}, $\mG/\Lambda$ is homeomorphic to $\XX$; in particular $\mG/\Lambda$ is a compact Polish space, and the probability measure on $\XX$ induces a probability measure on $\mG/\Lambda$ which is $\mG$-invariant since the action of $\mG$ is measure-preserving on $\XX$.  The action of $\Gamma$ on $\XX = \mG/\Lambda$ is a translational using the homomorphism
\begin{equation}\label{phig}
\phi \colon \gamma \mapsto (\phi'(\gamma), \rho_\gamma)
\end{equation}
from $\Gamma$ to $\mG$, here $\phi' \colon \Gamma \to \mG'$ is the corresponding homomorphism for the translational system $\mG'/\Lambda'$.

To close the induction for $(i)$, we need to give $\mG$ a filtration obeying the required properties.  This is given by the formulae
$$ \mG_i \coloneqq \{ (s,q) \in \mG: s \in \mG'_i, q \in \Poly_{\leq k-i}(\ZZ^{k-1}(\XX),U) \}$$
for $i \geq 1$, with $\mG_0 \coloneqq \mG$.  This is readily seen (using Lemma \ref{degrees}) to be a degree $k$ filtration of Polish groups.  Since $\mG'_j$ fixes $\ZZ^{j-1}(\mG'/\Lambda') = \ZZ^{\min(j-1,k-1)}(\XX)$ for all $j \geq 1$, it is not difficult to see that $\mG_j$ fixes $\ZZ^{j-1}(\XX)$ for all $j \geq 1$.  

Now we close the induction for $(ii)$.  The case $k=1$ is again straightforward: by Pontryagin duality we see that quotients of a rotational system $Z$ arise by quotienting out by a compact subgroup $K$ of $Z$, which is equipped with Haar probability measure $\mu$, and it is easy to verify the required claims.  Now suppose that $k \geq 2$ and that $(ii)$ has already been established for $k-1$.

Let $\YY$ be a factor of $\XX$.  Write $\XX' \coloneqq \ZZ^{k-1}(\XX)$ and $\YY' \coloneqq \ZZ^{k-1}(\YY)$.  By part $(i)$ we can write 
$$\XX = \XX' \rtimes_\rho U = \mG/\Lambda$$
and
$$ \XX' = \mG'/\Lambda'$$
where $\mG, \Lambda$ were constructed as above.   By Proposition \ref{abelext} and Proposition \ref{prop-invlim}, we may write
$$ \YY = \YY' \rtimes_\sigma W$$
for some compact abelian metrizable $W$ and $W$-valued cocycle $\sigma$ on $\ZZ^{k-1}(\YY)$, and there exists a surjective homomorphism $\varphi \colon U \to W$ and a function $F \in \M(\XX', W)$ such that one has the identity
\begin{equation}\label{compat}
\varphi \circ \rho + dF = \sigma \circ \pi'
\end{equation}
where $\pi' \colon \XX' \to \YY'$ is the factor map; see Figures \ref{fig:extensions}, \ref{fig:schem}.  Furthermore, the factor map $\pi \colon \XX \to \YY$ is given by the formula
\begin{equation}\label{pi-def}
 \pi( x', u) = (\pi'(x'), \varphi(u) + F(x'))
 \end{equation}
for $x' \in \ZZ^{k-1}(\XX)$ and $u \in U$; see Figure \ref{fig:expanded}.

\begin{figure}
\centering
    \[\begin{tikzcd}
	\XX = \XX' \rtimes_\rho U = \mG/\Lambda && \XX' = \mG'/\Lambda' \\
 \\
    \YY = \YY' \rtimes_\sigma W = K \backslash \mG / \Lambda && \YY' = K' \backslash \mG'/\Lambda'
    \arrow[from=1-1, to=1-3]	
    \arrow["\pi", from=1-1, to=3-1]
	\arrow[from=3-1, to=3-3]
    \arrow["\pi'", from=1-3, to=3-3]
 \end{tikzcd}\]
    \caption{The four main $\Gamma$-systems appearing in the proof of  Proposition \ref{inductive}(ii).  Establishing the identification $\YY = K \backslash \mG/\Lambda$ is the main task needed to close the induction.}
  \label{fig:extensions}
\end{figure} 

\begin{figure}
\centering
    \[\begin{tikzcd}
	\XX' && U \\
 \\
    \YY' && W 
    \arrow["\rho_\gamma", from=1-1, to=1-3]	
    \arrow["\pi'", from=1-1, to=3-1]
	\arrow["\sigma_\gamma", from=3-1, to=3-3]
    \arrow["\varphi", from=1-3, to=3-3]
   \arrow["\partial_\gamma F", from=1-1, to=3-3]    
 \end{tikzcd}\]
    \caption{A diagrammatic depiction of the relation \eqref{compat}.  The outer square commutes up to the coboundary correction term $\partial_\gamma F$ on the diagonal.}
  \label{fig:schem}
\end{figure} 

\begin{figure}
\centering
    \[\begin{tikzcd}
	\XX = \XX' \rtimes_\rho U && X' \rtimes_{\varphi \circ \rho} W && X' \rtimes_{\sigma \circ \pi'} W && \XX' \\
 \\
    && && \YY = \YY' \rtimes_\sigma W && \YY'
    \arrow["{(x',u) \mapsto (x',\varphi(u))}",from=1-1, to=1-3]	
    \arrow["{(x',w) \mapsto (x',w+F(x'))}",from=1-3, to=1-5,shift left=1]	
    \arrow["{(x',w) \mapsto (x',w-F(x'))}",from=1-5, to=1-3,shift left=1]	
    \arrow["{(x',w) \mapsto x'}",from=1-5, to=1-7]	
    \arrow["\pi", from=1-1, to=3-5]
    \arrow["\pi'", from=1-7, to=3-7]
    \arrow["{(x',w) \mapsto (\pi'(x'),w)}", from=1-5, to=3-5]
	\arrow["{(y',w) \mapsto y'}",from=3-5, to=3-7]
 \end{tikzcd}\]
    \caption{The relations \eqref{compat}, \eqref{pi-def} expressed as a commuting diagram of $\Gamma$-systems.}
  \label{fig:expanded}
\end{figure} 
Roughly speaking, our goal is to identify a subgroup $K$ of $\mathcal{G}$ that fixes the factor $\YY$. First, observe that by the induction hypothesis, we can write
$$ \YY' = K' \backslash \mG' / \Lambda'$$
for some closed subgroup $K'$ of $\mathcal{G'}$ normalized by $\phi'(\Gamma)$, with $\pi'(g' \Lambda') = K' g' \Lambda'$ for $g' \in \mG'$, with $K' \backslash \mG'/\Lambda'$ a Polish space. We also have a compactly supported Radon probability measure $\mu'$ on $K'$ such that the pushforwards $\mu'_{x'}$ to the compact orbits $K' x'$ are $K'$-invariant for every $x' \in \mG'/\Lambda'$, and $\mu'_{x'} = \mu'_{k'x'}$ for all $k' \in K'$.

We let $K \subset \mG$ denote the set
\begin{equation}\label{K-def}
 K \coloneqq \{ (s,q) \in \mG: s \in K'; \varphi(q) + \partial_s F = 0 \}.
 \end{equation}
To motivate this definition, consider the intermediate factor $\ker \varphi \backslash \XX = \XX' \rtimes_{\varphi \circ \rho} W$, which lies between $\XX$ and $\YY$ (see Figure \ref{fig:expanded}). Any $(s,q) \in \mathcal{G}$ then gives rise to an element $(s, \varphi(q))$ acting on $\ker \varphi \backslash \XX$. We are specifically interested in those transformations that induce a trivial action on the factor $\YY$. Hypothetically, if $\varphi \circ \rho = \sigma \circ \pi'$, this would mean that we could take $s \in K'$ and $\varphi(q) = 0$. However, in general, we have the weaker equation \eqref{compat}. Thus, we must consider the isomorphism $\XX' \rtimes_{\varphi \circ \rho} W \cong \XX' \rtimes_{\sigma \circ \pi'} W$ given by $(x,w) \mapsto (x, w + F(x))$, which induces the condition $\varphi(q) + \partial_s F = 0$ seen in \eqref{K-def}.

Clearly $K$ is closed; from the cocycle equation we see that $K$ is a subgroup of $\mG$.  If $(s,q) \in K$ and $\gamma \in \Gamma$, then from \eqref{phig} we compute
$$ \phi(\gamma) (s,q) \phi(\gamma)^{-1} = ( s_\gamma, (q + \partial_{s_\gamma} \rho_\gamma) \circ \phi'(\gamma)^{-1})$$
where $s_\gamma \coloneqq \phi'(\gamma) s \phi'(\gamma)^{-1}$.  As $K'$ is normalized by $\phi'(\Gamma)$, $s_\gamma$ lies in $K'$, and
\begin{align*}
\varphi((q + \partial_{s_\gamma} \rho_\gamma) \circ \phi'(\gamma)^{-1}) + \partial_{s_\gamma} F 
&= [(\varphi(q) + \partial_s F) + \partial_s(\varphi \circ \rho_\gamma + \partial_\gamma F) ] \circ \phi'(\gamma)^{-1} \\
&= [0 + \partial_s (\sigma_\gamma \circ \pi')] \circ \phi'(\gamma)^{-1} \\
&= 0
\end{align*}
thanks to \eqref{K-def}, \eqref{compat}.  Thus $K$ is normalized by $\phi(\gamma)$.

We now make the key claim that the homomorphism $(s,q) \mapsto s$ from $K$ to $K'$ is surjective; that is to say, for every $s \in K$ there exists $q \in \Poly_{\leq k-1}(\XX',U)$ such that $\varphi(q) + \partial_s F = 0$.  By Lemma \ref{polyrangelift}(i) it suffices to show that
$\partial_s F$ is a polynomial of degree at most $k-1$.  We claim more generally that for any $d \geq 1$ and $s \in K' \cap \mG'_d$, that $\partial_s F$ is a polynomial of degree at most $k-d$.  This is trivial for $d \geq k$ since $\mG'_d$ is trivial in this case.  Now suppose inductively that $1 \leq d < k$ and the claim has already been proven for $d+1$.  If $s \in K' \cap \mG'_d$ and $\gamma \in \Gamma$, we observe that
\begin{equation}\label{sgamma} \partial_\gamma \partial_s F = \partial_s \partial_\gamma F + \partial_{[s,\phi'(\gamma)]} F \circ s \circ \phi'(\gamma).
\end{equation}
Note that $[s,\phi'(\gamma)]$ lies in $K' \cap \mG'_{d+1}$ and hence the term $\partial_{[s,\phi'(\gamma)]} F \circ s \circ \phi'(\gamma)$ is of degree at most $k-d-1$ by induction hypothesis (and Lemma \ref{degrees}).  Meanwhile, by applying $\partial_s$ to \eqref{compat} (noting that $\pi'$ is $s$-invariant) we see that
$$ \partial_s \partial_\gamma F = - \varphi(\partial_s \rho)$$
and hence $\partial_s \partial_\gamma F$ is also of degree at most $k-d-1$, thanks to Lemma \ref{degrees} and the fact that $\rho$ is of degree $k-1$.  Thus $\partial_\gamma \partial_s F$ is of degree at most $k-d-1$ for all $\gamma$, and so $\partial_s F$ is of degree $k-d$, closing the induction.  This concludes the proof that $K$ surjects onto $K'$.

From Lemma \ref{polyrangelift}(ii) (and the continuity of the map $s \mapsto \partial_s F$ from $K'$ to $\Poly_{\leq k-1}(\XX',W)$) we conclude the following stronger property: for every compact subset $K'_0$ of $K'$, there exists a compact subset $K_0$ of $K$ and a measurable map $\Psi \colon s \mapsto (s,q_s)$ from $K'_0$ to $K_0$.  These measurable local sections of the projection $(s,q) \mapsto s$ will be useful later in the argument.

The kernel of the homomorphism $(s,q) \mapsto s$ from $K$ to $K'$ is the set $\Poly_{\leq k-1}(\XX', \mathrm{ker}(\varphi))$ of polynomials of degree at most $k-1$ from $\XX'$ to the kernel $\mathrm{ker}(\varphi)$ of $\varphi \colon U \to W$.  Thus we have a short exact sequence
$$ 0 \to \Poly_{\leq k-1}(\XX', \mathrm{ker}(\varphi)) \to K \to K' \to 0.$$ 
Unfortunately, the group $\Poly_{\leq k-1}(\XX', \mathrm{ker}(\varphi))$ will not be compact in general, but it contains $\mathrm{ker}(\varphi)$ as a compact subgroup. Since these groups correspond to the same vertical translations of $U$, we will be able to use $\mathrm{ker}(\varphi)$ as a suitable proxy for $\Poly_{\leq k-1}(\XX', \mathrm{ker}(\varphi))$ at several steps in the arguments below. 

If $(s,q) \in K$, then for almost every $(x',u) \in \XX$ we have
\begin{align*}
\pi( (s,q)(x',u) ) &= \pi( sx', u + q(x') )\\
&= (\pi'(sy), \varphi(u + q(x')) + F(sx') )\\
&= (\pi'(x'), \varphi(u) + F(x') + \varphi(q(x')) + \partial_s F(x') ) \\
&= (\pi'(x'), \varphi(u)) \\
&= \pi(x',u)
\end{align*}
thanks to \eqref{pi-def}, \eqref{K-def}.  Thus $\pi$ is $K$-invariant, and so every element of $L^\infty(\YY)$ is a $K$-invariant element of $L^\infty(\XX)$.  We now claim conversely (basically by chasing the diagram in Figure \ref{fig:expanded}) that every $K$-invariant element of $L^\infty(\XX)$ is an element of $L^\infty(\YY)$.  Indeed, if $f(x',u)$ is a $K$-invariant function of $L^\infty(\XX)$, then by noting that $K$ contains $\{ (0,u): u \in \mathrm{ker}(\varphi)\}$ we see that $f(x',u)$ is invariant with respect to shifts of $u$ by $\mathrm{ker}(\varphi)$ and so we may write $f(x',u) = g(x',\varphi(u)+F(x'))$ almost everywhere for some measurable function $g$ on $\ZZ^{k-1}(\XX) \times W$.  If $(s,q) \in K$, then by repeating the previous calculations we see that
$$ f((s,q)(x',u)) = f(x',u)$$
implies that
$$ g(sx', \varphi(u)+F(x')) = g(x', \varphi(u)+F(x'))$$
for almost all $x' \in \XX'$ and $u \in U$, thus (since $K$ surjects onto $K'$ and $U$ surjects onto $W$) $g(x',w)$ is invariant with respect to the action of $K'$ in the first variable $y$.  Thus we can write $g(x',w) = h(\pi'(x'),w)$ for some measurable function $x'$ on $\YY = \YY' \rtimes_\sigma W$.  From \eqref{pi-def} we have $f = h \circ \pi$ and so $f$ is $\YY$-measurable as required.

In view of the above description of $L^\infty(\YY)$, it is now plausible that $\YY$ can be identified with the topological quotient space $K \backslash \mG / \Lambda$, but we need to take some care with this identification as $K$ is not compact.  The first step is to build the measure $\mu$.  By induction hypothesis, $\mu$ is supported in some compact subset $K'_0$ of $K'$.  Using Lemma \ref{polyrangelift}(ii) as discussed previously, there exists a compact subset $K_0$ of $K$ and a measurable local section map $\Psi \colon s \mapsto (s,q_s)$ from $K'_0$ to $K_0$.  The group $\ker \varphi$ is a compact central subgroup of $K$, and hence $K_1 \coloneqq (\ker \varphi) K_0$ is a compact subset of $K$.  We define $\mu$ via the Riesz representation theorem as the unique Radon measure on $K_1$ for which
$$ \int_{K_1} f(s,q)\ d\mu(s,q) = \int_{K'_0} \int_{\ker \varphi} f(s,q_s+u)\ du d\mu'(s)$$
where $du$ denotes the probability Haar measure on $\ker \varphi$.  This is easily seen to be a Radon probability measure on $K_1$.  To show that $\mu_x$ is $K$-invariant for every $x \in \XX$, it suffices to show that
$$ \int_{K_1} f( (s,q) x )\ d\mu(s,q) = \int_{K_1} f( (s_0,q_0) (s,q) x )\ d\mu(s,q)$$
for all bounded measurable $f \colon \XX \to \R$ and $(s_0,q_0) \in K$.  If $f$ has mean zero with respect to the central compact $\ker \varphi$ action, then both integrals vanish thanks to the invariance of Haar measure $du$, so we may assume instead that $f$ is $\ker \varphi$-invariant.  We may thus write $f(x',u) = \tilde f(x', \varphi(u) + F(x'))$ for some bounded measurable $\tilde f \colon \XX' \rtimes_{\sigma \circ \pi'} W \to \R$ (cf., Figure \ref{fig:expanded}).  From \eqref{K-def} and the section property we see that
$$ \int_{K_1} f( (s,q) x )\ d\mu(s,q) = \int_{K'_0} \tilde f( sx', \varphi(u) + F(x'))\ d\mu'(s)$$
for $x = (x',u) \in \XX$, and similarly
$$ \int_{K_1} f( (s_0,q_0) (s,q) x )\ d\mu(s,q) = \int_{K'_0} \tilde f( s_0 sx', \varphi(u) + F(x'))\ d\mu'(s).$$
The claim now follows from the induction hypothesis.  Thus $\mu_x$ is $K$-invariant.  A similar argument gives
$$ \int_{K_1} f( (s,q) x )\ d\mu(s,q) = \int_{K_1} f( (s,q) (s_0,q_0) x )\ d\mu(s,q)$$
for all bounded measurable $f \colon \XX \to \R$ and $(s_0,q_0) \in K$, so that $\mu_x = \mu_{(s_0,q_0) x}$.  Clearly the support of $\mu_x$ is contained in the compact set $K_1 x$ and is $K$-invariant; since $K_1$ is a subset of the group $K$, we conclude that $K_1 x = Kx$.  In particular, $Kx$ is compact.  This verifies all the properties required on $\mu$.

Next we verify that the quotient space $K \backslash \mG / \Lambda$ is Polish.  From \cite[Proposition 2.2.6]{kechrisbecker} it suffices to show that $KA = \{k.a : k\in K, a\in A\}$ is a compact subset of $\mG/\Lambda$, whenever $A$ is closed in $\mG/\Lambda$.  But since $K_1 x = Kx$ for all $x \in \mG/\Lambda$, we have $KA = K_1 A$; since $K_1, A$ are both compact and the group action is continuous, $KA$ is compact as required.

Finally, we show that $\YY$ can be identified with $K \backslash \mG / \Lambda$.  For any bounded measurable function $f \colon \mG/\Lambda \to \R$, define the average $\E_K f \colon \mG/\Lambda \to \R$ by the formula
$$ \E_K f(x) \coloneqq \int_{\mG/\Lambda} f\ d\mu_x = \int_{K_1} f((s,q) x)\ d\mu(s,q).$$
From Fubini's theorem and the triangle inequality, $\E_K f$ is a contraction on $L^\infty(\mG/\Lambda)$ and also on $L^2(\mG/\Lambda)$; from the compactness of $K_1$ we also see that this operator preserves continuity.  Since the $\mu_x$ are supported on $Kx$, $\E_K f = f$ whenever $f$ is $K$-invariant; since $\mu_x = \mu_{kx}$ for all $k \in K$, we see that $\E_K f$ is $K$-invariant for any bounded measurable $f$. In fact, by the Birkhoff--Alaoglu ergodic theorem \cite{ab40abstract}, $\E_K f$ must equal the orthogonal projection from $L^2(\mG/\Lambda)$ to the subspace of $K$-invariant functions, since that projection is the unique $K$-invariant function in the closed convex hull of the $K$-orbit of $f$.

Using this average we can now perform the identification.  Certainly any element of $L^\infty(K \backslash \mG/\Lambda)$ induces a $K$-invariant element of $L^\infty(\mG/\Lambda)$, which is then identifiable with an element of $L^\infty(\YY)$ by the previous arguments.  Conversely, any element of $L^\infty(\YY)$ can be viewed as a $K$-invariant element $f$ of $L^\infty(\mG/\Lambda)$.  This function need not be continuous, but by Lusin's theorem it is the $L^2$ limit of a sequence $f_n$ of uniformly bounded continuous functions.  Applying $\E_K$ to the $f_n$, we may assume without loss of generality that the $f_n$ are also $K$-invariant in addition to being continuous on $\mG/\Lambda$, and are thus identifiable with continuous functions on $K \backslash \mG / \Lambda$.  In particular, they lie in $L^\infty(K \backslash \mG/\Lambda)$, and hence $f$ lies in this space also.  This provides an identification of $L^\infty(\YY)$ and $L^\infty(K \backslash \mG/\Lambda)$ which is easily seen to be an isomorphism of abelian von Neumann algebras that preserves the $\Gamma$ action, hence we have identified $\YY$ and $K \backslash \mG/\Lambda$ as $\Gamma$-systems.

 This concludes the proof of Proposition \ref{inductive}(ii), and hence also Theorem \ref{structure-thms}.

 \begin{remark}\label{rem-bundle}
 One can ask whether the ``poor man's Haar measure'' $\mu$ is necessary in the above arguments.
The closed group $K$ acts on $X=\mG/\Lambda$ by homeomorphisms, which makes the tuple $(X,K)$ a (right) topological transformation space with orbit space $K\backslash X$. 
Thus we can equip $K\times X$ with a topological groupoid structure with unit space $X$, see \cite[Chapter 1, \S 1]{renault-book} for notation, and in particular the first example there. 
Also, since the quotient map from $\mG/\Lambda$ to $K \backslash \mG/\Lambda$ is open, the bundle map from $K\times X$ to the unit space $X$ is open.  
Now using a result of Steinlage \cite[Lemma 4.3]{Stein} (and an induction argument), we can find a $K$-invariant Radon probability measure $\mu_x$ on the compact homogeneous spaces $Kx$ for each $x$.
If we knew that the $\mu_x$ were unique and that $K\cdot A$ is compact in $X$ for every compact set $A$ in $X$ (cf. \cite[Proposition 2.2.6]{kechrisbecker}), then the Haar bundle $\mu_{x}$ would be continuous in $x$ (in the vague topology) by modifying the argument in \cite[Lemma 1.3]{renault} using the openness of the bundle map. In this case, the conditional expectation operator $\E_K$ would map continuous functions to continuous functions, and thus the topological and measurable identifications $\YY=K\backslash \mG/ \Lambda$ coincide similarly as concluded in our proof of Proposition \ref{inductive}(ii). By the results in \cite{Stein}, uniqueness amounts to showing that $K$ acts on each fibre $Kx$ equicontinuously which does not seem to be satisfied even for Kronecker systems. On the other hand, the classical disintegration theorem gives a representation of the conditional expectation operator $\E_K\colon L^2(\mG/\Lambda)\to L^2(\mG/\Lambda)^K$ (since $L^2(\mG/\Lambda)$ is separable) which by construction and the Banach--Alaoglu theorem is also $K$-invariant, however it is only unique almost surely, and it is not clear to us how to get uniqueness everywhere. 

From our proof, all these properties are satisfied \emph{a posteriori} by employing  the convenient device of the ``poor man's Haar measure'' $\mu$ on $K$. However, the existence of $\mu$ is guaranteed by our very particular situation (indeed, the Zorn argument in the proof of Lemma \ref{polyrangelift}(ii) requires divisibility), and it remains an open question if a more general approach exists which does not use the particular properties of the extension $\XX$. 
\end{remark}

For applications to inverse theorems for the Gowers uniformity norms, we record some additional structural facts about the translational system $\mG/\Lambda$ and the group $K$ constructed by Proposition \ref{inductive}. 

\begin{proposition}\label{more-inductive}  Let the hypotheses be as in Proposition \ref{inductive}, and let $U_1,\dots,U_k$ be the structure groups of $\XX$.
\begin{itemize}
\item[(i)]  $\XX = \mG/\Lambda$ can be identified with the filtered abelian group 
$${\mathcal U} = {\mathcal D}^1(U_1) \times \dots \times {\mathcal D}^k(U_k)$$
in such a way that the compact $k$-step nilspace structure of ${\mathcal U}$ is compatible with the $\Gamma$-system structure of $\XX$ in the sense of Appendix \ref{nilspace-app}\footnote{See the discussion under Theorem \ref{haar-thm}.}.
Furthermore ${\mathcal U}$ can be identified with a filtered subgroup of $\mG$.
\item[(ii)]  The double quotient $K \backslash \mG/\Lambda$ has the structure of a $k$-step nilspace which is compatible with the $\Gamma$-system structure of $\YY =  K \backslash \mG/\Lambda$.
\item[(iii)]  If $\XX$ is $k$-divisible, and $\YY = K \backslash \mG/\Lambda$ is a factor whose structure groups $W_i$ are all $m$-torsion for some $m \geq 1$, then we have
$$ m^{\binom{k+1}{2}} \mG \leq K.$$
\end{itemize}
\end{proposition}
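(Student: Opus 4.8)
The plan is to track, through the inductive construction of $\mG$ from Proposition \ref{inductive}, exactly how much divisibility is lost at each level, and to exploit the hypothesis that the structure groups $W_i$ of the factor $\YY = K\backslash\mG/\Lambda$ are $m$-torsion to absorb the "top-level" contribution into $K$. Recall from the proof of Proposition \ref{inductive}(i),(ii) that $\mG$ is built by iterated semidirect products: writing $\XX' \coloneqq \ZZ^{k-1}(\XX) = \mG'/\Lambda'$ with $\mG'$ the degree $k-1$ group for $\XX'$, we have $\mG = \mG' \ltimes \Poly_{\leq k-1}(\XX',U_k)$ and $K = \mG' \cap K$ twisted appropriately, with $K$ surjecting onto the analogous group $K'$ for $\YY'$ with kernel $\Poly_{\leq k-1}(\XX', \ker\varphi)$, where $\varphi\colon U_k \to W_k$ is the surjection of top structure groups. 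First I would set up the induction on $k$: the base case $k=1$ is immediate since $\mG/\Lambda$ is then a rotational system $Z$, $K$ is a closed subgroup, $W_1 = Z/K$ is $m$-torsion by hypothesis, so $mZ \leq K$, i.e. $m^{\binom{2}{2}}\mG = m\mG \leq K$.

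For the inductive step, suppose the result holds for $k-1$ applied to $\XX' = \ZZ^{k-1}(\XX)$ (which is $(k-1)$-divisible by Proposition \ref{ppfacts}(ii)) and its factor $\YY' = K'\backslash\mG'/\Lambda'$, whose structure groups $W_1,\dots,W_{k-1}$ are $m$-torsion (being structure groups of $\YY$, which are $m$-torsion by hypothesis). This gives $m^{\binom{k}{2}}\mG' \leq K'$. Now I would handle the top level. An element of $\mG$ has the form $(s,q)$ with $s\in\mG'$ and $q\in\Poly_{\leq k-1}(\XX',U_k)$. Two things must be shown: first, that $m^{\binom{k}{2}} s \in K'$ (from the inductive hypothesis, using that $m^{\binom{k}{2}}\mG'\leq K'$ and that the projection $\mG\to\mG'$, $(s,q)\mapsto s$ is a homomorphism); and second, that after raising to a suitable further power we can arrange $\varphi\circ q + \partial_s F = 0$, which is precisely the defining condition \eqref{K-def} for membership in $K$. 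The key observation here is that $\partial_s F$ for $s\in K'\cap\mG'_d$ is a polynomial of degree $\leq k-d$ — this was proven inside the proof of Proposition \ref{inductive}(ii) — and that $\varphi\colon U_k\to W_k$ has $\ker\varphi$-valued fibers, with $W_k$ being $m$-torsion, so $m\cdot(\text{anything in } U_k)\in\ker\varphi$. Using Lemma \ref{polyrangelift}(i) and the $k$-divisibility of $\XX$, one lifts appropriately: since $m q' := m\cdot(\text{the }U_k\text{-part we need})$ lands in $\Poly_{\leq k-1}(\XX',\ker\varphi)$, and $\ker\varphi$-valued polynomials of degree $\leq k-1$ are in the kernel of $K\to K'$, the extra factor of $m$ needed at the top level combines with the $m^{\binom{k}{2}}$ from below.

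The bookkeeping to get the exponent exactly right is the main obstacle. The exponent $\binom{k+1}{2} = \binom{k}{2} + k$ suggests that passing from level $k-1$ to level $k$ costs a factor of $m^k$, not just $m$. I expect this is because raising $(s,q)$ to the power $m^j$ does not simply multiply the $U_k$-component by $m^j$: the semidirect product structure means $(s,q)^n$ has $U_k$-component $\sum_{i=0}^{n-1} q\circ s^i$, and to kill the lower-degree pieces of $\varphi\circ q + \partial_s F$ one must iterate a "dividing by $m$" procedure analogous to the one in the proof of Proposition \ref{trivial-quasi} (the argument eliminating $Q$ via $Q = p^nP$), losing one factor of $m$ per degree, hence $m^k$ total since polynomials here have degree up to $k-1$ plus the zeroth-order constant term — giving the $k$ extra factors. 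Concretely, I would prove by downward induction on $d$ that $m^{\binom{k}{2}}\cdot m^{k-d}$ times any element of $\mG_d$ lies in (a suitable intermediate approximation to) $K$, using at each step that $\partial_s F$ drops degree when $s\in\mG_{d+1}$, that $m$ times a $U_k$-value is $\ker\varphi$-valued, and that $\ker\varphi$-valued degree-$\leq k-1$ polynomials lift freely by $k$-divisibility (Lemma \ref{polyrangelift}). Once $d=1$ this yields $m^{\binom{k}{2}+k-1}\mG \leq$ (kernel-of-$K\to K'$)-coset of $K'$-lift, and one final application of the $W_k$-torsion hypothesis to the constant ($d=0$, i.e. $U_k$ itself sitting inside $\mG_k$) contributes the last factor of $m$, giving $m^{\binom{k}{2}+k}\mG = m^{\binom{k+1}{2}}\mG \leq K$, closing the induction.
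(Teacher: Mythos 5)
Your proposal only addresses part (iii); parts (i) and (ii) — the identification of $\mG/\Lambda$ with ${\mathcal U} = {\mathcal D}^1(U_1)\times\dots\times{\mathcal D}^k(U_k)$, the verification that the Host--Kra measures coincide with the nilspace Haar measures, and the verification of the groupable axiom \eqref{kgl} needed to make $K\backslash\mG/\Lambda$ a nilspace compatible with $\YY$ — are not touched, and in the paper these occupy the bulk of the proof. Within (iii) your skeleton does match the paper's: induct on $k$, use $(s,q)^N=(s^N,\sum_{j=0}^{N-1}q\circ s^j)$, get $s^{m^{\binom{k}{2}}}\in K'$ from the inductive hypothesis, invoke the fact (from the proof of Proposition \ref{inductive}(ii)) that $\partial_s F$ is polynomial of degree $\leq k-1$ for $s\in K'$, and check the membership condition \eqref{K-def} for the power.

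The genuine gap is the mechanism converting the $k$ extra factors of $m$ in the exponent into the vanishing of $\varphi\bigl(\sum_j q\circ s^j\bigr)+\partial_{s^N}F$. What drives this in the paper is not $k$-divisibility or Lemma \ref{polyrangelift} (divisibility is only needed to have the double coset structure of Proposition \ref{inductive}(ii) in the first place), but the purely algebraic Lemma \ref{alg-lemma}: since $W_k$ is $m$-torsion, the ring of difference operators acting on $\M(\ZZ^{k-1}(\XX),W_k)$ has characteristic $m$, and divisibility of the binomial coefficients $\binom{m^r}{j}$ by $m$ yields $\partial_{s^{m^{\binom{k}{2}+r}}}\in\bigl\langle\partial_{s^{m^{\binom{k}{2}}}}\bigr\rangle\,\partial_{s^{m^{\binom{k}{2}}}}^{\,r+1}$ and $\sum_{j=0}^{m^r-1}V^{s^j}_*\in\langle\partial_s\rangle\,\partial_s^{\,r}$; combined with Lemma \ref{degrees} (each difference drops polynomial degree by one) this gives $\partial_{s^{m^{\binom{k+1}{2}}}}F=0$ (take $r=k$, starting from degree $\leq k-1$) and kills $\sum_j V^{s^j}_*\varphi(q)$ since $\varphi(q)$ has degree $\leq k-1<\binom{k+1}{2}$. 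Your proposed substitute — a ``dividing by $m$'' iteration modelled on the $Q=p^nP$ correction in Proposition \ref{trivial-quasi} — is an argument for modifying a lift, which you are not free to do here: you must show that the $N$-th power of the given element itself lies in $K$, not that some cohomologous or corrected element does. Likewise the downward induction on $d$ is not formulated precisely enough to check (the ``intermediate approximation to $K$'' is unspecified, and the final factor of $m$ attributed to ``the constant at $d=0$, i.e.\ $U_k$ sitting inside $\mG_k$'' does not correspond to a well-defined step, since the claim concerns powers of arbitrary elements of $\mG=\mG_1$). Without the binomial-coefficient/torsion identity of Lemma \ref{alg-lemma}, your exponent bookkeeping does not close; with it, the downward induction on $d$ becomes unnecessary and the paper's two-line verification of \eqref{K-def} goes through directly.
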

\begin{remark}
The exponent $\binom{k+1}{2}$ here can be improved significantly, but we do not attempt to optimize it here.
\end{remark}
\begin{proof}  We prove $(i)$ by induction on $k$.  The case $k=1$ is straightforward, so suppose $k \geq 2$ and the claim has already been proven for $k-1$.  Writing $\XX = \ZZ^{k-1}(\XX) \rtimes_\rho U_k$ with $\rho$ polynomial of degree $k-1$, we have from induction that $\ZZ^{k-1}(\XX)$ is a translational system $\mG'/\Lambda'$, and from the proof of Proposition \ref{inductive} we have that $\XX = \mG/\Lambda$ with
$$\mG \coloneqq \mG' \ltimes \Poly_{\leq k-1}(\ZZ^{k-1}(\XX),U_k).$$
Note that $U_k$ can be viewed as a subgroup of $\Poly_{\leq k-1}(\ZZ^{k-1}(\XX),U_k)$ that is invariant under the action of $\mG'$.
As $\mG'$ contains a copy of ${\mathcal D}^1(U_1) \times \dots \times {\mathcal D}^{k-1}(U_{k-1})$ as a filtered abelian subgroup, we conclude that $\mG$ contains ${\mathcal U}$ as a filtered abelian subgroup.  By induction we also see that ${\mathcal U}$ acts on $\XX$ transitively and freely.
 Therefore, $\mG=\mathcal{U}\cdot \Lambda$ as groups, and by induction one can show that the identification $\mG/\Lambda = \mathcal{U}$ is a homeomorphism. 
 By \cite[Lemma 6.1]{jt21-1}, we have that $\mathrm{HK}^k(\mG)=\mathrm{HK}^k(\mathcal{U})\cdot \mathrm{HK}^k(\Lambda)$ for all $k$. Since $\mathcal{U}\cap \Lambda$ is trivial, this gives the nilspace isomorphism $\mG/\Lambda \cong \mathcal{U}$.   

 It remains to show that the compact nilspace structure on $\mathcal{U}$ is compatible with the $\Gamma$-system structure of $\XX$, that is say that the Host--Kra measure $\mu^{[n]}$ on $\XX$ agree with Haar probability measure on $\HK^n(\mathcal{U})$ under the above identifications for each $n \geq 0$.  From \cite[Lemma A.23(iv)]{btz} we see that each Host--Kra measure $\mu^{[n]}$ is preserved by the action of $\HK^n(\mG)$, and hence by $\HK^n(\mathcal{U})$ (which acts by translation on $\mathcal{U}^{\{0,1\}^n}$), so it suffices by uniqueness of Haar measure to show that
 $\mu^{[n]}$ is supported on  $\HK^n(\mathcal{U})$.  A routine induction using the explicit description of the $\Gamma$-action of a Weyl system shows that the space $\HK^n(\mathcal{U})$ is preserved by the action of $T^\gamma$ on any $n-1$-face of $\{0,1\}^n$ for any $\gamma \in \Gamma$.  Since this space also contains diagonal elements $(x)_{\omega \in \{0,1\}^n}$ for any $x \in {\mathcal U}$, we conclude that
 $$ (T^{\sum_{i=1}^n \omega_i \gamma_i} x)_{\omega \in \{0,1\}^n} \in \HK^n(\mathcal{U})$$
 for any $x \in {\mathcal U}$ and $\gamma_1,\dots,\gamma_n \in \Gamma$.  In particular, if $f_\omega \colon \mathcal{U} \to \R$ are continuous functions supported on open sets $V_\omega$ with $\prod_{\omega \in \{0,1\}^n} V_\omega$ avoiding $\HK^n(\mathcal{U})$, we have
$$ \int_{\mathcal U} \prod_{\omega \in \{0,1\}^n} f_\omega \circ T^{\sum_{i=1}^n \omega_i \gamma_i} = 0$$
for all $\gamma_1,\dots,\gamma_n \in \Gamma$.  After repeatedly averaging over F{\o}lner sequences and passing to the ergodic limit using 
\cite[(A.6)]{btz}, we conclude that
$$ \int_{{\mathcal U}^{\{0,1\}^n}} \bigotimes_{\omega \in \{0,1\}^n} f_\omega \ d\mu^{[n]} = 0$$
which implies that $\mu^{[n]}$ is supported on $\HK^n(\mathcal{U})$ as claimed.
 This proves $(i)$.

 Now we prove $(ii)$.  To show that $K \backslash G / \Gamma$ has the structure of a $k$-step compact nilspace, it suffices by Lemma \ref{double-nilspace}
 (and part $(i)$) to verify the groupable axiom \eqref{kgl}.  For $i \leq 1$ the claim is trivial since $G_i=G$, and when $i > k$ the claim is also trivial since $G_i$ is trivial.  Now we verify the $i=k$ case.  It suffices to show that if $(y,u) \in G/\Lambda$ (with $y \in G'/\Lambda'$ and $u \in U_k$) and $(s,q) \in K$ is such
that $(s,q)(y,u) = (y,v)$ for some $v \in U_k$, then there exists $(0,q') \in K \cap G_k$ such that $(0,q')(y,u)=(y,v)$.  But from \eqref{sqy} one has $sy=y$ and $u+q(y) = v$, which by \eqref{K-def} implies that $\phi(q(y)) = \phi(q(y)) + \partial_s F(y) = 0$.  In particular, $(0, q(y)) \in K \cap G_k$ and $(0, q(y)) (y,u) = (y,v)$, giving the claim.

Now suppose by downward induction that $1 < i < k$ and the claim has already been established for $i+1$.  It suffices to show that if $x \in G/\Lambda$ and $t \in K$ is such that $tx \in G_i x$, then there exists $t' \in K \cap G_i$ such that $tx = t'x$.  By quotienting $G,K,\Gamma$ by the normal group $G_{i+1}$ and then applying the previous argument (with $k$ replaced by $i$), we can find $t_1 \in (K \cap G_i) G_{i+1}$ such that $G_{i+1} tx = G_{i+1} t_1 x$, thus $t_1^{-1} t x \in G_{i+1} x$.  By induction hypothesis we can find $t_2 \in K \cap G_{i+1}$ such that $t_1^{-1} t x = t_2 x$, thus $tx = t_1 t_2 x$.  Since $t_1 t_2 \in K \cap G_i$, this closes the induction. 

 To complete the proof of $(ii)$ we need to show that the $k$-step compact nilspace structure is compatible with the $\Gamma$-system structure of $\YY$, that is to say that the Host--Kra measure $\mu^{[n]}_\YY$ on $\YY$ agree with the Haar measure on $C^n(K \backslash G / \Lambda)$ for every $n \geq 0$.  By \cite[Lemma A.22]{btz} (or \cite[Lemma 4.5]{host2005nonconventional}), $\mu^{[n]}_\YY$ is the pushforward of the Host--Kra measure $\mu^{[n]}_\XX$ on $\XX$, which by the proof of $(i)$ is supported in $C^n(G/\Lambda)$.  This implies that $\mu^{[n]}_\YY$ is supported in $C^n(K \backslash G / \Lambda)$.  This measure is also invariant with respect to the action of the central group $G_k$ on each of the vertices, and hence also with respect to the $k^{\mathrm{th}}$ structure group of $K \backslash G / \Lambda$ (which is a quotient of $G_k$).  Applying an induction hypothesis, the pushforward of $\mu^{[n]}_\YY$ to $C^n(K' \backslash G' / \Lambda')$ is Haar measure, and hence $\mu^{[n]}_\YY$ is itself Haar measure, as required.

 Now we prove $(iii)$ by induction on $k$.  The case $k=1$ is again straightforward since $\mG=U_1$ is $m$-divisible by Theorem \ref{bounded-tor}. Now suppose that $k \geq 2$ and the claim has already been proven for $k-1$.  
 
  From the proof of Proposition \ref{inductive}, we can write $\YY = \ZZ^{k-1}(\YY) \rtimes_\sigma W_k$ where $\ZZ^{k-1}(\YY) = K' \backslash \mG' / \Lambda'$ is a double coset system, and one has a surjective homomorphism $\varphi \colon U_k \to W_k$ and a function $F \in \M(\ZZ^{k-1}(\XX), W_k)$ such that one has the identity \eqref{compat}, with the factor map $\pi \colon \XX \to \YY$ given by \eqref{pi-def}, and $K$ is given by \eqref{K-def}. 

Let $s\in \mG'$. By induction hypothesis we see that
$$  s^{m^{\binom{k}{2}}} \in K'.$$ As in Proposition \ref{inductive}, the map $\partial_{s^{m^{\binom{k}{2}}}} F$ is a polynomial of degree $k-1$.
The (commutative) ring $\left\langle \partial_{s^{m^{\binom{k}{2}}}} \right\rangle$ of operators on $\M( \ZZ^{k-1}(\YY), W_k )$ generated by $\partial_{s^{m^{\binom{k}{2}}}}$ is $m$-torsion, since $W_k$ is $m$-torsion.  Hence by Lemma \ref{alg-lemma} (and the crude inequality $p^r \geq 2^r \geq r+1$) we have
$$ \partial_{s^{m^{\binom{k}{2}+r}}} \in \left \langle \partial_{s^{m^{\binom{k}{2}}}} \right\rangle \partial_{s^{m^{\binom{k}{2}}}}^{r+1}$$
for any $r \geq 1$ (indeed we could obtain a much better exponent than $r+1$ here if desired).  In particular, by Proposition \ref{degrees} 
$\partial_{s^{m^{\binom{k}{2}+r}}}F$ is a polynomial of degree $k-r-1$. Setting $r=k$ and using the identity $$\binom{k+1}{2} = \binom{k}{2}+k,$$ we deduce that $\partial_{s^{m^{\binom{k+1}{2}}}}F=0$.

Now let $(s,q)\in \mG$. Then $(s,q)^{m^{\binom{k+1}{2}}} = (s^{m^{\binom{k+1}{2}}}, \sum_{j=0}^{m^{\binom{k+1}{2}}-1}V^{s^j}_* q)$, where $V^{s^j}_* = 1 + \partial_{s^j}$ denotes the operation of shifting by $s^j$. By the induction hypothesis, $s^{m^{\binom{k}{2}}}\in K'$ and therefore so is $s^{m^{\binom{k+1}{2}}}$. By \eqref{K-def} and the computation above, it suffices to show that $\sum_{j=0}^{m^{\binom{k+1}{2}}-1} V^{s^j}_* \varphi(q)$ vanishes. But from Lemma \ref{alg-lemma} again we have
$$ \sum_{j=0}^{m^{\binom{k+1}{2}}-1} V^{s^j}_* \in \langle \partial_s \rangle \partial_s^{\binom{k+1}{2}}$$
(viewing $\partial_s$ as an operator on the $m$-torsion group $\M(\ZZ^{k-1}(\XX), W_k)$), and hence
$\sum_{j=0}^{m^{\binom{k+1}{2}}-1} V^{s^j}_* \varphi(q)$ has degree at most $k-1 - \binom{k+1}{2} < 0$, giving the claim.
\end{proof}

As a consequence we can find good nilspace fibrations for the double coset system from Theorem \ref{inductive}, in the spirit of \cite[Theorem 1.7]{CGSS}.  (See Definition \ref{nilspace-def}(iii) for the definition of a nilspace fibration.)

\begin{corollary}[Existence of good fibration]
\label{fibration}
Let $\Gamma$ be a countable abelian $m$-torsion group for some $m \geq 1$. Let $\XX$ be an ergodic $\Gamma$-system of some order $k \geq 1$. Then there exists a nilspace fibration  $\psi \colon \mathcal{W}\rightarrow \XX$, where 
$\mathcal{W}$ is a filtered abelian group of the form
\begin{equation}\label{Wdef}
\mathcal{W} = \mathcal{D}^1(W_1) \times \dots \times \mathcal{D}^k(W_k)
\end{equation}
for some $m^{\binom{k}{2}}$-torsion abelian groups $W_1,\dots,W_k$.
\end{corollary}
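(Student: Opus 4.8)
The plan is to build the fibration from the double-coset model of $\XX$ furnished by our structure theory, and then to trim the fibers down to bounded torsion using the torsion bound of Proposition \ref{more-inductive}(iii). First I would set up the geometric model. By Theorem \ref{bounded-tor} the structure groups $U_1,\dots,U_k$ of $\XX$ are $m$-torsion. Choosing a surjection $\Z^\omega\to\Gamma$, regard $\XX$ as an ergodic order-$k$ $\Z^\omega$-system; this changes neither the structure groups nor the Host--Kra nilspace structure. By Corollary \ref{Weyl extensions}(i), $\XX$ has an extension $\YY$ which is a totally disconnected, $\infty$-divisible (in particular $k$-divisible), ergodic Weyl $\Z^\omega$-system of order $k$; write $\pi\colon\YY\to\XX$ for the factor map. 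Applying Proposition \ref{inductive}(i) and Proposition \ref{more-inductive}(i) to $\YY$, we may write $\YY=\mG/\Lambda$ with $\mG$ a Polish group carrying a degree $k$ filtration, and identify $\mG/\Lambda$ as a compact $k$-step nilspace (compatibly with the dynamics) with the filtered abelian group $\mathcal{U}={\mathcal D}^1(U_1^\YY)\times\dots\times{\mathcal D}^k(U_k^\YY)$, where the $U_i^\YY$ are the structure groups of $\YY$; moreover $\mathcal{U}$ embeds as a filtered abelian subgroup of $\mG$ with $\mG=\mathcal{U}\cdot\Lambda$. Since $\XX$ is a factor of the $k$-divisible Weyl system $\YY$, Proposition \ref{inductive}(ii) gives a closed subgroup $K\le\mG$ (normalized by the $\Z^\omega$-action) and an isomorphism $\XX\cong K\backslash\mG/\Lambda$; by Proposition \ref{more-inductive}(ii) the induced $k$-step nilspace structure on $K\backslash\mG/\Lambda$ coincides with the Host--Kra nilspace structure of $\XX$, and the quotient map $q\colon \mathcal{U}\cong\mG/\Lambda\to K\backslash\mG/\Lambda\cong\XX$, $g\Lambda\mapsto Kg\Lambda$, is a nilspace fibration --- it is the quotient of the compact nilspace $\mG/\Lambda$ by the closed group $K$ of translations, and the quotient is Polish by the proof of Proposition \ref{inductive}(ii).

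Next I would trim the fibers. Because the structure groups $U_i$ of $\XX$ are $m$-torsion, Proposition \ref{more-inductive}(iii) (taken with $W_i=U_i$) yields $m^{\binom{k+1}{2}}\mG\le K$; tracking the exponents through that proof --- in which the bound at the $(k-1)$-st filtration level is already $s^{m^{\binom{k}{2}}}\in K'$, and the extra $k=\binom{k+1}{2}-\binom{k}{2}$ is spent only on the terms $\partial_s F$ with $s$ ranging over all of $\mG'$ rather than over the Weyl-coordinate subgroup --- one obtains the sharper inclusion $m^{\binom{k}{2}}\mathcal{U}\le K$. Now set $\mathcal{W}:=\mathcal{U}/m^{\binom{k}{2}}\mathcal{U}$. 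Since $\mathcal{U}$ is the direct product of the filtered abelian groups ${\mathcal D}^i(U_i^\YY)$, the subgroup $m^{\binom{k}{2}}\mathcal{U}$ respects this decomposition and $\mathcal{W}={\mathcal D}^1(W_1)\times\dots\times{\mathcal D}^k(W_k)$ with $W_i:=U_i^\YY/m^{\binom{k}{2}}U_i^\YY$, which is $m^{\binom{k}{2}}$-torsion. As $m^{\binom{k}{2}}\mathcal{U}\le K$, the fibration $q$ factors through the quotient homomorphism $r\colon\mathcal{U}\to\mathcal{W}$, say $q=\psi\circ r$ with $\psi\colon\mathcal{W}\to\XX$; the map $r$ is itself a fibration (a quotient of a filtered abelian group by a filtered subgroup), so by the general nilspace fact that if $\psi\circ r$ and $r$ are fibrations with $r$ surjective then $\psi$ is a fibration, $\psi\colon\mathcal{W}\to\XX$ is the desired nilspace fibration.

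The main obstacle is the torsion bookkeeping of the second paragraph: verifying that the inclusion of Proposition \ref{more-inductive}(iii) sharpens to $m^{\binom{k}{2}}\mathcal{U}\le K$ once one powers only elements of the Weyl-coordinate subgroup $\mathcal{U}$, and checking that the quotient $\mathcal{U}/m^{\binom{k}{2}}\mathcal{U}$ has exactly the claimed product-of-${\mathcal D}^i$ description with $m^{\binom{k}{2}}$-torsion structure groups. A secondary, more routine point is confirming that the descended map $\psi$ is a genuine nilspace fibration rather than merely a morphism, which is immediate from the composition and cancellation properties of fibrations once $q$ and $r$ are known to be fibrations; everything else is assembled from the results already established above.
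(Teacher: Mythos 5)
Your route is the same as the paper's: realize the Weyl $\Z^\omega$-extension as $\mG/\Lambda\cong\mathcal{U}$ via Proposition \ref{inductive}(i) and Proposition \ref{more-inductive}(i), realize $\XX$ as $K\backslash\mG/\Lambda$ via Proposition \ref{inductive}(ii) and \ref{more-inductive}(ii), use Proposition \ref{more-inductive}(iii) to see that a fixed power of $m$ times $\mathcal{U}$ lies in $K$, quotient $\mathcal{U}$ by that subgroup to get $\mathcal{W}$, and descend the fibration $\mathcal{U}\to K\backslash\mG/\Lambda$ through the quotient homomorphism. The factorization/cancellation step at the end is also how the paper concludes, so structurally the argument is fine.

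The one step I would not accept as written is the ``sharper inclusion'' $m^{\binom{k}{2}}\mathcal{U}\le K$, and in particular the reason you give for it. You claim the extra factor $m^k$ in Proposition \ref{more-inductive}(iii) is ``spent only on the terms $\partial_s F$ with $s$ ranging over all of $\mG'$ rather than over the Weyl-coordinate subgroup.'' That is not so: an element of $\mathcal{U}$ of the form $(s,0)$ with $s\in\mathcal{U}'\le\mG'$ lies in $K$ only if, by \eqref{K-def}, $\partial_s F$ vanishes \emph{identically} (not merely has degree $\le k-1$), so the cost of annihilating $\partial_s F$ is incurred for Weyl-coordinate elements just as for general elements of $\mG$; running the same recursion as in the proof of Proposition \ref{more-inductive}(iii) (with the crude bound $p^r\ge r+1$) for, say, a $U_1$-component again yields the exponent $\binom{k+1}{2}$, not $\binom{k}{2}$. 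Obtaining a genuinely smaller exponent would require exploiting that $\partial_u$ lowers degree by $i$ for $u$ in the $i$-th coordinate, or the stronger form of Lemma \ref{alg-lemma}, neither of which your sketch does. Since the paper explicitly does not optimize this constant (and its own proof in fact takes $W_i=U_i/m^{\binom{k+1}{2}}U_i$), the robust fix is simply to invoke Proposition \ref{more-inductive}(iii) as stated to get $m^{\binom{k+1}{2}}\mathcal{U}\le K$ and take the $W_i$ to be $m^{\binom{k+1}{2}}$-torsion; everything else in your argument then goes through unchanged, and the exact power of $m$ is immaterial for the application to Theorem \ref{gowers-main}.
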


\begin{proof}  By Theorem \ref{inductive} we may write $\XX = K\backslash \mG/\Lambda$ with  $\YY$, $\mG$, $\Lambda$ and $K$ as in that theorem. Thus, we can view $\XX$ as a nilspace (see Definition \ref{def-filtration}(iii)).
As in the previous proposition, $\mG/\Lambda$ is isomorphic to the nilspace $\mathcal{U}$. Furthermore, since $\mathcal{U}$ is a subgroup of $\mG$, Proposition \ref{more-inductive}(iii) implies that $m^{\binom{k}{2}}\mathcal{U}\subseteq K$. Let ${\mathcal W}$ denote the filtered abelian group \eqref{Wdef} with $W_i \coloneqq U_j / m^{\binom{k+1}{2}} U_j$. The (continuous) factor map $\pi:\mG/\Lambda\rightarrow K\backslash \mG/\Lambda$ maps $Z_i(\mG/\Lambda)$ to $Z_i(K\backslash \mG/\Lambda)$ and therefore is a fibration (see \cite[Proposition 4.6]{host2005nonconventional} and \cite[Lemma 3.3.8]{candela}). The quotient map $\mathcal{U}\rightarrow \mathcal{W}$ is also a fibration by definition. We deduce that $\psi: \mathcal{W}\rightarrow K\backslash \mG/\Lambda$ defines a fibration, as required. 
\end{proof}

\section{Application to the inverse theorems for the Gowers uniformity norms}\label{sec-gowers}

In this section we prove Theorem \ref{gowers-main}.  Before we turn to the proof, we record two preliminary tools.

Firstly, we take advantage of Theorem \ref{pSylow} in order to reduce matters to the case where $m$ is a power of a prime.  Let $m\geq 2$ be any integer, and let $\Gamma$ be an $m$-torsion countable abelian group. Let $\XX$ be an ergodic $\Gamma$-system of order $k$. By Theorem \ref{pSylow} we can write $\XX = \biggenprod_{p|m} \XX_p$, where for each $p|m$, $\XX_p$ is an ergodic $\Gamma_p$-system of type $k$. Applying the results in the previous section for each $\XX_p$ separately, we can write $\XX_p = K_p\backslash \mG_p/\Lambda_p$ as a double coset system.

The key to reducing to the prime power case is

\begin{lemma}[Schur--Zassenhaus for nilspaces]\label{SchurZassenhaus}
Let $p,q$ be distinct primes and let $l,k,m \geq 1$. Let $X$ be a $k$-step nilspace whose structure groups are $p^m$-torsion. Then any nilspace morphism $g \colon \mathcal{D}^1(\Z/{q^l}\Z)\rightarrow \XX$ is a constant.
\end{lemma}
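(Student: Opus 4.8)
The plan is to exploit the rigidity of nilspace morphisms from cyclic groups together with a torsion obstruction. First I would recall the structure of a morphism $g\colon \mathcal{D}^1(\Z/q^l\Z)\to\XX$: since $\mathcal{D}^1(\Z/q^l\Z)$ is the $1$-step nilspace on the cyclic group $\Z/q^l\Z$, and $\XX$ is a $k$-step nilspace, $g$ is (up to translating by $g(0)$) a ``polynomial map'' of degree $\leq k$ in the nilspace sense; composing with the projections $\XX\to Z_i(\XX)/Z_{i-1}(\XX)$ to the structure groups, one obtains polynomial maps from $\Z/q^l\Z$ into each structure group $W_i$ of degree $\leq i\leq k$. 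The strategy is to induct on $k$ (equivalently on the nilpotency degree), peeling off one structure group at a time.

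The key step is the base of the induction / the top structure group: a polynomial map $\phi\colon \Z/q^l\Z\to W_k$ of degree $\leq k$ with $W_k$ being $p^m$-torsion must be constant. This is where the Schur--Zassenhaus flavour enters. The map $\phi$ is determined by a system of finite differences, and the group generated by its values lies in $W_k$, hence is $p^m$-torsion; on the other hand $\phi$ factors through the cyclic group $\Z/q^l\Z$ of coprime order $q^l$. Concretely, I would argue that the image of $\phi$ is a subset of a single coset of a $p^m$-torsion group, while the ``polynomial sequence'' structure forces any non-constant polynomial on $\Z/q^l\Z$ valued in an abelian group $A$ to have image generating a subgroup of $A$ whose exponent is divisible by a power of $q$ (one can see this by reducing mod $p$: a degree-$\leq k$ polynomial $\Z/q^l\Z\to A$ with $A$ of exponent coprime to $q$ has all its iterated differences $\partial_{h_1}\cdots\partial_{h_{k+1}}\phi=0$, and since multiplication by $q^l$ is invertible on $A$ one can solve the finite-difference recursion to show $\phi$ is constant — this is an elementary analogue of Lemma \ref{alg-lemma} / the calculus of finite differences over $\Z/q^l\Z$). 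Having shown the projection of $g$ to $W_k$ is constant, $g$ descends to a morphism into $Z_{k-1}(\XX)$, whose structure groups $W_1,\dots,W_{k-1}$ are still $p^m$-torsion, and the inductive hypothesis finishes the argument.

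The main obstacle I anticipate is making precise the passage ``nilspace morphism from $\mathcal{D}^1(\Z/q^l\Z)$ $\Rightarrow$ compatible system of polynomial maps into the structure groups'' and ensuring the induction is clean: one must check that after killing the top coordinate the residual map really is a morphism into the $(k-1)$-step nilspace $Z_{k-1}(\XX)$ with the inherited (still $p^m$-torsion) structure groups, and that ``constant on each structure group'' assembles to ``constant''. I would handle this using the standard description of morphisms between compact nilspaces (and the fact that a morphism which is constant on the top structure group and on the base quotient is constant, by the exact-sequence description of nilspace cubes). The finite-difference lemma over $\Z/q^l\Z$ is elementary but should be stated and proved carefully, since it is the arithmetic heart of the coprimality argument; everything else is bookkeeping with the nilspace filtration.
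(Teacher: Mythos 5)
Your overall skeleton---induct on the step $k$ and use coprimality of $q^{l}$ and $p^{m}$ to kill polynomials from $\Z/q^{l}\Z$ into the structure groups---is the same as the paper's, and your arithmetic core is correct: a degree-$\leq k$ polynomial $\phi \colon \Z/q^{l}\Z \to A$ with $A$ of exponent $p^{m}$ is constant. (The paper phrases this via the symmetric multilinear top derivative $\nabla^{k}\phi \colon (\Z/q^{l}\Z)^{k}\to A$, which is trivial because each slot is a homomorphism from a $q$-group to a $p$-group, followed by downward induction on the degree; your ``invert $q^{l}$ in the finite-difference recursion'' is the same computation, in the spirit of Lemma \ref{alg-lemma}.)

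However, as written there is a genuine gap where you pass from the morphism $g$ to ``polynomial maps into each structure group'': there is no canonical projection $\XX \to W_{k}$ (nor to $Z_{i}/Z_{i-1}$). A $k$-step nilspace $X$ is only an abelian bundle over $\mathcal{F}_{k-1}(X)$ with structure group $W_{k}$; after choosing a measurable splitting $X \cong \mathcal{F}_{k-1}(X)\times W_{k}$, the cube condition on the fiber coordinate is twisted by the potential $F$ evaluated on the (varying) base configuration, so the $W_{k}$-coordinate of $g$ is \emph{not} a degree-$\leq k$ polynomial a priori, and your ``key step'' cannot be applied to it directly; in particular your plan of first showing the $W_{k}$-component constant and only then descending is circular at this point. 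The fix is to run the two stages in the opposite order, which is exactly what the paper does: compose $g$ with the canonical factor map $X \to \mathcal{F}_{k-1}(X)$ (always a nilspace morphism into a $(k-1)$-step nilspace whose structure groups $W_{1},\dots,W_{k-1}$ are still $p^{m}$-torsion) and apply the inductive hypothesis to conclude that the base component is a constant $x_{0}$; only then does the fiber coordinate $u$ of $g$ become a genuine degree-$\leq k$ polynomial into $W_{k}$ (it differs from one by the constant $F(\bm{x}_{0})$), after which your coprimality argument finishes. So the proposal is salvageable, but the induction must be base-first, and the unsupported claim that ``projections to the structure groups'' of a morphism are polynomials must be replaced by this relative statement.
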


\begin{proof}
We prove the claim by induction on the step $k$ of $\XX$. If $\XX$ is of order $1$ then it is isomorphic to a $p^m$-torsion group equipped with the abelian filtration. In that case $g$ is an affine map from $\Z/{q^l}\Z$ to a $p^m$-torsion group and hence constant. Now assume inductively that $k>1$ and the claim holds for all smaller values of $k$. Measurably, we can write $X = \mathcal{F}_{k-1}(X)\times U$ for some $p^m$-torsion group $U$ (see Definition \ref{nilspace-def}(iii) for the definition of $\mathcal{F}_{k-1}(X)$). As in \cite[Definition 3.3.25]{candela}, the cubes on $X$ are determined by a potential $F \colon \mathcal{F}_{k-1}(X)^{[k]}\rightarrow U$. More specifically, $c\in C^k(X)$ is a cube if $c_\omega = (x_\omega,t_\omega)$ and $\sum_{\omega\in \{0,1\}^{k}}(-1)^{|\omega|}t_\omega = F(\bm{x})$ where $\bm{x}=(x_\omega)$, while $c\in C^i(X)$ is a cube for $i\not = k$ if $x\in C^i(\mathcal{F}_{k-1}(X))$ and any $k$-dimensional face is in $C^k(\mathcal{F}_{k-1}(X))$.

Write $g(x) = (g_{k-1}(x),u(x))$, by induction, the nilspace map $g_{k-1}(x) = x_0$ is a constant. We deduce that $u-F(\bm{x}_0):\mathcal{D}^1(\Z/q\Z)\rightarrow \mathcal{D}^k(U)$ is a nilspace map, where $\bm{x}_0 = (x_0)_{\omega\in \{0,1\}^k}$. Therefore, $u:\Z/{q^l}\Z\rightarrow U$ is a polynomial of degree at most $k$. The derivative $\nabla^k u \colon \Z/q^l \Z \times \dots \times \Z/q^l\Z \to U$ is multilinear and hence trivial since $p,q$ are coprime; thus $u$ is in fact of degree $k-1$. Repeating this argument by induction we see that $u$ must be a constant, closing the induction.
\end{proof}
The next tool records some polynomiality properties of a certain ``residue map''.  Given an element $x \in \Z/d_1 \Z \times \dots \times \Z/d_n\Z$ of a product of cyclic groups, let $|x| \in \Z^n$ denote the representative of $x$ in the standard fundamental domain $\{0,\dots,d_1-1\} \times \dots \times \{0,\dots,d_n-1\}$.

\begin{lemma}\label{nilspacedegree}  Let $p$ be a prime, let $\Z/d_1 \Z \times \dots \times \Z/d_n\Z$ be a product of cyclic groups which is $p^r$-torsion for some $r \geq 1$, and let $W$ be a $p^s$-torsion abelian group for some $s \geq 1$.  Let $P \in \Poly_{\leq k}(\Z^n,W)$.  Then the map $x \mapsto P(|x|)$ is a polynomial of degree at most $k s(p^r-1)$.
\end{lemma}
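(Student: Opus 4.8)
\textbf{Proof plan for Lemma \ref{nilspacedegree}.}

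The plan is to analyze how the ``residue map'' $x \mapsto |x|$ interacts with the derivative operators $\partial_\gamma$, bearing in mind that $|x|$ is only piecewise linear: it fails to be additive precisely when a coordinate wraps around a multiple of $d_i$. First I would reduce to the scalar case $n=1$, and to $W = \T$ (using the Fourier equivalence \eqref{fourier-equiv}, which says $\Poly_{\leq k}(\Z^n,W) \equiv \Hom(\hat W, \Poly_{\leq k}(\Z^n))$, together with the fact that $\hat W$ is $p^s$-torsion). For the scalar case it suffices to understand the single function $e_i \colon \Z/p^r\Z \to \Z$ sending $x$ to its representative in $\{0,\dots,p^r-1\}$, and to show that $x \mapsto P(|x_1|,\dots,|x_n|)$ has controlled degree given that the $|x_i|$ individually do and that $P$ is multilinear-ish of degree $k$; the general bound $ks(p^r-1)$ should then come from the product structure, with a factor $s$ absorbing the passage from $W$ to $\T$ via its $p^s$ torsion.

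The key computation is the following. For a single cyclic coordinate, write $|x|$ for the representative of $x \in \Z/d\Z$ in $\{0,\dots,d-1\}$, where $d \mid p^r$. For $h \in \Z/d\Z$ one has $\partial_h |x| := |x+h| - |x| = |h| - d \cdot \mathbf{1}[\,|x| + |h| \geq d\,]$, i.e. the derivative is constant minus an indicator of ``carrying''. The point is to show that the carry indicator $x \mapsto \mathbf{1}[\,|x|+|h| \geq d\,]$, as a $\T$-valued (in fact $\frac1d\Z/\Z$-valued, hence $p^r$-torsion-valued) function of $x$, is itself a polynomial of bounded degree. This follows by iterating: taking a further derivative $\partial_{h'}$ of the carry indicator produces a difference of two carry indicators with shifted arguments, and after $d$-many derivatives (or more carefully $p^r$-many, since $d \mid p^r$) the function is killed — this is exactly the kind of bound one proves by hand for the fractional-part/``floor'' cocycle, and $\mathbf 1[|x|+|h|\ge d]$ is essentially $\lfloor (|x|+|h|)/d\rfloor$. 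Thus $|x|$ viewed as a $\tfrac{1}{p^r}\Z/\Z$-valued function (after dividing by $p^r$) is a polynomial of degree at most $p^r - 1$, since its first derivatives are $p^r$-torsion-valued polynomials of degree at most $p^r - 2$, and one inducts. Substituting $x \mapsto |x|$ into a degree-$k$ polynomial $P$ then multiplies degrees multiplicatively: using the algebraic lemma that a product (more precisely, composition through the multilinear expansion of $P$ as a degree-$k$ polynomial in $n$ variables) of $k$ factors each a polynomial of degree $\le p^r-1$ in the $|x_i|$ has total degree $\le k(p^r-1)$, we obtain degree $\le k(p^r-1)$ for $\T$-valued $P$; the extra factor of $s$ enters because to represent a $p^s$-torsion-valued $P$ we must clear denominators of size $p^s$, and the ``floor'' of a sum of $s$ pieces costs a factor $s$ in the carry bound (or equivalently $\hat W$ torsion $p^s$ forces working modulo $p^{r+s}$, and $p^{r+s}-1 \le s(p^r-1) + \dots$; one checks $\le s(p^r - 1)$ suffices after the reduction).

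The main obstacle I expect is bookkeeping the degree arithmetic precisely: making rigorous the claim that substituting bounded-degree polynomials into the variables of a degree-$k$ polynomial multiplies rather than adds degrees, and pinning down the exact constant $ks(p^r-1)$ rather than something slightly worse. The cleanest route is probably to write $P(|x_1|,\dots,|x_n|)$ through the ``Taylor/binomial'' expansion of $P \in \Poly_{\leq k}(\Z^n,W)$ (each such $P$ is a $\Z$-linear combination of products $\prod \binom{y_j}{a_j}$ with $\sum a_j \le k$), observe that $\binom{|x|}{a}$ is, after clearing the denominator $a!$ (which divides some power of $p$ bounded in terms of $r$ — here one must be slightly careful since $a \le k$ but $\nu_p(a!)$ is controlled), a polynomial in $x$ of degree $a(p^r-1)$ by the scalar result applied with the torsion bumped appropriately, and then invoke Lemma \ref{Tensor}-style multiplicativity (the tensor-product-of-polynomials lemma alluded to in Section \ref{divis-weyl-sec}) to combine the factors. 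The torsion in $W$ being $p^s$ is what lets us absorb all the $a!$ and $d$-denominators into a single factor $s$, giving the stated bound.
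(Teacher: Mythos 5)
Your overall skeleton agrees with the paper's proof: Taylor-expand $P$ into monomials $c\binom{|x_1|}{a_1}\cdots\binom{|x_n|}{a_n}$ with $\sum a_i \le k$, reduce (since $W$ is $p^s$-torsion) to showing each single-variable factor $x_i \mapsto \binom{|x_i|}{a_i} \bmod p^s$ is a polynomial of degree at most $a_i s(p^r-1)$, and combine the factors via Lemma \ref{Tensor}. The gap is that you never actually establish this single-variable bound, and both mechanisms you sketch for it are flawed. First, the claim that the carry indicator (equivalently, the derivative of $|x|$) is killed after $d$-many, or $p^r$-many, derivatives is only correct for values modulo $p$; modulo $p^s$ with $s>1$ an indicator function on $\Z/d\Z$ has degree on the order of $s(d-1)$, not $d-1$ (for instance $\mathbf{1}[x=1]$ on $\Z/2\Z$, viewed as $\Z/4\Z$-valued, has degree $2$), so the factor of $s$ cannot be postponed to a final ``clearing of denominators''. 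Second, the arithmetic you invoke to absorb that factor, namely ``$p^{r+s}-1 \le s(p^r-1)+\dots$'', is false already for $p=2$, $r=s=1$, and the alternative route through writing $\binom{|x|}{a}$ with denominator $a!$ runs into exactly the $\nu_p(a!)$ problem you flag: $a!$ need not be invertible mod $p^s$, and you give no argument that clearing it does not degrade the bound past $a s(p^r-1)$.

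The paper avoids all of this with a short exact recursion worth comparing against. For $1 \le a < d_i$, the Pascal identity corrected for wraparound gives $\partial_1 \binom{|x_i|}{a} = \binom{|x_i|}{a-1} - \binom{d_i}{a}\binom{|x_i|}{d_i-1}$, and the key point is that the wraparound coefficient $\binom{d_i}{a}$ is divisible by $p$ (as $d_i>1$ is a power of $p$). An induction then shows that $\partial_1^j\binom{|x_i|}{a}$ is a combination of terms $p^b\binom{|x_i|}{c}$ with $c - b(d_i-1) \le a-j$ and $0 \le c < d_i$; each ``reset'' of the binomial index up to $d_i-1$ costs a factor of $p$, so after $s$ such resets everything vanishes mod $p^s$, giving $\partial_1^{a+s(d_i-1)}\binom{|x_i|}{a} = 0 \bmod p^s$, and since $a + s(d_i-1) \le 1 + a s(p^r-1)$ this yields the stated degree bound $a s(p^r-1)$. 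This recursion is the quantitative heart of the lemma, and as written your proposal does not supply a correct substitute for it.
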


\begin{proof}  Of course we may assume that $d_1,\dots,d_n > 1$. By Taylor expansion, we can express $P(|x|)$ as the sum of finitely many terms of the form
$$ c \binom{|x_1|}{a_1} \dots \binom{|x_n|}{a_n}$$
with $a_1,\dots,a_n > 0$ and $c \in W$ are such that $a_1+\dots+a_n \leq k$ and $a_i < d_i$, where $x = (x_1,\dots,x_n)$.  As $W$ is $p^s$-torsion, it thus suffices to show that each monomial
$$x \mapsto \binom{|x_1|}{a_1} \dots \binom{|x_n|}{a_n} \mod p^s$$
is a polynomial of degree at most $ks(p^r-1)$ from $\Z/d_1 \Z \times \dots \times \Z/d_n\Z$ to $\Z/p^s\Z$. By Lemma \ref{Tensor}, it suffices to show that each term
$$ x_i \mapsto \binom{|x_i|}{a_i} \mod p^s$$
is a polynomial of degree at most $a_i s(p^r-1)$ from $\Z/d_i\Z$ to $\Z/p^s\Z$, or equivalently (by Taylor expansion) that
$$ \partial_1^{a_i s(p^r-1) + 1} \binom{|x_i|}{a_i} = 0 \mod p^s$$
for all $x_i \in \Z/d_i\Z$ (where the derivative is understood to be in the $x_i$ variable).  We may take $a_i \geq 1$, since the claim is trivial for $a_i=0$.  But from the Pascal triangle identity we have
\begin{align*}
\partial_1 \binom{|x_i|}{a} &= \left(\binom{|x_i|+1}{a} - \binom{|x_i|}{a}\right) - \left(\binom{|x_i|+1}{a} - \binom{|x_i+1|}{a}\right)\\ 
&= \binom{|x_i|}{a-1} - \binom{d_i}{a} \binom{|x_i|}{d_i-1}
\end{align*}
for any $1 \leq a < d_i$, while of course
$$ \partial_1 \binom{|x_i|}{0} = 0.$$
Note also that $\binom{d_i}{a}$ is divisible by $p$ whenever $1 \leq a < d_i$.  A routine induction then shows that for any $j \geq 0$, $\partial_1^j \binom{|x_i|}{a}$ can be expressed as a combination of terms $p^b \binom{|x_i|}{c}$ with $c - b(d_i-1) \leq a-j$ and $0 \leq c < d_i$.  In particular, for $j = a + s(d_i-1)$ we must have $b \geq s$.  In particular
$$ \partial_1^{a_i + s(d_i-1)} \binom{|x_i|}{a_i} = 0 \mod p^s.$$
Since 
$$a_i + s(d_i-1) \leq 1 + a_i s(d_i-1) \leq 1 + a_i s(p^r-1),$$
the claim follows.
\end{proof}

\subsection{Proof of the inverse theorem for the Gowers uniformity norms over finite fields}

We can now begin the proof of Theorem \ref{gowers-main}.  We follow the nonstandard analysis approach from \cite{tz-lowchar}, \cite{jt21-1}.  We refer the reader to \cite[Appendix A]{tz-lowchar} or \cite[Appendix A]{jt21-1} for the nonstandard formalism used here.

Assume by contradiction that the claim fails for some $m$, $k\geq 1$ and $\delta>0$. Let $C=C(k,m)$ be a sufficiently large constant to be chosen later. Then for every $n \geq 1$, we can find a finite $m$-torsion group $G^{(n)}$ and a $1$-bounded function $f^{(n)} \colon G^{(n)}\rightarrow \C$ with $\|f^{(n)}\|_{U^{k+1}(G^{(n)})}>\delta$ so that there does not exists any polynomial $P\in \Poly_{C}(G^{(n)})$ such that \begin{equation}\label{contradiction}|\E_{x\in G^{(n)}} f^{(n)}(x) e(-P(x))|\geq \frac{1}{n}.
\end{equation}
Fix a non-principal ultrafilter $\alpha$. Let $G$ be the ultraproduct of the $G^{(n)}$, and let $f \coloneqq \lim_{n\rightarrow \alpha}f^{(n)}$. Then we can endow $G$ with the Loeb
probability space structure (see \cite[Appendix A]{jt21-1}), and $f\in L^\infty(G)$. Then we have $\|f\|_{U^{k+1}(G)} \geq \delta$, where $\|\cdot\|_{U^{k+1}(G)}$ is the natural counterpart of the Gowers norms in the nonstandard case; see \cite[(4.21)]{jt21-1}.
We now invoke a correspondence principle obtained by the first and third author (see also \cite{towsner} for a closely related principle).

\begin{proposition}\label{correspondence}
Let $m$ be a fixed natural number. Let $G = (G,+)$ be a nonstandard finite abelian $m$-torsion group, equipped with the Loeb probability space structure, and let $\mathcal{F}$ be an at most countable subset of $L^\infty(G)$. Then there
exists an ergodic separable $(\Z/m\Z)^\omega$-system $\XX$, which (as a probability space) is a factor of (the inseparable probability space) $(G,\mathcal{L}_G,\mu_G)$ (so in particular we can embed $L^\infty(\XX)$ in $L^\infty(G)$) with the following
properties:
\begin{itemize}
    \item[(i)] (Equivalence of Host--Kra and Gowers inner products) For any $n\geq 0$ and any tuple $(f_\omega)_{\omega\in\{0,1\}^n}$ of bounded functions on $\XX$, one has
    $$ \left\langle (f_\omega)_{\omega\in\{0,1\}^n}\right\rangle_{U^n(\XX)} =\left\langle (f_\omega)_{\omega\in\{0,1\}^n}\right\rangle_{U^n(G)}.$$
\item[(ii)] ($\mathcal{F}$ is modeled) We have $\mathcal{F}\subseteq L^\infty(\XX)$. 
\end{itemize}
\end{proposition}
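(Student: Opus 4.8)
The plan is to realise $\XX$ as an invariant separable factor of the translation action of $(\Z/m\Z)^\omega$ on the Loeb space $(G,\mathcal{L}_G,\mu_G)$. The first point is that, since $G$ is $m$-torsion, every element of $G$ has order dividing $m$, so for \emph{any} sequence $(t_j)_{j\geq 1}$ of elements of $G$ the rule sending the $j$-th coordinate generator of $(\Z/m\Z)^\omega$ to the Loeb-measure-preserving translation $x\mapsto x+t_j$ extends to a measure-preserving action $T$ of $(\Z/m\Z)^\omega$ on $(G,\mathcal{L}_G,\mu_G)$; the $t_j$ commute and are $m$-torsion, so no relations are violated. Given such a choice I would take $\X$ to be the $\sigma$-algebra generated by the countable family $\{g\circ T^\gamma : g\in\mathcal{F},\ \gamma\in(\Z/m\Z)^\omega\}$; this is $T$-invariant by construction and separable because $\mathcal{F}$ is at most countable, so $\XX\coloneqq(G,\X,\mu_G,T)$ is a separable $(\Z/m\Z)^\omega$-system which, as a probability space, is a factor of $(G,\mathcal{L}_G,\mu_G)$, and $\mathcal{F}\subseteq L^\infty(\XX)$; this already yields (ii) and the embedding $L^\infty(\XX)\hookrightarrow L^\infty(G)$. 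It then remains to choose $(t_j)$ so that $T$ is ergodic on $\X$ and so that (i) holds.

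For (i) I would use that, for a measure-preserving $(\Z/m\Z)^\omega$-action, the Host--Kra inner product $\langle(f_\omega)_{\omega\in\{0,1\}^n}\rangle_{U^n(\XX)}$ is the integral of $\bigotimes_\omega f_\omega$ against the Host--Kra measure $\mu^{[n]}$, and the latter is the limit, along a F{\o}lner sequence in $(\Z/m\Z)^\omega$, of the averages over $\gamma_1,\dots,\gamma_n$ of the $n$-dimensional cube correlations built from the functions $f_\omega\circ T^{\sum_i\omega_i\gamma_i}$ (with $f_\omega$ conjugated when $|\omega|$ is odd); whereas $\langle(f_\omega)_\omega\rangle_{U^n(G)}$ is the analogous expectation with $x,h_1,\dots,h_n$ ranging uniformly in the Loeb sense over $G$. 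These coincide once the group translations by $\sum_i\omega_i\gamma_i$ equidistribute in $G$ like the genuine translations $x\mapsto x+\sum_i\omega_i h_i$. I would arrange this by a greedy construction of $(t_j)$: enumerate the countably many finite configurations of members of $\mathcal{F}$ together with their $(\Z/m\Z)^\omega$-translates and all target dimensions $n$, and at each stage adjoin a new generator $t_j$ whose representatives in the finite groups $G^{(n)}$ are chosen, via the transfer principle and a Gowers--Cauchy--Schwarz / van der Corput estimate, sufficiently generic relative to what has been fixed so far. Since in a finite abelian group a generic tuple of elements generates the group, and the corresponding finite translation action is then ergodic with the expected self-joinings, this both secures ergodicity of $T$ on $\X$ and forces the $U^n$-inner products on $\XX$ to agree with those on $G$.

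The hard part is the bookkeeping in this diagonalisation: one must verify that adjoining generators to repair ergodicity and to treat larger and larger $n$ does not disturb the $U^{n'}$-identities already secured for smaller $n'$, that the limiting $\sigma$-algebra $\X$ is genuinely $T$-invariant, and that the resulting action is ergodic. This is exactly the correspondence principle of the first and third authors \cite{jt21-1} (see also \cite{towsner}); although that reference is stated for $\F_p^\omega$ and for Roth-type patterns, $(\Z/m\Z)^\omega$ is likewise a direct limit of finite abelian groups, and the argument carries over to the $m$-torsion setting and to arbitrary cube correlations with only cosmetic changes, so in the write-up I would simply invoke it to produce $\XX$ with the stated properties.
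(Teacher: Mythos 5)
Your proposal is correct and ultimately takes the same route as the paper: both rest on the correspondence principle of \cite{jt21-1} (Proposition 5.1 there) together with the observation that translations on an $m$-torsion group are $m$-torsion, so the resulting $\Z^\omega$-action factors through a $(\Z/m\Z)^\omega$-action. Note only that the cited result is already stated for arbitrary nonstandard finite abelian groups and all Gowers--Host--Kra inner products (not just $\F_p^\omega$ or Roth-type patterns), so your intermediate greedy/diagonalisation sketch --- which in any case differs from the actual proof there, which uses random shifts and ergodic averaging --- is unnecessary; one simply invokes the proposition and adds the torsion remark.
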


\begin{proof}
Apply \cite[Proposition 5.1]{jt21-1} to get a $\Z^\omega$-system $\XX$, and observe that any $\Z^\omega$-action on the $m$-torsion group $G$ by translations factors through a $(\Z/m\Z)^\omega$-action.
\end{proof}

Let $\XX$ be as in the above proposition, and factorize  $m = \prod_{p|m} p^{\nu_p(m)}$.  By Theorem \ref{pSylow} and Proposition \ref{inductive} we can write
$$ \XX = \biggenprod_{p|m} \XX_p$$
for some order $k$ ergodic double coset $(\Z/p^{\nu_p(m)}\Z)^\omega$-systems $\XX_p = K_p\backslash \mG_p/\Lambda_p$.  In particular (as in \cite{tz-lowchar} or \cite{jt21-1}) one can find a continuous function
$$ F \colon \biggenprod_{p|m} \XX_p \to \C$$
such that
$$ \int_G f(x) \overline{F(\pi(x))}d\mu_G(x) \neq 0$$
where $\pi \colon G \to \XX$ is the factor map.   From Proposition \ref{more-inductive}(ii), each $\XX_p$ has the structure of a compact nilspace compatible with the $\Gamma$-system structure.  Now we argue similarly to the proof of \cite[Theorem 1.5]{candela-szegedy-inverse}.  By \cite[Theorem 2.7.3]{candela}, $\XX_p$ is the inverse limit of compact finite rank $k$-step nilspaces $\XX_{p,\beta}$, with the factor maps $\Pi_{p,\beta} \colon \XX_p \to \XX_{p,\beta}$ continuous fibrations.  By perturbing $F$ slightly in uniform norm, we may thus assume without loss of generality that $F = F_\beta \circ \Pi_\beta$ for some continuous $F_\beta \colon \biggenprod_{p|m} \XX_{p,\beta} \to \C$ and some $\beta$, where $\Pi_\beta \colon \XX \to \prod_{p|m} \XX_{p,\beta}$ is the map $\Pi_\beta((x_p)_{p|m}) = (\Pi_{p,\beta}(x_p))_{p|m}$, thus
$$ \int_G f(x) \overline{F_\beta(\Pi_\beta \circ \pi(x))}d\mu_G(x) \neq 0$$

At this point we adapt the arguments from \cite[Lemmas 7.2, 7.3]{jt21-1}.  We begin by claiming that the map $\pi$ is \emph{almost polynomial} in the sense that 
$$ (\pi(x_\omega))_{\omega \in \{0,1\}^n} \in C^n(\XX)$$
for all (standard) $n\geq 0$ and $\mu_{\HK^n(G)}$-almost all $(x_\omega)_{\omega \in \{0,1\}^n}$, where $\mu_{\HK^n(G)}$ denotes Loeb measure on $\HK^n(G)$.  Since the space $\HK^n( \XX )$ is second countable, it suffices (as in the proof of \cite[Lemma 7.2]{jt21-1}) to show that
$$ \int_{\HK^n(G)} \prod_{\omega \in \{0,1\}^n} 1_{\pi^{-1}(U_\omega)}(x_\omega) d\mu_{\HK^n(G)}((x_\omega)_{\omega \in \{0,1\}^n}) = 0$$
whenever $U_\omega$ are open subsets of $\XX$ such that $\prod_{\omega \in \{0,1\}^n} U_\omega$ is disjoint from $C^n( \XX )$.  Repeating the proof of \cite[Lemma 7.2]{jt21-1}, the integral here can be re-expressed as a Gowers--Host--Kra inner product
\begin{equation}\label{uwx}
 \langle (1_{U_\omega})_{\omega \in \{0,1\}^n} \rangle_{U^n(\XX)}.
 \end{equation}
However, since $\prod_{\omega \in \{0,1\}^n} U_\omega$ avoids $C^n( \XX ) = \HK^n(\XX)$, we have
$$ \prod_{\omega \in \{0,1\}^n} 1_{U_\omega}(T^{\sum_{i=1}^n \omega_i h_i} x) = 0$$
for all $x \in \XX$ and $h_1,\dots,h_n \in \Gamma$; taking multiple ergodic averages along F{\o}lner sequences we conclude that \eqref{uwx} vanishes as claimed.  Thus $\pi$ is almost polynomial, which implies that $\Pi_\beta \circ \pi$ is also almost polynomial.

Applying \cite[Theorem 4.2]{candela-szegedy-inverse} (restated in the language of nonstandard analysis as in \cite[Lemma 7.3]{jt21-1}) we can find an internal nilspace morphism $g \colon G\rightarrow {}^*X_\beta$ so that $\Pi_\beta \circ \pi = \mathrm{st} g$ a.e., hence
$$ F_\beta(\Pi_\beta \circ \pi(x)) = F_\beta(\mathrm{st} g(x)) = \mathrm{st} F_\beta(g(x)).$$
Writing $g= \lim_{n\rightarrow\alpha} g^{(n)}$ where $g^{(n)} \colon G^{(n)} \rightarrow \XX_\beta$ are nilspace morphisms, we conclude that
$$\mathrm{st}\lim_{n\rightarrow\alpha} \E_{x\in G^{(n)}} f^{(n)}(x) F_\beta(g^{(n)}(x))\neq 0.$$

Since $G^{(n)}$ is a finite abelian $m$-torsion group, we can write $G^{(n)} = \biggenprod_{p|m} G^{(n)}_p$, where $G^{(n)}_p$ is a finite abelian $p^{\nu_p(m)}$-torsion group.  Meanwhile, the $\XX_{p,\beta}$ are nilspace factors of $\XX_p$, whose structure groups are $p^{\nu_p(m)}$-torsion by Theorem \ref{bounded-tor}; hence by \cite[Lemma 3.3.8]{candela0} the structure groups of $\XX_{p,\beta}$, being quotients of the structure groups of $\XX_{p}$, are also $p^{\nu_p(m)}$-torsion.  From Lemma \ref{SchurZassenhaus} we conclude that for each $p|m$, the $\XX_{p,\beta}$ component of $g^{(n)}$ is constant with respect to every factor $G^{(n)}_q$ with $q \neq p$.  Thus there exist nilspace morphisms $g^{(n)}_p \colon G^{(n)}_p \to \XX_{p,\beta}$ such that
$$ g^{(n)}( (x_p)_{p|m} ) = (g^{(n)}_p(x_p))_{p|m}$$
whenever $x_p \in G^{(n)}_p$.  Next, we use Proposition \ref{fibration}, composed with the fibration from $\XX_p$ to $\XX_{p,\beta}$ to obtain a nilspace fibration 
$\psi_p \colon \mathcal{W}_p\rightarrow \XX_{p,\beta}$ for each $p|m$, where $\mathcal{W}_p$ is a filtered abelian metrizable $p^{\binom{k}{2} \nu_p(m)}$-torsion group.  We can factor
$$ G^{(n)}_p = \bigoplus_{j=1}^{N^{(n)}_p} \Z/d^{(n)}_{j,p} \Z$$
for some $d^{(n)}_{j,p} | p^{\nu_p(m)}$, which is a quotient of $\Z^{N^{(n)}_p}$.  The nilspace morphism $g^{(n)}_p$ then induces a nilspace morphism from $\Z^{N^{(n)}_p}$ to $\XX_{p,\beta}$, which by Lemma \ref{fibration-lift} lifts to a nilspace morphism $h^{(n)}_p \colon \Z^{N^{(n)}_p} \to \mathcal{W}_p$, so in particular
$$ g^{(n)}_p(x) = \psi_p( h^{(n)}_p(|x|) )$$
for all $x \in G^{(n)}_p$.  Putting all this together, we conclude that
$$\mathrm{st}\lim_{n\rightarrow\alpha} \E_{x\in G^{(n)}} f^{(n)}(x) F_\beta \circ \psi( (h^{(n)}_p(|x_p|))_{p|m} )\neq 0$$
where $\psi \colon \prod_p \mathcal{W}_p \to \prod_{p|m} \XX_{p,\beta}$ is the direct product of the factor maps $\psi_p$.  Using the Stone--Weierstrass theorem to approximate $F_\beta \circ \psi$ uniformly by linear combinations of characters, we conclude that there exist characters $\xi_p \in \widehat{{\mathcal W}_p}$ for each $p|m$ such that
\begin{equation}\label{thecorrelation}\mathrm{st}\lim_{n\rightarrow\alpha} \E_{x\in G^{(n)}} f^{(n)}(x) e\left(-\sum_{p|m} \xi_p \circ h^{(n)}_p(|x_p|)\right) \neq 0.
\end{equation}
Since $\mathcal{W}_p$ is $p^{\binom{k}{2} \nu_p(m)}$-torsion, $\xi_p$ takes values in $\frac{1}{p^{\binom{k}{2} \nu_p(m)}}\Z/\Z$.  By Lemma \ref{nilspacedegree}, the map $x \mapsto \sum_{p|m} \xi_p \circ h^{(n)}_p(|x_p|)$ will be a polynomial of degree $C$, if $C=C(k,m)$ is sufficiently\footnote{Indeed, our arguments allow us to take $C(k,m) = k \binom{k}{2} \sup_{p|m} p^{\nu_p(m)} (p^{\nu_p(m)} - 1)$, though it is definitely possible to lower this value of $C$ further with more care.} large.  But this now contradicts the inability to satisfy \eqref{contradiction}, for all $n$ in some $\alpha$-large set. The claim follows.

To prove Theorem \ref{gowersextension}, set $C_2:=C$ and observe that we only need to show that the polynomials $h_p^{(n)}\circ |\cdot|$ takes the form $P_n(|\cdot|)$ where $P_n:H\rightarrow \mathbb{T}$ for some bounded extension $q:H\rightarrow G$. Equivalently, we prove that there exists a constant $C_1=C_1(k,m)$ so that $h_p^{(n)}$ is invariant to translation by the coset $C_1\cdot\mathbb{Z}^{N_p^{(n)}}$, we can then set $H_n:=(\mathbb{Z}/C_1\mathbb{Z})^{N_p^{(n)}}$ and $P_n$ the projection of $h_p^{(n)}$ to $H_n$. 

By the structure of $\mathcal{W}_p$, $h_p^{(n)} = ((h_p^{(n)})_1,...,(h_p^{(n)})_k)$ where $(h_p^{(n)})_i:\mathcal{D}^1(\mathbb{Z}^{N_p^{(n)}})\rightarrow \mathcal{D}^i(U_i)$ is a nilspace morphism. By definition, this implies that $(h_p^{(n)})_i:\mathbb{Z}^{N_p^{(n)}}\rightarrow U_i$ is a polynomial of degree $\leq i$. To complete the proof we will apply the following Lemma.

\begin{lemma}
Let $p$ be a prime and let $d,n,m\in \mathbb{N}$. Let $U$ be a $p^m$-torsion group and let $Q:\mathbb{Z}^n\rightarrow U$ be a polynomial of degree $\leq d$. Then $Q$ is invariant to translations by every $t\in p^{dm}\mathbb{Z}^n$.
\end{lemma}
\begin{proof}
    We induct on $d\geq 1$. When $d=1$, $Q$ is an affine map and the claim follows. Let $d>1$ and assume inductively that the claim holds for all smaller values of $d$. Let $s\in \mathbb{Z}^n$ be arbitrary, by the induction hypothesis $\partial_t \partial_s Q = 0$ for all $t\in p^{C_{d-1}}\mathbb{Z}^n$. We deduce that $\partial_t Q =\phi(t)$ is a constant for all $t\in p^{C_{d-1}}\mathbb{Z}^n$. By the cocycle identity, we also have that $\phi:p^{m(d-1)}\mathbb{Z}^n\rightarrow \mathbb{T}$ is a homomorphism. 

    Since $U$ is $p^m$-torsion, we deduce that $\phi(p^mt) = p^m \partial_t Q = 0$. Equivalently, $Q$ is invariant to translations by every $t\in p^{dm}\mathbb{Z}/\mathbb{Z}$, as required.
\end{proof}

Since $(h_p^{(n)})_i$ are of degree at most $k$ and takes values in an at most $p^{m\binom{k}{2}}$-group, we deduce by applying the previous lemma once for every $1\leq i \leq k$, that $h_p^{(n)}$ is invariant to translations by every $t\in p^{mk\binom{k}{2}}\mathbb{Z}^n$. Setting $C_2(k,m):=mk\binom{k}{2}$ the claim follows.
\appendix

\section{Host--Kra structure theory}\label{host-kra-theory}

In this appendix, we collect some needed results in Host--Kra structure theory from the pioneering work of Host and Kra \cite{host2005nonconventional} for $\Z$-actions, and later work by Bergelson, Tao, and Ziegler \cite{btz} and Shalom \cite{shalom3,shalom2} who studied a structure theory for other countable abelian group actions.   
The basic results in Host--Kra theory are well recorded in the textbook \cite{hk-book} for (ergodic) $\Z$-actions. 
Many of these results extend without difficulty to arbitrary countable abelian group actions as was previously observed by several authors, see e.g., \cite[Appendix A]{btz}, \cite[\S 2]{jst}.  
We start by gathering some of these basic facts. 
Throughout this section, we fix a countable discrete abelian group $\Gamma$ unless mentioned otherwise. 

\subsection{Measure-preserving systems}\label{system-sec}

Intuitively, a $\Gamma$-system should be a probability space $(X,\X,\mu)$ equipped with a measure-preserving action of $\Gamma$, and one system $(Y,\Y,\nu)$ should be a factor of another $(X,\X,\mu)$ if there is a factor map $\pi \colon X \to Y$ that intertwines with the action and pushes forward the measure $\mu$ to the measure $\nu$.  However, for technical reasons\footnote{The main reason for this is that our measurable cocycles $\rho$ will only obey identities such as the cocycle equation almost everywhere, rather than everywhere.} it is more correct to work with \emph{near-actions} which are only defined (and are actions) almost everywhere rather than everywhere, and the factor relation has to be defined more abstractly at the probability algebra level.  The precise definitions are as follows:

\begin{definition}[Measure-preserving systems]\label{system-def}  Let $\Gamma = (\Gamma,+)$ be a discrete countable abelian group.
\begin{itemize}
    \item A \emph{$\Gamma$-system} is a quadruplet $\XX = (X,\X,\mu,T)$, where $(X,\X,\mu)$ is a Lebesgue probability space (a standard Borel space equipped with a probability measure $\mu$), and $T\colon \Gamma\times X\to X$ is a \emph{near-action} of $\Gamma$, thus $T^{\gamma_1+\gamma_2}(x)=T^{\gamma_1}\circ T^{\gamma_2}(x)=T^{\gamma_2}\circ T^{\gamma_1}(x)$ for almost every $x\in X$ for all $\gamma_1,\gamma_2\in\Gamma$, and $T^\gamma\colon X\to X$ is measure-preserving for each $\gamma\in\Gamma$.  
    \item If $(X,\X,\mu,T)$ is a $\Gamma$-system, its \emph{abstraction} is the triplet $(\X/\sim, \mu/\sim, T^*/\sim)$, where $\X/\sim$ is the complete Boolean algebra formed by quotienting $\X$ by null sets, $\mu/\sim \colon \X/\sim \to [0,1]$ is $\mu$ similarly quotiented by null sets, and for each $\gamma \in \Gamma$, $(T^\gamma)^*/\sim \colon \X/\sim \to \X/\sim$ is the pullback map $(T^\gamma)^*(E) \coloneqq (T^\gamma)^{-1}(E)$, again quotiented by null sets.  Note that as $(X,\X,\mu)$ is a Lebesgue probability space, $(\X/\sim,\mu/\sim)$ is a separable probability algebra, and the pullback maps $(T^\gamma)^*/\sim$ are measure-preserving.
    \item We say that one $\Gamma$-system $\YY = (Y,\Y,\nu,S)$ is a \emph{factor} of another $\Gamma$-system $\XX = (X,\X,\mu,T)$ (or equivalently that $\XX$ is an \emph{extension} of $\YY$), and write $\YY \leq \XX$, if there is an ``abstract factor pullback map'' $\pi^*/\sim \colon \Y/\sim \to \X/\sim$ which is measure-preserving and intertwines with the action, thus
    $$ (\mu/\sim)( (\pi^*/\sim)(E) ) = (\nu/\sim)(E)$$
    and
    $$ ((T^\gamma)^*/\sim) (\pi^*/\sim) E = (\pi^*/\sim) ((S^\gamma)^*/\sim) E$$
    for all $E \in \Y/\sim$ and $\gamma \in \Gamma$.  If $\pi^*/\sim$ is invertible, we say that $\XX$ and $\YY$ are \emph{isomorphic}.
    \item A $\Gamma$-system $\XX = (X,\X,\mu,T)$ is ergodic if the invariant algebra $\{ E \in \X/\sim: ((T^\gamma)^*/\sim) E = E \forall \gamma \in \Gamma \}$ is the trivial algebra $\{0,1\}$.
\end{itemize}
\end{definition}

Informally, two $\Gamma$-systems are isomorphic if they are equivalent ``up to null sets'', although the equivalence need not be realizable as a \emph{pointwise} map between the systems.  See \cite[Appendix A]{jst} for further discussion.  Observe that if $\YY$ is a factor of $\XX$, then $L^\infty(\YY)$ can be identified with a $\Gamma$-invariant closed subalgebra of $L^\infty(\XX)$, and if two factors of $\XX$ are identified with the same such subalgebra then they are isomorphic.  Conversely, every $\Gamma$-invariant closed subalgebra of $L^\infty(\XX)$ gives rise to a factor; see \cite[Proposition A.3]{jst}.  

Let us say that an abstract factor pullback map $\pi^*/\sim \colon \Y/\sim \to \X/\sim$ is \emph{representable} by a measurable map $\pi \colon X \to Y$ (which we call a \emph{(concrete) factor map} if $\pi$ intertwines the $\Gamma$-actions almost everywhere (thus $\pi \circ S^\gamma = T^\gamma \circ \pi$ almost everywhere for each $\gamma \in \Gamma$) and $\pi^*/\sim$ is given by the pullback map $E \mapsto \pi^{-1}(E)$ up to almost everywhere equivalence.  In general, an abstract factor pullback map need not have a concrete representation for the original $\Gamma$-systems $\XX, \YY$, but it turns out that such representations exist if one is willing to replace the original $\Gamma$-systems $\XX, \YY$ by isomorphic models $\hat \XX, \hat \YY$ (for instance, the ``Cantor models'' of these systems); see \cite[Proposition A.3(ii)]{jst}. Also, concrete representations always exist (and are unique up to almost everywhere equivalence) if the base space $\YY$ is Polish; see \cite[Proposition 3.2]{jt19}.  Because the iterated skew-products \eqref{structuregroups} are Polish, this means that for systems of some finite order $k$ we will be able to work with concrete factor maps without difficulty.

Observe that if $\XX$ is a $\Gamma$-system and $\Gamma'$ is an extension of $\Gamma$ in the sense that there is a surjective homomorphism $\phi \colon \Gamma' \to \Gamma$ of countable abelian groups, then $\XX$ can also be viewed as a $\Gamma'$ system, by composing the $\Gamma$-near action with $\phi$ to create a $\Gamma'$-near action.  We say that a $\Gamma'$-system $\YY$ is a \emph{generalized extension} of $\XX$ (or that $\XX$ is a \emph{generalized factor} of $\YY$) if it is an extension of $\XX$ when viewed as a $\Gamma'$-system rather than a $\Gamma$-system.

We define some special types of $\Gamma$-systems.

\begin{definition}[Translational, rotational, and double coset systems]\label{translational-def}  Let $\Gamma$ be a countable abelian group.
\begin{itemize}
    \item A \emph{translational $\Gamma$-system} is a system of the form $G/\Lambda$, where $G$ is a Polish group, $\Lambda$ is a closed cocompact subgroup of $G$, the compact quotient space $G/\Lambda$ is also Polish and endowed with a $G$-invariant probability measure, and the action $T$ of $\Gamma$ is given by $T^\gamma g \Lambda = \phi(\gamma) g \Lambda$ for some homomorphism $\phi \colon \Gamma \to G$ and all $\gamma \in \Gamma$, $g \in G$.  If $G$ is abelian (so that $Z = G/\Lambda$ is a compact abelian group), we refer to $Z$ as a \emph{rotational $\Gamma$-system}.
    \item A \emph{double coset $\Gamma$-system} is a system of the form $K\backslash G/\Lambda$, where $G/\Lambda$ is a translational $\Gamma$-system, $K$ is a closed subgroup of $G$ normalized by $\phi(\Gamma)$, the compact space $K \backslash G / \Lambda$ is also Polish and is given the probability measure inherited from $G/\Lambda$ by projection, and the action $T$ of $\Gamma$ is given by $T^\gamma K g \Lambda = K \phi(\gamma) g \Lambda$.  Note that this double coset system $K \backslash G/\Lambda$ is a factor of the translational system $G/\Lambda$ with factor map $g \Lambda \mapsto K g \Lambda$.
    \item When the group $G$ is nilpotent of class $k$ we say that the translational system or the double coset system associated with $G$ is of degree $k$.
\end{itemize}
\end{definition}

We caution that the group $G$ in Definition \ref{translational-def} is \emph{not}\footnote{For instance, $G$ might contain a copy of $\Poly_{\leq 1}(\T, \T^\N) = \Poly_{\leq 1}(\T)^\N = (\Z \oplus \T)^\N$, which is Polish but not locally compact.} assumed to be locally compact.  In particular, the theory of Haar measure is not necessarily available, and the existence of a $G$-invariant probability measure on $G/\Lambda$ is part of the definition of a translational system and not an automatic consequence from the other hypotheses.  The Polish nature\footnote{On the other hand, $G/\Lambda$ is automatically Polish when $\Lambda$ is a closed subgroup of the Polish group $G$; see \cite[Proposition 1.2.3]{kechrisbecker}.} of $K \backslash G/\Lambda$ is similarly not a consequence of the other hypotheses, and must be established separately.  On the other hand, the translational and double coset systems we actually use in this paper satisfy additional properties (involving a filtration structure on $G$) that are not captured by the above definition (see \cite[Theorem 1.7]{jst} for the $k=2$ case of this phenomenon, as well as the ``groupable'' axiom from \cite[Definition 1.8]{cgss-2023}).  It may therefore be that one should impose additional axioms on these systems in order to get the most suitable category of systems for applications, but we do not pursue this matter here. 

\subsection{Host--Kra factors}

We recall the construction of the Host--Kra factors from \cite[\S 3]{host2005nonconventional}, \cite[Chapter 9.1]{hk-book}, or \cite[Appendix A]{btz}.  Given a $\Gamma$-system $\XX = (X, \mu, T)$, we can recursively define the Host--Kra parallelepiped $\Gamma$-systems $\XX^{[k]} = (X^{[k]}, \mu^{[k]}, T^{[k]})$ for $k \geq 0$ by setting $\XX^{[0]} \coloneqq \XX$ and
$$ \XX^{[k+1]} \coloneqq \XX^{[k]} \times_{\ZZ^0(\XX^{[k]})} \XX^{[k]}$$
where the right-hand side is the relatively independent product of $\XX^{[k]}$ with itself over the invariant factor $\ZZ^0(\XX^{[k]})$; see \cite[Chapter 5]{furstenberg2014recurrence} for the construction of relatively independent product for Lebesgue probability spaces.
For $k\geq 1$, one can show that there exists a unique (up to isomorphism) factor $\ZZ^k(\XX)$ of $\XX$ with the property that for $f\in L^\infty(\XX)$, $$\int_{X^{[k]}} \bigotimes_{\omega\in\{0,1\}^k} {\mathcal C}^{|\omega|} f d\mu^{[k]} =0 \iff \E(f\,| \, \ZZ^{k-1}(\XX))=0,$$ where $ \bigotimes_{\omega\in\{0,1\}^k} f$ is the tensor product of $2^k$ copies of $f$, ${\mathcal C} \colon z \mapsto \overline{z}$ denotes complex conjugation and $\E(\cdot| \ZZ^k(\XX))$ denotes conditional expectation onto the factor $\ZZ^k(\XX)$, cf. \cite[\S 4]{host2005nonconventional}, \cite[Theorem 7, Chapter 9]{hk-book}, or \cite[Appendix A]{btz}.  The quantity 
$$\|f\|_{U^k(X)} \coloneqq \left(\int_{X^{[k]}} \bigotimes_{\omega\in\{0,1\}^k} {\mathcal C}^{|\omega|} f d\mu^{[k]}\right)^{\frac{1}{2^k}}$$ is known as the \emph{Host--Kra--Gowers seminorm} of $f$.  

An ergodic $\Gamma$-system $\XX$ is said to be \emph{of order $k$} if $\XX=\ZZ^k(\XX)$.   We record some basic functoriality properties of $\ZZ^k$ and of systems of order $k$:

\begin{proposition}[Functoriality properties of $\ZZ^k$]\label{prop-functoriality}
(cf. \cite[\S 4]{host2005nonconventional}, \cite[Propositions 11, 17, 21, and Theorem 20; Chapter 9]{hk-book}, \cite[Lemma A.22]{btz}) 
Let $k\geq 1$. 
\begin{itemize}
\item[(i)]  A factor  of an ergodic $\Gamma$-system  of order $k$ is of order $k$.
\item[(ii)]  Let $\pi \colon \YY \to \XX$ be a (concrete) factor map between two ergodic $\Gamma$-systems. Let $\pi_k^\YY:\YY\rightarrow \ZZ^k(\YY)$, $\pi_k^\XX:\XX\rightarrow \ZZ^k(\XX)$ be the factor maps onto the $k$-th Host--Kra factors respectively. Then (after passing to a suitable concrete model if necessary) there exists a concrete factor map $\pi_{k}:\ZZ^k(\YY)\rightarrow \ZZ^k(\XX)$ such that $\pi^\XX_k\circ \pi = \pi_k \circ \pi^\YY_k$. 
\item[(iii)] An inverse limit of ergodic $\Gamma$-systems of order $k$ is an ergodic $\Gamma$-system of order $k$. 
\item[(iv)] If $\XX$ is an inverse limit of ergodic $\Gamma$-systems $\XX_i,i\in I$, then $\ZZ^k(\XX)$ is an inverse limit of $\ZZ^k(\XX_i),i\in I$.  
\end{itemize}
\end{proposition}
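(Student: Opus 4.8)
The statement in question is Proposition \ref{prop-functoriality}, a package of four standard functoriality facts about the Host--Kra--Ziegler factors. Since these are classical and appear throughout the literature cited (\cite{host2005nonconventional}, \cite{hk-book}, \cite{btz}), the plan is to reduce each item to the defining property of $\ZZ^k$, namely that $\E(f\,|\,\ZZ^{k-1}(\XX))=0$ if and only if $\|f\|_{U^k(\XX)}=0$, together with the recursive construction of the parallelepiped systems $\XX^{[k]}$ as relatively independent products over invariant factors.

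For (i), I would argue that if $\XX$ has order $k$ and $\YY\leq \XX$ is a factor, then identifying $L^\infty(\YY)$ with a $\Gamma$-invariant closed subalgebra of $L^\infty(\XX)$, one has $\|f\|_{U^k(\YY)}=\|f\|_{U^k(\XX)}$ for $f\in L^\infty(\YY)$ (the Host--Kra measures of a factor are pushforwards of those of the extension, as in \cite[Lemma A.22]{btz}). Since $\XX=\ZZ^k(\XX)$ means $\|f\|_{U^{k+1}(\XX)}=0\Rightarrow f=0$, the same holds in $\YY$, giving $\YY=\ZZ^k(\YY)$. For (ii), the key point is that $\pi$ induces a factor map $\pi^{[k]}\colon \YY^{[k]}\to\XX^{[k]}$ compatible with the Host--Kra measures; then the $U^{k+1}$-seminorm on $\YY$ dominates (in fact, after conditioning, relates to) the pullback of the $U^{k+1}$-seminorm on $\XX$, so that the subalgebra $\pi^*(L^\infty(\ZZ^k(\XX)))$ is contained in (the pullback to $\YY$ of) $L^\infty(\ZZ^k(\YY))$. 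This yields the desired concrete factor map $\pi_k$ after passing to Polish/Cantor models so that concrete representatives exist (invoking the remarks after Definition \ref{system-def} and \cite[Proposition 3.2]{jt19}). For (iii) and (iv), I would use that an inverse limit $\XX$ of $\XX_i$ satisfies $L^\infty(\XX)=\overline{\bigcup_i L^\infty(\XX_i)}$, that $U^k$-seminorms are continuous in $L^2$, and that Host--Kra measures of inverse limits are the corresponding inverse limits; then $\E(f\,|\,\ZZ^{k-1}(\XX))$ can be computed as the limit of $\E(f_i\,|\,\ZZ^{k-1}(\XX_i))$, which immediately gives that $\ZZ^k(\XX)=\varprojlim \ZZ^k(\XX_i)$ and hence that order $k$ is preserved.

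The main obstacle is not conceptual but bookkeeping: one must be careful that all the relevant factor maps can be taken concrete (rather than merely abstract at the probability-algebra level), which requires passing to suitable models — but this is exactly the content of the discussion in Section \ref{system-sec} and is harmless here because iterated skew-products and their factors are Polish. I would therefore present the proof as a sequence of short reductions to the defining property of $\ZZ^k$ and to \cite[Lemma A.22]{btz}, citing \cite{host2005nonconventional,hk-book} for the $\Z$-case and noting that all arguments there extend verbatim to countable abelian $\Gamma$ (as already observed in \cite[Appendix A]{btz}, \cite[\S 2]{jst}), rather than reproducing the computations in full.
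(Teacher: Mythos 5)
Your proposal is correct and takes essentially the same route as the paper: the paper gives no proof of Proposition \ref{prop-functoriality} at all, deferring to the cited references (\cite[\S 4]{host2005nonconventional}, \cite[Chapter 9]{hk-book}, \cite[Lemma A.22]{btz}) together with the standing remark that the $\Z$-arguments extend to arbitrary countable abelian $\Gamma$, which is precisely what you do. Your sketched reductions to the defining property of $\ZZ^k$ and to the pushforward property of the Host--Kra measures are the standard arguments behind those citations (modulo the harmless $U^k$ versus $U^{k+1}$ index slip in your item (i)), so nothing further is needed.
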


\subsection{Cocycles and extensions}

A useful representation of the Host--Kra factors $\ZZ^k(X)$ is in terms of a chain of abelian group skew-product extensions given by cocycles of a certain type as mentioned in the introduction; see \eqref{structuregroups}. We will introduce these notions in the next two definitions, and the representation \eqref{structuregroups} then follows from the ensuing proposition by induction.   

\begin{definition}[Cocycles, abelian group extension, and cohomology]\label{coh:def}
Let $\XX=(X,\mu,T)$ be a $\Gamma$-system and $U=(U,+)$ be a compact metrizable abelian group.
\begin{itemize}
    \item[(i)] A \emph{cocycle} on $\XX$ with values in $U$ is a map $\rho \colon \Gamma \to \M(X,U)$ (which we denote as $\gamma \mapsto \rho_\gamma$ satisfying the cocycle identity 
    \begin{equation}\label{cocycle-rho}
    \rho_{\gamma+\gamma'} = \rho_\gamma + \rho_{\gamma'} \circ T^\gamma
    \end{equation}
    for all $\gamma,\gamma'\in \Gamma$.  
    \item[(ii)] If $\rho$ is a cocycle in $\XX$ with values in $U$, the \emph{abelian group skew-product extension} $\XX \rtimes_\rho U$ of $\XX$ by $\rho$ is the $\Gamma$-system defined on the product probability space $X\times U$ (where $U$ is equipped with Haar probability measure) with the $\Gamma$-near action given by 
    $$T^\gamma(x,u) = (T^\gamma x , u+ \tilde \rho_{\gamma}(x))$$
    for all $x \in \XX, u \in U, \gamma \in \Gamma$, where $\tilde \rho_\gamma \colon X \to U$ is an arbitrarily chosen\footnote{It is because of this arbitrary nature of the representative that we work (initially, at least) with near-actions rather than genuine actions.} representative of $\rho_\gamma \in \M(X,U)$.  It is easy to see that $\XX \rtimes_\rho U$ is a well-defined (up to isomorphism) as a $\Gamma$-system.
    \item[(iii)] If $F \in \M(X,U)$, we let $dF$ be the cocycle defined by $(dF)_\gamma \coloneqq \partial_\gamma F = F \circ T^\gamma - F$.  A cocycle $\rho$ is said to be a \emph{coboundary} if it is of the form $\rho = dF$ for some $F \in \M(X,U)$. Two cocycles $\rho,\rho'$ are said to be \emph{cohomologous} if their difference $\rho-\rho'$ is a coboundary, thus $\rho-\rho'=dF$ for some $F \in \M(X,U)$.
    \item[(iv)] Let $k\geq -1$. A function $\rho \colon \Gamma \to \M(X,U)$ is said to be a \emph{quasi-cocycle of degree $k$} if one has
$$ \rho_{\gamma+\gamma'} - \rho_\gamma - \rho_{\gamma'} \circ T^\gamma \in \Poly_{\leq k}(\XX,U)$$
for every $\gamma,\gamma'\in \Gamma$.  We say that $\rho$ is a \emph{quasi-coboundary of degree $k$} if there exists $F \in \M(X,U)$ such that 
$$ \rho_\gamma - (dF)_\gamma \in \Poly_{\leq k}(\XX,U)$$
for all $\gamma \in \Gamma$.  Note that every quasi-coboundary of degree $k$ is a quasi-cocycle of degree $k$, and when $k=-1$ the notions of quasi-cocycle and quasi-coboundary collapse to that of cocycle and coboundary respectively.
\end{itemize}
\end{definition}

\begin{remark}\label{cohiso}
Cohomologous cocycles define isomorphic abelian extensions with the isomorphism being given pointwise almost everywhere by the map $(x,u)\mapsto (x,u-F(x))$ where $\rho-\rho'=dF$. 
\end{remark}

\begin{remark}[Cocycles and short exact sequences]\label{cocyc-skew}
One can also interpret cocycles using the language of short exact sequences of groups.  With the notation of Definition \ref{coh:def}, we can form the semi-direct product $\Gamma \ltimes \M(X,U)$ to be the group of pairs $(\gamma, F)$ with $\gamma \in \Gamma$ and $F \in \M(X,U)$ with group law
$$ (\gamma', F') (\gamma,F) = (\gamma + \gamma', F + F' \circ T^\gamma).$$
One easily verifies that $\Gamma \ltimes \M(X,U)$ is a group (acting on the left on $X \times U$ by the formula $(\gamma,F)(x,u) \coloneqq (T^\gamma x, u+F(x))$), with the short exact sequence
\begin{equation}\label{basic-short-exact} 
0 \to \M(X,U) \to \Gamma \ltimes \M(X,U) \to \Gamma \to 0.
\end{equation}
A cocycle $\rho$ is then essentially the same thing as a splitting $\gamma \mapsto (\gamma,\rho_\gamma)$ of this sequence, and two cocycles differ by a coboundary if and only if their associated splittings are related by a conjugation by an element of $\M(X,U)$.  In a similar vein, a quasi-cocycle $\rho$ of degree $k$ gives rise to a short exact sequence
\begin{equation}\label{quasi-short-exact}
0 \to \Poly_{\leq k}(\XX,U) \to \{ (\gamma, \rho_\gamma+P): \gamma \in \Gamma, P \in \Poly_{\leq k}(\XX,U) \} \to \Gamma \to 0,
\end{equation}
which splits when $\rho$ is a quasi-coboundary (or more generally if $\rho$ is quasi-cohomologous to a cocycle).  We will frequently study variants of these short exact sequences \eqref{basic-short-exact}, \eqref{quasi-short-exact} in this paper, with particular attention drawn to the question of when such sequences split.
\end{remark}

\begin{definition}[Functions of type $k$] \label{type:def}
(cf. \cite[Definition 4.1]{btz})   Let $\XX=(X,\mu,T)$ be a $\Gamma$-system, let $U=(U,+)$ be a compact metrizable abelian group, let $k\geq 0$, and let $\XX^{[k]}$ be the Host--Kra parallelepiped $\Gamma$-system associated with $\XX$.
	\begin{itemize}
		\item[(i)] For a measurable $f \colon X\rightarrow U$, we define $\Delta^{[k]}f \colon X^{[k]}\rightarrow U$ by $$\Delta^{[k]}f((x_\omega)_{\omega\in \{0,1\}^k}):=\sum_{\omega\in \{0,1\}^k}(-1)^{\mathrm{sgn}(\omega)}f(x_\omega)$$ where $\text{sgn}(\omega) \coloneqq \sum_{i=1}^k \omega_i$.  Note that if two functions $f,f'$ agree $\mu$-almost everywhere, then $\Delta^{[k]} f$ and $\Delta^{[k]} f'$ agree $\mu^{[k]}$-almost everywhere, so we may view $\Delta^{[k]}$ as a homomorphism from $\M(X,U)$ to $\M(X^{[k]}, U)$.  This homomorphism is $\Gamma$-equivariant, so in particular if $\rho \colon \Gamma \to \M(X,U)$ is a $U$-valued cocycle on $X$, then $\Delta^{[k]} \rho \colon \Gamma \to \M(X^{[k]},U)$ is a $U$-valued cocycle on $X^{[k]}$.
		\item[(ii)] A function $\rho \colon \Gamma \to \M(X,U)$ is said to be a \emph{function of type $k$} if $\Delta^{[k]}\rho \colon \Gamma \to \M(X^{[k]},U)$ is a coboundary on the  Host--Kra parallelepiped $\Gamma$-system $\XX^{[k]}$, that is, there exists  $F \in \M(X^{[k]},U)$ such that $\Delta^{[k]}\rho_\gamma = F \circ (T^\gamma)^{[k]} - F$ for all $\gamma\in \Gamma$.
	\end{itemize}
\end{definition}

\begin{proposition}\label{abelext}
Let $\XX=(X,\mu,T)$ be an ergodic $\Gamma$-system. Then for every $k\geq 1$, the Host--Kra factor $\ZZ^k(\XX)$ of order $k$ is (isomorphic to) an abelian group skew-product extension $\ZZ^{k-1}(\XX)\rtimes_\rho U$ of the Host--Kra factor $\ZZ^{k-1}(\XX)$ of order $k-1$ by a compact metrizable abelian group $U$ and a cocycle $\rho$ of type $k$.
\end{proposition}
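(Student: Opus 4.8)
The plan is to prove Proposition \ref{abelext} by induction on $k$, following the standard Host--Kra argument (as in \cite[\S 3]{host2005nonconventional}, \cite[Chapter 9]{hk-book}, or \cite[Appendix A]{btz}) adapted to the countable abelian group setting. The base case $k=1$ is essentially the classical fact that the Kronecker factor $\ZZ^1(\XX)$ of an ergodic system is a rotational system on a compact abelian group, which can itself be realized as a skew-product extension $\ZZ^0(\XX) \rtimes_\rho U = \{\mathrm{pt}\} \rtimes_\rho U$ of the trivial system by a cocycle $\rho$ of type $1$: here $\rho_\gamma$ is the constant function given by the eigenvalue homomorphism $\gamma \mapsto \lambda(\gamma)$, and the type $1$ condition $\Delta^{[1]}\rho$ being a coboundary on $\XX^{[1]}$ is immediate since $\Delta^{[1]}$ of a constant is a constant, and constant cocycles valued in a group are coboundaries after passing to an ergodic component (or directly, since $\XX^{[1]}$ has an invariant function absorbing the constant). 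For the inductive step, assuming the representation for $k-1$, I would apply the key structural dichotomy for $\ZZ^k(\XX)$ over $\ZZ^{k-1}(\XX)$.

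The main work is the inductive step: given that $\ZZ^{k-1}(\XX)$ is the maximal factor of order $k-1$, one must show $\ZZ^k(\XX)$ is an abelian group skew-product extension of $\ZZ^{k-1}(\XX)$ by a compact metrizable abelian group $U$ via a cocycle of type $k$. First I would recall the general Host--Kra theorem that any ergodic extension $\YY \to \ZZ$ for which $\ZZ^{[k]}$ sits inside $\YY^{[k]}$ appropriately (more precisely, for which the extension is "relatively CL of order $k$") is an isometric-type extension, hence an abelian group extension; the crucial input is that $\ZZ^k(\XX) \to \ZZ^{k-1}(\XX)$ has this property by the very definition of the Host--Kra--Ziegler factors via the seminorms $\|\cdot\|_{U^k}$. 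Concretely: the conditional measures of $\mu^{[k]}_{\XX}$ relative to the "side" coordinates force the fibers of $\ZZ^k \to \ZZ^{k-1}$ to carry a transitive action of a compact group, and ergodicity plus metrizability of the probability algebra (from the standard Lebesgue / separability hypotheses imposed throughout the paper) let one identify this group with a compact metrizable abelian $U$, realizing $\ZZ^k(\XX) = \ZZ^{k-1}(\XX) \rtimes_\rho U$ for some cocycle $\rho$. Finally, one checks $\rho$ is of type $k$: this is exactly the statement that $\Delta^{[k]}\rho$ is a coboundary on $\ZZ^{k-1}(\XX)^{[k]}$ (equivalently on $\XX^{[k]}$), which follows from the fact that the $U$-valued "vertical" cocycle becomes trivial one level up in the cube construction — precisely the defining property distinguishing $\ZZ^k$ from $\ZZ^{k+1}$ — together with Definition \ref{type:def}.

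The hard part, and the step I would flag as the main obstacle, is the identification of the fiber group $U$ as a \emph{compact} metrizable abelian group and the verification that $\ZZ^k(\XX)$ really is (isomorphic to) the skew-product rather than merely a fibered product with fibers modeled on a homogeneous space. In the $\Z$-case this is handled by Host--Kra's analysis of the "relatively independent" structure of $\mu^{[k]}$ and the Mackey / Zimmer theory of isometric extensions; for general countable abelian $\Gamma$ one has to know these arguments go through verbatim, which is exactly what is asserted in \cite[Appendix A]{btz} and \cite[\S 2]{jst}. So the cleanest route is to cite the fact that the Host--Kra--Ziegler structure theory for $\ZZ^k$ as abelian group extensions of $\ZZ^{k-1}$ by cocycles of type $k$ transfers to arbitrary countable abelian groups, as recorded in those references, and to present the proof as: (1) state the $k=1$ base case from the classical Kronecker theory; (2) invoke the general fact that $\ZZ^k(\XX)$ is an isometric (hence, by ergodicity and the abelian nature forced by the cube structure, abelian group) extension of $\ZZ^{k-1}(\XX)$; (3) verify the resulting cocycle $\rho$ has type $k$ directly from Definition \ref{type:def} using that $\ZZ^k$ is by definition the level at which the $k$-th cube differencing of the vertical data trivializes. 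I would keep the writeup short, delegating the measure-theoretic heavy lifting to the cited literature and only spelling out the type $k$ verification, which is the piece genuinely specific to the cocycle language used in this paper.
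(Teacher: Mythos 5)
Your proposal is correct and takes essentially the same approach as the paper: the paper's entire proof is a citation to \cite[Proposition 6.3]{host2005nonconventional}, \cite[Proposition 3, Chapter 18]{hk-book}, and \cite[Proposition 3.4]{btz}, together with the remark that the $\Z$-system arguments extend without difficulty to general countable abelian $\Gamma$. Your plan --- sketching the standard induction (Kronecker base case, Mackey/Zimmer isometric-extension step, type-$k$ verification) while delegating the measure-theoretic heavy lifting to those same references --- is, if anything, slightly more detailed than what the paper records.
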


\begin{proof} See \cite[Proposition 6.3]{host2005nonconventional}, \cite[Proposition 3, Chapter 18]{hk-book}, or \cite[Proposition 3.4]{btz}.  The arguments in \cite{host2005nonconventional}, \cite{hk-book} are formulated for $\Z$-systems, but (as observed in \cite{btz}) extend without difficulty to more general $\Gamma$-systems.
\end{proof}

Iterating this proposition, we see that every system of order $k$ has a canonical representation of the form \eqref{structuregroups} (with $U_1$ given the structure of a translational $\Gamma$-system.  In particular, such systems have a canonical Polish space structure, and so we can represent abstract factor maps into these spaces by concrete factor maps without difficulty.

The following results analyse the representation \eqref{structuregroups} resulting from Proposition \ref{abelext} with respect to taking factors and inverse limits.

\begin{proposition}\label{prop-invlim}\ 
    \begin{itemize}
    \item[(i)]  Let $\YY=U_1\rtimes_{\rho_1} U_2\rtimes \ldots\rtimes_{\rho_{k-1}}U_k$ be an ergodic $\Gamma$-system of order $k$ written in the form \eqref{structuregroups}. Let $\XX=U_1'\rtimes_{\rho_1'} U_2'\rtimes \ldots\rtimes_{\rho_{k-1}'} U_k'$ be a factor of $\YY$ (which must also be of order $k$ thanks to Proposition \ref{prop-functoriality}(i)). Then for every $1\leq i\leq k$ there exists a surjective group homomorphism $\varphi_i \colon U_i\rightarrow U_i'$ and $F_i \in \M(\ZZ^i(\YY),U'_{i+1})$ such that $\varphi_{i+1}\circ \rho_i = \rho_i'\circ \pi_i + dF_i$ and $\pi_{i+1}(y,u) = (\pi_i(y), \varphi_{i+1}(u)+F_i(y))$ for almost every $(y,u) \in \ZZ^{i+1}(\YY)$ where $\pi_i \colon \ZZ^{i}(\YY)\rightarrow \ZZ^{i}(\XX)$ is a concrete factor map induced by $\pi$ by Proposition \ref{prop-functoriality}(ii) (and the fact that $\ZZ^i(\XX)$ is Polish).  
    \item[(ii)] Let $\YY=U_1\rtimes_{\rho_1}U_2\rtimes \ldots\rtimes_{\rho_{k-1}}U_k$ be an ergodic $\Gamma$-system of order $k$ written in the form \eqref{structuregroups}. Suppose that $\YY$ is the inverse limit of $\YY_n = U_{1,n}\rtimes_{\rho_{1,n}}U_{2,n}\rtimes \ldots\rtimes_{\rho_{k-1,n}} U_n$ in the category of (abstract) measure-preserving $\Gamma$-systems, where $\YY_n$ are also written in the form \eqref{structuregroups}. Then for every $1\leq i\leq k$, there exists a surjective group homomorphism $\varphi_{i,n}:U_i\rightarrow U_{i,n}$ such that $U_i$ is the inverse limit of the sequence $U_{i,n}$ in the category of compact abelian groups.
\end{itemize}    
\end{proposition}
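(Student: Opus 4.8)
The plan is to prove both parts by induction on $k$, using the representation \eqref{structuregroups} as an iterated skew-product and tracking how factor maps (resp. inverse-limit maps) interact with the skew-product structure. Both parts are essentially ``bookkeeping'' statements about how the canonical tower decomposition behaves under standard categorical operations, but some care is needed because the skew-product representation is only canonical up to cohomology, and factor maps/inverse-limit maps are a priori only defined abstractly.

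For part (i), I would argue as follows. The base case $k=1$ is immediate: $\XX, \YY$ are rotational systems on compact abelian groups $U_1', U_1$ respectively, and a factor map corresponds (by Pontryagin duality, cf. \cite[\S 1.5]{jst}) to an injection $\hat U_1' \hookrightarrow \hat U_1$, i.e., a surjection $\varphi_1 \colon U_1 \to U_1'$. For the inductive step, suppose $k \geq 2$ and the claim holds for $k-1$. Apply Proposition \ref{prop-functoriality}(ii) to get a concrete factor map $\pi_{k-1} \colon \ZZ^{k-1}(\YY) \to \ZZ^{k-1}(\XX)$ intertwining $\pi$; the inductive hypothesis applied to $\ZZ^{k-1}(\YY) \to \ZZ^{k-1}(\XX)$ supplies $\varphi_1,\dots,\varphi_{k-1}$ and $F_1,\dots,F_{k-2}$. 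It remains to produce $\varphi_k$ and $F_{k-1}$. Write $\YY = \ZZ^{k-1}(\YY) \rtimes_{\rho_{k-1}} U_k$ and $\XX = \ZZ^{k-1}(\XX) \rtimes_{\rho'_{k-1}} U'_k$. The factor map $\pi \colon \YY \to \XX$ lies over $\pi_{k-1}$, so it is an extension of the base factor map by a map of the fibers; one shows (this is a standard fact, essentially a relative version of the Mackey/Pontryagin analysis of abelian extensions; it is precisely what is needed to make \eqref{structuregroups} functorial, and is implicit in \cite[Chapter 9]{hk-book} and \cite[Appendix A, B]{btz}) that any such factor map has the form $\pi(y,u) = (\pi_{k-1}(y), \varphi_k(u) + F_{k-1}(y))$ for a surjective continuous homomorphism $\varphi_k \colon U_k \to U'_k$ and some $F_{k-1} \in \M(\ZZ^{k-1}(\YY), U'_k)$, and that intertwining the $\Gamma$-action forces $\varphi_k \circ \rho_{k-1} = \rho'_{k-1} \circ \pi_{k-1} + dF_{k-1}$. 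The surjectivity of $\varphi_k$ follows from ergodicity of $\XX$ (the image would otherwise define a proper intermediate factor contradicting maximality, or more directly: the closure of the image is a $\Gamma$-invariant subgroup, and the pushforward of Haar measure must be Haar on the image, which equals $U'_k$ by the measure-preserving property). Then set $\varphi_i, F_i$ as just constructed, closing the induction.

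For part (ii), I would again induct on $k$, with the base case $k=1$ handled by Pontryagin duality: an inverse limit of rotational systems $U_{1,n}$ corresponds to a direct limit of the discrete groups $\hat U_{1,n}$, whose Pontryagin dual is the inverse limit $U_1 = \varprojlim U_{1,n}$. For the inductive step, applying part (i) to each factor map $\YY \to \YY_n$ gives surjective homomorphisms $\varphi_{i,n} \colon U_i \to U_{i,n}$ for each $i$, compatible with the factor maps $\YY_n \to \YY_{n'}$ (for $n \geq n'$) by uniqueness up to the relevant equivalences — here one must check that the $\varphi_{i,n}$ form an inverse system, i.e., that the maps $U_{i,n} \to U_{i,n'}$ induced by $\YY_n \to \YY_{n'}$ compose correctly with the $\varphi_{i,n}$; this is a diagram chase using the functoriality from part (i) and Proposition \ref{prop-functoriality}(iv) (which says $\ZZ^i(\YY) = \varprojlim \ZZ^i(\YY_n)$). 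Thus we get a continuous homomorphism $U_i \to \varprojlim_n U_{i,n}$. To see it is an isomorphism: it is surjective because each $\varphi_{i,n}$ is (a standard compactness argument — a compatible sequence $(u_n) \in \varprojlim U_{i,n}$ has nonempty preimage $\bigcap_n \varphi_{i,n}^{-1}(u_n)$ as a decreasing intersection of nonempty compacts), and it is injective because $\YY = \varprojlim \YY_n$ forces the coordinates of $\YY$ — in particular the structure group $U_i$, which appears as the $i$-th fiber — to be determined by those of the $\YY_n$; concretely, an element of the kernel would give a nontrivial $\Gamma$-invariant subalgebra of $L^\infty(\YY)$ killed by every projection to $L^\infty(\YY_n)$, contradicting that these projections separate points.

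The main obstacle I anticipate is the precise justification in part (i) that a factor map over $\pi_{k-1}$ must be of the stated ``affine'' form $(y,u) \mapsto (\pi_{k-1}(y), \varphi_k(u) + F_{k-1}(y))$ with $\varphi_k$ a \emph{continuous group homomorphism} (rather than merely a measurable one) and surjective. This is where the structure theory genuinely enters: one uses that $\ZZ^k$ is the maximal order-$k$ factor together with the relative ergodic decomposition of the abelian extension, and the rigidity of abelian group extensions — the relevant statement is that any factor map between abelian skew-products lying over a fixed base factor is induced by a homomorphism of fiber groups twisted by a measurable cocycle correction, which is exactly the content that underlies Proposition \ref{abelext} and its functoriality. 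I would either cite the relevant lemmas from \cite{btz} (Appendix A/B) and \cite{hk-book} (Chapter 9) for this, or give a short self-contained argument: the fiberwise action of $U'_k$ (via vertical rotations) pulls back along $\pi$ to an action on $\YY$ commuting with $\Gamma$; its kernel in $U_k$ is a closed subgroup, the quotient injects continuously into $U'_k$, and maximality of $\ZZ^k(\XX)$ forces this to be onto — giving $\varphi_k$ — after which $F_{k-1}$ and the cocycle identity drop out of comparing the two $\Gamma$-actions.
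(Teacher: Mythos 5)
The paper does not actually prove this proposition in-text: its ``proof'' is a citation to \cite[Lemma 6.11]{shalom1}. Your reconstruction follows essentially the same route that the cited reference (and the underlying Host--Kra/BTZ machinery) uses: Pontryagin duality in the base case, induction up the tower \eqref{structuregroups}, and the Mackey/vertical-Fourier rigidity of abelian skew-product extensions to show that a factor map over $\pi_{k-1}$ is affine in the fiber; part (ii) then follows by applying part (i) to each $\YY\to\YY_n$, checking compatibility, and dualizing. So your proposal is sound in approach and at least as detailed as what the paper provides.

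Two spots in your sketch should be tightened. First, in part (i) the phrase ``the fiberwise action of $U'_k$ pulls back along $\pi$ to an action on $\YY$'' is not a well-defined operation: factor maps do not transport group actions backwards. The correct mechanism is character-by-character: for $\chi\in\hat U'_k$, the pullback $g_\chi \coloneqq \chi(\mathrm{u}')\circ\pi$ satisfies $g_\chi\circ T^\gamma = g_\chi\cdot e(\chi\cdot\rho'_{k-1,\gamma}\circ\pi_{k-1})$, so expanding $g_\chi$ in vertical Fourier modes over $\ZZ^{k-1}(\YY)$ and using that ergodicity of $\YY$ forbids $\xi\circ\rho_{k-1}$ from being a coboundary for nontrivial $\xi\in\hat U_k$, one finds a \emph{single} frequency $\xi(\chi)$; the map $\chi\mapsto\xi(\chi)$ is an injective homomorphism $\hat U'_k\to\hat U_k$ (injectivity because a frequency-zero $g_\chi$ would be $\ZZ^{k-1}(\YY)$-measurable, forcing $\chi$ trivial), whose dual is the desired continuous surjection $\varphi_k$, and the unimodular phases assemble into $F_{k-1}\in\M(\ZZ^{k-1}(\YY),U'_k)$ via the Fourier equivalence \eqref{fourier-equiv}. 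This also gives the uniqueness of $\varphi_k$ that your compatibility ``diagram chase'' in part (ii) implicitly needs. Second, in part (ii) the injectivity of $U_i\to\varprojlim_n U_{i,n}$ is not quite ``the projections separate points'': the right argument is that, by the affine form from part (i), pullbacks of $\ZZ^i(\YY_n)$-measurable functions have vertical frequencies lying in the image of $\hat U_{i,n}$ in $\hat U_i$; since $\ZZ^i(\YY)$ is the inverse limit of the $\ZZ^i(\YY_n)$ (Proposition \ref{prop-functoriality}(iv)), the union of these subspaces is dense in $L^2(\ZZ^i(\YY))$, which forces $\hat U_i=\bigcup_n \varphi_{i,n}^*\hat U_{i,n}$ and hence $U_i=\varprojlim_n U_{i,n}$. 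With those repairs your argument is complete and matches the standard proof the paper delegates to the literature.
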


\begin{proof} See \cite[Lemma 6.11]{shalom1}.
\end{proof}

We collect some useful results about type of cocycles.

\begin{proposition}\label{type}
Let $\XX=(X,\mu,T)$ be an ergodic $\Gamma$-system, let $U$ be a metrizable compact abelian group, let $\rho=(\rho_\gamma)_{\gamma\in \Gamma}$ be cocycle on $\XX$ with values in $U$, and let $k\geq 1$.
\begin{itemize}
    \item[(i)] (Moore--Schmidt theorem) $\rho$ is a coboundary if and only if $\xi\circ \rho$ is a coboundary as a cocycle on $\XX$ with values in $\T$ for all Fourier characters $\xi$ in the Pontryagin dual $\hat U$ of $U$. 
    \item[(ii)] $\rho$ is of type $k$ if and only if for every $\xi\in\hat U$, $\xi\circ\rho$ is of type $k$.
    \item[(iii)] If $\XX$ is of order $k$, then $\XX\rtimes_\rho U$ is of order $k$ if and only if $\rho$ is of type $k$.
    \item[(iv)] If $\rho$ is a function (not necessarily a cocycle) of type $m\geq 0$ and $S\in \Aut(\XX)$ is an automorphism\footnote{An automorphism of a $\Gamma$-system $\XX$ is a measure-preserving isomorphism  $S$ of $(X,\mu)$ commuting (up to almost everywhere equivalence) with the $T$-action.} of the $\Gamma$-system $\XX$ that fixes the $\sigma$-algebra of the Host--Kra factor $\ZZ^k(\XX)$, then $\partial_S \rho$ is a function of type $\max\{m-k-1,0\}$.
    \item[(v)] Suppose $\YY=(Y,\mu,T)$ is an ergodic $\Gamma$-extension of $\XX$ with factor map $\pi:Y\rightarrow X$. If $\rho$ is of type $k$, then $\rho\circ\pi$ is of type $k$ as well. 
    \item[(vi)] Suppose that $U=\T$ is the torus and $\rho$ is a cocycle of type $1$. Then $\rho$ is cohomologous to polynomial cocycle of degree $0$, that is a homomorphism $\Gamma\to \T$. 
    \item[(vii)] If $\XX$ is not ergodic, then $\rho$ is a coboundary if and only if $\rho$ is a coboundary on every ergodic component of $\XX$. 
\end{itemize}
\end{proposition}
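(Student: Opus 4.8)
The proposition collects facts of quite different character, so the plan is to isolate the one genuinely new ingredient, part (vii), prove it first, and then harvest everything else. Part (i) is precisely the Moore--Schmidt theorem; I would cite the version for countable abelian group actions worked out in \cite{jt19}, \cite{jt-foundational}. The nontrivial implication (that $\T$-valued coboundaries over all characters $\xi\in\hat U$ assemble into a $U$-valued coboundary) is the content of those references, while the converse is immediate since $\xi\circ dF=d(\xi\circ F)$. Since Moore--Schmidt does not require ergodicity, (i) is available for arbitrary (not necessarily ergodic) $\Gamma$-systems, so no circularity with (vii) will arise.

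For (vii), the forward direction is trivial: restricting a solution $F$ of $dF=\rho$ to an ergodic component solves the equation there. For the converse I would write the ergodic decomposition $\mu=\int_\Omega \mu_\omega\,d\nu(\omega)$, with $\mu_\omega$ the ergodic invariant measures and $\omega(\cdot)$ the Borel factor map onto the invariant $\sigma$-algebra. By hypothesis, for $\nu$-a.e.\ $\omega$ there is $F_\omega\in\M(X,U)$ with $(dF_\omega)_\gamma=\rho_\gamma$ $\mu_\omega$-a.e.\ for all $\gamma\in\Gamma$. As $U$ is compact metrizable, $\M(X,U)$ is Polish, and using a measurable disintegration one checks that
\[
R:=\{(\omega,F)\in\Omega\times\M(X,U): (dF)_\gamma=\rho_\gamma\ \mu_\omega\text{-a.e.\ for all }\gamma\in\Gamma\}
\]
is a Borel subset of $\Omega\times\M(X,U)$ whose projection to $\Omega$ is conull. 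By the Jankov--von Neumann uniformization theorem (see e.g.\ \cite{kechris1995classical}) there is a universally measurable selector $\omega\mapsto F_\omega$ with $(\omega,F_\omega)\in R$ for $\nu$-a.e.\ $\omega$; setting $F(x):=F_{\omega(x)}(x)$ and disintegrating gives $dF=\rho$ $\mu$-a.e. Alternatively one may simply quote the corresponding gluing statement from \cite{jt-foundational}. I expect the measurability bookkeeping here --- Borelness of $R$ and the joint measurability needed to glue --- to be the main (though routine) obstacle.

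Granting (i) and (vii), part (ii) is formal: $\Delta^{[k]}\colon\M(X,U)\to\M(X^{[k]},U)$ is a $\Gamma$-equivariant group homomorphism commuting with post-composition by any $\xi\in\hat U$, so $\xi\circ\Delta^{[k]}\rho=\Delta^{[k]}(\xi\circ\rho)$; hence $\rho$ is of type $k$ $\iff$ $\Delta^{[k]}\rho$ is a coboundary on $\XX^{[k]}$ $\iff$ (by (i), applied to the possibly non-ergodic $\XX^{[k]}$) $\Delta^{[k]}(\xi\circ\rho)$ is a coboundary on $\XX^{[k]}$ for every $\xi$ $\iff$ $\xi\circ\rho$ is of type $k$ for every $\xi$. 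Part (v) is equally quick: the parallelepiped construction is functorial, so a factor map $\pi\colon\YY\to\XX$ induces $\pi^{[k]}\colon\YY^{[k]}\to\XX^{[k]}$, and if $\Delta^{[k]}\rho=dG$ then $\Delta^{[k]}(\rho\circ\pi)=d(G\circ\pi^{[k]})$.

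For (iii) I would cite \cite[Proposition 3.4]{btz} (after \cite[Proposition 6.3]{host2005nonconventional}). For (vi), which is essentially the identification of the Kronecker factor with the maximal order-$1$ factor, I would cite \cite{host2005nonconventional}, \cite{hk-book}; alternatively, by (iii) the system $\XX\rtimes_\rho U$ is then of order $1$, hence a compact abelian group rotation, which forces $\rho$ to be cohomologous to a homomorphism $\Gamma\to\T$. Finally, (iv) would follow the argument of \cite{btz}: using (ii) to reduce to $U=\T$ and the identity $\Delta^{[m']}(\partial_S\rho)=\partial_{S^{[m']}}(\Delta^{[m']}\rho)$, one exploits that $S^{[\cdot]}$ fixes the relevant Host--Kra--Ziegler factors (because $S$ fixes $\ZZ^k(\XX)$) to descend the transfer function $G$ with $\Delta^{[m]}\rho=dG$ and conclude that $\partial_S\rho$ has type $\max\{m-k-1,0\}$; the careful tracking of the cube structure here is the one remaining point needing attention.
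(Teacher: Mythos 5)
Most of your plan coincides with the paper's own treatment, which is likewise citation-based for everything except one clause: (i) is quoted from Moore--Schmidt \cite{moore1980coboundaries}; (ii) is deduced from (i) exactly as you do (using that Moore--Schmidt needs no ergodicity, so it may be applied on the non-ergodic $\XX^{[k]}$); (iii), (v) and (vii) are quoted from Host--Kra \cite{host2005nonconventional}, \cite{hk-book} with the remark that the arguments extend to arbitrary countable abelian $\Gamma$; and (iv) is handled by the argument of \cite[Lemma 5.3]{btz}, extended to general automorphisms. Your ergodic-decomposition-plus-uniformization proof of (vii) is essentially the standard argument behind the cited lemma; the one point to repair is that the component solutions $F_\omega$ are only defined up to $\mu_\omega$-null sets, and the ergodic measures $\mu_\omega$ are typically singular to $\mu$, so your set $R$ cannot be formulated inside $\M(X,U)$ taken modulo $\mu$-null sets: one must work with genuine Borel representatives (or a parametrized space of Borel functions) before invoking Jankov--von Neumann, after which the gluing goes through.

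The genuine gap is (vi). Your fallback derivation ``by (iii) the system $\XX\rtimes_\rho\T$ is of order $1$, hence a compact group rotation'' is not available, because part (iii) presupposes that $\XX$ itself is of order $k=1$, whereas in (vi) the base $\XX$ is an arbitrary ergodic $\Gamma$-system; so (iii) gives nothing here. The citation route is also exactly what the paper declines to take: the statements in the literature (\cite{moore1980coboundaries}, \cite[Lemma 10.3]{fw}, \cite[Chapter 5]{hk-book}, \cite[Proposition 2.4(vi)]{jt21-1}) are proved for $\Z$-actions or for torsion-free $\Gamma$, and the point of (vi) as stated is precisely that $\Gamma$ may have torsion. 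This is the one clause the paper proves from scratch: by ergodicity $\mu^{[1]}=\mu\times\mu$, so type $1$ yields $F\in\M(X\times X,\T)$ with $\rho_\gamma(x_0)-\rho_\gamma(x_1)=\partial_\gamma F(x_0,x_1)$; one then lets the abelian group $\{(\gamma,\rho_\gamma+c):\gamma\in\Gamma,\ c\in\T\}$ near-act on $X\times\T$, notes that the compact self-adjoint operator $HH^*$ built from the kernel $e(F(x_0,x_1)+u_1-u_0)$ commutes with this action, extracts a joint eigenfunction, which must take the form $\beta_0(x)e(u)$ with $|\beta_0|$ constant by ergodicity, hence equals $e(F'(x)+u)$ for some $F'\in\M(X,\T)$, and the eigenfunction equation then says that $\rho_\gamma-\partial_\gamma F'$ is a constant $c_\gamma$ for each $\gamma$, with $\gamma\mapsto c_\gamma$ a homomorphism by the cocycle identity. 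You would need to supply this (or some other torsion-friendly) argument for (vi) rather than a citation or the (iii)-based shortcut.
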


\begin{proof}
For $(i)$, see \cite{moore1980coboundaries}. The claim $(ii)$ follows immediately from $(i)$. The proof of $(iii),(v)$, and $(vii)$ can be found in \cite[Corollary 7.7]{host2005nonconventional}, \cite[Corollary 7.8]{host2005nonconventional}, and  \cite[Lemma 9.1]{host2005nonconventional} for $\Gamma=\Z$ respectively (see also \cite[Propositions 5, 8 and Corollary 9, Chapter 18]{hk-book} and \cite[Lemma 11, 
Chapter 5]{hk-book}) but the arguments extend without difficulty to arbitrary discrete countable abelian groups $\Gamma$. The proof of $(iv)$ was established in \cite[Lemma 5.3]{btz} for automorphisms of a specific type, but the same proof holds for arbitrary automorphisms. 

We establish $(vi)$. Results of this type have appeared in the literature before \cite{moore1980coboundaries}, \cite[Lemma 10.3]{fw}, \cite[Chapter 5, Lemma 13]{hk-book}, \cite[Proposition 2.4(vi)]{jt21-1}, but the result here is slightly more general in that we do not require $\Gamma$ to be torsion-free. 

Since $\rho$ is of type $1$ and $\mu^{[1]} = \mu \times \mu$, there exists $F \in M(X \times X, \T)$ such that
\begin{equation}\label{rho-eq}
 \rho_\gamma(x_0) - \rho_\gamma(x_1) = F(T^\gamma x_0,T^\gamma x_1) - F(x_0,x_1)
 \end{equation}
for all $\gamma \in \Gamma$ and $\mu \times \mu$-almost all $(x_0,x_1)$.  As $\rho$ is a $(\Gamma,X,\T)$-cocycle, the set
$$ G \coloneqq \{ (\gamma, \rho_\gamma + c): \gamma \in \Gamma, c \in \T \}$$
is an abelian group that near-acts on $X \times \T$.  By \eqref{rho-eq}, the integral operator $H \colon L^2(X \times \T) \to L^2(X \times \T)$ defined by
$$ H f(x_1,u_1) \coloneqq \int_{X \times \T} e( F(x_0,x_1) + u_1 - u_0 ) f(x_0,u_0)\ d\mu(x_0) du_0$$
commutes with this near-action; then $HH^*$ also commutes with this near-action.  As this operator is compact, self-adjoint, and non-trivial, there thus exists a non-trivial joint eigenfunction $\beta \in L^2(X \times \T)$ of $H H^*$ as well as the near-action of $G$, with the eigenvalue of $H$ being non-zero.  As $\beta$ is a scalar multiple of $HH^*\beta$, it must take the form $\beta(x,u) = \beta_0(x) e(u)$ almost everywhere for some non-trivial $\beta_0 \in L^2(X)$; the function $|\beta_0|$ is then $\Gamma$-invariant and thus constant by ergodicity.  We can then normalize $|\beta_0|=1$, thus we may write
$$ \beta(x,u) = e(F(x)+u)$$
almost everywhere for some $F \in M(X,\T)$.  As $\beta$ is an eigenfunction of the near-action of $(\gamma,\rho_\gamma)$, a calculation then shows that $(\rho-dF)_\gamma$ is almost everywhere constant, thus in $\Poly_{\leq 0}(X)$, for each $\gamma \in \Gamma$.
\end{proof}

The following result from \cite{shalom2} is a higher-order version of the Moore--Schmidt theorem which requires higher-order divisibility of the underlying system (and the acting group to be torsion-free), see the example after \cite[Proposition 3.8]{shalom2} for a counterexample when the divisibility hypothesis is dropped.  

\begin{proposition}[Higher order Moore-Schmidt]\label{typecircle:prop}
\cite[Proposition 3.8]{shalom2}
	Let $k\geq 1$, let $\Gamma$ be a torsion-free countable abelian group, and let $\XX$ be a $k$-divisible ergodic $\Gamma$-system. Let $U$ be a compact abelian metrizable group, let $\rho \colon \Gamma \to \M(X,U)$ be a $U$-valued cocycle on $X$, and suppose that for every $\xi\in \hat U$, $\xi \circ \rho$ is a $\T$-valued quasi-coboundary on $\XX$ of order $k-1$.  Then $\rho$ is a $U$-valued quasi-coboundary on $\XX$ of order $k-1$.
\end{proposition}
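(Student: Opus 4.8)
The plan is to glue the characterwise data into a single $U$-valued solution, in the spirit of the classical Moore--Schmidt theorem (Proposition~\ref{type}(i)), with the divisibility of $\Poly_{\leq k}(\XX)$ supplying the splitting that makes the gluing possible. Throughout I would work with the Fourier equivalences $\M(\XX,U) \equiv \Hom(\hat U, \M(\XX,\T))$ and, from~\eqref{fourier-equiv}, $\Poly_{\leq k-1}(\XX,U) \equiv \Hom(\hat U, \Poly_{\leq k-1}(\XX))$, compatibly with the inclusion $\Poly_{\leq k-1}(\XX,U) \leq \M(\XX,U)$. Under these identifications a character $\xi \in \hat U$ acts by evaluation, with $\rho_\gamma(\xi) = (\xi \circ \rho)_\gamma$ and $(\partial_\gamma F)(\xi) = \partial_\gamma(F(\xi))$; and, crucially, an $F \in \M(\XX,U)$ lies in $\Poly_{\leq k-1}(\XX,U)$ if and only if $F(\xi) \in \Poly_{\leq k-1}(\XX)$ for every $\xi \in \hat U$.

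The first step is to form the group
$$ H \coloneqq \bigl\{ (\xi, G) \in \hat U \times \M(\XX,\T) : (\xi \circ \rho)_\gamma - \partial_\gamma G \in \Poly_{\leq k-1}(\XX) \text{ for all } \gamma \in \Gamma \bigr\}. $$
Additivity of $\xi \mapsto \xi \circ \rho$ and of each $\partial_\gamma$ shows that $H$ is a subgroup of $\hat U \times \M(\XX,\T)$. The projection $(\xi, G) \mapsto \xi$ is surjective precisely because each $\xi \circ \rho$ is assumed to be a $\T$-valued quasi-coboundary of order $k-1$; and its kernel consists of the pairs $(0,G)$ with $\partial_\gamma G \in \Poly_{\leq k-1}(\XX)$ for all $\gamma$, which by the defining property of polynomials is exactly $\{0\} \times \Poly_{\leq k}(\XX)$. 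Hence there is a short exact sequence of abelian groups
$$ 0 \to \Poly_{\leq k}(\XX) \to H \to \hat U \to 0. $$

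Since $\XX$ is $k$-divisible, $\Poly_{\leq k}(\XX)$ is divisible, hence injective as a $\Z$-module, so $\mathrm{Ext}^1(\hat U, \Poly_{\leq k}(\XX)) = 0$ and the sequence splits. Choosing a splitting homomorphism $\xi \mapsto (\xi, F(\xi))$, the assignment $\xi \mapsto F(\xi)$ is a group homomorphism from the countable discrete group $\hat U$ to $\M(\XX,\T)$, hence corresponds under the Fourier equivalence to some $F \in \M(\XX,U)$. By the defining relation of $H$ we then have $\rho_\gamma(\xi) - (\partial_\gamma F)(\xi) = (\xi\circ\rho)_\gamma - \partial_\gamma(F(\xi)) \in \Poly_{\leq k-1}(\XX)$ for all $\xi \in \hat U$ and $\gamma \in \Gamma$, which by the Fourier characterization of $\Poly_{\leq k-1}(\XX,U)$ recalled in the first paragraph means $\rho_\gamma - \partial_\gamma F \in \Poly_{\leq k-1}(\XX,U)$ for all $\gamma$. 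This is exactly the assertion that $\rho$ is a $U$-valued quasi-coboundary of order $k-1$.

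The essential input is the divisibility (equivalently, the $\Z$-injectivity) of $\Poly_{\leq k}(\XX)$, which is exactly what the $k$-divisibility hypothesis provides; without it the short exact sequence above need not split, and the statement genuinely fails, as the example after \cite[Proposition~3.8]{shalom2} shows. The remainder is bookkeeping. The one point needing care is to keep the two Fourier identifications --- one for $\M(\XX,\cdot)$, one for $\Poly_{\leq k-1}(\XX,\cdot)$ --- mutually consistent and $\Gamma$-equivariant, so that ``evaluation at $\xi$'' commutes both with $\partial_\gamma$ and with the membership tests, together with the observation that a group homomorphism out of the discrete group $\hat U$ is automatically continuous and so genuinely defines an element of $\M(\XX,U)$.
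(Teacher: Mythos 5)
Your proof is correct. One point of comparison is moot: the paper does not prove Proposition \ref{typecircle:prop} at all, but quotes it from \cite[Proposition 3.8]{shalom2}, so there is no in-paper argument to measure your proposal against. That said, your route is the natural one and is consistent with how the paper handles analogous problems: you encode the characterwise hypothesis as the abelian extension $0 \to \Poly_{\leq k}(\XX) \to H \to \hat U \to 0$ (the kernel identification $\{G : \partial_\gamma G \in \Poly_{\leq k-1}(\XX)\ \forall \gamma\} = \Poly_{\leq k}(\XX)$ is indeed just the definition of degree), use $k$-divisibility to make $\Poly_{\leq k}(\XX)$ injective so that the sequence splits (this is exactly Lemma \ref{divis-inject}(ii)), and then glue the splitting into a single $F \in \M(\XX,U)$ via the Fourier identifications $\M(\XX,U) \equiv \Hom(\hat U, \M(\XX,\T))$ and \eqref{fourier-equiv}, both of which the paper asserts and which are unproblematic since $\hat U$ is countable and discrete. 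This is the Pontryagin-dual counterpart of the splitting problems the paper itself wrestles with (compare Remark \ref{cocyc-skew} and the sequence \eqref{short-exact} in Section \ref{divis-weyl-sec}), with the pleasant feature that here $H$ sits inside the honest direct product $\hat U \times \M(\XX,\T)$, is abelian, and no topological or measurable-selection difficulties arise, so the purely algebraic splitting suffices. Two minor observations: your argument uses neither ergodicity nor the torsion-freeness of $\Gamma$ (these hypotheses are carried over from the formulation in \cite{shalom2}, and torsion-freeness is in practice what makes $k$-divisibility a usable assumption), and the only hypothesis doing real work is the divisibility of $\Poly_{\leq k}(\XX)$, which, as you note, cannot be dropped.
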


We record some relevant properties about polynomials. 
	\begin{proposition}\label{ppfacts} Let $\XX=(X,\mu,T)$ be an ergodic $\Gamma$-system.
	\begin{itemize}
	\item[(i)] Let $k\geq 1$, and suppose that $\phi,\psi\in \Poly_{\leq k}(\XX)$ and $\phi-\psi$ is non-constant. Then $$\|e(\phi)-e(\psi)\|_{L^2(\XX)} \geq \sqrt{2}/2^{k-2}$$ where $e(y)=e^{2\pi i y}$.  In particular, there are only countably many elements of $\Poly_{\leq k}(\XX)$ up to constants.
    \item[(ii)]  For any $m \geq 0$, a polynomial in $\Poly_{\leq m}(\XX)$ is measurable in $\ZZ^m(\XX)$.
	\item[(iii)] Let $m\geq 0, k\geq 1$ and let $P\in \Poly_{\leq m}(X)$. If $t\in \Aut(\XX)$ fixes the $\sigma$-algebra $\ZZ^k(\XX)$, then $\partial_{t} P$ is a polynomial of degree $\leq m-k-1$. 
	\item[(iv)] Let $m\geq 0$, let $P\in \Poly_{\leq m}(X)$, and let $K\leq \Aut(\XX)$ be a compact subgroup. Then there is an open subgroup $V\leq K$ such that $\partial_{u} P$ is a constant for every $u\in V$. 
	\item[(v)] Let $m\geq 0$ and let $f \in \M(X,\T)$. Then $f$ is a polynomial of degree at most $m-1$ if and only if $\Delta^{[m]}f(x)\equiv 0$ for $\mu^{[m]}$-almost every $x\in \XX^{[m]}$. 
    \item[(vi)]  If $\XX$ is an inverse limit of $(\XX_\alpha)_{\alpha \in A}$ is a directed set of ergodic $\Gamma$-systems (with compatible factor maps) and $k \geq 1$, then $\Poly_{\leq k}(\XX)$ is the union of the $\Poly_{\leq k}(\XX_\alpha)$ (where we embed the latter groups in the former in the obvious fashion).
	\end{itemize}
	\end{proposition}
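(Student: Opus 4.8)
The plan is to establish the six assertions roughly in the order (v), (i), (ii), (iii), (iv), (vi), since the later parts build on the earlier ones. Part (v) — that $f \in \M(X,\T)$ lies in $\Poly_{\leq m-1}(\XX)$ iff $\Delta^{[m]}f$ vanishes $\mu^{[m]}$-a.e.\ on $\XX^{[m]}$ — I would take from the standard theory of the Host--Kra cube measures (\cite[Chapter 8]{hk-book}): the ``only if'' direction holds because $\Delta^{[m]}$ of a polynomial of degree $<m$ vanishes on the orbit-parallelepipeds $(T^{\omega_1\gamma_1+\cdots+\omega_m\gamma_m}x)_{\omega\in\{0,1\}^m}$ (where it reduces to $(-1)^m\partial_{\gamma_1}\cdots\partial_{\gamma_m}$), and $\mu^{[m]}$ is carried by the closure of such parallelepipeds; the ``if'' direction comes from restricting the a.e.\ vanishing to those same (generic) parallelepipeds and averaging along F{\o}lner sequences to get $\partial_{\gamma_1}\cdots\partial_{\gamma_m}f=0$ a.e. Part (i) I would prove by induction on $k$: writing $g=\phi-\psi\in\Poly_{\leq k}(\XX)$ and using $|e(\phi)(x)-e(\psi)(x)|=|e(g)(x)-1|$, the claim is equivalent to a uniform bound $|\int_X e(g)\,d\mu|\leq 1-c_k$ for non-constant $g$. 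The base case uses ergodicity: for $g\in\Poly_{\leq 1}(\XX)$, $\gamma\mapsto\partial_\gamma g$ is a homomorphism $\Gamma\to\T$, non-trivial unless $g$ is invariant (hence constant), so $e(g)$ is a non-trivial eigenfunction and $\int e(g)=0$. For the inductive step, van der Corput's inequality along a F{\o}lner sequence gives $|\int e(g)|^2=\lim_N\E_{n,m\leq N}\int e(\partial_{\gamma_n-\gamma_m}g)$ with $\partial_\delta g\in\Poly_{\leq k-1}(\XX)$; the ``bad set'' $S_g=\{\delta:\partial_\delta g\text{ constant}\}$ is a proper subgroup (else $g\in\Poly_{\leq 1}(\XX)$, already handled), so on a F{\o}lner sequence equidistributing modulo $S_g$ the differences $\gamma_n-\gamma_m$ avoid $S_g$ with frequency $\geq\tfrac12$, and there the induction hypothesis bounds $|\int e(\partial_{\gamma_n-\gamma_m}g)|$ away from $1$, giving the gap (with a little more care about the van der Corput step, e.g.\ combined with (v), one recovers the stated constant $\sqrt2/2^{k-2}$, but I would not chase the optimal value).

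For (ii) I would induct on $m$, the case $m=0$ being immediate since $\Poly_{\leq 0}(\XX)$ consists of constants by ergodicity, which are measurable in $\ZZ^0(\XX)$. Given $\phi\in\Poly_{\leq m}(\XX)$, each $\partial_\gamma\phi\in\Poly_{\leq m-1}(\XX)$ is $\ZZ^{m-1}(\XX)$-measurable, hence $\ZZ^m(\XX)$-measurable, by induction; since $e(\phi)\circ T^\gamma\,\overline{e(\phi)}=e(\partial_\gamma\phi)$, conditioning on $\ZZ^m(\XX)$ yields $\E(e(\phi)\mid\ZZ^m(\XX))\circ T^\gamma=e(\partial_\gamma\phi)\,\E(e(\phi)\mid\ZZ^m(\XX))$, so the modulus of $\E(e(\phi)\mid\ZZ^m(\XX))$ is $\Gamma$-invariant, hence a constant $c\geq 0$. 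This $c$ is nonzero: by (v), $\Delta^{[m+1]}\phi\equiv 0$ $\mu^{[m+1]}$-a.e., so $\|e(\phi)\|_{U^{m+1}(\XX)}=1$, which by the defining property of $\ZZ^m(\XX)$ forces $\E(e(\phi)\mid\ZZ^m(\XX))\neq 0$. Then $h:=\E(e(\phi)\mid\ZZ^m(\XX))/c$ is unimodular and satisfies the same twisting relation as $e(\phi)$, so $e(\phi)\,\overline h$ is $\Gamma$-invariant, hence a constant, and therefore $e(\phi)$ — hence $\phi$ — is $\ZZ^m(\XX)$-measurable. Part (iii) then follows quickly: for $P\in\Poly_{\leq m}(X)$ and $t$ fixing $\ZZ^k(\XX)$, and any $\gamma_1,\dots,\gamma_{m-k}\in\Gamma$, the polynomial $Q:=\partial_{\gamma_1}\cdots\partial_{\gamma_{m-k}}P$ has degree $\leq k$, hence is $\ZZ^k(\XX)$-measurable by (ii) and fixed by $t$, so $\partial_t Q=0$; as $t$ commutes with the $\Gamma$-action, $\partial_t$ commutes with each $\partial_{\gamma_i}$, whence $\partial_{\gamma_1}\cdots\partial_{\gamma_{m-k}}\partial_t P=0$, i.e.\ $\partial_t P\in\Poly_{\leq m-k-1}(X)$.

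For (iv), I first note $\partial_u P\in\Poly_{\leq m-1}(X)$ for every $u\in K$ (since $\partial_{\gamma_1}\cdots\partial_{\gamma_m}\partial_u P=\partial_u(\partial_{\gamma_1}\cdots\partial_{\gamma_m}P)=\partial_u(\mathrm{const})=0$), and that $\{u\in K:\partial_u P\text{ is constant}\}$ is a subgroup (using $\partial_{uu'}P=(\partial_{u'}P)\circ u+\partial_u P$ and the analogous identity for inverses). By strong continuity of the $K$-action, $u\mapsto e(\partial_u P)=(e(P)\circ u)\,\overline{e(P)}$ is continuous into $L^2(\XX)$ with value $1$ at $u=\id$; applying the gap estimate (i) with $k=m-1$, there is an open neighbourhood of $\id$ in $K$ on which $\partial_u P$ is forced to be constant, and the open subgroup it generates is the desired $V$. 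Finally (vi): induct on $k$, the case $k=0$ being trivial. Given $\phi\in\Poly_{\leq k}(\XX)$, the $\partial_\gamma\phi\in\Poly_{\leq k-1}(\XX)$ are, by induction and countability of $\Gamma$ (after replacing the index set by a cofinal sequence, which exists since the probability algebras are separable), all measurable in a single $\XX_\alpha$; then for $\beta\geq\alpha$ the conditional-expectation argument of (ii) shows $|\E(e(\phi)\mid\XX_\beta)|$ is a constant $c_\beta$, and $c_\beta>0$ forces $\phi$ to be $\XX_\beta$-measurable; since $\E(e(\phi)\mid\XX_\beta)\to e(\phi)\neq 0$ in $L^2$ by the martingale theorem, $c_\beta>0$ for $\beta$ large, giving $\phi\in\Poly_{\leq k}(\XX_\beta)$. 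The reverse inclusion is clear since derivatives commute with pullback along factor maps.

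The main obstacle is part (i): producing a clean, uniform quantitative gap — and in particular the exact constant $\sqrt2/2^{k-2}$ — requires careful bookkeeping of the van der Corput averaging over the coset structure of the ``bad'' subgroup $S_g$, and this is the one place where a genuinely quantitative argument is needed. Once (i) and (v) are in hand, the remaining parts are essentially formal, the recurring device being the rigidity trick that a unimodular function whose $T^\gamma$-twists lie in a subfactor must have conditional expectation onto that subfactor of constant modulus, equal to the function up to a scalar unless it vanishes.
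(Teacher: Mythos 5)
Your treatments of (i)--(v) essentially reconstruct the standard arguments that the paper itself only cites (parts (i), (ii), (iii), (v) to \cite{btz}, part (iv) to \cite{shalom3}): the eigenfunction computation in the base case, the mean-ergodic/F{\o}lner induction for the uniform gap, the constant-modulus conditional expectation rigidity trick for (ii), commuting $\partial_t$ with the $\Gamma$-derivatives for (iii), and the gap-plus-strong-continuity argument for (iv) are all correct in outline. Two caveats: in (i) you do not actually establish the stated constant $\sqrt{2}/2^{k-2}$, only some uniform $c_k$ (nothing else in the paper uses the explicit value, but the statement asserts it); and in (v) the ``if'' direction as you phrase it --- restricting the $\mu^{[m]}$-a.e.\ vanishing to the orbit parallelepipeds --- is not literally an argument, since those parallelepipeds form a $\mu^{[m]}$-null family; the genuine proof (which you in any case defer to the literature, as does the paper) goes by induction on $m$ through the relative-product structure of $\mu^{[m]}$.

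The genuine gap is in (vi), the one part the paper proves rather than cites. Your inductive step requires all the first derivatives $\partial_\gamma\phi$, $\gamma\in\Gamma$, to be measurable with respect to a \emph{single} stage $\XX_\alpha$; but the induction hypothesis only places each individual $\partial_\gamma\phi$ in some stage, and neither the countability of $\Gamma$ nor the directedness of the index set (nor passing to an increasing cofinal chain) produces a common upper bound for infinitely many indices --- the levels needed for the various $\gamma$ could be unbounded along the chain. In fact, the assertion that the whole derivative family lies in one stage is essentially equivalent to the statement being proved (once $\phi\in\Poly_{\leq k}(\XX_\alpha)$, all its derivatives automatically are), so the induction does not close as written, and without that step the twisting identity $\E(e(\phi)\mid\XX_\beta)\circ T^\gamma=e(\partial_\gamma\phi)\,\E(e(\phi)\mid\XX_\beta)$ that your rigidity/martingale argument relies on is unavailable. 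This is exactly where the paper brings in a nontrivial external input: it notes $\|e(P)\|_{U^{k+1}(\XX)}=1$, approximates $e(P)$ in $L^2$ (hence in $U^{k+1}$, by boundedness) by a unimodular $\XX_\alpha$-measurable function, invokes the Eisner--Tao stability theorem for polynomial phases inside $\XX_\alpha$ to produce $R\in\Poly_{\leq k}(\XX_\alpha)$ with $e(R)$ close to $e(P)$, and then uses part (i) to force $P-R$ to be constant. Your scheme for (vi) would go through if the common-stage step were justified, but as it stands you either need an argument supplying it or a quantitative stability input of the Eisner--Tao type.
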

	\begin{proof}
	The proof of $(i)$, $(ii)$, and $(iii)$ can be found in \cite[Lemma C.1]{btz}, \cite[Lemma A.35]{btz}, and \cite[Proof of Lemma 8.8]{btz} respectively. The proof of $(iv)$ can be found in \cite[Corollary B.3]{shalom3}. The proof of $(v)$ is given in \cite[Lemma 4.3(iii)]{btz}.  For $(vi)$, let $P \in \Poly_{\leq k}(\XX)$, then the Gowers--Host--Kra seminorm $\|e(P)\|_{U^{k+1}(\XX)}$ of $e(P)$ is equal to $1$.  As $\XX$ is the inverse limit of the $\XX_\alpha$, and the $U^{k+1}$ norm is controlled by the $L^{2^{k+1}}$ norm, we see that for any $\eps>0$ there exists $Q \in \M(\XX_\alpha,\T)$ for some $\alpha$ such that 
 $$\|e(P)-e(Q)\|_{U^{k+1}(\XX)}, \|e(P)-e(Q)\|_{L^2(\XX)} \leq \eps.$$
    For $\eps$ small enough, we can invoke \cite[Theorem 1.3]{eisner-tao} (shrinking $\eps$ as necessary) and also ensure that there exists $R \in \Poly_{\leq k}(\XX_\alpha)$ such that
    $$ \|e(Q)-e(R)\|_{L^2(\XX_\alpha)} \leq \eps.$$
    In particular $\|e(P)-e(R)\|_{L^2(\XX)} \leq 2\eps$.  Applying part $(i)$ and taking $\eps$ small enough, we conclude that $P$ and $R$ differ by a constant, hence $P \in \Poly_{\leq k}(\XX_\alpha)$, giving the claim.
	\end{proof}

We need some further technical lemmas. Recall that $\mathcal{M}(X,\T)$ denotes the space of measurable functions from $X$ to $\T$ where two functions are identified if they agree almost surely. We equip $\mathcal{M}(X,\T)$ with the topology of convergence of probability (or equivalently,  $L^2$-topology) and corresponding Borel $\sigma$-algebra. 

\begin{proposition}\label{C.8}
\cite[Lemma C.8]{host2005nonconventional}
Let $U$ be a compact abelian group acting freely on a probability space $\XX=(X,\mu)$. Namely, $\XX=Y\times U$ as a measure space, for some probability space $Y$, and the action of $U$ on $\XX$ is given by $V_u(y,v) = (y,v+u)$ for all $y\in Y$, $u,v\in U$. Let $u\mapsto \phi_u$ be a measurable map from $U$ to $\mathcal{M}(X,\T)$ such that $$\phi_{u+v} = \phi_u + \phi_v\circ V_u$$ for all $u,v\in U$. Then there exists $\Phi\in \mathcal{M}(X,\T)$ such that $\phi_u = \Phi\circ V_u - \Phi$.
\end{proposition}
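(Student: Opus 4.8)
The plan is to ``integrate'' the cocycle relation $\phi_{u+v}=\phi_u+\phi_v\circ V_u$ over the fiber $U$, exploiting the product decomposition $\XX=Y\times U$ and the fact that $V_u$ simply translates the $U$-coordinate. Writing points of $X$ as $x=(y,v)$ with $y\in Y$ and $v\in U$, I would fix a suitably generic $w_0\in U$ and define
\[
\Phi(y,v):=\phi_{v-w_0}(y,w_0),
\]
and then verify $\Phi\circ V_u-\Phi=\phi_u$ directly from the cocycle identity.

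To make the displayed formula meaningful, first I would choose a jointly measurable representative $\Psi\colon U\times X\to\T$ of the measurable map $u\mapsto\phi_u$ --- such a representative exists by a standard argument since $\M(X,\T)$, with the topology of convergence in measure, is Polish and separable --- and set $\Phi(y,v):=\Psi(v-w_0,y,w_0)$, which is a genuine measurable function on $X$. Next I would rephrase the cocycle identity in terms of $\Psi$: for each fixed $(a,b)\in U^2$ one has $\Psi(a+b,y,w)=\Psi(a,y,w)+\Psi(b,y,w+a)$ for $\mu$-a.e.\ $(y,w)\in X$, and hence, integrating over $(a,b)$ against Haar measure and using Fubini, this identity holds for almost every $(a,b,y,w)$ with respect to $du\times du\times\mu$. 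A further application of Fubini in the $w$-variable produces a conull set of $w_0\in U$ for which, after freezing $w=w_0$, the identity still holds for a.e.\ $(a,b,y)$; I would fix such a $w_0$ (also generic enough that $\Psi(\cdot,\cdot,w_0)$ is well defined up to null sets independently of the chosen representative). Substituting the measure-preserving change of variables $a=v-w_0$, $b=u$ then gives, for a.e.\ $(v,u,y)$,
\[
\Psi(v+u-w_0,y,w_0)=\Psi(v-w_0,y,w_0)+\Psi(u,y,v),
\]
which reads $\Phi(V_u(y,v))=\Phi(y,v)+\phi_u(y,v)$; that is, $\Phi\circ V_u-\Phi=\phi_u$ holds in $\M(X,\T)$ for almost every $u\in U$.

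Finally I would upgrade this from a.e.\ $u$ to all $u$: if $G\subseteq U$ denotes the conull set of $u$ for which $\phi_u=\Phi\circ V_u-\Phi$, then any $u_1\in U$ can be written $u_1=u_2+u_3$ with $u_2,u_3\in G$ (since $G\cap(u_1-G)$ is conull, hence nonempty), and the cocycle identity gives
\[
\phi_{u_1}=\phi_{u_2}+\phi_{u_3}\circ V_{u_2}=(\Phi\circ V_{u_2}-\Phi)+(\Phi\circ V_{u_3}-\Phi)\circ V_{u_2}=\Phi\circ V_{u_1}-\Phi,
\]
completing the proof.

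The only genuinely delicate point --- and the step I expect to be the main obstacle to write out carefully --- is the measure-theoretic bookkeeping around evaluating the equivalence class $\phi_{v-w_0}\in\M(X,\T)$ along the null slice $\{w=w_0\}$, given that the cocycle relation is a priori only an almost-everywhere statement for each fixed pair of group elements. The resolution is the two-fold use of Fubini above: one first promotes the family of fiberwise a.e.\ identities (indexed by $(a,b)$) to a single joint a.e.\ identity on $U^2\times X$, and only then freezes the fiber coordinate at a generic value. This is precisely where compactness of $U$ (to have a Haar probability measure and apply Fubini) and freeness of the action (to have the splitting $X=Y\times U$ with $V_u$ acting by translation of the second coordinate) enter.
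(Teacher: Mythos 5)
Your argument is correct, and it follows what is essentially the standard proof of this lemma (the paper itself does not reprove it but cites Host--Kra; the same device of setting $\Phi(y,v+w_0):=\phi_v(y,w_0)$ for a Fubini-generic $w_0$ is exactly the one invoked elsewhere in the paper, e.g.\ in the proofs of Theorem \ref{pSylow} and Proposition \ref{PI1}). The only caveat is minor and harmless: the jointly measurable representative $\Psi$ can in general be arranged to represent $\phi_u$ only for almost every $u$, but since your final step upgrades the identity from a conull set of $u$ to all of $U$ via the cocycle relation, this does not affect the argument.
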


 We have a polynomial version of Proposition \ref{C.8}:  
 
	\begin{proposition} \label{PI1}
 \cite[Proposition 8.9]{btz} 
    Let $\Gamma$ be a countable abelian group, let $j,l\geq 0$, let $U$ be a compact abelian group, and let $\XX=\YY\rtimes_\rho U$ be an ergodic $\Gamma$-system such that $\ZZ^{j-1}(\XX)$ is a factor of $\YY$ and $\rho$ is a cocycle polynomial of degree at most $j$. For any $t\in U$, let $p_t\in \Poly_{\leq l}(\XX)$ depend measurably\footnote{This measurability hypothesis, which is clearly necessary, was omitted by mistake in \cite{btz}.} on $t$ and suppose that for any $t,s\in U$ $$p_{t+s} - p_t\circ V_s - p_s =0.$$
    Then there exists $Q\in \Poly_{\leq l+j}(\XX)$ such that $\partial_t Q = p_t$ for all $t\in U$. Furthermore, we can take $Q(y,u+u_0) \coloneqq p_u(y,u_0)$ for a generic $u_0\in U$.
	\end{proposition}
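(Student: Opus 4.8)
The plan is to first solve the equation $\partial_t Q = p_t$ abstractly, and then promote the solution to a polynomial of the claimed degree; this is the argument of \cite[Proposition 8.9]{btz}, which I would follow closely while incorporating the measurability hypothesis flagged in the footnote.

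\emph{Producing $Q$.} The group $U$ acts freely on $\XX = \YY \rtimes_\rho U$ by the vertical rotations $V_t$, realising $\XX$ as a measure space as $\YY \times U$ with $V_t(y,v) = (y, v+t)$, and $t \mapsto p_t$ is a measurable map $U \to \M(\XX,\T)$ satisfying $p_{t+s} = p_t \circ V_s + p_s$. I would invoke Proposition \ref{C.8} to obtain $Q \in \M(\XX,\T)$ with $\partial_t Q = p_t$ for every $t$; the construction there (cf.\ \cite[Lemma B.6]{btz}) yields $Q$ via the explicit formula $Q(y, u+u_0) = p_u(y,u_0)$ for a generic $u_0 \in U$, which is exactly the formula asserted in the statement.

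\emph{The degree bound.} To show $Q \in \Poly_{\leq l+j}(\XX)$ I would induct on $l$; the case $l=-1$ is immediate, since then $p \equiv 0$ and so $Q \equiv 0$ by the explicit formula. For the inductive step, fix $\gamma \in \Gamma$. Since $V_t$ commutes with $T^\gamma$, the map $t \mapsto \partial_\gamma p_t$ takes values in $\Poly_{\leq l-1}(\XX)$, is measurable in $t$, and still satisfies the cocycle identity, so the inductive hypothesis (applied with $l$ replaced by $l-1$ and the same $\YY,\rho,j$) produces $Q'_\gamma \in \Poly_{\leq l+j-1}(\XX)$ with $\partial_t Q'_\gamma = \partial_\gamma p_t$ and $Q'_\gamma(y, u+u_0) = \partial_\gamma p_u(y,u_0)$. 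As $\partial_t \partial_\gamma Q = \partial_\gamma p_t = \partial_t Q'_\gamma$, the difference $\partial_\gamma Q - Q'_\gamma$ is $V$-invariant, hence $\YY$-measurable, and comparing the explicit formulas (using $p_0 = 0$) identifies it with $c_\gamma(y) := p_{\rho_\gamma(y)}(T^\gamma_\YY y, u_0)$, where $T^\gamma_\YY$ denotes the action on $\YY$. Thus $\partial_\gamma Q \in \Poly_{\leq l+j-1}(\XX)$, and hence $Q \in \Poly_{\leq l+j}(\XX)$, provided one knows that $c_\gamma \in \Poly_{\leq l+j-1}(\YY)$.

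\emph{The main obstacle.} The hard part is exactly this last assertion about $c_\gamma$, and it is the only place where the hypothesis that $\rho$ is a cocycle polynomial of degree $\leq j$ is used. I would handle it as in \cite[Lemma 8.8]{btz}: observe that $(y,u,u') \mapsto p_{u-u'}(y,u')$ is a polynomial $\tilde P \in \Poly_{\leq l}(\XX \times_{\YY} \XX)$ on the fibre product (its $(l+1)$st $\Gamma$-derivatives restrict to those of $p_{u-u'} \in \Poly_{\leq l}(\XX)$, which vanish), and that $c_\gamma = (\tilde P \circ T^\gamma) \circ \iota_\gamma$, where $\iota_\gamma \colon \YY \to \XX \times_{\YY} \XX$ is $y \mapsto (y, u_0, u_0 - \rho_\gamma(y))$. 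Because $\rho$ is a polynomial cocycle of degree $\leq j$, the maps $\iota_\gamma \circ T^\delta_\YY$ and $T^\delta \circ \iota_\gamma$ differ only by vertical shifts of $\XX \times_{\YY} \XX$ governed by degree $\leq j$ polynomials in $y$ (essentially $\rho_\delta$ and $\rho_\delta \circ T^\gamma_\YY$); feeding this into Lemma \ref{Tensor} on tensor products of polynomials, together with Proposition \ref{ppfacts}, then shows that the pullback along $\iota_\gamma$ of the degree $\leq l$ polynomial $\tilde P \circ T^\gamma$ has degree $\leq l+j-1$ on $\YY$. (When $l < j$ this simplifies considerably, since then each $p_t \in \Poly_{\leq l}(\XX)$ is already measurable with respect to $\ZZ^l(\XX) \leq \ZZ^{j-1}(\XX)$, hence with respect to $\YY$, so that $t \mapsto p_t$ is in fact a homomorphism into $\Poly_{\leq l}(\YY)$ and the degree of $c_\gamma$ follows directly from Lemma \ref{Tensor}.) Tracking precisely how composition with this degree-$\leq j$ twist raises the polynomial degree — and in particular extracting the sharp bound $l+j$ — is where the real care is required.
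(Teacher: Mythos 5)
The paper does not actually prove Proposition \ref{PI1} --- it is imported wholesale from \cite[Proposition 8.9]{btz} --- so your proposal can only be measured against that cited argument. Your skeleton (solve $\partial_t Q=p_t$ via Proposition \ref{C.8} with the explicit formula $Q(y,u+u_0)=p_u(y,u_0)$, then induct on $l$, reducing everything to the degree of the correction term $c_\gamma(y)=p_{\rho_\gamma(y)}(T^\gamma y,u_0)$ produced by the $\rho$-twist) is the natural route, and that reduction is fine modulo routine joint-measurability and generic-$u_0$ bookkeeping. The problem is that essentially the whole content of the proposition is the bound $c_\gamma\in\Poly_{\leq l+j-1}$, and your proposal does not prove it: Lemma \ref{Tensor} controls products of polynomials on one fixed system and Proposition \ref{ppfacts} controls derivatives, but neither says anything about substituting the polynomial section $y\mapsto(y,u_0,u_0-\rho_\gamma(y))$ into a degree-$\leq l$ polynomial on the fibre product, and naive substitution multiplies degrees rather than adding them. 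The true bound genuinely needs the interplay your sketch never invokes --- that vertical rotations fix $\ZZ^{j-1}(\XX)\leq\YY$ together with the precise degree of $\rho$ --- implemented through a fibrewise vertical Taylor-type induction as in \cite[Lemmas 8.8, 8.14]{btz} (and, since \cite{btz} works with $\F_p^\omega$-systems and $p$-torsion structure groups, one should also check those lemmas survive for general $\Gamma$ and compact $U$). Your closing sentence concedes exactly this point, so the crux is asserted rather than proved.

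That the missing step is not a routine verification is visible already at the base of your induction. Take $\Gamma=\Z$, $\YY=\T$ an irrational rotation by $\alpha$, $U=\T$, $\rho_n(y)=ny+\binom{n}{2}\alpha$ (a polynomial cocycle of degree $1$), $j=1$ (so $\ZZ^{0}(\XX)$ is trivially a factor of $\YY$), and $p_t\equiv t$, so $l=0$. Then $c_n(y)=\rho_n(y)$ has degree $1=l+j$, not $\leq l+j-1$ as your inductive claim requires; correspondingly $Q=\mathrm{u}-u_0$ has degree $2$, and indeed no $Q\in\Poly_{\leq 1}(\XX)$ can satisfy $\partial_t Q=t$, since degree-$1$ polynomials on the skew shift are $\ZZ^{1}$-measurable and hence $U$-invariant. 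So under the hypothesis you work with (and as the statement is printed), namely $\deg\rho\leq j$, your key claim is false; it only has a chance when $\rho$ has degree at most $j-1$, which is what actually holds in every application in this paper (there $\XX$ is Weyl of order $j$) and matches the numerology in \cite{btz}. This confirms that the degree bookkeeping at the heart of the proof cannot be extracted from the soft ingredients you cite and must be supplied by a genuine argument exploiting the relation between $\deg\rho$, the factor $\ZZ^{j-1}(\XX)\leq\YY$, and vertical differentiation.
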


We need the following construction in the next lemma. Let $\XX=(X,\mu,T)$ be an ergodic $\Gamma$-system for some countable abelian group $\Gamma$ and let $K$ be a group of automorphisms of $\XX$ such that $K$ acts freely (on the left) on $\XX$ as described in the previous proposition. Then we can form the quotient $\Gamma$-system $K \backslash \XX$ where the underlying set is the space of $K$-orbits $[x]$ (which is obtained from $X$ by an equivalence relation), equipped with the quotient $\sigma$-algebra, the pushfoward of the probability measure $\mu$ under the quotient map, and the action $T_\gamma[x]=[T_\gamma x]$. One directly verifies that $K \backslash \XX$ is a $\Gamma$-system and a factor of $\XX$.  

\begin{lemma}\label{lem-goingup}
Let $\Gamma$ be a countable abelian group, let $\XX$ be an ergodic $\Gamma$-system, let $U$ be a compact abelian group, let $\rho\colon \Gamma\times X\to U$ be an ergodic cocycle, and let $K$ be a compact connected abelian group of automorphisms of $\XX$ such that $\partial_v \rho$ is a coboundary for all $v\in K$. 
Then $K$ lifts to a compact connected abelian group $\tilde K$ of automorphisms of the group skew-product extension $\XX\rtimes_\rho U$. 

Moreover, if $K$ acts freely on $\XX$, then its lift $\tilde{K}$ acts freely on $\YY\coloneqq \XX\rtimes_\rho U$ and we have that $\tilde K \backslash \YY$ is a factor of $\YY$ which group skew-extends the factor $K \backslash \XX$ of $\XX$ by a quotient of $U$.  
\end{lemma}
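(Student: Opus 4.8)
The heart of the matter is to produce the compact connected abelian group $\tilde K$ of automorphisms of $\YY = \XX\rtimes_\rho U$ lying over $K$; the ``moreover'' part will then follow by routine bookkeeping. The plan is to mimic the construction already used several times in the paper (e.g.\ in the proof of Theorem~\ref{pSylow} and in Proposition~\ref{maxtdfactor}): form the group of pairs $(v,F)$ where $v\in K$ and $F\in\M(\XX,U)$ solves the Conze--Lesigne-type equation $\partial_v\rho = dF$, organize it into a short exact sequence over $K$ with kernel $U$, and split that sequence. Concretely, first note that for each $v\in K$ the hypothesis gives at least one $F_v\in\M(\XX,U)$ with $\partial_v\rho = dF_v$, and by ergodicity $F_v$ is unique up to a constant in $U$. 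An automorphism $\tilde v$ of $\YY$ lying over $v$ must have the form $\tilde v(x,u) = (vx, u + F(x))$ for some such $F$ (this is forced by requiring $\tilde v$ to commute with the $\Gamma$-near-action, using the explicit formula $T^\gamma(x,u)=(T^\gamma x, u+\rho_\gamma(x))$ and the cocycle identity). So set
$$H \coloneqq \{(v,F)\in K\ltimes \M(\XX,U): \partial_v\rho = dF\},$$
a closed subgroup of the Polish group $K\ltimes\M(\XX,U)$, fitting into a short exact sequence
$$0\to U\to H\to K\to 0$$
of topological groups. Since $K$ and $U$ are compact and $U\hookrightarrow H$ is an embedding with $K$ carrying the quotient topology, $H$ is compact by \cite[Theorem 5.25]{hewittross}, exactly as in the proof of Theorem~\ref{pSylow}.

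The next step is to extract from $H$ a \emph{connected} compact abelian group mapping onto $K$. As in the Sylow argument, $H$ is at most $2$-step nilpotent with $[H,H]\le U$ central, so commutation induces a bilinear alternating map $K\times K\to U$; here however we cannot argue that it vanishes by a torsion obstruction, so instead I would pass to the connected component $H^\circ$ of the identity in $H$. The projection $H\to K$ restricts to a continuous homomorphism $H^\circ\to K$; since $K$ is connected and the sequence $0\to U\to H\to K\to 0$ shows $H/H^\circ$ is a quotient of $U/(U\cap H^\circ)$, one checks that $H^\circ$ still surjects onto $K$ (the image is a closed subgroup of $K$ of finite index, hence all of $K$ by connectedness). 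Then $[H^\circ,H^\circ]$ is a connected subgroup of the central $U$, and — this is the one genuinely new point compared to the $p$-torsion setting — I claim $H^\circ$ is abelian: the commutator map is a continuous alternating bihomomorphism from the connected compact group $K\cong H^\circ/(U\cap H^\circ)$ into $U$; by duality such a map corresponds to an element of $\Lambda^2 \hat K$ landing in the torsion-free... — here one must be slightly careful. A cleaner route: take $\tilde K \coloneqq$ the closure of a maximal connected abelian (equivalently, a maximal torus / the connected component of a maximal abelian closed) subgroup of $H^\circ$ that still surjects onto $K$, using the standard structure theory of compact connected groups (every compact connected group is a quotient of $(\text{torus})\times\prod(\text{simple})$, and here $H^\circ$ is nilpotent hence a quotient of a solenoid, so already abelian). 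In fact, since $H^\circ$ is a \emph{connected} compact nilpotent group, it is automatically abelian (a connected compact nilpotent Lie group is a torus, and the profinite-by-Lie structure of compact groups plus connectedness reduces to this), so we may simply take $\tilde K \coloneqq H^\circ$.

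\textbf{Finishing.} With $\tilde K = H^\circ$ compact connected abelian surjecting onto $K$ via $(v,F)\mapsto v$, I let $\tilde K$ act on $\YY$ by $(v,F)\cdot(x,u) = (vx, u+F(x))$; the cocycle identity for $H$ (the homomorphism property $F_{v+v'} = F_v + F_{v'}\circ v$ after projecting to $H^\circ$) makes this a genuine action by automorphisms commuting with $\Gamma$, and measure-preservation is Fubini. For the ``moreover'' part, suppose $K$ acts freely on $\XX$, so $\XX = \XX' \times K$ as a measure space with $K$ acting by translation on the second coordinate. Then $\YY = \XX'\times K\times U$, and an element $(v,F)\in\tilde K$ acts by $(x',w,u)\mapsto (x', w+v, u+F(x',w))$; this is visibly free (look at the middle coordinate if $v\neq 0$, and if $v=0$ then $F$ is a nonzero element of $U$ by ergodicity, acting freely on the $U$-coordinate). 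Hence $\tilde K\backslash\YY$ is a well-defined factor $\Gamma$-system of $\YY$ via the quotient construction recalled before the lemma. Finally, the factor map $\YY\to\XX$ intertwines the $\tilde K$-action with the $K$-action (by construction of the projection $\tilde K\to K$), so it descends to a factor map $\tilde K\backslash\YY \to K\backslash\XX$; its fibers are the $U$-orbits quotiented by the kernel $\tilde K\cap(0\ltimes U)$, a closed (hence compact) subgroup of $U$, so $\tilde K\backslash\YY$ is a group skew-product extension of $K\backslash\XX$ by the quotient group $U/(\tilde K\cap U)$.

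\textbf{Main obstacle.} The delicate point is the argument that $H$ (or rather $H^\circ$) can be taken abelian and connected while still surjecting onto $K$ — i.e.\ ruling out a genuinely nonabelian $2$-step nilpotent obstruction and a disconnectedness obstruction simultaneously. In the $\F_p^\omega$ setting of \cite{btz} both were killed by torsion/splitting lemmas; here the right substitute is the structure theory of compact connected groups (a connected compact nilpotent group is abelian, indeed a quotient of a product of solenoids), applied to $H^\circ$, together with a short argument that $H^\circ\to K$ is onto using connectedness of $K$. Everything else — the forced form of lifts, compactness of $H$ via \cite[Theorem 5.25]{hewittross}, freeness and the skew-product description of the quotient — is routine and parallels arguments already in the paper.
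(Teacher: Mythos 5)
Your construction is essentially the intended one: the paper's ``proof'' of Lemma \ref{lem-goingup} is only a citation to \cite[Lemmas 3.10, 3.11]{shalom3}, and the device you use --- the group $H=\{(v,F)\in K\ltimes \M(\XX,U):\partial_v\rho=dF\}$, the exact sequence $0\to U\to H\to K\to 0$, compactness via \cite[Theorem 5.25]{hewittross}, and then passing to a connected abelian subgroup surjecting onto $K$ --- is exactly the mechanism already deployed in the proof of Theorem \ref{pSylow} and in Section \ref{divis-weyl-sec}; taking $\tilde K=H^\circ$ and using that a compact connected (two-step) nilpotent group is abelian is the correct substitute for the torsion/splitting lemmas of \cite{btz}.

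Two local points need repair, though neither affects the strategy. First, your justification that $H^\circ$ surjects onto $K$ is garbled: $\pi(H^\circ)$ has no reason to have finite index ($H/H^\circ$ is only profinite), and the assertion that $H/H^\circ$ is a quotient of $U/(U\cap H^\circ)$ presupposes $H=U\cdot H^\circ$, which is what you are trying to prove. The correct one-line argument: $K/\pi(H^\circ)\cong H/(UH^\circ)$ is a quotient of the profinite group $H/H^\circ$, hence totally disconnected, while it is also a continuous image of the connected group $K$, hence connected; so it is trivial and $\pi(H^\circ)=K$. Second, in the ``moreover'' part, ``free'' in this paper (cf.\ Proposition \ref{C.8} and the quotient construction stated just before the lemma) means the measure-space product structure with translation action, not merely trivial stabilizers; you should add that a measure-preserving action of a compact metrizable group on a Lebesgue space with everywhere trivial stabilizers has this form (disintegrate over the orbit space; the invariant fibre measures are Haar), and the identification of $\tilde K\backslash \YY$ as a skew-product of $K\backslash\XX$ by $U/(\tilde K\cap U)$ needs a Borel section of $\tilde K\to K$, which is legitimate because two lifts of the same $v\in K$ inside $\tilde K$ differ by a constant lying in $\tilde K\cap U$ (ergodicity), so the induced cocycle is well defined modulo $\tilde K\cap U$. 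With these repairs your argument is complete and coincides with the cited approach.
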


\begin{proof}
See \cite[Lemmas 3.10 and 3.11]{shalom3}; the proofs are given for a special countable abelian group $\Gamma$, but the same proof works for arbitrary countable abelian groups. 
\end{proof}

\section{Nilspace theory}\label{nilspace-app}

We recall some of the theory of compact nilspaces and related structures (the various relationships are summarized in Figure \ref{fig:structures}).  We briefly recall the basic definitions:

\begin{figure}
\centering
    \[\begin{tikzcd}
    {\substack{{\mathcal D}^k(U)}} & {\substack{\text{compact } {\mathcal D}^k(U)}} \\
    {\substack{\text{filtered}\\ \text{group } G}} & {\substack{\text{filtered Polish} \\ \text{group } G}} \\
	{\substack{G/\Lambda}} & {\substack{\text{compact } G/\Lambda}} & {\substack{\text{translational} \\ \Gamma-\text{system } G/\Lambda }} \\
	{\substack{K \backslash G/\Lambda} }& {\substack{\text{compact } K \backslash G/\Lambda}} & {\substack{\text{double coset} \\  \Gamma-\text{system } K \backslash G/\Lambda }} \\
    {\substack{\text{nilspace } X}} & {\substack{\text{compact nilspace } X }} & {\substack{\Gamma-\text{system } \XX}} \\
    &{\substack{\text{cubic coupling } \\ (\mu^{[n]})_{n \geq 0}}}
    \arrow[from=1-1, to=2-1]
    \arrow[from=1-2, to=2-2]
    \arrow[from=1-2, to=1-1]
    \arrow[from=2-1, to=3-1]
    \arrow[from=2-2, to=3-2]
    \arrow[from=2-2, to=2-1]
    \arrow[from=3-2, to=3-1]	
    \arrow[from=3-3, to=3-2]	
    \arrow[from=3-1, to=4-1]	
    \arrow[from=3-2, to=4-2]	
    \arrow[from=3-3, to=4-3]
    \arrow[from=4-2, to=4-1]	
    \arrow[from=4-3, to=4-2]	
    \arrow[dashed,from=4-1, to=5-1]	
    \arrow[dashed,from=4-2, to=5-2]	
    \arrow[from=5-2, to=5-1]	
    \arrow[from=4-3, to=5-3]	
    \arrow[from=5-2, to=6-2, shift left=1]	
    \arrow[from=6-2, to=5-2, shift left=1]	
    \arrow[from=5-3, to=6-2]	
    \arrow[from=3-1, to=5-1, bend right=40]
    \arrow[from=3-2, to=5-2, bend right=40]
\end{tikzcd}\]
    \caption{Some of the different structures considered in this paper (all implicitly assumed to have degree, step, or order $k$).  Arrows from one structure to another indicate a way to generate the latter from the former (possibly after quotienting out by a group action, or applying a forgetful functor).  Dashed arrows require that $K, G, \Gamma$ verify a ``groupable'' axiom. Some minor hypotheses (e.g., metrizability) have been suppressed for brevity.  While this diagram ``morally'' commutes, verifying commutativity in practice requires some non-trivial effort.}
  \label{fig:structures}
\end{figure} 

\begin{definition}[Compact nilspace]\label{nilspace-def}\   
\begin{itemize}
\item[(i)] \cite[Definition 1.2.2]{candela} A \emph{nilspace} is a space $X$, equipped with subsets $C^n(X)$ of $X^{\{0,1\}^n}$ for every $n \geq 0$ obeying the ergodicity axiom $C^0(X)=1$, $C^1(X) = X^{\{0,1\}}$, the composition axiom that $c \circ \phi \in C^m(X)$ whenever $c \in C^n(X)$ and $\phi \colon \{0,1\}^m \to \{0,1\}^n$ is a cube morphism (as defined in \cite[Definition 1.1.1]{candela0}, and that every $n$-corner $c_*$ has at least one completion to an $n$-cube $c \in C^n(X)$, where an $n$-corner is defined to be a tuple $(c_\omega)_{\omega \in \{0,1\}^n \backslash \{1^n}$ in $X^{\{0,1\}^n \backslash \{1^n\}}$ such that the restriction to every $n-1$-face lies in $C^{n-1}(X)$.  Elements of $C^n(X)$ will be known as \emph{$n$-cubes} in $X$.  A nilspace is \emph{$k$-step} for some $k \geq 0$ if every $k+1$-corner has a unique completion to a $k+1$-cube.  If $X$ is a compact metrizable space and the $C^n(X)$ are closed subsets of $X^{\{0,1\}^n}$, we say that the nilspace $X$ is \emph{compact}.
\item[(ii)] \cite[Definition 2.2.11]{candela} A \emph{morphism} between nilspaces $X, Y$ is a map $g \colon X \to Y$ such that $g \circ c \in C^n(Y)$ whenever $c \in C^n(X)$ for some $n \geq 0$.  A morphism between compact nilspaces is a morphism of nilspaces that is also continuous.  An \emph{isomorphism} of (compact) nilspaces is a morphism that has an inverse that is also a morphism.  If the map $c \mapsto g \circ c$ from $C^n(X)$ to $C^n(Y)$ is surjective for every $n \geq 0$, we say that $Y$  is a \emph{factor} of $X$, and that $g$ is a \emph{factor map}.
\item[(iii)] If $X$ is a  nilspace and $n \geq 0$, we define the equivalence relation $\sim_n$ on $X$ by setting $x\sim_n x'$ if there exist two cubes $c,c'\in C^{n+1}(X)$ such that $c_\omega = c'_\omega$ for all $\omega\in \{0,1\}^{n+1}\backslash \{0^{n+1}\}$ and $c_{0^{n+1}} = x$, $c'_{0^{n+1}} =x'$, where $0^{n+1}=(0,0,\dots,0)$. (The nilspace axioms will guarantee that $\sim_n$ is indeed an equivalence relation.)   In \cite[Lemma 3.2.10]{candela0} it is shown that for any $k \geq 0$, ${\mathcal F}_k(X) \coloneqq X/\sim_k$ has the structure of a $k$-step nilspace, with the quotient map from $X$ to $X/\sim_k$ being a factor map.  A factor map $g \colon X \to Y$ between nilspaces is said to be a \emph{fibration}\footnote{This notion was originally referred to as a \emph{fiber surjective morphism} in \cite[Definition 3.3.7]{candela0}, but renamed as ``fibration'' in \cite[Definition 7.1]{gmv}.} if for every $n \geq 0$, $g$ maps equivalence classes of $\sim_n$ on $X$ to equivalence classes of $\sim_n$ on $Y$.
\end{itemize}
\end{definition}

Observe that if $X$ is a (compact) nilspace, then each $C^n(X)$ is also a (compact) nilspace, using the identification $(C^n(X))^m \coloneqq C^{n+m}(X)$ (the precise conventions for this identification are not important, as the composition axiom of a nilspace ensures that the spaces of cubes are preserved by permutations of the coordinates).

Key example of $k$-step nilspaces include degree $k$ nilspaces, and more generally abelian bundles; we recall the definitions.

\begin{definition}[Degree $k$ nilspaces and abelian bundles]  Let $U$ be an abelian group and $k \geq 1$.
\begin{itemize}
    \item[(i)]  \cite[\S 2.2.4]{candela0} The \emph{degree $k$ nilspace} structure ${\mathcal D}^k(U)$ on $U$ is the nilspace $U$ endowed with the cubes
$$ C^n({\mathcal D}^k(U)) \coloneqq \{ (u_\omega)_{\omega \in \{0,1\}^n}: \sum_{\omega \in F} (-1)^{|\omega|} u_\omega = 0 \forall k-\hbox{faces } F \subset \{0,1\}^n \}.$$
One can check that ${\mathcal D}^k(U)$ is a $k$-step nilspace; see \cite[\S 2.2.4]{candela0}.  If $U$ is compact metrizable, then ${\mathcal D}^k(U)$ is a compact nilspace.
    \item[(ii)] \cite[Definitions 3.2.17, 3.2.18]{candela0} More generally, a $k$-step nilspace $X$ is said to be a \emph{degree $k$ abelian bundle} over another $k-1$-step nilspace $Y$ with structure group $U$ if there is a free action $u \colon x \mapsto x+u$ of $U$ on $X$, with $Y$ isomorphic as a nilspace factor to the quotient $X/U$ (thus there is an identification of $Y$ with $X/U$ such that for every $k \geq 0$, the quotient map from $X^{\{0,1\}^k}$ to $(X/U)^{\{0,1\}^k}$ maps $C^k(X)$ surjectively onto $C^k(Y)$), and such that for any $n \geq 0$ and $c \in C^n(X)$, one has
    $$ \{ c_2 \in C^n(X): \pi^{\{0,1\}^n}(c_2) = \pi^{\{0,1\}^n}(c) \} = \{ c + c_3: c_3 \in C^n( {\mathcal D}^k(U) ) \}$$
    where $\pi^{\{0,1\}^n}: C^n(X) \to C^n(Y)$ is the projection map and $C^n( {\mathcal D}^k(U) )$ acts on $X^{\{0,1\}^n}$ in the obvious fashion.  If $X,Y$ are compact nilspaces and the free action of $U$ on $X$ is continuous, we say that the bundle is continuous.
\end{itemize}
\end{definition}

A basic structural theorem in the subject (cf., Proposition \ref{abelext}) is

\begin{theorem}[$k$-step nilspaces are towers of abelian bundles]\label{tower}\cite[Theorem 3.2.19]{candela0}, \cite[\S 2.1.1]{candela} Let $X$ be a $k$-step nilspace for some $k \geq 1$.   Then there exists abelian groups $U_1,\dots,U_k$ (known as the \emph{structure groups}) and a sequence $X_0,\dots,X_k=X$ of nilspaces (known as the \emph{factor subspaces}), with $X_0$ a point, and for each $i=1,\dots,k$, $X_i$ is an $i$-step nilspace that is a degree $i$ abelian bundle over $X_{i-1}$ with structure group $U_i$, unique up to isomorphism (of nilspaces and bundles); indeed, $X_i$ is isomorphic to ${\mathcal F}_i(X)$.  Furthermore if $X$ is a compact nilspace, the $U_i$ are compact metrizable, the bundles are continuous, and all these structures are unique up to isomorphism (of compact nilspaces and continuous bundles).
\end{theorem}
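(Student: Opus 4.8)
The plan is to argue by induction on $k$, taking the factor subspaces to be the iterated quotients $X_i \coloneqq \mathcal{F}_i(X) = X/\!\sim_i$. For $k=0$, the ergodicity axiom together with the requirement that every $1$-corner (i.e.\ every point of $X$) has a \emph{unique} completion to a $1$-cube forces $X$ to be a single point, so there is nothing to construct. For $k \geq 1$, \cite[Lemma 3.2.10]{candela0} gives that each $X_i$ is an $i$-step nilspace and that the canonical maps $X_i \to X_{i-1}$ are factor maps (indeed fibrations); moreover $X_0$ is a point and $X_k = X$ since $\sim_k$ is trivial on a $k$-step nilspace. Applying the inductive hypothesis to the $(k-1)$-step nilspace $X_{k-1}$ produces $U_1,\dots,U_{k-1}$ and the lower part of the tower, so the whole content is to realise $\pi \colon X = X_k \to X_{k-1}$ as a degree $k$ abelian bundle.

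First I would introduce the structure group $U_k$ as the group of \emph{vertical translations} of $X$: bijections $\alpha \colon X \to X$ with $\pi \circ \alpha = \pi$ such that for every $n$, every $c \in C^n(X)$, and every face $F \subseteq \{0,1\}^n$ containing the top vertex $1^n$, the tuple obtained from $c$ by replacing $c_\omega$ with $\alpha(c_\omega)$ for $\omega \in F$ is again in $C^n(X)$. The crux --- and the step I expect to be the main obstacle --- is to show that $U_k$ is abelian and acts freely and transitively on each fibre of $\pi$. Freeness is comparatively easy: a vertical translation is trivial on all lower factors, hence rigid on the $k$-step layer, so it is determined by its value at a single point. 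Transitivity requires, given $x \sim_{k-1} x'$, building a vertical translation sending $x$ to $x'$: one starts from the defining pair of $k$-cubes witnessing $x \sim_{k-1} x'$ that agree off the vertex $0^k$, extracts from it a ``difference arrow'', and then uses the composition and corner-completion axioms to extend this arrow consistently to a globally defined translation. Commutativity of $U_k$ follows since two vertical (top-degree) translations act in ``complementary'' directions on any cube. Granting this, the abelian-bundle identity
$$\{ c' \in C^n(X): \pi^{\{0,1\}^n}(c') = \pi^{\{0,1\}^n}(c)\} = \{ c + c_3: c_3 \in C^n(\mathcal{D}^k(U_k))\}$$
for $c \in C^n(X)$ is a direct check: the left side is a torsor under the translations fixing $\pi^{\{0,1\}^n}(c)$, and unwinding the defining property of $U_k$ over an $n$-cube identifies those with exactly the tuples whose alternating sums over all $k$-faces vanish, i.e.\ with $C^n(\mathcal{D}^k(U_k))$.

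For uniqueness, observe that $X_i$ is canonically $\mathcal{F}_i(X)$, an intrinsic quotient of $X$, and $U_i$ is the intrinsically defined vertical translation group of $X_i$, so the whole tower is determined by $X$ up to canonical isomorphism of nilspaces and of bundles. Finally, in the compact metrizable case one checks that each $\sim_i$ has closed graph (it is cut out by the closed conditions defining $C^{i+1}(X)$), so each $X_i$ is again compact metrizable and each $X_i \to X_{i-1}$ continuous; one topologises $U_i$ via any bijection with a fibre of $X_i \to X_{i-1}$, which is closed and hence compact, and verifies that the group law, the action $U_i \times X_i \to X_i$, and the bundle projection are continuous and independent of the chosen fibre. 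Uniqueness in the compact category then follows from the algebraic uniqueness together with the automatic continuity of isomorphisms between compact nilspaces.
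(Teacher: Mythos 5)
The first thing to note is that the paper does not prove this statement: Theorem \ref{tower} is background, quoted from Camarena--Szegedy via \cite[Theorem 3.2.19]{candela0} and \cite[\S 2.1.1]{candela}, so the comparison is really with the argument in those references. Your outline follows the same general strategy as the cited proof (induct along the canonical factors $\mathcal{F}_i(X)$, then exhibit $X \to \mathcal{F}_{k-1}(X)$ as a degree $k$ abelian bundle under a ``vertical'' group), and the easy bookkeeping steps ($X_0$ a point, $X_k = X$, each $\mathcal{F}_i(X)$ an $i$-step nilspace via \cite[Lemma 3.2.10]{candela0}) are fine. However, the two steps that constitute the actual content of the theorem are asserted rather than proved. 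The construction of a free \emph{transitive} abelian action on the fibres --- extracting a ``difference arrow'' from the pair of $k$-cubes witnessing $x \sim_{k-1} x'$, extending it to a globally defined vertical translation, checking independence of all choices, abelianness, and independence of the base point --- is precisely the hard part of the Camarena--Szegedy/Candela argument, which occupies several pages of corner-completion and cube-composition lemmas; the sentence ``uses the composition and corner-completion axioms to extend this arrow consistently to a globally defined translation'' restates the goal rather than supplying the argument. Similarly, the displayed bundle identity is not a ``direct check'' once transitivity on single fibres is granted: one must show that for two $n$-cubes with the same projection the fibrewise differences assemble into an element of $C^n(\mathcal{D}^k(U_k))$, and this again requires the unique-completion machinery, not just the torsor structure on each fibre separately.

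The compact case has an additional genuine gap. Transporting the topology of one fibre to $U_i$ requires proving that the group law and the action $U_i \times X_i \to X_i$ are continuous and that different fibres induce the same topology; this is where \cite[\S 2.1]{candela} does real work (closedness of the cube sets, continuity of completion/difference maps), and it cannot be dismissed as a verification. More seriously, the appeal to ``automatic continuity of isomorphisms between compact nilspaces'' is not available: abstract group isomorphisms between compact abelian groups need not be continuous (already for $\T$), so the uniqueness statement in the compact category must be obtained by constructing the comparison isomorphisms continuously from the canonical identifications $X_i \cong \mathcal{F}_i(X)$, not by upgrading the purely algebraic uniqueness. As it stands, the proposal is a correct road map of the known proof, but the core existence and continuity arguments are missing.
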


We also have a canonical Haar measure:

\begin{theorem}[Existence and uniqueness of Haar measure]\label{haar-thm}\cite[Proposition 2.2.5, Corollary 2.2.7, Proposition 2.2.11]{candela}  Let $X$ be a $k$-step compact nilspace for some $k \geq 1$, and let $X_0,\dots,X_k$ and $U_1,\dots,U_k$ be as in Theorem \ref{tower}.  Then there exists a unique Radon probability measure $\mu_X$ on $X$ with the property that for each $1 \leq i \leq k$, the pushforward $\mu_{X_i}$ of $X$ to $X_i$ is $U_i$-invariant.  The support of $\mu_X$ is all of $X$.  If $X$ is a continuous fibration of another $k$-step compact nilspace $Y$, then the Haar measure of $X$ pushes forward to the Haar measure of $Y$.
\end{theorem}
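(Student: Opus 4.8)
The plan is to induct on the step $k$, using the tower of abelian bundles from Theorem~\ref{tower} to peel off the top structure group. For $k=1$ the nilspace $X=X_1$ is isomorphic to the compact abelian group ${\mathcal D}^1(U_1)$; its normalized Haar measure is the unique translation-invariant Radon probability measure, has full support, and is carried to Haar measure by any continuous surjective homomorphism, which settles the base case. For $k\geq 2$ assume the theorem for $(k-1)$-step compact nilspaces. Write $X=X_k$ as a continuous degree-$k$ abelian bundle over $X_{k-1}$ with compact metrizable structure group $U_k$, let $\mu_{U_k}$ be Haar measure on $U_k$, and let $\pi\colon X\to X_{k-1}$ be the bundle projection. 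Define the averaging operator $\E_{U_k}\colon C(X)\to C(X)$ by $\E_{U_k}f(x):=\int_{U_k}f(u\cdot x)\,d\mu_{U_k}(u)$; continuity of the action map $U_k\times X\to X$ on the compact space $X$ makes $\E_{U_k}f$ continuous, and $\E_{U_k}f$ is $U_k$-invariant, hence constant on the fibers of $\pi$. Since $\pi$ is a continuous closed surjection onto the Hausdorff space $X_{k-1}$, it is a quotient map, so $\E_{U_k}f=\bar f\circ\pi$ for a unique $\bar f\in C(X_{k-1})$. Set $\mu_X(f):=\mu_{X_{k-1}}(\bar f)$; this is a normalized positive linear functional, hence a Radon probability measure by the Riesz representation theorem.

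Because $g\circ\pi$ is $U_k$-invariant for $g\in C(X_{k-1})$, we have $\overline{g\circ\pi}=g$ and therefore $\pi_*\mu_X=\mu_{X_{k-1}}$. Consequently, for $i<k$ the pushforward of $\mu_X$ to $X_i$ equals the pushforward of $\mu_{X_{k-1}}$ to $X_i$, which is $U_i$-invariant by the inductive hypothesis; and the pushforward to $X_k=X$ is $\mu_X$ itself, which is $U_k$-invariant since translation by $v\in U_k$ commutes with $\E_{U_k}$ (by translation-invariance of $\mu_{U_k}$). For the support claim, if $f\in C(X)$ is nonnegative and strictly positive on a nonempty open set, then for a suitable $x_0$ the set $\{u\in U_k:u\cdot x_0\in\{f>0\}\}$ is nonempty and open, hence of positive $\mu_{U_k}$-measure, so $\bar f(\pi(x_0))=\E_{U_k}f(x_0)>0$; by continuity of $\bar f$ and the full support of $\mu_{X_{k-1}}$ we get $\mu_X(f)=\mu_{X_{k-1}}(\bar f)>0$, which forces every nonempty open subset of $X$ to have positive $\mu_X$-measure. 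For uniqueness, let $\nu$ be any Radon probability measure on $X$ with the stated pushforward property; its pushforward $\pi_*\nu$ to $X_{k-1}$ inherits the pushforward property for the tower of $X_{k-1}$, so $\pi_*\nu=\mu_{X_{k-1}}$ by induction, and $\nu$, being the pushforward to $X_k$, is $U_k$-invariant. Then for $f\in C(X)$, Fubini and the $U_k$-invariance of $\nu$ give $\nu(f)=\nu(\E_{U_k}f)=\nu(\bar f\circ\pi)=(\pi_*\nu)(\bar f)=\mu_{X_{k-1}}(\bar f)=\mu_X(f)$, whence $\nu=\mu_X$.

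Finally, the fibration compatibility: a continuous fibration $g\colon X\to Y$ of $k$-step compact nilspaces respects the relations $\sim_n$, hence induces a continuous fibration $g_{k-1}\colon X_{k-1}\to Y_{k-1}$ on the $(k-1)$-step factors together with a continuous surjective homomorphism $\phi\colon U_k(X)\to U_k(Y)$ on the top structure groups satisfying $g(x+u)=g(x)+\phi(u)$ (this is the structure theory of fibrations of abelian bundles; see the references cited in the statement and \cite{candela0}). Using that the pushforward of Haar measure under a continuous surjective homomorphism of compact groups is again Haar measure, one checks $\E_{U_k(Y)}(f)\circ g=\E_{U_k(X)}(f\circ g)$ for $f\in C(Y)$, hence $\overline{f}^{\,Y}\circ g_{k-1}=\overline{f\circ g}^{\,X}$; then $g_*\mu_X(f)=\mu_{X_{k-1}}(\overline{f\circ g}^{\,X})=\mu_{X_{k-1}}(\overline{f}^{\,Y}\circ g_{k-1})=(g_{k-1})_*\mu_{X_{k-1}}(\overline{f}^{\,Y})=\mu_{Y_{k-1}}(\overline{f}^{\,Y})=\mu_Y(f)$, invoking the inductive fibration statement at step $k-1$. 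I expect the main obstacle to be precisely this last input from the structure theory of nilspace fibrations --- that a fibration descends to compatible fibrations of all the factor subspaces and is genuinely equivariant (via surjective homomorphisms) on the structure groups; the remainder is a routine induction once Theorem~\ref{tower} and Haar measure on compact abelian groups are available.
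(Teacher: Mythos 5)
Your argument is correct and is essentially the same as the one behind the cited result: the paper does not prove Theorem \ref{haar-thm} itself but refers to \cite[Proposition 2.2.5, Corollary 2.2.7, Proposition 2.2.11]{candela}, where the Haar measure is constructed by exactly this induction on the step, averaging fibre-wise over the top structure group of the abelian-bundle decomposition of Theorem \ref{tower} and invoking Riesz representation, with uniqueness and full support obtained as you do. The only external input you flag — that a continuous fibration descends to fibrations of the factor subspaces together with surjective continuous homomorphisms of the structure groups — is indeed standard nilspace structure theory (cf.\ \cite[Lemma 3.3.8]{candela0}), so no genuinely new ingredient is required.
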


Since $C^n(X)$ is also a compact nilspace, we also have Haar measures $\mu^{[n]}_X \coloneqq \mu_{C^n(X)}$ on the cube spaces $C^n(X)$ for all $n \geq 0$.  In the language of \cite{candela-szegedy-cubic}, these measures $\mu^{[n]}_X$ have the structure of a cubic coupling; see \cite[Proposition 3.6]{candela-szegedy-cubic}.   If a space $X$ has both the structure of a $\Gamma$-system and a compact nilspace, we say that the $\Gamma$-system structure and the compact nilspace structure are \emph{compatible} if the $\sigma$-algebra of the $\Gamma$-system is the Borel $\sigma$-algebra of the compact nilspace, and the Host-Kra measures $\mu^{[n]}_X$ of the $\Gamma$-system agree\footnote{In the language of \cite{candela-szegedy-cubic}, this is asserting that the $\Gamma$-system and the compact nilspace generate the same cubic coupling; see also Figure \ref{fig:structures}.} with the Haar measures $\mu^{[n]}_X$ of the compact nilspace.  In \cite[Theorem 5.11]{candela-szegedy-cubic} it was shown that every $\Gamma$-system of order $k$ (and more generally any cubic coupling, see \cite[\S 3.6]{candela-szegedy-cubic}) is isomorphic to a model equipped with a compatible $k$-step compact nilspace structure, though in the current paper we will not directly use this fact as we will construct the compatible $k$-step nilspace structures directly for the systems under consideration.

Following \cite[Definition 2.5.1]{candela}, we call a compact $k$-step nilspace \emph{compact finite rank}, or \emph{CFR} for short, if the structure groups $Z_1,\dots,Z_k$ all have finitely generated Pontryagin duals. The nilspaces we will deal with here will usually not be CFR, but on the other hand every compact $k$-step nilspace is expressible as the inverse limit of CFR $k$-step nilspaces; see \cite[Theorem 2.7.3]{candela}.

Besides $\Gamma$-systems, another key source of nilspaces comes from filtered groups and their quotients; we now review the key definitions.

\begin{definition}[Filtered groups and their quotients]\label{def-filtration}
\text{}
\begin{itemize}
\item[(i)]
 A \emph{filtration} on a group $G = (G,\cdot)$ is a collection of subgroups $G_\bullet = (G_i)_{i=0}^\infty$ of $G$ such that $G_0 = G_1 \geq G_2\geq \dots$ and $[G_i,G_j] \subset G_{i+j}$ for all $i,j \geq 0$ where $[G_i,G_j]$ denotes the group generated by the commutators $[g,h]$ with $g\in G_i,h\in G_j$.  We refer to the pair $(G,G_\bullet)$ as a \emph{filtered group}.  A \emph{filtered subgroup} $H$ of $G$ is a subgroup $H$ with a filtration $(H_i)_{i=0}^\infty$ with $H_i \leq G_i$ for all $i$, and similarly a \emph{filtered homomorphism} $\phi \colon G \to H$ between two filtered groups $G,H$ is a homomorphism such that $\phi(G_i) \leq H_i$ for all $i$. The product $G \times H$ of two  filtered groups $G, H$ is another filtered group with filtration $(G \times H)_i = G_i \times H_i$. 
 \item[(ii)] If $G = (G,G_\bullet)$ is a filtered group, and $k \geq 0$, we define the \emph{Host--Kra group} $\HK^k(G) = \HK^k(G,G_\bullet)$ to be the subgroup of $G^{\{0,1\}^k}$ generated by the elements
$$
[g_{\omega_0}]_{\omega_0} \coloneqq \left(g_{\omega_0}^{1_{\omega \geq \omega_0}}\right)_{\omega \in \{0,1\}^k}
$$
for $\omega_0$ in $\{0,1\}^k$ and $g_{\omega_0} \in G_{|\omega_0|}$, where $g_{\omega_0}^{1_{\omega \geq \omega_0}}$ is defined to equal $g_{\omega_0}$ when $\omega \geq \omega_0$ (in the product order on $\{0,1\}^k$) and the identity $1$ otherwise.
\item[(iii)]  If $G = (G,G_\bullet)$ is a filtered group, $k \geq 0$, and $\Lambda$ is a subgroup of $G$, we define the \emph{Host--Kra space} $\HK^k(G/\Lambda)$ to be the set
$$ \HK^k(G/\Lambda) \coloneqq \pi_\Lambda^{\{0,1\}^k}( \HK^k(G) ) \subset (G/\Lambda)^{\{0,1\}^k},$$
where $\pi_\Lambda \colon G \to G/\Lambda$ is the quotient map and $\pi^{\{0,1\}^k} \colon G^{\{0,1\}^k} \to (G/\Lambda)^{\{0,1\}^k}$ is the map defined by pointwise evaluation of $\pi$, thus
$$ \pi^{\{0,1\}^k}( (h_\omega)_{\omega \in \{0,1\}^k} ) \coloneqq (\pi(h_\omega))_{\omega \in \{0,1\}^k}.$$
Note that we have a canonical identification
$$ \HK^k(G/\Lambda) \equiv \HK^k(G) / \HK^k(\Lambda)$$
and also
$$ \HK^k(\Lambda) = \HK^k(G) \cap \Lambda^{\{0,1\}^k}.$$ If $K\leq G$ is another subgroup of $G$, we define $\HK^k(K\backslash G/\Lambda)$ to be the set $\pi_{K,\Lambda}^{\{0,1\}^k}(\HK^k(G))\subseteq (K\backslash G/\Lambda)^{\{0,1\}^k}$, where $\pi_{K,\Lambda}\colon G\to K\backslash G/\Lambda$ is the (double) quotient map. 
 \end{itemize}
 \end{definition}

Every filtered group $G$ has the structure of a nilspace with $C^n(G) = \HK^n(G)$ for all $n \geq 0$, and if $G$ has degree $k$ then it is a $k$-step nilspace; see \cite[Proposition 2.2.8]{candela0}.  Furthermore, an inspection of the proof shows that for $1 \leq i \leq k$, the $i^{\mathrm{th}}$ structure group $U_i$ is given by $U_i = G_i / G_{i+1}$, and the $i^{\mathrm{th}}$ factor space is given by ${\mathcal F}_i(G) = G / G_{i+1}$ (with the obvious filtered group structures, free action, and factor maps); that is to say, the equivalence classes of $\sim_i$ are the orbits of $G_{i+1}$.

We note that if $U = (U,+)$ is an abelian group and $k \geq 1$, and we define the \emph{degree-$k$ filtration} on $U$ by setting $U_0 = U_1 =\ldots = U_k\coloneqq U$ and $U_i = \{0\}$ for $i>k$, then the associated nilspace is precisely ${\mathcal D}^k(U)$.  Because of this, we shall also abuse notation and use ${\mathcal D}^k(U)$ to refer to this filtered abelian group.  If $U$ is compact metrizable, it is not difficult to see that all the Host--Kra groups $\HK^n({\mathcal D}^k(U))$ are compact metrizable, and that $\mu^{[n]}_{{\mathcal D}^k(U)}$ is the Haar probability measure on $\HK^n({\mathcal D}^k(U))$.  Similarly, if $U_1,\dots,U_k$ are compact metrizable abelian groups, then 
$$ {\mathcal U} \coloneqq {\mathcal D}^1(U_1) \times \dots \times {\mathcal D}^k(U_k)$$
is a compact $k$-step nilspace with Haar measures $\mu^{[n]}_{\mathcal U}$ being the Haar probability measures on
$$\HK^n( {\mathcal D}^1(U_1) ) \times \dots \times \HK^n( {\mathcal D}^k(U_k) ).$$

Every group quotient $G/\Lambda$ acquires the structure of a nilspace with $C^n(G/\Lambda) = \HK^n(G/\Lambda)$; see \cite[Proposition 2.3.1]{candela0} or \cite[Proposition 2.6]{gmv}. It will be a $k$-step nilspace if $G$ is of degree $k$.  In particular, $G/\Lambda$ will be a compact nilspace if $G/\Lambda$ is compact and $\HK^n(G/\Lambda)$ is closed in $(G/\Lambda)^{\{0,1\}^n}$ for all $n$.  An inspection of the proofs show that for $1 \leq i \leq k$, the $i^{\mathrm{th}}$ structure group $U_i$ is given by $U_i = G_i / \Lambda_i G_{i+1}$ (where $\Lambda_i \coloneqq \Lambda \cap G_i$), and the $i^{\mathrm{th}}$ factor space is given by ${\mathcal F}_i(G) = G / \Lambda G_{i+1}$, 
with the obvious filtered group structures, free action, and factor maps.  (Note that since $G_{i+1}$ is normal, $\Lambda G_{i+1}$ is a subgroup of $G$.)

In this paper we will need to extend this result to double coset spaces $K \backslash G / \Lambda$.  Here an additional algebraic hypothesis on the subgroups $K, \Lambda$ is required.  We will use the following recent result from \cite{cgss-2023}.

\begin{lemma}[Criterion for double cosets to be nilspaces]\label{double-nilspace}  Let $G$ be a degree $k$ filtered group, and $K,\Lambda$ be subgroups of $G$.  Assume the ``groupable axiom''
\begin{equation}\label{kgl}
K g \Lambda \cap G_i g \Lambda = (K \cap G_i) g \Lambda
\end{equation}
for all $i \geq 0$ and $g \in G$.  Then $K \backslash G / \Lambda$ has the structure of a $k$-step nilspace if we define $C^n(K \backslash G / \Lambda) = \HK^n(K \backslash G / \Lambda)$ for all $n \geq 0$, and the quotient map from $G/\Lambda$ to $K \backslash G/\Lambda$ is a fibration.   Finally, we have ${\mathcal F}_i(K \backslash G / \Lambda) = K \backslash G / G_{i+1} \Lambda$ for all $i \geq 0$.  
\end{lemma}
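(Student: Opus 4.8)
\textbf{Proof plan for Lemma \ref{double-nilspace}.} The statement is quoted from \cite{cgss-2023}, so the cleanest route is to cite it directly; but since the ingredients are developed in the appendix, I would sketch a self-contained argument that the groupable axiom \eqref{kgl} is exactly what is needed to push the known quotient-nilspace structure on $G/\Lambda$ down to $K\backslash G/\Lambda$. The plan is as follows. First I would recall from \cite[Proposition 2.3.1]{candela0} (restated in the appendix) that $G/\Lambda$, equipped with $C^n(G/\Lambda)=\HK^n(G/\Lambda)$, is a $k$-step nilspace, with factor spaces ${\mathcal F}_i(G/\Lambda)=G/\Lambda G_{i+1}$ and structure groups $G_i/\Lambda_i G_{i+1}$. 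The double coset space carries the candidate cube sets $C^n(K\backslash G/\Lambda)=\HK^n(K\backslash G/\Lambda)=\pi_{K,\Lambda}^{\{0,1\}^n}(\HK^n(G))$, where $\pi_{K,\Lambda}\colon G\to K\backslash G/\Lambda$ is the double quotient map and $\pi_{K,\Lambda}^{\{0,1\}^n}$ acts coordinatewise. Since $\pi_{K,\Lambda}$ factors through $\pi_\Lambda\colon G\to G/\Lambda$, the map $q\colon G/\Lambda\to K\backslash G/\Lambda$ is automatically a surjective cube-preserving map, i.e.\ it surjects $C^n(G/\Lambda)$ onto $C^n(K\backslash G/\Lambda)$ for every $n$; so $K\backslash G/\Lambda$ satisfies the composition and ergodicity axioms of a nilspace essentially for free (composition axioms and the point condition $C^0=1$, $C^1=$ everything are inherited from $G/\Lambda$ since cube morphisms act on the label set, not on $G$).

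The substantive work is twofold: (a) the corner-completion axiom, and in particular the $(k+1)$-uniqueness that makes $K\backslash G/\Lambda$ $k$-step; and (b) identifying the factor spaces as ${\mathcal F}_i(K\backslash G/\Lambda)=K\backslash G/G_{i+1}\Lambda$, which simultaneously gives the fibration claim. For (a), existence of corner completions descends from $G/\Lambda$ via the surjection $q$ on cube spaces together with the fact (standard for nilspaces obtained from filtered groups) that $\HK^n(G)\to \HK^n(G)$ restricted to a corner lifts — one lifts a corner in $K\backslash G/\Lambda$ along $q$ to a corner in $G/\Lambda$ (using surjectivity of $q$ on the $(n-1)$-faces and a diagram/induction argument on faces), completes it there, and pushes forward. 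Uniqueness modulo $\sim_k$ is where the groupable axiom \eqref{kgl} enters: two completions of a given $(k+1)$-corner in $K\backslash G/\Lambda$ lift to two completions in $G/\Lambda$ whose final vertices differ, in $G/\Lambda$, by an element of the $(k+1)$-st structure group, i.e.\ they lie in the same $G_{k+1}$-coset times $\Lambda$; axiom \eqref{kgl} at $i=k+1$ (where $G_{k+1}$ is trivial if $G$ has degree $k$, but more usefully the general $i$ case used inductively through the factor-space identification) says precisely that the $K$-orbit of such a vertex intersected with its $G_{i}\Lambda$-translate is the $(K\cap G_i)$-translate, which forces the two pushed-forward vertices to coincide in $K\backslash G/\Lambda$. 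For (b), I would argue by downward induction on $i$ exactly as in Proposition \ref{more-inductive}(ii) above (whose proof of \eqref{kgl}-type descent through the factor subspaces is essentially a template): \eqref{kgl} at level $i$ says that if $tx\in G_i x$ for $t\in K$, $x\in G/\Lambda$, then $tx=t'x$ for some $t'\in K\cap G_i$, and iterating this down the tower $G\geq G_2\geq\cdots$ shows that the equivalence relation $\sim_i$ on $K\backslash G/\Lambda$ has orbits of $G_{i+1}$ (mod $K$), whence ${\mathcal F}_i(K\backslash G/\Lambda)=K\backslash G/G_{i+1}\Lambda$; that the quotient map respects these $\sim_i$-classes is the definition of a fibration.

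The main obstacle, as usual with double-coset quotients, is that $K$ need not be normal, so one cannot simply invoke the quotient-group nilspace theory — one genuinely needs \eqref{kgl} to control the interaction between $K$-orbits and the filtration subgroups $G_i$, and the checking that \eqref{kgl} is both necessary and sufficient for corner-uniqueness is the crux. Since this lemma is attributed to \cite{cgss-2023}, I would in the actual paper keep the proof to a brief indication — recalling the candidate cube structure, noting that composition/ergodicity/existence are inherited via the fibration $q\colon G/\Lambda\to K\backslash G/\Lambda$, and explaining that \eqref{kgl} gives uniqueness of $(k+1)$-corner completions and the identification of factor spaces by the downward induction on $i$ sketched above — and refer the reader to \cite{cgss-2023} for the complete verification. (One should double-check the precise numbering and hypotheses against the cited preprint, since the groupable axiom there is stated at the level of $K,G,\Gamma$ with the $\phi(\Gamma)$-normalization of $K$ built in, which is automatic in our setting.)
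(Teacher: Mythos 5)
Your proposal is correct and matches the paper's approach: the paper simply cites \cite[Lemma 5.6]{cgss-2023} for the nilspace structure, the fibration property, and the identification of the factor spaces, and then notes (as you do) that the $k$-step property follows from the existence of a fibration from the $k$-step nilspace $G/\Lambda$ onto $K\backslash G/\Lambda$. The additional sketch you give of how \eqref{kgl} enters is not carried out in the paper, so your final recommendation to keep only a brief indication and defer to \cite{cgss-2023} is exactly what is done.
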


\begin{proof}  See \cite[Lemma 5.6]{cgss-2023}.  The fact that $K \backslash G/\Lambda$ is $k$-step follows from the fact that there is a fibration from the $k$-step nilspace $G/\Lambda$ to $K \backslash G/\Lambda$.
\end{proof}

Finally, we record a lifting property of nilspace fibrations.

\begin{lemma}[Lifting through a fibration]\label{fibration-lift} Let $X,Y$ be nilspaces and let $\psi \colon X \to Y$ be a nilspace fibration. Let $g$ be a morphism from $\mathcal{D}^1(\Z^m)$ to $Y$ for some $m \geq 1$.  Then there exists a nilspace morphism $g'$ from $\mathcal{D}^1(\Z^m)$ to $X$ such that $\psi \circ g' = g$.
\end{lemma}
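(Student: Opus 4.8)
\textbf{Proof proposal for Lemma \ref{fibration-lift}.}

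The plan is to lift $g$ one ``level'' of the cube dimension at a time, exploiting that $\mathcal{D}^1(\Z^m)$ is the degree-$1$ nilspace on a free abelian group and that its $n$-cube structure is finitely generated in a very concrete sense. First I would recall that a morphism $g \colon \mathcal{D}^1(\Z^m) \to Y$ is the same data as a point $g(0)$ together with the values $g(e_1),\dots,g(e_m)$ on the standard generators, subject only to the constraint that for each $i$ the pair $(g(0), g(e_i))$ is a $1$-cube (which is automatic since $C^1(Y) = Y^{\{0,1\}}$) and that all the higher ``discrete-derivative'' conditions hold; equivalently, $g$ factors as $\Z^m \xrightarrow{} Y$ where the image of any affine cube configuration in $\Z^m$ lands in the appropriate $C^n(Y)$. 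The key structural input is that $\mathcal{D}^1(\Z^m)$ is a \emph{free} object: by \cite[Definition 3.3.7]{candela0} and the surjectivity-on-cubes property of a fibration, one lifts a morphism out of a free nilspace by building a compatible system of lifts of the cube configurations that generate $C^n(\mathcal{D}^1(\Z^m))$.

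Concretely, the main step is: first choose an arbitrary lift $x_0 \in X$ of $g(0) \in Y$ (possible since $\psi$ is surjective, being a fibration). Next, for each generator $e_i$, I would use that $\psi$ restricted to the $\sim_n$-classes is surjective onto $\sim_n$-classes to lift the edge $(g(0),g(e_i))$ to an edge $(x_0, x_i)$ in $C^1(X)$; since $C^1(X) = X^{\{0,1\}}$ this is again trivial, so the real content is in dimension $2$ and up. The inductive claim to prove is: for each $N \geq 1$ there is a morphism $g_N$ from the ``box'' $\{-N,\dots,N\}^m$ (viewed with its induced cube structure as a sub-nilspace-configuration of $\mathcal{D}^1(\Z^m)$) to $X$ lifting $g$, and these are compatible as $N$ grows. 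To pass from $g_N$ to $g_{N+1}$ one must lift the value of $g$ at each new lattice point $v$ adjacent to the already-lifted region; because $v$ together with previously handled points forms a corner of a cube in $\mathcal{D}^1(\Z^m)$ whose other vertices are already lifted, and because $\psi$ is a fibration (hence a \emph{fiber-surjective} morphism), the corner in $X$ can be completed to a cube lying over the given cube in $Y$ — this is exactly the lifting lemma for fibrations, e.g.\ \cite[Lemma 3.3.8]{candela0} or the analogous statement in \cite[\S 7]{gmv}. One checks that the resulting $g_{N+1}$ is still a morphism (all $n$-cube configurations in the enlarged box map into $C^n(X)$) using the composition axiom together with the fact that the cube configurations are generated by ``elementary'' ones handled at each stage. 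Taking the union $g' \coloneqq \bigcup_N g_N$ gives a morphism $\mathcal{D}^1(\Z^m) \to X$ with $\psi \circ g' = g$ by construction.

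The main obstacle I anticipate is the bookkeeping in the inductive completion step: one must order the new lattice points so that when lifting $g$ at a point $v$, enough of the surrounding cube is already lifted that $v$ appears as the \emph{missing vertex of a corner} whose completion is governed by the fibration property, rather than as an isolated point where no constraint forces consistency — and one must verify that different cubes through $v$ all force the \emph{same} lift, i.e.\ that the completion is consistent across overlapping cubes. For $k$-step nilspaces with $k$ finite this consistency is exactly the uniqueness of $(k+1)$-corner completions pushed through $\psi$; more carefully, since the target $\mathcal{D}^1(\Z^m)$ of the morphism we are lifting is $1$-step, the configurations are highly rigid and one only needs finitely much data (the values at $0$ and $e_1,\dots,e_m$, suitably lifted) to determine everything, so the induction collapses to: lift the generating edges compatibly, then observe that every cube condition is then automatically satisfiable because it was satisfiable downstairs and $\psi$ is fiber-surjective. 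If one prefers to avoid the explicit lattice induction, an alternative is to invoke directly the general statement that fibrations admit lifts of morphisms out of free (in particular, discrete-group) nilspaces, which is essentially \cite[Lemma 3.3.8]{candela0} combined with the description of $\mathcal{D}^1(\Z^m)$ as the free $1$-step nilspace on $m$ generators; I would cite this and give the lattice-induction argument only as much detail as needed to make the reduction transparent.
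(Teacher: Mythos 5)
There is a genuine gap. Your reduction rests on the claim that a morphism $g \colon \mathcal{D}^1(\Z^m) \to Y$ is determined by the values $g(0),g(e_1),\dots,g(e_m)$, and that once these are lifted ``every cube condition is then automatically satisfiable.'' This is false as soon as $Y$ has step $\geq 2$, which is the only case of interest here: morphisms out of $\mathcal{D}^1(\Z^m)$ into a $k$-step nilspace behave like polynomial maps of degree $\leq k$ (for instance, a morphism $\mathcal{D}^1(\Z)\to\mathcal{D}^2(\T)$ is a quadratic polynomial, determined only by its values at $0,1,2$), so the data of $g$ is not the finite set of values you describe and the claimed ``collapse'' of the induction does not occur. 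Consequently the point you yourself flag as the main obstacle is never actually handled: when you lift $g$ at a new lattice point by completing one particular corner over the corresponding cube of $Y$ (which the fibration property does allow), you must check that \emph{every} parallelepiped configuration of $\mathcal{D}^1(\Z^m)$ through that point whose other vertices are already lifted is still sent to a cube of $X$. Since corner completion in a $k$-step nilspace is unique only in dimensions $>k$, distinct cubes through the new point impose genuinely competing constraints on the choice of lift, and no rigidity argument of the kind you invoke is available to reconcile them. In addition, the reference you fall back on, \cite[Lemma 3.3.8]{candela0}, is not a corner- or morphism-lifting statement, so the proposed ``alternative route'' cites a result that does not say what you need.

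The lemma is nevertheless true, and the paper's own proof is simply a citation of \cite[Corollary A.7]{CGSS}, which establishes (a slightly stronger form of) exactly this lifting property; its proof proceeds via the theory of free nilspaces and a careful extension argument for partial morphisms, i.e.\ it carries out precisely the consistency bookkeeping that your sketch dismisses. If you want a self-contained argument, you would essentially have to reproduce that work: either develop the freeness of $\mathcal{D}^1(\Z^m)$ properly and prove that fibrations have the corresponding lifting property against it, or run your lattice induction while tracking, at each newly lifted point, all cube constraints of all dimensions up to the step of $Y$, not just the single corner used to produce the lift.
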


\begin{proof} This follows from \cite[Corollary A.7]{CGSS} (which proves a more general and slightly stronger statement).
\end{proof}

\section{Multilinear maps, polynomials, and divisibility}\label{appendix-div}

In general, measure-preserving dynamical systems are not divisible, not even $1$-divisible, e.g., an irrational rotation.  
Therefore we construct extensions satisfying divisibility, a property which is crucial in deriving the main structure theorems of this paper. 
In these constructions, we exploit certain properties of multilinear maps and polynomials and their connections through divisibility.  
We record these technical aspects in this appendix.  

\begin{definition}[Multilinear maps]\label{def-multilinear}
Let $G,H$ be locally compact (Hausdorff) abelian groups and let $m\geq 1$. A map $\lambda:G^m\rightarrow H$ is said to be \emph{multilinear} if it is a (group) homomorphism in each coordinate. We denote by $\ML_{m}(G,H)$ the group of continuous multilinear maps $G^m\rightarrow H$ (where we equip $G^m$ with the product topology). A multilinear map $\lambda\in\ML_{m}(G,H)$ is said to be \textit{totally symmetric} if $$\lambda(g_1,\ldots,g_m) = \lambda(g_{\sigma(1)},\ldots,g_{\sigma(m)})$$ for every permutation $\sigma$ of $\{1,\ldots,m\}$. We denote by $\SML_{m}(G,H)$ the subgroup of $\ML_{m}(G,H)$ consisting of continuous totally symmetric multilinear maps.  
\end{definition}

Note that $\SML_{m}(G,H)$, $\ML_{m}(G,H)$ are abelian groups. 
For discrete $G$, the groups $\ML_m(G,\T)$ and $\SML_{m}(G,\T)$ are isomorphic to the Pontryagin dual of the tensor product and the symmetric tensor product of $m$ copies of $G$ respectively whose definitions are recalled next. 
	\begin{definition}[Tensor products]
	Let $G$ be a discrete abelian group and let $m\geq 1$. The \emph{$m$-fold tensor product} of $G$ is an abelian group $G^{\otimes m}$ satisfying the following universal property: There exists a multilinear map $\imath:G^m\rightarrow G^{\otimes m}$ such that for every abelian group $H$ and every multilinear map $\lambda\in \text{ML}_m(G,H)$ there exists a homomorphism $\varphi : G^{\otimes m}\rightarrow H$ such that $\lambda=\varphi \circ\imath$. We will usually write $\imath(g_1,\ldots,g_m)$ as $g_1\otimes\ldots\otimes g_m$.
 Similarly, one can define a symmetric tensor product $G^{\otimes_{sym} m}$ by replacing multilinear maps by symmetric multilinear maps in the universal property.  One can also define the tensor product $G_1 \otimes G_2$ of two distinct groups $G_1,G_2$ using bilinear maps from $G_1 \times G_2$ to $H$.
	\end{definition}
        \begin{remark}
        The tensor product and the symmetric tensor product always exist and are unique up to isomorphism (as quotients of the free abelian group $\Z^{G^m}$). Note that the symmetric tensor product is a quotient of the tensor product. 
        \end{remark}

We now state a version of the familiar fact that the product of a polynomial of degree $m$ and a polynomial of degree $n$ is a polynomial of degree $m+n$.

\begin{lemma}[Tensor products of polynomials]\label{Tensor}  Let $\XX$ be a $\Gamma$-system, let $U, V$ be abelian groups, and let $f \in \Poly_{\leq m}(\XX, U)$ and $g \in \Poly_{\leq n}(\XX,V)$ for some integers $m,n$.  Then $f \otimes g \in \Poly_{\leq m+n}(\XX, U \otimes V)$.
\end{lemma}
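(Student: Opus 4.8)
The plan is to prove this by induction on $m+n$, reducing to a statement about derivatives and using the cocycle identity \eqref{cocycle-ident} to move a derivative past a tensor product. First, recall that by the Fourier equivalence \eqref{fourier-equiv} and the functoriality of tensor products, it suffices to establish the claim after composing with an arbitrary character of $U \otimes V$. Since $\widehat{U \otimes V}$ can be identified with $\ML_1(U \otimes V, \T) \cong \ML_2(U \times V, \T)$, and bilinear maps $U \times V \to \T$ are spanned (in a suitable topological sense, using Pontryagin duality on the compact factors) by products $\xi \otimes \eta$ with $\xi \in \hat U$, $\eta \in \hat V$, one reduces to showing that if $P \coloneqq \xi \circ f \in \Poly_{\leq m}(\XX)$ and $Q \coloneqq \eta \circ g \in \Poly_{\leq n}(\XX)$ then the pointwise product $PQ$ (taken in $\R$, or rather the map $x \mapsto P(x) Q(x)$ lifted to $\T$ via representatives) lies in $\Poly_{\leq m+n}(\XX)$. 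Actually, to avoid the subtlety of multiplying $\T$-valued functions, it is cleaner to work directly with the tensor-valued statement and the derivative operators $\partial_\gamma$.

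The key computation is the following Leibniz-type identity: for any $\gamma \in \Gamma$,
$$ \partial_\gamma (f \otimes g) = (\partial_\gamma f) \otimes (g \circ T^\gamma) + f \otimes (\partial_\gamma g),$$
which follows from $f \otimes g \circ T^\gamma = (f \circ T^\gamma) \otimes (g \circ T^\gamma) = (f + \partial_\gamma f) \otimes (g + \partial_\gamma g)$ and bilinearity, exactly mirroring \eqref{cocycle-ident}. Now I would argue by induction on $m+n$ (the cases where $m < 0$ or $n < 0$ being trivial since then one factor vanishes, and the base case $m = n = 0$ being the statement that the tensor product of two invariant functions is invariant, immediate from the identity above). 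For the inductive step, fix $\gamma_1, \dots, \gamma_{m+n+1} \in \Gamma$; applying $\partial_{\gamma_1}$ via the Leibniz identity and then expanding $\partial_{\gamma_2} \cdots \partial_{\gamma_{m+n+1}}$ of each resulting term, one sees that $g \circ T^{\gamma_1} \in \Poly_{\leq n}(\XX, V)$ (since $T^{\gamma_1}$ preserves polynomial degree, being a near-action commuting with the $\partial_\gamma$'s up to the cocycle correction — more precisely $\partial_\delta (g \circ T^{\gamma_1}) = (\partial_\delta g) \circ T^{\gamma_1}$ so the degree is unchanged), while $\partial_{\gamma_1} f \in \Poly_{\leq m-1}(\XX, U)$ and $\partial_{\gamma_1} g \in \Poly_{\leq n-1}(\XX, V)$. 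By the induction hypothesis, $(\partial_{\gamma_1} f) \otimes (g \circ T^{\gamma_1}) \in \Poly_{\leq m+n-1}(\XX, U \otimes V)$ and $f \otimes (\partial_{\gamma_1} g) \in \Poly_{\leq m+n-1}(\XX, U \otimes V)$, so their sum $\partial_{\gamma_1}(f \otimes g)$ is killed by any $m+n$ further derivatives $\partial_{\gamma_2} \cdots \partial_{\gamma_{m+n+1}}$. Hence $\partial_{\gamma_1} \cdots \partial_{\gamma_{m+n+1}} (f \otimes g) = 0$, which is what we need.

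One point requiring a little care is measurability and well-definedness: $f \otimes g$ should be interpreted as the composition $x \mapsto \imath(f(x), g(x))$ where $\imath \colon U \times V \to U \otimes V$ is the canonical bilinear map, and one must check this is a measurable map into $U \otimes V$ with its natural (discrete, or Polish) topology; when $U, V$ are compact this is handled through the Fourier/Pontryagin identifications as in the definition of $\M(\XX, U)$, so that all the formal manipulations above take place inside the Polish groups $\M(\XX, U \otimes V)$. I do not expect this to be a genuine obstacle — it is the same bookkeeping already implicit in the paper's treatment of $\M(\XX, U)$ — but it is the step that needs to be spelled out rather than waved through. The main substantive content is entirely the Leibniz identity plus the induction, and the main (mild) obstacle is simply organizing the expansion of $\partial_{\gamma_2}\cdots\partial_{\gamma_{m+n+1}}$ applied to a sum of two tensor products so that the bookkeeping of degrees is transparent; using the induction hypothesis at the level ``one derivative lowers total degree by at least one'' makes this clean.
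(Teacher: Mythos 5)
Your proof is correct and follows essentially the same route as the paper: induction on $m+n$ together with a Leibniz identity for $\partial_\gamma(f\otimes g)$, each term of which has lower total degree. The only cosmetic difference is that you group the Leibniz expansion as $(\partial_\gamma f)\otimes(g\circ T^\gamma)+f\otimes(\partial_\gamma g)$ (using that composition with $T^\gamma$ preserves degree), whereas the paper writes the equivalent three-term form $(\partial_\gamma f)\otimes g+f\otimes(\partial_\gamma g)+(\partial_\gamma f)\otimes(\partial_\gamma g)$.
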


\begin{proof}  The claim is trivial if $m < 0$ or $n < 0$, so suppose that $m,n \geq 0$ and the claim has already been proven for smaller values of $m+n$.  For any $\gamma \in \Gamma$, we have the Leibniz identity
$$ \partial_\gamma (f \otimes g) = (\partial_\gamma f) \otimes g + f \otimes (\partial_\gamma g) + (\partial_\gamma f) \otimes (\partial_\gamma g).$$
By the induction hypothesis, all terms on the right-hand side are polynomials of degree $\leq m+n-1$, and hence $f \otimes g$ is a polynomial of degree $\leq m+n$ as claimed.
\end{proof}

An abelian group $d$ is \emph{divisible} if for every $d \in D$ and $n \geq 1$ there exists $d' \in G$ such that $nd'=d$.  The following fact is classical:

\begin{lemma}[Divisible abelian groups are injective]\label{divis-inject}  Let $D$ be a divisible group.
\begin{itemize}
\item[(i)] If $H \leq G$ are nested subgroups, then every homomorphism $\phi \colon H \to D$ extends to a homomorphism from $G$ to $D$.
\item[(ii)] Every short exact sequence $0 \to D \to G \to K \to 0$ of abelian groups that starts with $D$ splits; in particular, $G$ is isomorphic to $D \times K$.
\end{itemize}
\end{lemma}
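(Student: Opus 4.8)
The statement to prove is Lemma~\ref{divis-inject}, the classical fact that divisible abelian groups are injective objects in the category of abelian groups.

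\medskip

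\noindent\textbf{Proof of (i).}
The plan is the standard Zorn's lemma argument. Consider the poset of pairs $(H', \phi')$ where $H \leq H' \leq G$ and $\phi' \colon H' \to D$ extends $\phi$, ordered by $(H_1',\phi_1') \leq (H_2',\phi_2')$ if $H_1' \leq H_2'$ and $\phi_2'$ restricts to $\phi_1'$ on $H_1'$. This poset is nonempty (it contains $(H,\phi)$) and every chain has an upper bound given by the union of the domains and the common extension. By Zorn's lemma there is a maximal element $(H^*, \phi^*)$. I claim $H^* = G$: if not, pick $g \in G \setminus H^*$ and let $H^{**} \coloneqq \langle H^*, g\rangle$. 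If $ng \notin H^*$ for all $n \geq 1$, then $H^{**} = H^* \oplus \langle g \rangle$ (free over $H^*$) and we extend $\phi^*$ by sending $g$ to $0$. Otherwise let $n \geq 1$ be minimal with $ng \in H^*$; by divisibility of $D$ choose $d \in D$ with $nd = \phi^*(ng)$, and define the extension by $\phi^{**}(h + jg) \coloneqq \phi^*(h) + jd$ for $h \in H^*$, $j \in \Z$. One checks this is well-defined (if $h + jg = h' + j'g$ then $(j-j')g \in H^*$ so $n \mid j - j'$ by minimality, and the relation $nd = \phi^*(ng)$ makes the two values agree) and a homomorphism. Either way $(H^{**},\phi^{**})$ strictly dominates $(H^*,\phi^*)$, contradicting maximality. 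Hence $H^* = G$ and $\phi^*$ is the desired extension.

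\medskip

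\noindent\textbf{Proof of (ii).}
Let $0 \to D \xrightarrow{\iota} G \xrightarrow{\pi} K \to 0$ be exact. Apply part (i) with the subgroup $\iota(D) \leq G$ and the homomorphism $\iota^{-1} \colon \iota(D) \to D$ (well-defined since $\iota$ is injective): we obtain a homomorphism $r \colon G \to D$ with $r \circ \iota = \mathrm{id}_D$, i.e.\ a retraction. Then the map $G \to D \times K$ given by $g \mapsto (r(g), \pi(g))$ is an isomorphism: it is a homomorphism, it is injective (if $r(g) = 0$ and $\pi(g)=0$ then $g \in \iota(D)$, say $g = \iota(d)$, and $0 = r(g) = d$, so $g = 0$), and it is surjective (given $(d,k)$, pick any $g_0$ with $\pi(g_0) = k$, then $g \coloneqq g_0 + \iota(d - r(g_0))$ satisfies $\pi(g) = k$ and $r(g) = r(g_0) + d - r(g_0) = d$). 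Equivalently, the retraction $r$ furnishes a splitting and the sequence splits. $\qed$

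\medskip

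\noindent\emph{Remark on difficulty.} There is no real obstacle here; the only point requiring a moment of care is the well-definedness of the extension $\phi^{**}$ in the case $ng \in H^*$, which rests on the minimality of $n$ and the choice $nd = \phi^*(ng)$. This is exactly the place where divisibility of $D$ is used, and it is worth isolating because a closer inspection of precisely this Zorn argument is invoked in the proof of Lemma~\ref{polyrangelift}(ii) elsewhere in the paper.
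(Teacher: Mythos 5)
Your proof is correct and follows essentially the same route as the paper: part (i) by Zorn's lemma reduced to a one-element extension, with divisibility used exactly to solve $n\phi(e)=\phi(ne)$ in the torsion case, and part (ii) by extending the identity on $D$ to a retraction $r \colon G \to D$ which furnishes the splitting. You simply spell out the well-definedness check and the explicit isomorphism $g \mapsto (r(g),\pi(g))$ in more detail than the paper does.
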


\begin{proof}  To prove $(i)$, it suffices by Zorn's lemma to establish the case when $G$ is generated by $H$ and some element $e$ not in $H$.  If $ne \not\in H$ for any $n \geq 1$, one can extend $\phi$ by selecting $\phi(e) \in D$ arbitrarily and then using the homomorphism property to specify $\phi$ on the rest of $G$; if instead $ne \in H$ for some minimal $n \geq 1$, we use the divisibility of $D$ select $\phi(e) \in D$ so that $n \phi(e) = \phi(ne)$ and then use the homomorphism property to extend $\phi$ to the rest of $G$.

To prove $(ii)$, we can identify $D$ with a subgroup of $G$.  By $(i)$, the identity homomorphism on $D$ extends to a projection homomorphism from $G$ to $D$, which provides the desired splitting.
\end{proof}

	It is a basic fact of abstract harmonic analysis that the Pontryagin dual of a discrete torsion-free abelian group is a compact divisible group, see e.g., \cite[Corollary 8.5]{hofmann}. 
        This fact generalizes to the symmetric tensor product of discrete abelian groups.

	\begin{proposition} \label{symdivdiscrete}
		Let $G$ be a countable discrete torsion-free abelian group. Then for every $m\geq 1$, the group $\SML_m(G,\T)$ is divisible. 
	\end{proposition}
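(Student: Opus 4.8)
The plan is to deduce the statement from the classical fact recalled just before the proposition, namely that the Pontryagin dual of a discrete torsion-free abelian group is a compact divisible group. First I would use the universal property of the symmetric tensor product (and the identification noted above, that for discrete $G$ the group $\SML_m(G,\T)$ is the Pontryagin dual of the $m$-fold symmetric tensor product) to write $\SML_m(G,\T)\cong\widehat{H}$, where $H\coloneqq G^{\otimes_{sym} m}$ is regarded as a discrete abelian group: indeed a continuous symmetric multilinear map $G^m\to\T$ is automatically continuous since $G$ is discrete, hence is the same datum as a homomorphism $H\to\T$. Granting this, the proposition reduces to showing that $H$ is torsion-free, at which point the cited fact finishes the proof.

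It then remains to verify that $G^{\otimes_{sym} m}$ is torsion-free whenever $G$ is. I would argue this by reduction to the finitely generated case. Write $G=\bigcup_\alpha G_\alpha$ as the directed union of its finitely generated subgroups (directed because the sum of two finitely generated subgroups is finitely generated); each $G_\alpha$ is finitely generated and torsion-free, hence free abelian. The construction $(-)^{\otimes_{sym} m}$ is finitary and algebraic — it is a quotient of the free abelian group on the $m$-fold cartesian power by the multilinearity and symmetry relations — so it commutes with this filtered colimit, giving $H=\varinjlim_\alpha G_\alpha^{\otimes_{sym} m}$. For $G_\alpha\cong\Z^{r}$ the group $G_\alpha^{\otimes_{sym} m}$ is free abelian, with a basis indexed by the size-$m$ multisets drawn from a basis of $G_\alpha$; in particular it is torsion-free. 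Finally, a filtered colimit of torsion-free abelian groups is torsion-free: if $nt=0$ in the colimit for some $n\geq 1$, then $t$ is represented by some $t_\alpha\in G_\alpha^{\otimes_{sym} m}$, the relation $nt_\alpha=0$ already holds in $G_\beta^{\otimes_{sym} m}$ for some $\beta\geq\alpha$, and torsion-freeness of $G_\beta^{\otimes_{sym} m}$ forces $t_\beta=0$, hence $t=0$. Thus $H$ is torsion-free, and $\SML_m(G,\T)\cong\widehat{H}$ is compact and divisible, as claimed.

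The essential (but routine) step is the reduction of torsion-freeness of $G^{\otimes_{sym} m}$ to the free, finitely generated case; I do not expect a genuine obstacle, but one point requiring a little care is that the transition maps $G_\alpha^{\otimes_{sym} m}\to G_\beta^{\otimes_{sym} m}$ need not be injective, which is why I phrase the last step through colimits of torsion-free groups rather than by directly embedding $G^{\otimes_{sym} m}$ into a $\Q$-vector space. (One could alternatively try to show that the canonical map $G^{\otimes_{sym} m}\to\mathrm{Sym}^m_\Q(G\otimes\Q)$ is injective, its target being a $\Q$-vector space and hence torsion-free, but establishing this injectivity seems to need essentially the same finite-generation argument.)
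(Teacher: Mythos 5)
Your proposal is correct. Note that the paper itself gives no argument for Proposition \ref{symdivdiscrete} — it simply cites \cite[Proposition 3.12]{shalom2} — so there is no in-paper proof to compare against; what you have written is a self-contained proof that fits naturally with the surrounding text, since it uses exactly the two facts the paper records immediately beforehand: the identification of $\SML_m(G,\T)$ with the Pontryagin dual of the symmetric tensor power $G^{\otimes_{sym} m}$ (continuity being automatic for discrete $G$), and the fact that the dual of a discrete torsion-free abelian group is divisible (equivalently, one can phrase this last step purely algebraically: multiplication by $n$ is injective on a torsion-free $H$, and $\T$ is an injective $\Z$-module, so every character of $H$ is divisible by $n$). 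The whole content is thus the torsion-freeness of $G^{\otimes_{sym} m}$, and your reduction is sound: $\mathrm{Sym}^m$ is a finitary quotient-of-free-group construction and hence commutes with the filtered colimit $G=\varinjlim_\alpha G_\alpha$ over finitely generated subgroups; each $G_\alpha$ is free abelian, so $G_\alpha^{\otimes_{sym} m}$ is free with multiset (monomial) basis; and a filtered colimit of torsion-free groups is torsion-free. Your remark that the transition maps $G_\alpha^{\otimes_{sym} m}\to G_\beta^{\otimes_{sym} m}$ need not be injective, so that one should argue at the level of colimits rather than unions, is exactly the right point of care. The only step stated somewhat telegraphically is the commutation of $\mathrm{Sym}^m$ with filtered colimits, but the verification (surjectivity because symmetric tensors involve finitely many group elements; injectivity because any relation is a finite combination of multilinearity/symmetry relations, hence already holds at some finite stage) is routine and at the level of rigor of the rest of the paper.
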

	\begin{proof}
	See e.g., \cite[Proposition 3.12]{shalom2}.  
	\end{proof}
 
	We now prove a similar result for compact groups.

\begin{proposition}\label{symdiv}
Let $Z$ be a compact abelian group with divisible dual and let $A$ be a divisible group which is either countable or contains the torus $\T$ as a countable index open subgroup. Then the following hold true. 
\begin{itemize}
    \item[(i)] $\ML_k(Z,A)$ equipped with the compact-open topology is a discrete, countable, and divisible abelian group for every $k \geq 1$.
    \item[(ii)] Let $k\geq 1$. For every multilinear map $b\in \ML_k(Z,A)$ there exists an open subgroup $V\leq Z$ such that $b(s_1,\ldots,s_k)=0$ whenever $s_i\in V$ for at least one $1\leq i \leq k$. 
    \item[(iii)] $\SML_k(Z,A)$ is divisible for any $k\geq 1$. 
\end{itemize}
\end{proposition}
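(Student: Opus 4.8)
\textbf{Proof plan for Proposition \ref{symdiv}.}

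The plan is to prove the three parts in order, with parts (i) and (ii) serving as the technical scaffolding for the main claim (iii). For part (i), the key observation is that a continuous multilinear map $b \colon Z^k \to A$ on a compact group $Z$ with values in $A$ must be ``locally trivial'': since $b$ is continuous and $b(0,s_2,\dots,s_k)=0$, and $A$ is either discrete or has $\T$ as an open subgroup of countable index (hence has a neighbourhood basis of the identity consisting of open subgroups or at worst one copy of a small arc of $\T$), for each fixed $(s_2,\dots,s_k)$ the map $s_1 \mapsto b(s_1,s_2,\dots,s_k)$ is a continuous homomorphism from the compact group $Z$ into $A$ whose image is a compact subgroup of $A$; when $A$ is countable this image is finite, and when $A$ contains $\T$ as an open countable-index subgroup the image is still a compact (hence, being a subgroup, finite or a copy of $\T$) subgroup. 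Either way, by compactness of $Z$ and the fact that $Z$ has a neighbourhood basis at $0$ consisting of open (finite-index, if $Z$ is totally disconnected; in general, small) subgroups, one deduces that $b$ vanishes whenever one argument lies in a sufficiently small open subgroup $V$ of $Z$ — this is precisely part (ii), and the argument for (ii) should be run first and then invoked in (i). Given (ii), $\ML_k(Z,A)$ is discrete (a multilinear map is determined by its restriction to $(Z/V)^k$ for some open finite-index $V$, and there are only countably many such data since $Z/V$ is finite and $A$ is countable-modulo-$\T$), and countable (countable union over the open subgroups $V$, of which there are countably many by metrizability of $Z$, of the countable sets $\ML_k(Z/V, A)$). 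Divisibility of $\ML_k(Z,A)$: given $b$ and $n \geq 1$, $b$ factors through $(Z/V)^k \to A$ for some open finite-index $V$; now $(Z/V)^{\otimes k}$ is a finite abelian group, so $\Hom((Z/V)^{\otimes k}, A) \cong \ML_k(Z/V,A)$, and since $A$ is divisible, $\Hom$ into $A$ of any abelian group is divisible (if $\phi \colon B \to A$ and we want $n\psi = \phi$: pick a free resolution $0 \to R \to F \to B \to 0$ with $F$ free; lift $\phi$ to $F$, divide the lift by $n$ coordinatewise using divisibility of $A$, and check it descends — or more cleanly, use that $A$ divisible $\Rightarrow$ injective by Lemma \ref{divis-inject}, so $\Hom(-,A)$ is exact, and $\Hom(B,A)$ is a quotient of the divisible group $\Hom(F,A) = A^{(\text{basis})}$, hence divisible). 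This gives $\psi \in \ML_k(Z/V,A)$, which pulls back to the desired $n$-th root of $b$ in $\ML_k(Z,A)$.

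For part (ii) in detail: fix $b \in \ML_k(Z,A)$. I would argue by continuity at the origin. Since $A$ is divisible and either countable or has $\T$ as a countable-index open subgroup, $A$ contains no small subgroups other than possibly those inside $\T$; more usefully, there is an open neighbourhood $W$ of $0$ in $A$ containing no nontrivial subgroup (if $A$ is discrete take $W=\{0\}$; if $A \supseteq \T$ open take $W$ a small open arc inside $\T$, which contains no nontrivial subgroup of $\T$ and a fortiori none of $A$ since $\T$ is open). By continuity of $b$ there is an open neighbourhood $V_0$ of $0$ in $Z$ with $b(V_0, Z, \dots, Z) \subseteq W$ — wait, this is not quite continuity in one variable, so instead: by joint continuity of $b$ and compactness of $Z$, there is an open neighbourhood $V_0 \ni 0$ with $b(V_0, s_2,\dots,s_k) \in W$ for all $s_2,\dots,s_k$ (use a compactness/tube argument on $Z^{k-1}$). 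Let $V$ be an open subgroup of $Z$ contained in $V_0$ (which exists: for $Z$ compact abelian metrizable, open subgroups need not form a basis in general — here I should be more careful and instead use that for $s_1 \in V_0$ the homomorphism $t \mapsto b(ts_1, s_2,\dots,s_k)$ from $\Z$ — or rather the closure of $\langle s_1\rangle$ — lands in $W$ for all small $t$, hence, $W$ containing no subgroup, must be identically zero on the subgroup generated; then take $V$ to be the subgroup generated by $V_0$, which is open). So $b(s_1,s_2,\dots,s_k) = 0$ for $s_1 \in V$ and all other arguments arbitrary; by symmetry of the hypothesis in which coordinate lies in $V$ (rerun the argument coordinatewise and intersect the finitely many resulting open subgroups) we get a single open subgroup $V$ with the stated property.

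For part (iii), the main claim: given $b \in \SML_k(Z,A)$ and $n \geq 1$, we want $c \in \SML_k(Z,A)$ with $nc = b$. By (ii) applied to $b$, there is an open subgroup $V$ of $Z$ such that $b$ vanishes whenever an argument lies in $V$; hence $b$ factors through a symmetric multilinear map $\bar b$ on $(Z/V)^k$, where $Z/V$ is a finite abelian group. Now $\SML_k(Z/V, A) \cong \Hom((Z/V)^{\otimes_{\mathrm{sym}} k}, A)$, and $(Z/V)^{\otimes_{\mathrm{sym}} k}$ is a finite abelian group; since $A$ is divisible, $\Hom$ from any abelian group into $A$ is divisible (same argument as in (i): $A$ divisible $\Rightarrow$ injective by Lemma \ref{divis-inject}(i), so $\Hom(-,A)$ takes surjections to injections and is right-exact in the appropriate sense; presenting $(Z/V)^{\otimes_{\mathrm{sym}}k}$ as a quotient of a finitely generated free abelian group $F$, $\Hom(F,A)=A^{\mathrm{rk}\,F}$ is divisible and $\Hom((Z/V)^{\otimes_{\mathrm{sym}}k},A)$ is a subgroup of it cut out by the relations — actually a quotient; either way divisibility is inherited since both subgroups and quotients of divisible groups are divisible). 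Therefore $\bar b = n\bar c$ for some $\bar c \in \SML_k(Z/V,A)$, and pulling back along $Z^k \to (Z/V)^k$ gives $c \in \SML_k(Z,A)$ with $nc = b$, as desired.

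The main obstacle I anticipate is getting part (ii) — the local triviality — stated and proved cleanly in the generality where $A$ is not discrete but merely ``has $\T$ as a countable-index open subgroup,'' because then $A$ does contain small subgroups (namely inside the $\T$ factor) and one cannot simply say ``continuous homomorphism from compact to $A$ has open kernel.'' The resolution is that $\T$ itself contains no small \emph{nontrivial} subgroups (every nontrivial subgroup of $\T$ is dense, hence not contained in a small arc), so a continuous homomorphism $Z \to A$ whose image, by connectedness considerations or by the compact-image argument, is forced into a small arc must be trivial; combined with the ``no small subgroups'' property of $\T$ and the openness of $\T$ in $A$, one still extracts an open subgroup $V$ of $Z$ on which $b$ (in any one slot) vanishes. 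The rest — parts (i) and (iii) — is then essentially finite-group algebra plus the standard fact (Lemma \ref{divis-inject}) that divisible groups are injective, and should go through routinely once (ii) is in hand. One should also double-check the countability bookkeeping in (i): that $Z$ metrizable compact has only countably many open subgroups of each finite index, or more simply that $\ML_k(Z,A) = \varinjlim_V \ML_k(Z/V,A)$ over a countable cofinal family of open subgroups $V$, each term being countable.
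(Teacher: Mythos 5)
There is a genuine gap, and it sits at the heart of the divisibility claims in both (i) and (iii): you assert that $\Hom(B,A)$ is divisible whenever $A$ is divisible, and this is false. For example $\Hom(\Z/2\Z,\Q/\Z)\cong\Z/2\Z$ is not divisible even though $\Q/\Z$ is. Your homological justification misfires: applying $\Hom(-,A)$ with $A$ injective to $0\to R\to F\to B\to 0$ gives $0\to\Hom(B,A)\to\Hom(F,A)\to\Hom(R,A)\to 0$, so $\Hom(B,A)$ is a \emph{subgroup} of $A^{(\mathrm{basis})}$, not a quotient, and subgroups of divisible groups need not be divisible (e.g.\ $\Z\le\Q$); likewise ``divide the lift by $n$ and check it descends'' is exactly where the construction breaks (for $B=\Z/2\Z$, $\phi(1)=1/2$, the divided lift does not descend). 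Consequently your strategy of factoring $b$ through the \emph{fixed} finite quotient $Z/V$ furnished by (ii) and dividing there cannot work: with $Z=\Z_2$ (whose dual, the Pr\"ufer $2$-group, is divisible), $A=\Q/\Z$, $k=1$, $n=2$, the character factoring through $Z/2Z$ has no square root at that level -- its square root only exists one level deeper, factoring through $Z/4Z$. A telltale symptom is that your arguments for divisibility in (i) and (iii) never invoke the hypothesis that $\hat Z$ is divisible, yet without that hypothesis the statement is false (take $Z=\Z/2\Z$, $A=\Q/\Z$), so no argument ignoring it can be correct.

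This is precisely where the paper's proof differs: after using (ii) to descend $b$ to the finite quotient $Z/V$, it uses the divisibility of $\hat Z$, via Pontryagin duality, to build a \emph{finer} finite quotient $W$ of $Z$ (dividing the dual generators $\hat e_i\in\widehat{Z/V}\le\hat Z$ by $n$) on which the generators $w_i$ satisfy $n w_i=e_i$; the $n$-th root is then defined on $W$ by setting $b'(w_{i_1},\dots,w_{i_k})$ equal to an $n$-th division of $b(z_{i_1},\dots,z_{i_k})$ in the divisible group $A$, extended multilinearly (symmetry is inherited), and pulled back to $Z$. The same duality step drives the divisibility of $\ML_1(Z,A)$ in the base case of (i), with the $k\ge 2$ case handled by the identification $\ML_k(Z,A)=\Hom(Z,\ML_{k-1}(Z,A))$ and induction. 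Your part (ii) and the countability/discreteness bookkeeping are broadly in the right spirit (the paper gets (ii) for $k\ge 2$ from open kernels into the discrete group $\ML_{k-1}(Z,A)$, much as you do in the discrete-$A$ case; note also that a continuous homomorphism from a compact $Z$ into $A\supseteq\T$ need not vanish on any open subgroup, so your ``no small subgroups'' route needs care there), but the divisibility steps must be rebuilt around the divisibility of $\hat Z$ as above rather than around injectivity arguments at a fixed finite level.
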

\begin{proof}  We begin with $(i)$.
If $A$ contains the torus $\T$ as a countable index open subgroup, then by Lemma \ref{divis-inject} $A$ is isomorphic to $\T \times A'$ for a countable discrete group $A'$.  From this we see that to prove $(i)$, it suffices to do so in the special cases when $A$ is either countable, or equal to $\T$.

We prove $(i)$ by induction on $k$. If $k=1$ and $A=\T$, then $\ML_1(Z,A)$ is the dual group, and thus a countable, discrete, abelian group which is divisible by assumption. If $A$ is discrete, then the kernel of every continuous homomorphism $\varphi:Z\rightarrow A$ is an open subgroup. Since $Z$ is a compact metrizable abelian group, there are at most countably many open subgroups (this can be seen from the fact that its dual is countable for example). Since also $Z/\ker(\varphi)$ is finite (as $Z$ is compact), we deduce that $\ML_1(Z,A)$ is countable. Now since $A$ is discrete, the compact-open topology on $\ML_1(Z,A)$ is metrizable by the metric $\tilde{d}(\varphi_1,\varphi_2)\coloneqq \sup\{d(\varphi_1(z),\varphi_2(z))\colon z\in Z\}$, where $d$ is the discrete metric on $A$, and thus is also discrete. It is left to show that $\ML_1(Z,A)$ is divisible. Let $\varphi\in \ML_1(Z,A)$ and let $n\geq 2$. Then $V\coloneqq \ker(\phi)$ is an open subgroup, and $\varphi$ descends to a homomorphism $Z/V\rightarrow A$. Since $Z/V$ is a finite abelian group, it is self-dual $\widehat{Z/V}\cong Z/V$. Let $e_1,\ldots,e_N$ be the generators of $Z/V$ and let $\hat e_1,\ldots,\hat e_N$ be the image of $e_1,\ldots,e_N$ under the isomorphism $\widehat{Z/V}\cong Z/V$. By Pontryagin duality, the surjective homomorphism $Z\rightarrow Z/V$ gives rise to an injection $\widehat{Z/V}\rightarrow \hat Z$. We can therefore identify $\widehat{Z/V}$ as a subgroup of $\hat Z$. Since we assume that $\hat Z$ is divisible, we can find $\hat w_i\in\hat Z$ such that $n\cdot \hat w_i =\hat e_i$ for each $1\leq i \leq N$. Let $W$ be the Pontryagin dual of the group $\hat W$ generated by $\hat w_1,\ldots,\hat w_N$, and let $w_1,\ldots,w_N\in W$ be the elements which correspond to $\hat w_1,\ldots,\hat w_N$ under the identification $\hat W \cong W$ (note that $\hat W$ is finite in $\hat Z$ as all its elements have bounded torsion). We have that $n\cdot w_i = e_i$ for all $1\leq i \leq N$. Moreover, by construction we have that $W$ is a quotient of $Z$ and $Z/V$ is a quotient of $W$. Now, let $\tilde{\varphi}:W\rightarrow A$ be the unique homomorphism which satisfies $\tilde{\varphi}(w_i) = \varphi(e_i)/n$ where $\varphi(e_i)/n\in A$ is any element satisfying $n\cdot \varphi(e_i)/n=\varphi(z_i)$ for all $i=1,\ldots,N$ where $z_i\in Z$ is such that $z_i +V=e_i$. Lifting $\tilde{\varphi}$ to $Z$ we see that $n\cdot\tilde{\varphi} = \varphi$. This completes the proof of $(i)$ in the case where $k=1$. Let $k\geq 2$ and assume inductively that $(i)$ holds for all smaller values of $k$. Note that $\ML_k(Z,A) = \Hom(Z, \ML_{k-1}(Z,A))$. Thus setting $\tilde{A}:=\ML_{k-1}(Z,A)$, we have that $\tilde{A}$ is discrete and divisible by induction hypothesis, and $(i)$ for $k$ follows from the case $k=1$.

Now let $b \colon Z^k\rightarrow A$ be as in $(ii)$. We already proved $(ii)$ in the case where $k=1$. Let $k\geq 2$, for each coordinate we get a map $b_i \colon Z\rightarrow \ML_{k-1}(Z,A)$. By $(i)$, $\ML_{k-1}(Z,A)$ is discrete. Therefore, the kernel $V_i$ of $b_i$ is an open subgroup. The intersection  $V \coloneqq \bigcap_{i=1}^k V_i$ satisfies the property in the claim.

To prove $(iii)$, let $n\geq 2$ and let $b:Z^k\rightarrow A$ be a totally symmetric multilinear map. We need to show that there exists a totally symmetric multilinear map $b' \colon Z^k\rightarrow A$ such that $n\cdot b'=b$. We repeat the argument from the case $k=1$ in $(i)$ with some modifications. By $(ii)$, we can find an open subgroup $V$ such that $b$ descends to a totally symmetric multilinear map on $Z/V$. Construct $W$ just as before, and let $w_1,\ldots,w_N$ be the corresponding basis. Now set $b'(w_{i_1},\ldots,w_{i_k}) = \frac{b(z_{i_1},\ldots,z_{i_k})}{n}$ and extend $b'$ (uniquely) to a multilinear map. Since $b$ is symmetric so is $b'$, and by lifting $b$ to $Z$ we have that $n\cdot b = b'$ as required.
\end{proof}

	Our next goal is to study the relationship between polynomials and multilinear maps. Let $Z$ be an ergodic $\Gamma$-rotational system for some countable abelian group $\Gamma$, let $k\geq 1$, and let $P \in \Poly_k(Z)$. Then $b(s_1,\ldots,s_k) \coloneqq \partial_{s_1}\ldots\partial_{s_k}P$ is a totally symmetric multilinear map. We can reverse this implication when $Z$ has divisible Pontryagin dual:
 
\begin{lemma}[Symmetric multilinear maps are polynomial] \label{SML=Polydiv} Let $Z$ be an ergodic $\Gamma$-rotational system for some countable abelian group $\Gamma$ such that the dual group $\hat{Z}$ is divisible. Let $k\geq 1$ and let $b\in \SML_k(Z,\T)$. Then there exists a polynomial $Q\in \mathrm{Poly}_{\leq k}(Z,\T)$ such that
$$b(s_1,\ldots,s_k) = \partial_{s_1}\ldots\partial_{s_k} Q$$
\end{lemma}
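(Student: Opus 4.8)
The plan is to induct on $k$, building the polynomial $Q$ one derivative at a time by integrating a symmetric multilinear form along the group $Z$ itself. The base case $k=1$ is precisely the statement that every character lifts to a polynomial of degree $1$ (indeed, every continuous homomorphism $b\colon Z\to\T$ is already a polynomial of degree $\leq 1$ on the rotational system $Z$, since $\partial_s b$ is the constant $b(s)$ for each $s$, and hence $\partial_s\partial_t b=0$). So suppose $k\geq 2$ and the claim holds for $k-1$.

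First I would fix $b\in\SML_k(Z,\T)$ and, for each $s\in Z$, consider the ``partial evaluation'' $b_s\coloneqq b(s,\cdot,\dots,\cdot)\in\SML_{k-1}(Z,\T)$, a symmetric multilinear form in $k-1$ variables. By the induction hypothesis, each $b_s$ is of the form $\partial_{t_1}\cdots\partial_{t_{k-1}}Q_s$ for some $Q_s\in\Poly_{\leq k-1}(Z,\T)$. The issue is to choose these $Q_s$ coherently (measurably and additively in $s$, up to lower-order terms) so that one can then integrate the family $(Q_s)_{s}$ over $s\in Z$ to produce the desired $Q\in\Poly_{\leq k}(Z)$ with $\partial_s Q = Q_s$ modulo $\Poly_{<k-1}$ and ultimately $\partial_{s_1}\cdots\partial_{s_k}Q = b(s_1,\dots,s_k)$. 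The natural device is Proposition \ref{PI1} (the polynomial version of the Moore--Schmidt/Host integration lemma): if one can arrange a measurable family $s\mapsto p_s\in\Poly_{\leq k-1}(Z)$ satisfying the cocycle relation $p_{s+s'}=p_s\circ V_{s'}+p_{s'}$, then there is $Q\in\Poly_{\leq k}(Z)$ with $\partial_s Q=p_s$, and explicitly $Q(z+z_0)=p_z(z_0)$ for generic $z_0$. Here $Z$ itself plays the role of the free $U$-action (acting on itself by translation), so the hypotheses of Proposition \ref{PI1} apply directly with $\YY$ a point and $j=0$, or more cleanly one applies Proposition \ref{C.8}/\ref{PI1} on the rotational system. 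Concretely, I would take $p_s(z)\coloneqq \binom{\text{something}}{\ }$-type expression; more precisely, the clean choice is to set $Q(z)\coloneqq \tfrac{1}{k!}\,\widetilde b(z,\dots,z)$ heuristically, but since $Z$ need not be $k$-divisible one cannot divide by $k!$ on the nose — this is exactly where the divisibility of $\hat Z$ enters.

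So the key step, and the main obstacle, is the construction of a \emph{well-defined} measurable ``$k$-th root of the diagonal'' of $b$. The strategy mirrors the proof of Proposition \ref{bkg} and Lemma \ref{SML=Polydiv}'s companion results: using Proposition \ref{symdiv}(ii), find an open subgroup $V\leq Z$ on which $b$ vanishes whenever one argument lies in $V$, so that $b$ descends to a symmetric multilinear form on the finite group $Z/V$. On the finite quotient one can write down an explicit polynomial representative by Taylor expansion (a combination of binomial-coefficient monomials $\binom{|z_i|}{a_i}$ in a basis), but one must track that these monomials land in the right degree — this is the content of a lemma like \ref{nilspacedegree}. Then one lifts back: by divisibility of $\hat Z$ (as used in the $k=1$ case of Proposition \ref{symdiv}(i)), one produces a compact quotient $W$ of $Z$ through which $Z/V$ factors and on which the relevant generators have prescribed $n$-th roots; this lets one define $Q$ on $W$, pull it back to $Z$, and verify $\partial_{s_1}\cdots\partial_{s_k}Q=b(s_1,\dots,s_k)$ by a direct multilinear computation (the non-leading terms in the Taylor expansion each lose at least one variable and so contribute $0$ after taking all $k$ derivatives). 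Finally I would check that $Q\in\Poly_{\leq k}(Z)$ — this follows from Lemma \ref{Tensor} (tensor products of polynomials) applied to the binomial monomials, exactly as in Lemma \ref{nilspacedegree}, together with the fact that $\partial_{s_1}\cdots\partial_{s_{k+1}}Q=0$ since it is already multilinear of the correct arity. The divisibility hypothesis on $\hat Z$ is used in precisely two places: to get the finite quotient right (Proposition \ref{symdiv}) and to perform the lift of the root through the Pontryagin-dual injection; without it, dividing the symmetric form by $k!$ would raise the degree, and the lemma genuinely fails (cf. the $\F_p^\omega$ remarks in the introduction).
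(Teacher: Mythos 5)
There is a genuine gap: you never actually construct $Q$, and the route you sketch for doing so stalls at precisely the hard point. Your plan is to use Proposition \ref{symdiv}(ii) to descend $b$ to a finite quotient $Z/V$ and "write down an explicit polynomial representative by Taylor expansion" in binomial monomials there, then lift. But the statement of the lemma is \emph{false} on a general finite quotient: for instance, the symmetric trilinear form $b(s,t,\gamma)=\tfrac{s_1t_1\gamma_2+s_1t_2\gamma_1+s_2t_1\gamma_1}{2}$ on $(\Z/2\Z)^2$ (see the remark following Proposition \ref{trivial-quasi}) is not $\partial_{s}\partial_{t}\partial_{\gamma}F'$ for \emph{any} $F'\colon(\Z/2\Z)^2\to\T$, since $2s=0$ forces $b(s,s,t)=b(s,t,t)$. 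So no Taylor-expansion argument on $Z/V$ can work; one must pass to a larger finite quotient $W$ of $Z$ chosen using the divisibility of $\hat Z$, and it is exactly there that the construction has to be carried out --- you assert that "the non-leading terms in the Taylor expansion each lose at least one variable and so contribute $0$ after taking all $k$ derivatives," but on cyclic groups the derivative of $\binom{|x|}{a}$ produces a wraparound correction $-\binom{d}{a}\binom{|x|}{d-1}$ of \emph{higher} degree (this is the computation inside Lemma \ref{nilspacedegree}), so that claim is unjustified as stated. Your alternative first thread (induction on $k$ via partial evaluations $b_s$ and integration through Proposition \ref{PI1}) is likewise abandoned at its crux: you never produce a measurable family $s\mapsto Q_s$ satisfying the cocycle relation, which is the whole difficulty of that route.

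The underlying misdiagnosis is where divisibility enters. You write that one "cannot divide by $k!$ on the nose" because $Z$ need not be $k$-divisible, but the division is not performed in $Z$ at all: Proposition \ref{symdiv}(iii), proved immediately before the lemma, says that $\SML_k(Z,\T)$ itself is divisible when $\hat Z$ is divisible. The paper's proof simply picks $\mu\in\SML_k(Z,\T)$ with $k!\,\mu=b$ and sets $Q(s):=\mu(s,\dots,s)$ (the diagonal, as in Proposition \ref{bkg}); a short induction on $k$, expanding $\partial_{s_k}Q(s)=\sum_{i=0}^{k-1}\binom{k}{i}\mu(s,\dots,s,s_k,\dots,s_k)$, gives $\partial_{s_1}\cdots\partial_{s_k}Q=k!\,\mu=b$, and the bound $Q\in\Poly_{\leq k}(Z,\T)$ then follows (your observation that one further derivative annihilates the constant $b(s_1,\dots,s_k)$ is indeed a correct way to see this, once the identity is established for all translations $s\in Z$). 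So the correct use of the divisible-dual hypothesis is through Proposition \ref{symdiv}(iii), and with it the proof is a half-page computation; without that step, your proposal as written does not close.
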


\begin{proof}
We give a precise formula for $Q$. By Proposition \ref{symdiv}(iii), we can find a totally symmetric multilinear map $\mu\in \SML_k(Z,\T)$ such that $k! \cdot \mu = b$. Set \begin{equation}
    Q(s) := \mu(s,s,\ldots,s).
\end{equation}
We claim by induction on $k$ that $Q$ is a polynomial of degree $k$ and $\partial_{s_1}\ldots\partial_{s_k} Q = k! \mu(s_1,\ldots,s_k)$. If $k=1$, then $Q(s)=\mu(s)$ is a homomorphism. Homomorphisms $Z\rightarrow\T$ are polynomials of degree $1$ and $\partial_{s_1} Q(s) =  \mu(s+s_1)-\mu(s) = \mu(s_1)$ as required. Let $k\geq 2$ and 
observe that
$$\partial_{s_k} Q(s) = \mu(s+s_k,s+s_k,\ldots,s+s_k)-\mu(s,s,\ldots,s) = \sum_{i=0}^{k-1} \binom{k}{i} \mu_i(s,s_k)$$ where $\mu_i(s,s_k) = \mu(s,s,\ldots,s,s_k,s_k,\ldots,s_k)$ where $s$ appears $i$ times and $s_k$ appears $(k-i)$-times. By the induction hypothesis, $\mu_i$ is a polynomial of degree $i$ and so
$$ \partial_{s_1}\ldots\partial_{s_k} Q(s) =  \partial_{s_1}\ldots\partial_{s_{k-1}} \partial_{s_k} Q(s) = k\cdot \partial_{s_1}\ldots\partial_{s_{k-1}} \mu_{k-1}(s,s_k).$$
The claim follows since by the induction hypothesis we have 
$$\partial_{s_1}\ldots\partial_{s_{k-1}} \mu_{k-1}(s,s_k)=(k-1)! \mu(s_1,\ldots,s_k).$$
\end{proof}

\begin{theorem}\label{divisible}
 Let $Z$ be an ergodic $\Gamma$-rotational system for some countable abelian group $\Gamma$ such that the dual group $\hat{Z}$ is divisible. Then for  every $k\geq 1$, the group  $\mathrm{Poly}_{\leq k}(Z,\T)$ is divisible.
\end{theorem}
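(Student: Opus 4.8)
\textbf{Proof plan for Theorem \ref{divisible}.} The goal is to show that $\Poly_{\leq k}(Z,\T)$ is divisible whenever $Z$ is an ergodic $\Gamma$-rotational system with $\hat Z$ divisible. The plan is to fix $P \in \Poly_{\leq k}(Z,\T)$ and $n \geq 1$ and produce $Q \in \Poly_{\leq k}(Z,\T)$ with $nQ = P$, by combining the short exact sequence structure of polynomials with the just-established Lemma \ref{SML=Polydiv} and the divisibility results for symmetric multilinear forms.

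First I would argue by induction on $k$. The base case $k=0$ is immediate: $\Poly_{\leq 0}(Z,\T)=\T$ by ergodicity, and $\T$ is divisible. For the inductive step, recall the short exact sequence (the analogue for $Z$ of the one displayed in Section \ref{divis-ext-sec})
\[
0 \to \Poly_{<k}(Z,\T) \to \Poly_{\leq k}(Z,\T) \xrightarrow{\nabla^k} \Spec_k(Z) \to 0,
\]
where $\nabla^k P(s_1,\dots,s_k) = \partial_{s_1}\cdots\partial_{s_k}P$ and $\Spec_k(Z) \leq \SML_k(\hat{\ }\,,\T)$ — more precisely, since $Z$ is a rotational system, the relevant symmetric multilinear forms are on the group $Z$ itself (as in Lemma \ref{SML=Polydiv}), and in fact by Lemma \ref{SML=Polydiv} \emph{every} element of $\SML_k(Z,\T)$ lies in $\Spec_k(Z)$, so $\Spec_k(Z) = \SML_k(Z,\T)$. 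Now given $P$ and $n$, consider $b \coloneqq \nabla^k P \in \SML_k(Z,\T)$. By Proposition \ref{symdiv}(iii), $\SML_k(Z,\T)$ is divisible (here we use that $\hat Z$ is divisible and $A = \T$ satisfies the hypotheses of that proposition), so there is $b' \in \SML_k(Z,\T)$ with $nb' = b$. By Lemma \ref{SML=Polydiv}, there is $Q_0 \in \Poly_{\leq k}(Z,\T)$ with $\nabla^k Q_0 = b'$. Then $\nabla^k(P - nQ_0) = b - nb' = 0$, so $P - nQ_0 \in \Poly_{<k}(Z,\T) = \Poly_{\leq k-1}(Z,\T)$. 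By the induction hypothesis, $\Poly_{\leq k-1}(Z,\T)$ is divisible, so we may write $P - nQ_0 = nR$ for some $R \in \Poly_{\leq k-1}(Z,\T) \subseteq \Poly_{\leq k}(Z,\T)$. Setting $Q \coloneqq Q_0 + R$ gives $nQ = nQ_0 + nR = nQ_0 + (P - nQ_0) = P$, as desired.

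I expect the only genuine subtlety to be bookkeeping about \emph{which} group of symmetric multilinear forms is in play: Lemma \ref{SML=Polydiv} and Proposition \ref{symdiv} are phrased with $\SML_k(Z,\T)$ (multilinear in the group elements of the compact group $Z$), and one must check that $\nabla^k$ on $\Poly_{\leq k}(Z,\T)$ indeed lands in $\SML_k(Z,\T)$ — this is exactly the observation recorded just before Lemma \ref{SML=Polydiv}, that $b(s_1,\dots,s_k) \coloneqq \partial_{s_1}\cdots\partial_{s_k}P$ is totally symmetric and multilinear, using the cocycle identity \eqref{cocycle-ident} (each $\partial_{s_i}$ is applied via the rotation action of $Z$ on itself). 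Once this identification is in hand, the argument is a routine diagram chase through the exact sequence together with the two divisibility inputs (Proposition \ref{symdiv}(iii) and the inductive hypothesis). No measure-theoretic difficulties arise because everything takes place on the rotational system $Z$, where polynomials have a clean algebraic description.
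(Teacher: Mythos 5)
Your proposal is correct and follows essentially the same route as the paper: induction on $k$ with the base case supplied by ergodicity (and $\hat Z$ for degree $1$), then dividing $\nabla^k P$ in $\SML_k(Z,\T)$ via Proposition \ref{symdiv}(iii), integrating back via Lemma \ref{SML=Polydiv}, and absorbing the degree $\leq k-1$ remainder by the induction hypothesis. The short exact sequence framing and the remark identifying the relevant multilinear forms as forms on $Z$ itself are just cosmetic repackagings of the paper's argument.
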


\begin{proof}
We prove this theorem by induction on $k$. By ergodicity, $\mathrm{Poly}_{\leq 0}(Z,\T)\cong \T$ and $\T$ is divisible. For the case $k=1$, we have 
$$\mathrm{Poly}_{\leq 1}(Z,\T)\cong \T\oplus \Poly_{\leq 1}(Z,\T)/\mathrm{Poly}_{\leq 0}(Z,\T)\cong \T\oplus \hat Z,$$
and the claim follows from the assumption that $\hat Z$ is divisible. 
Now let $k\geq 2$ and let $Q:Z\rightarrow\T$ be a polynomial of degree at most $k$. Then $b(s_1,\ldots,s_k):=\partial_{s_1}\ldots\partial_{s_k}Q$ is a totally symmetric multilinear map. By Proposition \ref{symdiv}(iii), we can find a totally symmetric multilinear map $b':Z^k\rightarrow \T$ such that $n\cdot b'=b$ for any $n\geq 1$. By Lemma \ref{SML=Polydiv}, we can find a polynomial $Q':Z\rightarrow\T$ of degree at most $k$ such that $\partial_{s_1}\ldots\partial_{s_k} Q' = b'(s_1,\ldots,s_k)$. We deduce that $Q-nQ'$ is a polynomial of degree at most $k-1$. By induction hypothesis, we can find a polynomial $Q''$ of degree at most $k-1$ such that $n\cdot Q'' = Q-nQ'$. Letting $P=Q'+Q''$, we see that $n\cdot P = Q$.
\end{proof}
\section{An Abramov system that is not Weyl}\label{appendix-d}

In this appendix we prove Proposition \ref{second-counter}.  The system $\YY$ we will construct will be a skew-product
$$ \YY \coloneqq \ZZ \rtimes_\rho \Z/8\Z$$
where
$\ZZ$ is the rotational $\F_2^\omega$-system $\ZZ = \prod_{n=1}^\infty \Z/2\Z$ with the action given by
$$(T^\gamma z)_i = z_i+\gamma_i \mod 2
$$
where $z=(z_1,z_2, \ldots)$, and $\rho \colon \Gamma \to \M( \ZZ, \Z/8\Z)$ is the cocycle
$$
\rho_\gamma(z) \coloneqq \sum_{i=1}^\infty (|z_i+\gamma_i| - |z_i|) \mod 8
$$
where $|\cdot| \colon \F_2 \to \{0,1\}$ is the map defined by $|0|=0$ and $|1|=1$.  We note that the infinite sum is well defined as all but finitely many of the $\gamma_i$'s are zero.  It is easy to see that $\rho$ is a cocycle. 

The $\F_2^\omega$-system $\YY$ has a factor
$$ \XX \coloneqq \ZZ \rtimes_{\rho \mod 4} \Z/4\Z$$
with factor map $(z,t) \mapsto (z, t \mod 4)$.  One easily verifies that
$$
\rho_\gamma(z) = \sum_{i=1}^\infty |\gamma_i|(1 + 2|z_i|) \mod 4
$$
and so $\XX$ is precisely the system constructed in \cite[Appendix E]{tz-lowchar}.  In particular, $\XX$ is an ergodic Weyl $\F_2^\omega$-system of order $2$, and the function $\iota \in \M(\XX,\T)$ defined by $\iota(z,t) \coloneqq \frac{t}{4}$ is a polynomial of degree $2$ that does not admit a square root in $\XX$ of degree $3$, that is to say there does not exist $Q \in \Poly_{\leq 3}(\XX)$ with $2Q = \iota$.

We can view $\YY$ as an extension
$$ \YY = \XX \rtimes_\sigma \Z/2\Z$$
of $\XX$, where $\sigma \colon \Gamma \to \M(\XX\rightarrow \Z/2\Z)$ is defined by the formula
\begin{equation}\label{sig}
\sigma_\gamma(z,t) = \frac{\rho_\gamma(z) - \partial_\gamma F(z,t)}{4}
\end{equation}
and $F(z,i \mod 4) \coloneqq i \mod 8$ for $i=0,1,2,3$ (so that in particular $F(z,t) \mod 4 = t$ and $\partial_\gamma F \mod 4 = \rho_\gamma$).  One easily verifies that $\sigma$ is a $\Z/2\Z$-valued cocycle.

We observe that $\frac{\sigma}{2}$ is not a $\T$-valued coboundary on $\XX$.  For if it were, there must exist $G \in \M(\XX,\T)$ such that
$$ \frac{\rho_\gamma(z)}{8} = \partial_\gamma G(z,t)$$
for all $\gamma \in \F_2^\omega$ and almost all $(z,t) \in \XX$.  If we set $\gamma = e_n$ to be a generator of $\F_2^\omega$ and send $n \to \infty$, then the right-hand side converges strongly to zero but the left-hand side does not.  Thus $\frac{\sigma}{2}$ is not a $\T$-valued coboundary.
By \cite[Theorem 2.4(ii)]{jst} this implies that $\YY = \XX \rtimes_\sigma \Z/2\Z$ is ergodic.

Observe that $\rho_\gamma$ is a polynomial of degree $2$ on $\ZZ$ for every $\gamma$, thus the coordinate map $(z,t) \mapsto t$ is a polynomial of degree three.  Also the coordinate maps $(z,t) \mapsto z_i$ are polynomials of degree $1$.  From Fourier expansion we conclude that the phase polynoimals of degree at most $3$ span a dense subspace of $L^2(\YY)$, so $\YY$ is Abramov of order $3$.

Next we claim $\YY$ is not of order $2$.  If we let $\phi \in \M(\YY,\T)$ be the function $\phi(z,t) \coloneqq \frac{t}{8}$, then direct computation using $\partial_\gamma |z_i| = - 2 |\gamma_i| |z_i|$  shows that
$$ \partial_{\gamma} \partial_{\gamma'} \partial_{\gamma''} \phi(z,t) = \sum_i \frac{|\gamma_i| |\gamma'_i| |\gamma''_i|}{2} \mod 1.$$
From this one easily verifies that $e(\phi)$ has vanishing $U^3$ Gowers--Host--Kra seminorm and so must be orthogonal to $\ZZ^2(\YY)$, and so $\YY$ cannot be of order $2$.

The Conze--Lesigne factor $\ZZ^2(\YY)$ is now a strict factor of $\YY$ that contains the order $2$ factor $\XX$.  By Mackey theory, we must therefore have $\ZZ^2(\YY) = \XX$.  If $\YY$ was Weyl of order $3$, then by Proposition \ref{prop-invlim}(ii), $\sigma$ would have to be cohomologous (as a $\Z/2\Z$-valued cocycle) to a polynomial of degree $2$, thus there exists $G \in \M(\XX,\Z/2\Z)$ and $q \colon \F_2^\omega \to \Poly_{\leq 2}(\XX, \Z/2\Z)$ such that
$$ \frac{\rho_\gamma(z) - \partial_\gamma F(z,t)}{4} = q_\gamma(z,t) + \partial_\gamma G(z,t).$$
In particular, 
$$\partial_\gamma \left( \frac{F(z,t)}{8} + \frac{G(z,t)}{2} \right) = \frac{\rho_\gamma(z)}{8} - \frac{q_\gamma(z,t)}{2}$$
is a polynomial of degree $2$ for every $\gamma$, thus $Q \coloneqq \frac{F}{8} + \frac{G}{2}$ is a polynomial of degree $3$.  But 
$$ 2Q = \frac{F}{4} = \iota,$$
contradicting the results of \cite[Appendix E]{tz-lowchar}.  Thus $\YY$ is not Weyl of order $3$, and the proof of Proposition \ref{second-counter} is complete.

\begin{remark}  One can view $\YY$ as the ergodic system arising from applying the Furstenberg correspondence principle to the finite vector spaces $\F_2^n$ and to the functions $f \colon \F_2^n \to \T$ defined by
$$ f(x_1,\dots,x_n) \coloneqq \frac{\sum_{i=1}^n |x_i|}{8} \mod 1;$$
these are cubic functions that cannot be expressed efficiently as the sum of a classical cubic polynomials and function of bounded quadratic complexity, which is the basic reason why they do not generate a Weyl system.  This function also arises as counterexample to the ``classical'' formulation of the inverse conjecture for the Gowers norms on finite vector spaces: see \cite{green-tao-finite}, \cite{lms}.
\end{remark}

\bibliographystyle{abbrv}
\bibliography{bibliography}

@book {kechrisbecker,
    AUTHOR = {Becker, Howard and Kechris, Alexander S.},
     TITLE = {The descriptive set theory of {P}olish group actions},
    SERIES = {London Mathematical Society Lecture Note Series},
    VOLUME = {232},
 PUBLISHER = {Cambridge University Press, Cambridge},
      YEAR = {1996},
     PAGES = {xii+136},
      ISBN = {0-521-57605-9},
   MRCLASS = {54H05 (04A15 04A20 28A05)},
  MRNUMBER = {1425877},
MRREVIEWER = {Klaas Pieter Hart},
       DOI = {10.1017/CBO9780511735264},
       URL = {https://doi.org/10.1017/CBO9780511735264},
}

@article {shalom1,
    AUTHOR = {Shalom, Or},
     TITLE = {Host-{K}ra factors for {$\bigoplus_{p\in P}\mathbb{Z}/p\mathbb{Z}$}
              actions and finite-dimensional nilpotent systems},
   JOURNAL = {Anal. PDE},
  FJOURNAL = {Analysis \& PDE},
    VOLUME = {17},
      YEAR = {2024},
    NUMBER = {7},
     PAGES = {2379--2449},
      ISSN = {2157-5045,1948-206X},
       DOI = {10.2140/apde.2024.17.2379},
       URL = {https://doi.org/10.2140/apde.2024.17.2379},
}

@article{shalom3,
Author = {Or Shalom},
Title = {Host-Kra theory for $\bigoplus_{p\in P}\mathbb{F}_p$- systems and multiple recurrence},
Year = {2020},
Journal = {Ergodic Theory and Dynamical Systems, 43(1), 299-360.},
DOI = {10.1017/etds.2021.109},
}

@article{shalom2,
AUTHOR = {Shalom, Or}, 
TITLE= {Multiple ergodic averages in abelian groups and Khintchine type recurrence},
Journal = {Trans. Amer. Math. Soc. 375 (2022), 2729-2761}, 
Year = 2021
}

@incollection {fw,
    AUTHOR = {Furstenberg, Hillel and Weiss, Benjamin},
     TITLE = {A mean ergodic theorem for
              {$(1/N)\sum^N_{n=1}f(T^nx)g(T^{n^2}x)$}},
 BOOKTITLE = {Convergence in ergodic theory and probability ({C}olumbus,
              {OH}, 1993)},
    SERIES = {Ohio State Univ. Math. Res. Inst. Publ.},
    VOLUME = {5},
     PAGES = {193--227},
 PUBLISHER = {de Gruyter, Berlin},
      YEAR = {1996},
   MRCLASS = {28D05},
  MRNUMBER = {1412607},
MRREVIEWER = {Idris Assani},
}

@book {hk-book,
    AUTHOR = {Host, Bernard and Kra, Bryna},
     TITLE = {Nilpotent structures in ergodic theory},
    SERIES = {Mathematical Surveys and Monographs},
    VOLUME = {236},
 PUBLISHER = {American Mathematical Society, Providence, RI},
      YEAR = {2018},
     PAGES = {X+427},
      ISBN = {978-1-4704-4780-9},
   MRCLASS = {37-02 (11B30 28D05 37Axx 37B05 47A35)},
  MRNUMBER = {3839640},
MRREVIEWER = {Yonatan Gutman},
       DOI = {10.1090/surv/236},
       URL = {https://doi.org/10.1090/surv/236},
}

@article {jstcounterexample,
    AUTHOR = {Jamneshan, Asgar and Shalom, Or and Tao, Terence},
     TITLE = {A {H}ost-{K}ra {$\Bbb F_2^\omega$}-system of order 5 that is
              not {A}bramov of order 5, and non-measurability of the inverse
              theorem for the {$U^6(\Bbb F_2^n)$} norm},
   JOURNAL = {Math. Ann.},
  FJOURNAL = {Mathematische Annalen},
    VOLUME = {394},
      YEAR = {2026},
    NUMBER = {1},
     PAGES = {Paper No. 11, 65},
      ISSN = {0025-5831,1432-1807},
   MRCLASS = {37A05 (11B30 37A35)},
  MRNUMBER = {5028803},
       DOI = {10.1007/s00208-026-03344-5},
       URL = {https://doi.org/10.1007/s00208-026-03344-5},
}

@article{jt21-1,
 author = {Jamneshan, Asgar and Tao, Terence},
 title = {The inverse theorem for the {{\(U^3\)}} {Gowers} uniformity norm on arbitrary finite abelian groups: {Fourier}-analytic and ergodic approaches},
 fjournal = {Discrete Analysis},
 journal = {Discrete Anal.},
 issn = {2397-3129},
 volume = {2023},
 pages = {48},
 note = {Id/No 11},
 year = {2023},
 language = {English},
 doi = {10.19086/da.84268},
 keywords = {11B30,28E05,37A15},
 zbMATH = {8061055}
}

@book {Fuchs,
 AUTHOR = {Fuchs, L\'aszl\'o},
     TITLE = {Infinite abelian groups. Vol. I. Pure and Applied Mathematics},
   year      = {1970},
  publisher = {Academic Press, New York-London},
  Volume = {Vol. 36},
  pages     = {xi+290},
  mrclass   = {20.30},
  mrnumber  = {0255673},
}

@article {Stein,
    AUTHOR = {Steinlage, R. C.},
     TITLE = {On {H}aar measure in locally compact {$T\sb{2}$} spaces},
   JOURNAL = {Amer. J. Math.},
  FJOURNAL = {American Journal of Mathematics},
    VOLUME = {97},
      YEAR = {1975},
     PAGES = {291--307},
      ISSN = {0002-9327},
   MRCLASS = {28A70},
  MRNUMBER = {369665},
MRREVIEWER = {Edwin Hewitt},
       DOI = {10.2307/2373713},
       URL = {https://doi.org/10.2307/2373713},
}

@article {renault,
    AUTHOR = {Renault, Jean},
     TITLE = {The ideal structure of groupoid crossed product
              {$C^\ast$}-algebras},
      NOTE = {With an appendix by Georges Skandalis},
   JOURNAL = {J. Operator Theory},
  FJOURNAL = {Journal of Operator Theory},
    VOLUME = {25},
      YEAR = {1991},
    NUMBER = {1},
     PAGES = {3--36},
      ISSN = {0379-4024},
   MRCLASS = {46L55},
  MRNUMBER = {1191252},
MRREVIEWER = {John Quigg},
}

@book {renault-book,
    AUTHOR = {Renault, Jean},
     TITLE = {A groupoid approach to {$C\sp{\ast} $}-algebras},
    SERIES = {Lecture Notes in Mathematics},
    VOLUME = {793},
 PUBLISHER = {Springer, Berlin},
      YEAR = {1980},
     PAGES = {ii+160},
      ISBN = {3-540-09977-8},
   MRCLASS = {46Lxx (22D25 22D40)},
  MRNUMBER = {584266},
MRREVIEWER = {A. K. Seda},
}

@article {jt20,
    AUTHOR = {Jamneshan, Asgar and Tao, Terence},
     TITLE = {An uncountable {M}ackey-{Z}immer theorem},
   JOURNAL = {Studia Math.},
  FJOURNAL = {Studia Mathematica},
    VOLUME = {266},
      YEAR = {2022},
    NUMBER = {3},
     PAGES = {241--289},
      ISSN = {0039-3223},
   MRCLASS = {37A15 (37A20)},
  MRNUMBER = {4450792},
       DOI = {10.4064/sm201125-1-5},
       URL = {https://doi.org/10.4064/sm201125-1-5},
}

@article{jt-foundational,
 author = {Jamneshan, Asgar and Tao, Terence},
 title = {Foundational aspects of uncountable measure theory: {Gelfand} duality, {Riesz} representation, canonical models, and canonical disintegration},
 fjournal = {Fundamenta Mathematicae},
 journal = {Fundam. Math.},
 issn = {0016-2736},
 volume = {261},
 number = {1},
 pages = {1--98},
 year = {2023},
 language = {English},
 doi = {10.4064/fm226-7-2022},
 keywords = {28A60,46L05,28A50,46M15},
 zbMATH = {7687928},
 Zbl = {1547.28006}
}

@article{jt19,
 author = {Jamneshan, Asgar and Tao, Terence},
 title = {An uncountable {Moore}-{Schmidt} theorem},
 fjournal = {Ergodic Theory and Dynamical Systems},
 journal = {Ergodic Theory Dyn. Syst.},
 issn = {0143-3857},
 volume = {43},
 number = {7},
 pages = {2376--2403},
 year = {2023},
 language = {English},
 doi = {10.1017/etds.2022.36},
 keywords = {37A20,37A05},
 zbMATH = {7702430},
 Zbl = {1521.37002}
}

@Article{austin2015pleasant,
  Title                    = {{Pleasant extensions retaining algebraic structure, I}},
  Author                   = {T.~Austin},
  Journal                  = {J. Anal. Math.},
  Year                     = {2015},
  Pages                    = {1-36},
  Volume                   = {125},

  Owner                    = {asgar},
  Timestamp                = {2017.10.20}
}

@article{jamneshan2019fz,
 author = {Jamneshan, Asgar},
 title = {An uncountable {Furstenberg}-{Zimmer} structure theory},
 fjournal = {Ergodic Theory and Dynamical Systems},
 journal = {Ergodic Theory Dyn. Syst.},
 issn = {0143-3857},
 volume = {43},
 number = {7},
 pages = {2404--2436},
 year = {2023},
 language = {English},
 doi = {10.1017/etds.2022.43},
 keywords = {37A15,37A05,37A20,37A30,03G30},
 zbMATH = {7702431},
 Zbl = {1532.37006}
}

@book{griesmer,
    AUTHOR = {Griesmer, John Thomas},
     TITLE = {Ergodic averages, correlation sequences, and sumsets},
      NOTE = {Thesis (Ph.D.)--The Ohio State University},
 PUBLISHER = {ProQuest LLC, Ann Arbor, MI},
      YEAR = {2009},
     PAGES = {234},
      ISBN = {978-1109-38845-9},
   MRCLASS = {Thesis},
  MRNUMBER = {2713738}
}

@incollection {rudolph,
    AUTHOR = {Rudolph, Daniel J.},
     TITLE = {Eigenfunctions of {$T\times S$} and the {C}onze-{L}esigne
              algebra},
 BOOKTITLE = {Ergodic theory and its connections with harmonic analysis
              ({A}lexandria, 1993)},
    SERIES = {London Math. Soc. Lecture Note Ser.},
    VOLUME = {205},
     PAGES = {369--432},
 PUBLISHER = {Cambridge Univ. Press, Cambridge},
      YEAR = {1995},
   MRCLASS = {28D05},
  MRNUMBER = {1325712},
MRREVIEWER = {Mariusz Lema\'{n}czyk},
       DOI = {10.1017/CBO9780511574818.017},
       URL = {https://doi.org/10.1017/CBO9780511574818.017},
}

@Book{furstenberg2014recurrence,
  Title                    = {Recurrence in Ergodic Theory and Combinatorial Number Theory},
  Author                   = {Furstenberg, H.},
  Publisher                = {Princeton University Press},
  Year                     = {2014},
  Series                   = {Princeton Legacy Library},

  ISBN                     = {9781400855162},
  Url                      = {https://books.google.de/books?id=B8X\_AwAAQBAJ}
}

@Article{ab40abstract, 
Title = {General ergodic theorems}, 
Author = {Alaoglu, L. and Birkhoff, G.}, 
Journal = {Ann. of Math.},
Year = {1940}, 
Volume = {41}, 
Pages = {293-309}
}

@book {hewittross,
    AUTHOR = {Hewitt, Edwin and Ross, Kenneth A.},
     TITLE = {Abstract harmonic analysis. {V}ol. {I}},
    SERIES = {Grundlehren der Mathematischen Wissenschaften [Fundamental
              Principles of Mathematical Sciences]},
    VOLUME = {115},
   EDITION = {Second},
 PUBLISHER = {Springer-Verlag, Berlin-New York},
      YEAR = {1979},
     PAGES = {ix+519},
      ISBN = {3-540-09434-2},
   MRCLASS = {43-02 (22-01)},
  MRNUMBER = {551496},
}

@Article{host2005nonconventional,
  Title                    = {{Nonconventional ergodic averages and nilmanifolds}},
  Author                   = {B.~Host and B.~Kra},
  Journal                  = {Ann.~Math.},
  Year                     = {2005},
  Number                   = {1},
  Pages                    = {397-488},
  Volume                   = {161},

  Owner                    = {asgar},
  Timestamp                = {2017.10.20}
}

@Book{kechris1995classical,
  Title                    = {Classical Descriptive Set Theory},
  Author                   = {Kechris, A.S.},
  Publisher                = {Springer-Verlag},
  Year                     = {1995},
  Series                   = {Graduate Texts in Mathematics},

  Doi                      = {10.1007/978-1-4612-4190-4},
  ISBN                     = {9780387943749},
  Lccn                     = {lc94030471},
  Url                      = {https://books.google.fr/books?id=lPvuAAAAMAAJ}
}

@Article{moore1980coboundaries,
  author  = {Calvin C. Moore and Klaus Schmidt},
  title   = {{Coboundaries and Homomorphisms for Non--Singular Actions and a Problem of H. Helson}},
  journal = {Proc. Lond. Math. Soc.},
  year    = {1980},
  volume  = {40},
  issue   = {3},
  pages   = {443-475},
}

@book{halmos2013lectures,
  title={Lectures on Ergodic Theory},
  author={Halmos, P.R.},
  isbn={9781614274612},
  url={https://books.google.com/books?id=E4ZZnQEACAAJ},
  year={2013},
  publisher={Martino Fine Books}
}

@article {gtz,
    AUTHOR = {Green, Ben and Tao, Terence and Ziegler, Tamar},
     TITLE = {An inverse theorem for the {G}owers {$U^{s+1}[N]$}-norm},
   JOURNAL = {Ann. of Math. (2)},
  FJOURNAL = {Annals of Mathematics. Second Series},
    VOLUME = {176},
      YEAR = {2012},
    NUMBER = {2},
     PAGES = {1231--1372},
      ISSN = {0003-486X},
   MRCLASS = {11B30},
  MRNUMBER = {2950773},
MRREVIEWER = {Julia Wolf},
       DOI = {10.4007/annals.2012.176.2.11},
       URL = {https://doi.org/10.4007/annals.2012.176.2.11},
}

@article{Effros,
AUTHOR = {E. G. Effros},
TITLE= {Transformation groups and $C^\star$-algebras},
Journal = {Annals of Mathematics},
VOLUME = {81},
YEAR = {1965},
PAGES = {38-55}
}

@book {tao-vu,
    AUTHOR = {Tao, Terence and Vu, Van H.},
     TITLE = {Additive combinatorics},
    SERIES = {Cambridge Studies in Advanced Mathematics},
    VOLUME = {105},
      NOTE = {Paperback edition [of MR2289012]},
 PUBLISHER = {Cambridge University Press, Cambridge},
      YEAR = {2010},
     PAGES = {xviii+512},
      ISBN = {978-0-521-13656-3},
   MRCLASS = {11-02 (05-02 05D10 11B13 11B30 11P70 11P82 28D05)},
  MRNUMBER = {2573797},
}

@article {green-tao-inverseu3,
    AUTHOR = {Green, Ben and Tao, Terence},
     TITLE = {An inverse theorem for the {G}owers {$U^3(G)$} norm},
   JOURNAL = {Proc. Edinb. Math. Soc. (2)},
  FJOURNAL = {Proceedings of the Edinburgh Mathematical Society. Series II},
    VOLUME = {51},
      YEAR = {2008},
    NUMBER = {1},
     PAGES = {73--153},
      ISSN = {0013-0915},
   MRCLASS = {11B25 (11B75 11P55 11P70)},
  MRNUMBER = {2391635},
MRREVIEWER = {Serge\u{\i} V. Konyagin},
       DOI = {10.1017/S0013091505000325},
       URL = {https://doi.org/10.1017/S0013091505000325},
}

@article{candela-szegedy-inverse,
 author = {Candela, Pablo and Szegedy, Bal{\'a}zs},
 title = {Regularity and inverse theorems for uniformity norms on compact abelian groups and nilmanifolds},
 fjournal = {Journal f{\"u}r die Reine und Angewandte Mathematik},
 journal = {J. Reine Angew. Math.},
 issn = {0075-4102},
 volume = {789},
 pages = {1--42},
 year = {2022},
 language = {English},
 doi = {10.1515/crelle-2022-0016},
 keywords = {37A20,37A30,37A15,22E15,57S20,22B05,22D40},
 url = {real.mtak.hu/165022/1/1902.01098v3.pdf},
 zbMATH = {7571578},
 Zbl = {1503.37012}
}

@article {candela,
    AUTHOR = {Candela, Pablo},
     TITLE = {Notes on compact nilspaces},
   JOURNAL = {Discrete Anal.},
  FJOURNAL = {Discrete Analysis},
      YEAR = {2017},
     PAGES = {Paper No. 16, 57},
   MRCLASS = {57S20 (22E40)},
  MRNUMBER = {3695479},
MRREVIEWER = {Vladimir V. Gorbatsevich},
       DOI = {10.19086/da.2106},
       URL = {https://doi.org/10.19086/da.2106},
}

@article{candela0,
 author = {Candela, Pablo},
 title = {Notes on nilspaces: algebraic aspects},
 fjournal = {Discrete Analysis},
 journal = {Discrete Anal.},
 issn = {2397-3129},
 volume = {2017},
 pages = {59},
 note = {Id/No 15},
 year = {2017},
 language = {English},
 doi = {10.19086/da.2105},
 keywords = {11B30,37A05,37B05},
 zbMATH = {6999185},
 Zbl = {1404.11011}
}

@misc{manners-periodic,
Author = {Frederick Manners},
Title = {Periodic nilsequences and inverse theorems on cyclic groups},
Year = {2014},
Eprint = {arXiv:1404.7742},
}

@misc{szegedy-inv,
Author = {Balazs Szegedy},
Title = {Gowers norms, regularization and limits of functions on abelian groups},
Year = {2010},
Eprint = {arXiv:1010.6211},
}

@misc{szegedy-inv2,
Author = {Balazs Szegedy},
Title = {On higher order Fourier analysis},
Year = {2012},
Eprint = {arXiv:1203.2260},
}

@misc{manners-inv,
Author = {Frederick Manners},
Title = {Quantitative bounds in the inverse theorem for the Gowers $U^{s+1}$-norms over cyclic groups},
Year = {2018},
Eprint = {arXiv:1811.00718},
}

@article {gtz-4,
    AUTHOR = {Green, Ben and Tao, Terence and Ziegler, Tamar},
     TITLE = {An inverse theorem for the {G}owers {$U^4$}-norm},
   JOURNAL = {Glasg. Math. J.},
  FJOURNAL = {Glasgow Mathematical Journal},
    VOLUME = {53},
      YEAR = {2011},
    NUMBER = {1},
     PAGES = {1--50},
      ISSN = {0017-0895},
   MRCLASS = {11B30 (11N13 11P70)},
  MRNUMBER = {2747135},
MRREVIEWER = {Julia Wolf},
       DOI = {10.1017/S0017089510000546},
       URL = {https://doi.org/10.1017/S0017089510000546},
}

@misc{milicevic,
 author = {Gowers, W. T. and Mili{\'c}evi{\'c}, Luka},
 title = {A quantitative inverse theorem for the ${U}^4$ norm over finite fields},
 year = {2017},
 howpublished = {Preprint, {arXiv}:1712.00241},
 keywords = {05D99,11T99},
 url = {https://arxiv.org/abs/1712.00241},
 arXiv = {arXiv:1712.00241}
}

@article{milicevic-u56,
 author = {Mili{\'c}evi{\'c}, Luka},
 title = {Quantitative inverse theorem for {Gowers} uniformity norms {{\(\mathsf{U}^5\)}} and {{\(\mathsf{U}^6\)}} in {{\(\mathbb{F}_2^n\)}}},
 fjournal = {Canadian Journal of Mathematics},
 journal = {Can. J. Math.},
 issn = {0008-414X},
 volume = {76},
 number = {4},
 pages = {1289--1338},
 year = {2024},
 language = {English},
 doi = {10.4153/S0008414X23000391},
 keywords = {11P70,05E16,05E18},
 zbMATH = {7930307},
 Zbl = {1559.11102}
}

@misc{milicevic-2,
 author = {Gowers, W. T. and Mili{\'c}evi{\'c}, L.},
 title = {An inverse theorem for {Freiman} multi-homomorphisms},
 year = {2020},
 howpublished = {Preprint, {arXiv}:2002.11667},
 url = {https://arxiv.org/abs/2002.11667},
 arXiv = {arXiv:2002.11667}
}

@article {btz2,
    AUTHOR = {Bergelson, Vitaly and Tao, Terence and Ziegler, Tamar},
     TITLE = {Multiple recurrence and convergence results associated to
              {$\Bbb {F}_P^\omega$}-actions},
   JOURNAL = {J. Anal. Math.},
  FJOURNAL = {Journal d'Analyse Math\'{e}matique},
    VOLUME = {127},
      YEAR = {2015},
     PAGES = {329--378},
      ISSN = {0021-7670},
   MRCLASS = {37A25 (37B20 37C85)},
  MRNUMBER = {3421997},
MRREVIEWER = {Nikos Frantzikinakis},
       DOI = {10.1007/s11854-015-0033-1},
       URL = {https://doi.org/10.1007/s11854-015-0033-1},
}

@article {btz,
    AUTHOR = {Bergelson, Vitaly and Tao, Terence and Ziegler, Tamar},
     TITLE = {An inverse theorem for the uniformity seminorms associated
              with the action of {$\mathbb{F}^\infty_p$}},
   JOURNAL = {Geom. Funct. Anal.},
  FJOURNAL = {Geometric and Functional Analysis},
    VOLUME = {19},
      YEAR = {2010},
    NUMBER = {6},
     PAGES = {1539--1596},
      ISSN = {1016-443X},
   MRCLASS = {37A35 (28D05)},
  MRNUMBER = {2594614},
MRREVIEWER = {Randall McCutcheon},
       DOI = {10.1007/s00039-010-0051-1},
       URL = {https://doi.org/10.1007/s00039-010-0051-1},
}

@article {tz-lowchar,
    AUTHOR = {Tao, Terence and Ziegler, Tamar},
     TITLE = {The inverse conjecture for the {G}owers norm over finite
              fields in low characteristic},
   JOURNAL = {Ann. Comb.},
  FJOURNAL = {Annals of Combinatorics},
    VOLUME = {16},
      YEAR = {2012},
    NUMBER = {1},
     PAGES = {121--188},
      ISSN = {0218-0006},
   MRCLASS = {11B30 (11T06)},
  MRNUMBER = {2948765},
MRREVIEWER = {Tom Sanders},
       DOI = {10.1007/s00026-011-0124-3},
       URL = {https://doi.org/10.1007/s00026-011-0124-3},
}

@article {gmv,
    AUTHOR = {Gutman, Yonatan and Manners, Freddie and Varj\'{u}, P\'{e}ter P.},
     TITLE = {The structure theory of nilspaces {I}},
   JOURNAL = {J. Anal. Math.},
  FJOURNAL = {Journal d'Analyse Math\'{e}matique},
    VOLUME = {140},
      YEAR = {2020},
    NUMBER = {1},
     PAGES = {299--369},
      ISSN = {0021-7670},
   MRCLASS = {37C99 (11B30 22A05)},
  MRNUMBER = {4094466},
MRREVIEWER = {Jonas Der\'{e}},
       DOI = {10.1007/s11854-020-0093-8},
       URL = {https://doi.org/10.1007/s11854-020-0093-8},
}

@article {BSST,
    AUTHOR = {Berger, Aaron and Sah, Ashwin and Sawhney, Mehtaab and Tidor,
              Jonathan},
     TITLE = {Non-classical polynomials and the inverse theorem},
   JOURNAL = {Math. Proc. Cambridge Philos. Soc.},
  FJOURNAL = {Mathematical Proceedings of the Cambridge Philosophical
              Society},
    VOLUME = {173},
      YEAR = {2022},
    NUMBER = {3},
     PAGES = {525--537},
      ISSN = {0305-0041,1469-8064},
   MRCLASS = {11B30},
  MRNUMBER = {4497969},
MRREVIEWER = {Andreas\ Koutsogiannis},
       DOI = {10.1017/S0305004121000682},
       URL = {https://doi.org/10.1017/S0305004121000682},
}

@article{CGSS,
 author = {Candela, Pablo and Gonz{\'a}lez-S{\'a}nchez, Diego and Szegedy, Bal{\'a}zs},
 title = {On higher-order {Fourier} analysis in characteristic {{\(p\)}}},
 fjournal = {Ergodic Theory and Dynamical Systems},
 journal = {Ergodic Theory Dyn. Syst.},
 issn = {0143-3857},
 volume = {43},
 number = {12},
 pages = {3971--4040},
 year = {2023},
 language = {English},
 doi = {10.1017/etds.2022.119},
 keywords = {37A46,37A17,37A44,11S31,20D15},
 url = {real.mtak.hu/163271/1/2109.15281v2.pdf},
 zbMATH = {7779005},
 Zbl = {1533.37020}
}

@book {morris,
    AUTHOR = {Morris, Sidney A.},
     TITLE = {Pontryagin duality and the structure of locally compact
              abelian groups},
    SERIES = {London Mathematical Society Lecture Note Series, No. 29},
 PUBLISHER = {Cambridge University Press, Cambridge-New York-Melbourne},
      YEAR = {1977},
     PAGES = {viii+128},
   MRCLASS = {22B05 (43A40)},
  MRNUMBER = {0442141},
MRREVIEWER = {K. A. Ross},
}

@ misc{hofmann, 
Author = {Hofmann, K.H. and Morris, S.A.},
Title = {The structure of compact groups. 
		A primer for the student—a handbook for the expert. Third edition, revised and augmented.},
	JOURNAL= {De Gruyter Studies in Mathematics},
	FJOURNAL = {De Gruyter, Berlin},
	VOLUME = {25},
	YEAR = {2013},
}

@article{jst-polynomialtowers,
 author = {Jamneshan, Asgar and Shalom, Or and Tao, Terence},
 title = {Polynomial towers and inverse Gowers theory for bounded-exponent groups},
 journal={arXiv:2601.00961},
 Year={2026},
}

@article{jst,
 author = {Jamneshan, Asgar and Shalom, Or and Tao, Terence},
 title = {The structure of arbitrary {Conze}-{Lesigne} systems},
 fjournal = {Communications of the American Mathematical Society},
 journal = {Commun. Am. Math. Soc.},
 issn = {2692-3688},
 volume = {4},
 pages = {182--229},
 year = {2024},
 language = {English},
 doi = {10.1090/cams/31},
 keywords = {37A15,37A44,37A46,37A30,11B30,28D15,37A35,22D40},
 zbMATH = {7856620},
 Zbl = {1545.37003}
}

@article{taozieglerhigh,
AUTHOR = {Tao, Terence and Ziegler, Tamar}, 
TITLE= {The inverse conjecture for the Gowers norm over finite fields via the correspondence principle
},
Journal = {Anal. PDE},
VOLUME = {3},
NUMBER = {1},
YEAR = {2010},
PAGES = {1-20},
DOI = {10.2140/apde.2010.3.1},
URL = {https://msp.org/apde/2010/3-1/p01.xhtml}

}

@article {pettis,
    AUTHOR = {Pettis, B. J.},
     TITLE = {On continuity and openness of homomorphisms in topological
              groups},
   JOURNAL = {Ann. of Math. (2)},
  FJOURNAL = {Annals of Mathematics. Second Series},
    VOLUME = {52},
      YEAR = {1950},
     PAGES = {293--308},
      ISSN = {0003-486X},
   MRCLASS = {20.0X},
  MRNUMBER = {38358},
MRREVIEWER = {L. Nachbin},
       DOI = {10.2307/1969471},
       URL = {https://doi.org/10.2307/1969471},
}

@article {green-tao-finite,
    AUTHOR = {Green, Ben and Tao, Terence},
     TITLE = {The distribution of polynomials over finite fields, with
              applications to the {G}owers norms},
   JOURNAL = {Contrib. Discrete Math.},
  FJOURNAL = {Contributions to Discrete Mathematics},
    VOLUME = {4},
      YEAR = {2009},
    NUMBER = {2},
     PAGES = {1--36},
   MRCLASS = {11B30 (11B75 11K36)},
  MRNUMBER = {2592422},
MRREVIEWER = {Serge\u{\i} V. Konyagin},
}

@book {ribes-z,
    AUTHOR = {Ribes, Luis and Zalesskii, Pavel},
     TITLE = {Profinite groups},
    SERIES = {Ergebnisse der Mathematik und ihrer Grenzgebiete. 3. Folge. A
              Series of Modern Surveys in Mathematics [Results in
              Mathematics and Related Areas. 3rd Series. A Series of Modern
              Surveys in Mathematics]},
    VOLUME = {40},
   EDITION = {Second},
 PUBLISHER = {Springer-Verlag, Berlin},
      YEAR = {2010},
     PAGES = {xvi+464},
      ISBN = {978-3-642-01641-7},
   MRCLASS = {20E18},
  MRNUMBER = {2599132},
       DOI = {10.1007/978-3-642-01642-4},
       URL = {https://doi.org/10.1007/978-3-642-01642-4},
}

@article {eisner-tao,
    AUTHOR = {Eisner, Tanja and Tao, Terence},
     TITLE = {Large values of the {G}owers-{H}ost-{K}ra seminorms},
   JOURNAL = {J. Anal. Math.},
  FJOURNAL = {Journal d'Analyse Math\'{e}matique},
    VOLUME = {117},
      YEAR = {2012},
     PAGES = {133--186},
      ISSN = {0021-7670},
   MRCLASS = {11B30},
  MRNUMBER = {2944094},
MRREVIEWER = {Andreas O. Bender},
       DOI = {10.1007/s11854-012-0018-2},
       URL = {https://doi.org/10.1007/s11854-012-0018-2},
}

@book {neukirch,
    AUTHOR = {Neukirch, J\"{u}rgen and Schmidt, Alexander and Wingberg, Kay},
     TITLE = {Cohomology of number fields},
    SERIES = {Grundlehren der mathematischen Wissenschaften [Fundamental
              Principles of Mathematical Sciences]},
    VOLUME = {323},
   EDITION = {Second},
 PUBLISHER = {Springer-Verlag, Berlin},
      YEAR = {2008},
     PAGES = {xvi+825},
      ISBN = {978-3-540-37888-4},
   MRCLASS = {11R34 (11-02 11G45 11R23 11S20 11S25 11S31 12G05)},
  MRNUMBER = {2392026},
       DOI = {10.1007/978-3-540-37889-1},
       URL = {https://doi.org/10.1007/978-3-540-37889-1},
}

@incollection {lms,
    AUTHOR = {Lovett, Shachar and Meshulam, Roy and Samorodnitsky, Alex},
     TITLE = {Inverse conjecture for the {G}owers norm is false},
 BOOKTITLE = {S{TOC}'08},
     PAGES = {547--556},
 PUBLISHER = {ACM, New York},
      YEAR = {2008},
   MRCLASS = {11B30 (11T06)},
  MRNUMBER = {2582913},
MRREVIEWER = {Serge\u{\i} V. Konyagin},
       DOI = {10.1145/1374376.1374454},
       URL = {https://doi.org/10.1145/1374376.1374454},
}

@book {wilson,
    AUTHOR = {Wilson, John S.},
     TITLE = {Profinite groups},
    SERIES = {London Mathematical Society Monographs. New Series},
    VOLUME = {19},
 PUBLISHER = {The Clarendon Press, Oxford University Press, New York},
      YEAR = {1998},
     PAGES = {xii+284},
      ISBN = {0-19-850082-3},
   MRCLASS = {20E18},
  MRNUMBER = {1691054},
MRREVIEWER = {Alexander Lubotzky},
}

@book{candela-szegedy-cubic,
 author = {Candela, Pablo and Szegedy, Bal{\'a}zs},
 title = {Nilspace factors for general uniformity seminorms, cubic exchangeability and limits},
 fseries = {Memoirs of the American Mathematical Society},
 series = {Mem. Am. Math. Soc.},
 issn = {0065-9266},
 volume = {1425},
 isbn = {978-1-4704-6548-3; 978-1-4704-7541-3},
 year = {2023},
 publisher = {Providence, RI: American Mathematical Society (AMS)},
 language = {English},
 doi = {10.1090/memo/1425},
 keywords = {37-02,37A15,37A30,37A46,37A44,37A17,28D15},
 zbMATH = {7704124},
 Zbl = {1523.37001}
}

@article{CGSSextensions,
 author = {Candela, Pablo and Gonz{\'a}lez-S{\'a}nchez, Diego and Szegedy, Bal{\'a}zs},
 title = {On measure-preserving {{\(\mathbb{F}_p^{\omega}\)}}-systems of order {{\(k\)}}},
 fjournal = {Journal d'Analyse Math{\'e}matique},
 journal = {J. Anal. Math.},
 issn = {0021-7670},
 volume = {156},
 number = {1},
 pages = {281--298},
 year = {2025},
 language = {English},
 doi = {10.1007/s11854-025-0381-4},
 keywords = {37-XX,68-XX},
 zbMATH = {8103494}
}

@article {cgss-2023,
    AUTHOR = {Candela, Pablo and Gonz\'alez-S\'anchez, Diego and Szegedy,
              Bal\'azs},
     TITLE = {Free nilspaces, double-coset nilspaces, and {G}owers norms},
   JOURNAL = {Math. Ann.},
  FJOURNAL = {Mathematische Annalen},
    VOLUME = {395},
      YEAR = {2026},
    NUMBER = {1},
     PAGES = {23},
      ISSN = {0025-5831,1432-1807},
   MRCLASS = {11B30 (37C85 43A70)},
  MRNUMBER = {5056736},
       DOI = {10.1007/s00208-026-03458-w},
       URL = {https://doi.org/10.1007/s00208-026-03458-w},
}

@article {towsner,
    AUTHOR = {Towsner, Henry},
     TITLE = {A correspondence principle for the {G}owers norms},
   JOURNAL = {J. Log. Anal.},
  FJOURNAL = {Journal of Logic and Analysis},
    VOLUME = {4},
      YEAR = {2012},
     PAGES = {Paper 4, 11},
   MRCLASS = {11B30 (03H05 37A45)},
  MRNUMBER = {2889824},
MRREVIEWER = {Bryna Kra},
       DOI = {10.4115/jla.2012.4.4},
       URL = {https://doi.org/10.4115/jla.2012.4.4},
}

@article {austin-norm,
    AUTHOR = {Austin, Tim},
     TITLE = {On the norm convergence of non-conventional ergodic averages},
   JOURNAL = {Ergodic Theory Dynam. Systems},
  FJOURNAL = {Ergodic Theory and Dynamical Systems},
    VOLUME = {30},
      YEAR = {2010},
    NUMBER = {2},
     PAGES = {321--338},
      ISSN = {0143-3857},
   MRCLASS = {37A30 (28D05 47A35)},
  MRNUMBER = {2599882},
MRREVIEWER = {Artur Siemaszko},
       DOI = {10.1017/S014338570900011X},
       URL = {https://doi.org/10.1017/S014338570900011X},
}

\end{document}